\numberwithin{equation}{section}
\newtheorem{proposition}{Proposition}[section]
\newtheorem{corollary}[proposition]{Corollary}
\newtheorem{lemma}[proposition]{Lemma}
\newtheorem{remark}[proposition]{Remark}
\newtheorem{definition}[proposition]{Definition}
\newtheorem{theorem}[proposition]{Theorem}
\newcommand{\e}[1]{\mathrm{e}\!\left(#1\right)}
\newcommand{\ti}{\to\infty}
\newcommand{\N}{\mathbb{N}}
\newcommand{\Z}{\mathbb{Z}}
\newcommand{\Q}{\mathbb{Q}}
\newcommand{\R}{\mathbb{R}}
\newcommand{\C}{\mathbb{C}}
\newcommand{\h}{\mathfrak{H}}
\newcommand{\sltr}{\mathrm{SL}(2,\R)}
\newcommand{\sltz}{\mathrm{SL}(2,\Z)}
\newcommand{\tsltr}{\widetilde{\mathrm{SL}}(2,\R)}
\newcommand{\ha}{\frac{1}{2}}
\newcommand{\tha}{\tfrac{1}{2}}
\newcommand{\Hei}{\mathbb{H}(\mathbb{R})}
\newcommand{\sve}[2]{\left(\begin{smallmatrix}\!#1\!\\ \!#2\!\end{smallmatrix}\right)}
\newcommand{\bm}[1]{\mbox{\boldmath{$#1$}}}
\newcommand{\LtR}{\mathrm L^2(\R)}
\newcommand{\SEtaR}{\mathcal{S}_\eta(\R)}
\newcommand{\sma}[4]{\left(\begin{smallmatrix} #1&#2\\#3&#4\end{smallmatrix}\right)}
\def\GamG{\Gamma\backslash G}
\def\vecxi{{\text{\boldmath$\xi$}}}
\newcommand{\de}{\mathrm{d}}
\begin{document}


\title[Stochastic Calculus for the Theta Process]{Stochastic Calculus for the Theta Process}

\author[F. Cellarosi]{Francesco Cellarosi}
\address[Francesco Cellarosi]{Department of Mathematics and Statistics. Queen's University, Kingston, Ontario. Canada.}
\email{francesco.cellarosi@queensu.ca}
\author[Z. Selk]{Zachary Selk}
\address[Zachary Selk]{Department of Mathematics and Statistics. Queen's University, Kingston, Ontario. Canada. Corresponding author.}
\email{zachary.selk@queensu.ca}
\date{\today} 
\subjclass[2020]{60L20, 60L90, 11F27, 11L03, 37C85}
\keywords{Rough paths; theta functions; exponential sums}

\begin{abstract}
The theta process is a stochastic process of number theoretical origin arising as a scaling limit of quadratic Weyl sums. It can be described in terms of the geodesic flow and an automorphic function on a homogeneous space. This process has several properties in common with Brownian motion such as its H\"older regularity, uncorrelated increments and quadratic variation. However, crucially, we show that the theta process is not a semimartingale, making It\^o calculus techniques inapplicable. Instead, we use the celebrated rough paths theory to develop the stochastic calculus for the theta process. We do so  by constructing the iterated integrals - the ``rough path" - above the theta process. Rough paths theory takes a signal and its iterated integrals and produces a vast and robust theory of stochastic differential equations. In addition, the rough path we construct can be described in terms of higher rank theta sums, via equidistribution of horocycle lifts. \end{abstract}
\maketitle

\tableofcontents

\section{Introduction}
The theta process, $X$, is a stochastic process of number theoretical origin introduced in \cite{Cellarosi-Marklof}. In this paper, we study the stochastic calculus of $X$. In particular, we demonstrate the need for what is called rough paths theory and we develop all tools required to apply rough paths theory. To the authors' best knowledge, this is the first application of rough paths theory to number theory.

\subsection{Theta Sums and the Theta Process}
We consider quadratic Weyl sums 
\begin{equation*}
    S_N(x;\alpha,\beta):=\sum_{n=1}^N \e{\left(\tfrac{1}{2}n^2+\beta n\right)x+n\alpha},
\end{equation*}
where $\e{z}:=e^{2\pi i z}$ and the parameters $x,\alpha,\beta$ are real. Various instances of $S_N(x;\alpha,\beta)$ are ubiquitous in number theory. For instance, classical quadratic Gauss sums can be written as $g(a;q)=S_{q}(\frac{2a}{q};0,0)$ and can be evaluated explicitly, depending on the remainder of $q$ modulo 4,  in terms of Kronecker symbols. Bounds for $S_N$ (and more generally for Weyl sums where the quadratic polynomial in $n$ is replaced by a degree $d$ polynomial) have a long history, starting with 
Hardy and Littlewood \cite{HL-23, HL-1914} and Weyl \cite{Weyl-1916}. A review of the history of the subject can be found, e.g., in \cite{flaminio2023equidistribution}. For instance, Fiedler, Jurkat and K\"{o}rner \cite{FJK1977} proved that for a non-decreasing sequence $(\psi(n))_{n\geq1}$ we have that  $\limsup_{N\to\infty}|S_N(x;0,0)|/(\sqrt{N}\psi(N))<\infty$ for almost every $x$ if and only if $\sum_{n=1}^\infty n^{-1}\psi(n)^{-4}<\infty$. Similarly, Fedotov and Klopp \cite{Fedotov-Kopp2012} showed that for a non-increasing  function $g:\R_+\to\R_+$ we have $\limsup_{N\to\infty}|S_N(x;\alpha,0)| g(\log N)/\sqrt{N}<\infty$ for almost every $(x,\alpha)$ if and only if $\sum_{n=1}^\infty g(n)^6<\infty$. They also proved that for almost every $x$ one can find a dense $G_\delta$ set $A_x$ so that for all $\alpha\in A_x$ we have $\limsup_{N\to\infty}|S_N(x;\alpha,0)| g(\log N)/\sqrt{N}<\infty$ if and only if $\sum_{n=1}^\infty g(n)^4<\infty$. The latter result was  generalized by Flaminio and Forni \cite{Flaminio-Forni-2006} using dynamical tools such as quantitative equidistribution for nilflows. Sums of the form $S_N$ also arise in various problem of mathematical physics, from the semiclassical dynamics of precessing spins \cite{Berry-Goldberg, Marklof-precessingspins}, to the Berry-Tabor conjecture for the geodesic flow on the 2-torus with quasi-periodic boundary conditions \cite{Marklof2003Annals}.

$S_N(x;\alpha,\beta)$ with random $x$ was originally studied in \cite{Jurkat-2,Jurkat-3,Jurkat-4}, where Jurkat and van Horne showed a central limit theorem where $x$ was distributed randomly uniformly on an interval. Marklof in \cite{Marklof-Duke-1999} introduced automorphic form techniques for studying $S_N(x;\alpha,\beta)$ with random $x$. In this study, following \cite{Cellarosi-Marklof}, we fix $(\alpha, \beta)\in \mathbb R^2\smallsetminus \mathbb Q^2$ and we let $x$ be randomly distributed according to a probability measure $\lambda$ on $\R$ which is absolutely continuous with respect to Lebesgue measure. 

The sum $S_N(x;\alpha,\beta)$ can then be viewed as the position after $N$ steps of a a deterministic walk in the complex plane with a random seed $x$. The classical Donsker's theorem says that a random walk, piecewise linearly interpolated, scaled by $\sqrt N$, converges weakly to a Brownian motion on the space of continuous functions. Analogously, the theta process arises as a limit of quadratic Weyl sums, scaled by $\sqrt N$. More precisely, it is shown in \cite{Cellarosi-Marklof}  that the sequence of stochastic processes
\begin{equation}\label{def:informal-X_N(t)}
    t\mapsto X_N(t)=\frac{1}{\sqrt N} \sum_{n=1}^{\lfloor N t\rfloor} \e{\left(\frac{1}{2}n^2+\beta n\right)x+n\alpha}+\text{piecewise linear interpolation}
\end{equation}
converges weakly on the space of continuous functions to a stochastic process $X$, called the \textbf{theta process}. Furthermore, the process $X$ may be given explicitly in terms of an automorphic function $\Theta_\chi$ on a particular Lie group $G$. The function $\Theta_\chi$ is invariant under a lattice $\Gamma$ and thus well defined on the finite volume quotient $\GamG$. The process $X$ may be given by 
\begin{equation*}
    X(t)=\sqrt{t} \Theta_\chi (\Gamma g\Phi^{2 \log t}),
\end{equation*}
where $\Gamma g$ is drawn according to the Haar measure $\mu_{\GamG}$ on $\GamG$ and $(\Phi^s)_s$ is the geodesic flow. For another instances of  stochastic processes coming from exponential sums, see   \cite{Demirci-Akarsu-incomplete-Gauss-sums, Kloosterman}. 

As $X$ arises in a similar way as Brownian motion, it shares many of the same properties. For example, both are almost surely $\alpha$-H\"older for any $\alpha \in (0,\frac{1}{2})$. They both have quadratic variation $t$ and the same mean and  covariance. They also both are almost surely of infinite $p$ variation if $p<2$. The main difference between Brownian motion and $X$ is that Brownian motion has infinitely many moments where $X$ only has $6-\varepsilon$ moments for any $\varepsilon>0.$

\subsection{Stochastic Calculus and Rough Paths Theory}
Stochastic calculus is concerned with differential equations of the form
\begin{equation}\label{eq:intro-rde}
    dY=b(Y) dt+\sigma(Y) dZ,
\end{equation}
where $Z$ is a stochastic process. Equations such as \eqref{eq:intro-rde} are known as \textbf{stochastic differential equations (SDEs)} and the literature on SDEs is vast. SDEs have found use in all areas of applied mathematics such as engineering, physics, biology, and finance to name a few. However, one of the main achievements of SDE theory is the further analysis of the stochastic process $Z$. For instance, the celebrated It\^o calculus developed in the 1950s (see \cite{Ito-2,Ito-3,Ito-1}) was created to study SDEs where $Z$ is Brownian motion. Not only does It\^o allow one to study noisy differential equations (which has vast applications in applied mathematics), it led to a much deeper understanding of Brownian motion. In particular, It\^o calculus proves a myriad of results (see \cite{Protter-Stochastic-Calculus,Schilling-book}) about Brownian motion such as its conformal invariance, the L\'evy characterization of Brownian motion, Girsanov's change of measure, It\^o's representation theorem and several other important, useful and interesting facts. Furthermore, It\^o calculus connects Brownian motion to the Laplacian and has led to further study of partial differential equations (via Feynman-Kac formulae) and even in geometry via its help in studying the Laplace-Beltrami operator. It\^o calculus has been generalized to general martingales and to a larger class called semimartingales. 

Although it is hard to overstate the importance of the It\^o calculus, it has several limitations. Most importantly, It\^o calculus is limited to semimartingales. It therefore leaves out several important examples such as the fractional Brownian motion (which is a generalization of Brownian motion that allows for correlations in the increments - see \cite{fbm-book}). Another crucial limitation of It\^o calculus is that all solutions of \eqref{eq:intro-rde} are defined in an $L^2$ sense and not for each individual realization of $Z$. Furthermore, SDEs defined in It\^o sense are not robust to changes in the noise. More precisely, Brownian paths that are close in the $\|\cdot\|_\infty$ norm might produce solutions that behave very differently. 

These limitations have been overcome by rough paths theory which was introduced by Lyons in 1998 (see \cite{Lyons-Original}), using the pioneering work of Chen (see \cite{Chen-1}) on the algebra of iterated integrals. Rough paths theory has since revolutionized stochastic analysis. Not only does it allow for general stochastic processes (not just semimartingales), it is a pathwise theory of differential equations that is robust to changes in the noise and gives an extensive collection of further tools to study SDEs. 

Rough paths theory allows for general signal $Z\in C^\alpha([0,T],\mathbb R^d)$ which is $\alpha$-H\"older. The key observation of rough paths theory is that defining solutions to equation \eqref{eq:intro-rde} can be reduced to defining the iterated integrals
\begin{equation*}
   \int_s^t \int_s^{r_2} dZ(r_1) \otimes dZ(r_2)\in \mathbb R^{d\times d}, 
\end{equation*}
where the $(i,j)$th component is the integral of the $i$th component of $Z$ against the $j$th component. The pair $(Z(t)-Z(s),\int_s^t \int_s^{r_2} dZ(r_1) \otimes dZ(r_2))\in \mathbb R^d \oplus \mathbb R^{d\times d}$ is called a \textbf{rough path} above $Z$. 

If $Z=B$ is Brownian motion then one can ``enhance" $B$ with the It\^o iterated integrals. In this way, one can ``embed" It\^o theory into rough paths theory. There are some slight subtleties in the difference between It\^o calculus for Brownian motion and rough paths theory for Brownian motion enhanced with It\^o integration (see \cite{Friz-Hairer-Book}, Chapter 3) but overall, rough paths theory can be seen as a generalization of the It\^o calculus.

Rough paths theory has seen immense popularity in recent years in both pure and applied mathematics. For example, incorporating iterated integrals in machine learning has led to the ``signature method." This in turn lead to advances in Chinese character recognition \cite{Chinese-Recognition}, diagnosing psychiatric illnesses \cite{Bipolar-Borderline} and machine learning in finance. Additionally, Hairer used rough paths theory to solve stochastic PDEs such as the KPZ equation \cite{MH-KPZ}. Hairer further generalized rough paths theory to his theory of regularity structures in \cite{MH-RS}. Rough paths theory has been used to solve differential equations driven by fractional Brownian motion \cite{fBm-sdes} and even irregular deterministic functions such as Weierstrass functions \cite{Weierstrass-Self}. 

As rough paths theory is a theory of stochastic calculus that allows for general H\"older processes and the theta process $X$ is almost surely H\"older (see \cite{Cellarosi-Marklof}), rough paths theory is an ideal setting for the stochastic calculus of $X$. In this paper we demonstrate the need for rough paths theory by showing that It\^o calculus and other types of calculus do not work for $X$. We then construct the iterated integrals (the rough path) above $X$ in a natural way. This allows for the vast collection of powerful tools in rough paths theory and is to the authors' best knowledge the first application of rough paths theory to number theory.

\subsection{Main Result}

In \cite{Random-Walk-rough-path} the authors show that a classical random walk along with its canonical iterated integrals converges weakly on the space of geometric rough paths, proving a ``rough" version of Donsker's theorem. The main contribution of the current paper is showing the analogous weak convergence of $X_N$ with the canonical Riemann-Stieltjes integrals on the space of geometric rough paths. We use the notations $X_N^1=\Re X_N$, $X_N^2=\Im X_N$ and thus identify $\C$ with $\R^2$. We show the following theorem. 
\begin{theorem}[Main Theorem]\label{theorem:main} Consider $X_N$ as in \eqref{def:informal-X_N(t)}, with fixed $(\alpha,\beta)\notin\Q^2$ and $x$ randomly distributed on $\R$ according to a probability measure which is absolutely continuous with respect to the Lebesgue measure. Consider the two-parameter process $\mathbf X_N(s,t)=(X_N(t)-X_N(s),\mathbb X_N(s,t))$  taking values in $\mathbb R^2\oplus \mathbb R^{2\times 2}$, where  $\mathbb X_N^{i,j}(s,t):=\int_s^t (X_N^i(r)-X_N^i(s)) dX_N^j(r)$ is the standard Riemann-Stieltjes integral for $i,j\in\{1,2\}$. Then the laws of $\mathbf X_N$ converge on the rough path space $\mathscr C_g^{\gamma}$ for all $\gamma<\frac{1}{2}$. The limiting process is a geometric rough path above $X$. 
\end{theorem}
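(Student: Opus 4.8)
The plan is to establish weak convergence in $\mathscr C_g^{\gamma}$ by the standard two-step scheme: prove tightness of the laws $\{\mathrm{Law}(\mathbf X_N)\}_N$ in $\mathscr C_g^{\gamma}$, and then identify every subsequential limit (via Prokhorov's theorem) by proving convergence of the finite-dimensional distributions of $\mathbf X_N$ to those of an explicit rough path $\mathbf X$ sitting above the theta process $X$. Since each $X_N$ is piecewise linear, $\mathbf X_N$ is its canonical (smooth) lift, hence automatically a geometric rough path; because $\mathscr C_g^{\gamma}$ is closed in the inhomogeneous rough path metric, once tightness there is known every weak limit is again geometric, and Chen's relation together with the second-level shuffle relation $\mathbb X_N^{i,j}(s,t)+\mathbb X_N^{j,i}(s,t)=(X_N^i(t)-X_N^i(s))(X_N^j(t)-X_N^j(s))$ pass to the limit.

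For tightness I would invoke the rough-path Kolmogorov criterion (see \cite{Friz-Hairer-Book}, Ch.~3): it suffices to find $q$ such that, uniformly in $N$,
\[
\big\|X_N(t)-X_N(s)\big\|_{L^q}\lesssim|t-s|^{1/2},\qquad \big\|\mathbb X_N(s,t)\big\|_{L^{q/2}}\lesssim|t-s|,
\]
which yields tightness in $\mathscr C_g^{\gamma}$ for $\gamma<\tfrac12-\tfrac1q$. The first bound is, after a dyadic decomposition of the summation range $(Ns,Nt]$, a uniform-in-$N$ form of the $L^q$ estimates for quadratic Weyl sums underpinning \cite{Cellarosi-Marklof}. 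The new input is the second. Writing $v_n=(\cos\theta_n,\sin\theta_n)$ with $\theta_n=2\pi\big((\tfrac12n^2+\beta n)x+n\alpha\big)$, the shuffle relation pins the symmetric part of $\mathbb X_N(s,t)$ to $\tfrac12(X_N(t)-X_N(s))^{\otimes 2}$, so it is governed by the first level; the antisymmetric (L\'evy area) part is, up to the explicit diagonal correction produced by the interpolation, $A_N(s,t)=\tfrac1N\sum_{Ns<m<n\le Nt}\sin(\theta_n-\theta_m)$. Since $\theta_{m+k}-\theta_m=2\pi\big((mk+\tfrac12k^2+\beta k)x+k\alpha\big)$ is linear in $m$, this is a rank-two exponential sum whose inner variable appears in a geometric progression; I would bound its $L^p$ norm either directly (van der Corput / Weyl differencing in $(m,k)$) or, in the spirit of this paper, by realizing $A_N$ as an automorphic function on an extended homogeneous space $\Gamma^{(2)}\backslash G^{(2)}$ and proving the integrability needed in the required exponent range.

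For the finite-dimensional distributions I would extend the dynamical picture of \cite{Cellarosi-Marklof}: for fixed times $s_1<\dots<s_k$ the vector $\big(X_N(s_{i},s_{i+1}),\,\mathbb X_N(s_i,s_{i+1})\big)_i$ is --- after sending the seed $x$ to a point of $\Gamma\backslash G$ and passing to a rank-two theta lift (a theta function on a larger group) --- the evaluation of a fixed function on $\Gamma^{(2)}\backslash G^{(2)}$ along an arc of a horocycle pushed by the geodesic flow at time $2\log(\cdot)$. Equidistribution of such lifted horocycle arcs (in the vein of Ratner and Eskin--McMullen, and of the horocycle-lift equidistribution used for theta sums) gives convergence of the joint law to an explicit limit; by construction it is a rough path $\mathbf X$ above $X$, its symmetric second level is forced by the shuffle relation, and its L\'evy area is described by higher-rank theta sums.

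The principal obstacle is the second displayed bound together with the passage to \emph{all} $\gamma<\tfrac12$: because $X$ has only $6-\varepsilon$ moments, a crude use of the rough-path Kolmogorov criterion with $q<6$ reaches only $\gamma<\tfrac13$. Getting arbitrarily close to $\tfrac12$ forces one to exploit that the symmetric part of $\mathbb X_N$ is algebraically tied to the first level (whose fluctuations on the scale $|t-s|^{1/2}$ are controlled by the methods of \cite{Cellarosi-Marklof}) and that the L\'evy area enjoys additional cancellation, so that the associated rank-two theta sums are integrable to an exponent large enough to push $\gamma$ past any threshold below $\tfrac12$. The sharp uniform-in-$N$ $L^p$ theory of these rank-two sums is the technical heart of the argument.
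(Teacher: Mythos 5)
Your overall architecture matches the paper's: tightness plus convergence of finite-dimensional distributions, with the second level recognized (correctly) as a rank-two theta sum over a triangular index set, evaluated along horocycle lifts pushed by the geodesic flow. However, two of your key steps have genuine gaps.

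First, the tightness mechanism you propose cannot work. You invoke the rough-path Kolmogorov criterion with moment bounds $\|X_N(t)-X_N(s)\|_{L^q}\lesssim|t-s|^{1/2}$ and $\|\mathbb X_N(s,t)\|_{L^{q/2}}\lesssim|t-s|$, concede that $q<6$ only reaches $\gamma<\tfrac13$, and then hope that "additional cancellation" makes the rank-two sums integrable to arbitrarily large exponents. It does not: the limiting tails force the first-level moments to stop at $6-\varepsilon$ (and uniformly in $N$ the increments have only $4-\varepsilon$ moments), while the second-level object has tails of order $(1+R)^{-2}$, so $\mathbb X_N(s,t)$ is not even uniformly in $L^{2}$. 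No choice of $q$ rescues the Kolmogorov route, and the shuffle identity only helps with the symmetric part, not the L\'evy area. The paper sidesteps moments entirely: it proves uniform-in-$N$ \emph{tail} bounds on the H\"older seminorms themselves, namely $\lambda(\|X_N\|_{\gamma'}>R)\ll(1+R)^{-4}$ and the analogous $(1+R)^{-2}$ bound for the second level, valid for every $\gamma'<\tfrac12$. These come from tail estimates for $\Theta_\chi$ and $\Theta^{(2)}$ evaluated at an $x$-dependent height $y(x)$ bounded below, at the cost of a discount factor $y(x)^{-\varepsilon}$ whose control reduces to a convergent Dirichlet series $\sum\phi(c)c^{-2/(1-4\varepsilon)}$. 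Tightness in $\mathscr C_g^{\gamma}$ then follows from the compact embedding $\mathscr C_g^{\gamma'}\hookrightarrow\mathscr C_g^{\gamma}$. You need this tail-based argument, or an equivalent substitute, to get past $\gamma=\tfrac13$.

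Second, you do not engage with what the paper identifies as the principal technical barrier: the cutoff in $\mathcal J_N=\tfrac1N\sum_{m<k<\ell\le n}\overline{z_k}z_\ell$ is the \emph{sharp indicator of a triangle}, which is neither smooth nor a product of one-dimensional cutoffs. The existing higher-rank theory (Marklof--Welsh) handles smooth cutoffs and boxes, and a box gives essentially no control over the triangle (the paper exhibits a sequence for which the triangular sum vanishes while the square sum is of order $N$). The paper's resolution is a dyadic decomposition of $T_{(0,1)}$ into three corner functions, three edge functions and a smooth remainder, with the sheared pieces (the two acute corners and the hypotenuse) controlled only after mapping $G\times G$ into $\mathbb H(\R^2)\rtimes\operatorname{Sp}(2,\R)$ and bounding the resulting theta functions by products of height functions. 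Relatedly, your fdd step requires \emph{joint} equidistribution of the specific pair of lifts $\Psi^{x}\Phi^{\tau}$ and $\Psi^{-x}\Phi^{\tau}$ with respect to a singular measure $\hat\mu$ supported on a six-dimensional submanifold of $\GamG\times\GamG$; the paper establishes this by an explicit generator-by-generator symmetry of the $\Gamma$-action relating the two fundamental-domain representatives, not by a generic Ratner or Eskin--McMullen statement, and your proposal leaves this identification of the joint limit measure unaddressed.
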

The matrix-valued function $\mathbb X_N$ can be written in terms of higher ordered theta sums of the kind studied in \cite{Cosentino-Flaminio-2015, Gotze-Gordin, Marklof-Welsh-I, Marklof-Welsh-II}. The main observation of our current paper is that iterated integrals which are the double integral over the continuous simplex $\Delta_{s,t}:=\{(r_1,r_2)\in (s,t)^2: s\leq r_1\leq r_2\leq t\}$ correspond to a rank-2 theta sum over the discrete simplex. The second order process can thus be written in an analogous way to $X$, in terms of a higher ordered automorphic function on a larger Lie groups with a triangular cutoff. Although higher rank theta sums have been studied in e.g. \cite{Marklof-Welsh-I,Marklof-Welsh-II}, they have been restricted to smooth cutoff functions or to the sharp indicator of boxes. The primary technical barrier to proving Theorem \ref{theorem:main} is the analysis of theta sums with non-smooth yet non-rectangular cutoffs. To the authors' best knowledge this is the first analysis of theta sums over non-smooth yet non-rectangular cutoffs. The theory developed here thus suggests a method for handling theta sums with sharp cutoffs of other polygonal regions. 

\section*{Acknowledgments} We would like to thank Tariq Osman, Jens Marklof and Trevor Wooley for helpful discussions. The first author acknowledges the support from the NSERC Discovery Grant RGPIN2022-04330.

\section{The Theta Process}
Let $\e{z}=e^{2\pi i z}$. Let $x,\alpha,\beta\in\R$ and $N\in\N$. We consider theta sums (quadratic Weyl sums)
\begin{align}\label{def-S_N(x,alpha,beta)}
    S_N(x;\alpha,\beta)=\sum_{n=1}^{N}\e{\left(\tha n^2+\beta n\right)x+\alpha n}.
\end{align}
Since we will fix $(\alpha,\beta)\notin\Q^2$, we simplify our notation by writing $S_N(x)$ for $S_N(x;\alpha,\beta)$.

Fix $T>0$. For each $x$ we consider the piecewise affine curves $[0,T]\ni t\mapsto X_N(x;t)\in\C$ obtained by joining the partial sums of \eqref{def-S_N(x,alpha,beta)}, normalized by $\sqrt{N}$, i.e.
\begin{align}\label{def-X_N(t)}
    X_N(x;t)=\frac{1}{\sqrt{N}}\left(S_{\lfloor tN\rfloor}(x)+
    (tN-\lfloor tN\rfloor)\left(S_{\lfloor tN\rfloor+1}(x)-S_{\lfloor tN\rfloor}(x)\right)
    \right).
\end{align}

An example of the curve $t\mapsto X_N(x;t)$ is given in Figure \ref{fig:BM_v_Theta}.
\begin{center}
\begin{figure}[b!]
    \includegraphics[width=15cm]{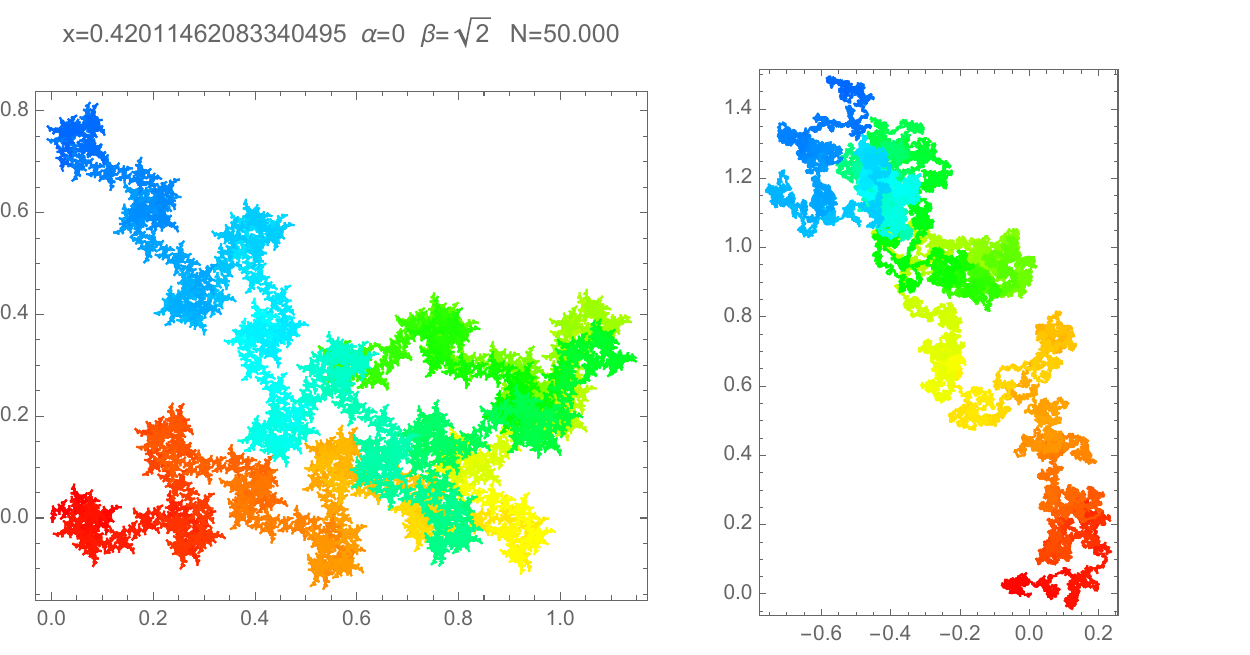}
    \caption{Left panel: the curve $t\mapsto X_N(x;t)$, with $N=50,\!000$, $\alpha=0$, $\beta=\sqrt{2}$, and $x\approx0.42011$ as indicated on the figure. Right panel: a realization of a rescaled simple symmetric random walk, obtained as in \eqref{def-S_N(x,alpha,beta)}-\eqref{def-X_N(t)} replacing $\e{\left(\tha n^2+\beta n\right)x+\alpha n}$ with $\e{\xi_n}$ for a sequence of i.i.d uniform random variables $(\xi_n)_{n\geq1}$ on $[0,1]$. In both panels, $t$ runs from $0$ (red) to $1$ (blue).}
    \label{fig:BM_v_Theta}
\end{figure}
 \end{center}
 
Let $\lambda$ be a  Borel probability measure on $\R$, absolutely continuous with respect to the Lebesgue measure. We let $x$ be randomly distributed according to $\lambda$ and we think  of \eqref{def-X_N(t)} as a deterministic walk on $\C$ with a random seed $x$, with the $n$th increment given by $\mathrm{e}((\ha n^2+\beta n)x+\alpha n)$. More precisely, we consider the space $\mathcal{C}_0([0,T],\C)$ of complex-valued continuous functions on $[0,1]$ taking value $0$ at $0$. We equip $\mathcal{C}_0([0,T],\C)$ with the topology induced by the $\sup$ norm and hence equip it with the corresponding Borel $\sigma$-algebra.

For each $N$, the probability measure $\lambda$ induces a probability measure, namely the push-forward measure $\mathbb{P}_N=((X_N(\cdot;t))_{0\leq t\leq T})_*\lambda$) on $\mathcal{C}_0([0,T],\C)$. As shown by Theorem 1.3 in \cite{Cellarosi-Marklof}, such measures converge weakly as $N\ti$. We refer to the stochastic process whose law is given by the limiting measure as the \emph{theta process}. 

\begin{theorem}[Invariance principle for theta sums, \cite{Cellarosi-Marklof}]\label{thm-existence-of-theta-process} Let $\lambda$ be a Borel probability measure, absolutely continuous with respect to the Lebesgue measure. Let $(\alpha,\beta)\notin\Q^2$. There exists a stochastic process $X\in \mathcal{C}_0([0,T],\C)$ (the \emph{theta process}) such that $X_N\Rightarrow X$ as $N\ti$. In other words, the induced probability measures $\mathbb{P}_N$ converge weakly as $N\ti$ to a probability measure $\mathbb{P}$ (the law of $X$) on  $\mathcal{C}_0([0,T],\C)$. The theta process $X$ does not depend on $\lambda$, or on $(\alpha,\beta)\in\R^2\smallsetminus\Q^2$.
\end{theorem}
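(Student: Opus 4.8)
The plan is to prove $X_N\Rightarrow X$ on $\mathcal{C}_0([0,T],\C)$ by the classical functional-limit-theorem scheme: establish (i) convergence of the finite-dimensional distributions and (ii) tightness of the family $(\mathbb{P}_N)_N$, and then conclude by Prokhorov's theorem. Both steps are powered by homogeneous dynamics. Following Marklof \cite{Marklof-Duke-1999} one first sets up a dynamical dictionary: there is a Lie group $G$, a lattice $\Gamma$, a geodesic flow $(\Phi^s)_s$ on $\GamG$, and a Jacobi-type automorphic function $\Theta_\chi$ such that, for a Schwartz weight $f$ approximating $\mathds{1}_{(0,1]}$,
\[
\frac{1}{\sqrt N}\,S_{\lfloor Nt\rfloor}(x)\;=\;\sqrt{t}\;\Theta_{\chi}\!\big(\Gamma\, n(x)\,\Phi^{2\log(Nt)}\big)+\mathrm{error}(f),
\]
where $n(x)$ is a fixed unipotent curve carrying the seed $x$ and the $(\alpha,\beta)$-data sit in the appropriate (Heisenberg) coordinate. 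Because $\Phi^{2\log N}\Phi^{2\log t}=\Phi^{2\log(Nt)}$, the whole path $\big(X_N(x;t)\big)_{t\in[0,T]}$ is, up to smoothing errors that vanish as $f\to\mathds{1}_{(0,1]}$, a single measurable, $\mu_{\GamG}$-almost-everywhere continuous functional of the point $\Gamma\,n(x)\,\Phi^{2\log N}$.

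For the finite-dimensional distributions I would invoke equidistribution of expanding horocycle arcs: the push-forward of the absolutely continuous law $\lambda$ under $x\mapsto\Gamma\,n(x)\,\Phi^{s}$ converges to the Haar probability $\mu_{\GamG}$ as $s\to\infty$, the limit is insensitive to the density of $\lambda$, and — crucially using $(\alpha,\beta)\notin\Q^2$ — it is the full Haar measure rather than a measure carried by a proper invariant submanifold. Applied along the orbit at the finitely many geodesic times $2\log(Nt_1),\dots,2\log(Nt_k)$, which pairwise differ only by the fixed shifts $2\log(t_i/t_j)$, this yields convergence in law of $\big(X_N(t_1),\dots,X_N(t_k)\big)$ to $\big(\sqrt{t_i}\,\Theta_\chi(\Gamma g\,\Phi^{2\log t_i})\big)_{i}$ with $\Gamma g$ Haar-distributed, i.e.\ precisely the finite-dimensional distributions of $X$. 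Two points require care: the almost-everywhere continuity of the sharp cutoff functional (so that the relevant observable is a legitimate test function for the portmanteau theorem), and — since $\Theta_\chi$ is unbounded on $\GamG$ — a uniform-integrability argument to pass from weak convergence of $\Gamma\,n(x)\,\Phi^{2\log N}$ to convergence of the unbounded functionals $X_N(t_i)$ and to interchange the limits $N\to\infty$ and $f\to\mathds{1}_{(0,1]}$.

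That uniform integrability is supplied by the same moment estimates that yield tightness via Kolmogorov's criterion on $\mathcal{C}_0([0,T],\C)$: it suffices to exhibit, uniformly in $N$, a bound $\mathbb{E}_\lambda\big|X_N(t)-X_N(s)\big|^{p}\le C_p\,|t-s|^{1+\delta}$ for some $p>2$ and $\delta>0$. Writing $X_N(t)-X_N(s)=\frac{1}{\sqrt N}\sum_{M<n\le L}\e{(\tfrac{1}{2}n^2+\beta n)x+\alpha n}+O(N^{-1/2})$ with $L-M\asymp N|t-s|$, this is a moment estimate for an incomplete theta sum which, through the automorphic representation together with a rescaling converting the window length $L-M$ into a geodesic-time shift, reduces to the integrability of $\Theta_\chi$ against $\mu_{\GamG}$; since $\Theta_\chi\in L^{p}(\mu_{\GamG})$ for every $p<6$, one obtains the bound with, say, $p=4$ and $\delta=1$, which is more than enough for tightness (and in fact recovers almost-sure H\"older regularity of exponent $<\tfrac{1}{2}$ in the limit). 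Finally, since the limiting law is assembled only from $\mu_{\GamG}$, $\Theta_\chi$ and $(\Phi^s)_s$ — objects depending on neither $\lambda$ nor the particular irrational $(\alpha,\beta)$ — the limit process $X$ is independent of these choices, as claimed.

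The step I expect to be the main obstacle is the uniform-in-$N$ moment bound for incomplete theta sums: obtaining polynomial decay in $|t-s|$ uniformly over both the position and the length of the summation window $(M,L]$ — in particular for very short windows and for windows abutting the endpoints — with only about six moments available. By contrast, once the dynamical dictionary is in place the finite-dimensional convergence follows fairly directly from horocycle equidistribution, and the tightness criterion itself is soft.
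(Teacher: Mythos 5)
Your proposal is correct and follows essentially the same two-step scheme (convergence of finite-dimensional distributions via equidistribution of expanding horocycle lifts in $\GamG$, plus tightness on $\mathcal{C}_0([0,T],\C)$) that the paper attributes to \cite{Cellarosi-Marklof} — indeed the paper only cites this theorem and records exactly this outline in the remark that follows it. Two small corrections: the uniform-in-$N$ moment bound holds only for $p<4$ (the available tail bound $\lambda(|\Theta_\chi|>R)\ll(1+R)^{-4}$ does not yield a finite fourth moment), so you should take, say, $p=3$, which still gives $\mathbb{E}\big|X_N(t)-X_N(s)\big|^{3}\ll|t-s|^{3/2}$ and hence tightness in the sup-norm topology; and your parenthetical claim that this "recovers almost-sure H\"older regularity of exponent $<\tfrac12$" is false — Kolmogorov with $p<4$ only reaches H\"older exponents below $\tfrac12-\tfrac1p<\tfrac14$, which is precisely why the present paper must replace Kolmogorov's criterion by the tail-bound arguments of Section 6 to get tightness in $C^\gamma$ for all $\gamma<\tfrac12$.
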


\begin{remark}
    To prove Theorem \ref{thm-existence-of-theta-process} the two main steps are:
    \begin{itemize}
        \item showing the existence of limiting finite-dimensional distributions;
        \item showing tightness of $\{\mathbb{P}_N\}_{N\geq1}$ in $\mathcal{C}_0([0,T],\C)$.
    \end{itemize} 
\end{remark}

The theta process is interesting for its number theoretical origin, as well as for its properties. Some properties (stationarity, scaling, time inversion, law of large numbers, rotational invariance, H\"{o}lder continuity, nondifferentiability) match those of a standard Brownian motion, while other properties are strikingly different (heavy tails, dependent increments, smaller modulus of continuity), as shown in Theorem 1.4 in \cite{Cellarosi-Marklof}. Below we use the notation ``$\sim$" to denote equality in distribution.
\begin{theorem}[Properties of the theta process, \cite{Cellarosi-Marklof}]
\textcolor{white}{.}
\begin{itemize}
    \item[(a)] Stationarity. Let $t_0>0$, and $Y(t)=X(t_0+t)-X(t_0)$. Then $Y\sim X$.
    \item[(b)] Scaling. Let $a>0$ and $Y(t)=\frac{1}{a}X(a^2 t)$. Then $Y\sim X$.
    \item[(c)] Time inversion. Let $$Y(t)=\begin{cases}
        0&\mbox{if $t=0$;}\\
        t X(1/t)&\mbox{if $t>0$}.
    \end{cases}$$
     Then $Y\sim X$.
    \item[(d)] Law of Large Numbers. Almost surely, $\lim_{t\to\infty}\frac{X(t)}{t}=0$.
    \item[(e)] Rotational invariance. Let $\theta\in\R$ and $Y(t)=e^{2\pi i \theta}X(t)$. Then $Y\sim X$.
     \item[(f)] H\"{o}lder continuity. Let $\gamma<\ha$. Then, almost surely, the curve $t\mapsto X(t)$ is everywhere locally $\gamma$-H\"{o}lder continuous. 
     \item[(g)] Nondifferentiability. Let $t_0\geq0$ Then, almost surely, the curve $t\mapsto X(t)$ is not differentiable at $t_0$.
    \item[(h)] Tails (improved version, see Theorem 1.2 in \cite{Cellarosi-Griffin-Osman}). For $R\geq1$ and $\varepsilon>0$
    $$\mathbb{P}\{|X(1)|>R\}=\frac{6}{\pi^2}R^{-6}\left(1+O_\varepsilon(R^{-2+\varepsilon})\right).$$
    \item[(i)] Increments. For every $k\geq1$ and every $0\leq t_0<t_1<t_2<\ldots<t_{k-1}<t_k$ the increments
    $$X(t_1)-X(t_0), X(t_2)-X(t_1),\ldots, X(t_{k-1})-X(t_k)$$ are not independent.
    \item[(j)] Modulus of continuity. For every $\varepsilon>0$ there exists $C_\varepsilon$ such that
    $$\limsup_{h\downarrow0}\sup_{0\leq t\leq 1-h}\frac{|X(t+h)-X(t)|}{\sqrt{t}(\log(1/t))^{1/4+\varepsilon}}\leq C_\varepsilon$$
    almost surely.
\end{itemize}
\end{theorem}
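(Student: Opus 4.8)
I would run the whole analysis through the dynamical model $X(t)=\sqrt t\,\Theta_\chi(\Gamma g\,\Phi^{2\log t})$ with $\Gamma g$ distributed by $\mu_{\GamG}$, using four structural inputs about $(G,\Gamma,\Theta_\chi)$: $\mu_{\GamG}$ is right-invariant and invariant under the geodesic flow $(\Phi^s)$, which is moreover ergodic on $\GamG$; right translation by the central $\R/\Z$-direction of the Heisenberg part of $G$ multiplies $\Theta_\chi$ by the unitary character $\e{\cdot}$ and commutes with $(\Phi^s)$, while the Weyl element $w$ conjugates $\Phi^s$ to $\Phi^{-s}$ and acts on $\Theta_\chi$ through the metaplectic representation, fixing it up to a unimodular constant; and, in the cusps of $\GamG$, $|\Theta_\chi|\asymp(\mathrm{ht})^{1/2}$ with $\mu_{\GamG}\{|\Theta_\chi|>R\}\asymp R^{-6}$ — the last being the arithmetic core (it is essentially (h), whose sharp form with error term is \cite{Cellarosi-Griffin-Osman}), and it yields the increment moment bounds $\mathbb E|X(t)-X(s)|^p=|t-s|^{p/2}\,\mathbb E|X(1)|^p<\infty$ for every $p<6$ once stationarity and scaling are known. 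Throughout, $\sim$ denotes equality in law. I would first dispatch the distributional symmetries (a)--(e), then the pathwise facts (g), (i), and finally the regularity statements (f), (j).

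Scaling (b) and rotational invariance (e) are immediate: $\frac1a X(a^2t)=\sqrt t\,\Theta_\chi(\Gamma g\,\Phi^{4\log a}\,\Phi^{2\log t})$ and $\e{\theta}X(t)=\sqrt t\,\Theta_\chi(\Gamma g\,z_\theta\,\Phi^{2\log t})$, and $g\mapsto g\Phi^{4\log a}$, $g\mapsto gz_\theta$ preserve $\mu_{\GamG}$ and commute with $t\mapsto\Phi^{2\log t}$, so the left-hand processes have the law of $X$. For stationarity (a): flowing the basepoint by $\Phi^{2\log t_0}$ shows $(X(t_0+t)-X(t_0))_t\sim\sqrt{t_0}\,(X(1+t/t_0)-X(1))_t$, so by (b) it suffices to prove the single instance $(X(1+t)-X(1))_t\sim(X(t))_t$; for that I go back to the sums and substitute $n=\lfloor N\rfloor+j$, using $\tfrac12(m+j)^2=\tfrac12m^2+mj+\tfrac12j^2$, to get $S_{\lfloor N\rfloor+k}(x)-S_{\lfloor N\rfloor}(x)=\e{\varphi_N(x)}\,S_k(x;\alpha+\lfloor N\rfloor x,\beta)$ for a unimodular phase $\varphi_N$, so that the reindexed de-phased walks are theta walks with a shifted first parameter and converge to $X$ jointly in $t$ by the equidistribution underlying Theorem~\ref{thm-existence-of-theta-process} (which is uniform in that parameter, so the $N$- and $x$-dependence of $\alpha+\lfloor N\rfloor x$ is harmless), while the stray phase $\e{\varphi_N(x)}$ — the central $\R/\Z$-coordinate of the orbit — equidistributes on the circle together with the orbit and is absorbed as in (e). For time inversion (c): $tX(1/t)=\sqrt t\,\Theta_\chi(\Gamma gw\,\Phi^{2\log t}\,w^{-1})$, which by the $w$-action and the $w$-invariance of $\mu_{\GamG}$ is $\sim\bar cX(t)\sim X(t)$ using (e) to absorb the unimodular constant; continuity at $t=0$ is the law of large numbers (d).

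For (d), scaling gives $X(t)\sim\sqrt t\,X(1)$, so (h) gives $\mathbb P\{|X(t)|/t>\varepsilon\}\ll_\varepsilon t^{-3}$, and Borel--Cantelli along integer times, together with $\sup_{t\in[n,n+1]}|X(t)-X(n)|\sim\sup_{[0,1]}|X|$ having a finite moment of order $>1$ (by (a) and the sup-version of (h)), gives $X(t)/t\to0$ a.s.; equivalently one may invoke the $(\mathrm{ht})^{1/2}$-growth of $\Theta_\chi$ and the logarithm law for $(\Phi^s)$. For nondifferentiability (g), by (a) it is enough to show $X$ is a.s.\ not differentiable at $0$; but $X(h)/h=\Theta_\chi(\Gamma g\,\Phi^{2\log h})/\sqrt h$ with $2\log h\to-\infty$ as $h\downarrow0$, and by ergodicity a.e.\ orbit meets $\{|\Theta_\chi|\ge c\}$ (positive measure for small $c$ since $\mathbb E|\Theta_\chi|^2=1$) along a sequence $s_k\to-\infty$, so $\limsup_{h\downarrow0}|X(h)|/h=\infty$. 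For dependent increments (i): the reason it must hold is that full independence of the increments, together with (a) and (b), would give $X(1)\sim uX(1)+\sqrt{1-u^2}X'(1)$ for all $u\in(0,1)$ with $X'(1)$ an independent copy, forcing $X(1)$ Gaussian and contradicting (h); to establish the stronger assertion that the increments over \emph{any} partition are dependent, one instead computes a joint fourth moment of two consecutive increments from the explicit description of $X$ and checks it does not factorize.

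The last and hardest piece is regularity. Kolmogorov--Chentsov applied to the increment moments immediately gives that $X$ is a.s.\ locally $\gamma$-Hölder for every $\gamma<\tfrac13$; raising this to every $\gamma<\tfrac12$ in the \emph{everywhere} sense of (f), and proving the $(\log(1/h))^{1/4+\varepsilon}$ modulus of continuity (j), is where moment methods break down — a union bound over the heavy-tailed increments only returns the exponent $\tfrac13$ — so one must use the dynamical model: on a $t$-window of width $h$ the oscillation of $X$ is governed by the oscillation of $\Theta_\chi$ along the corresponding geodesic arc and by the depth of the single deepest cusp excursion it makes there, and a quantitative equidistribution/logarithm-law estimate showing these excursions do not accumulate is what replaces the Brownian logarithmic exponent $\tfrac12$ by $\tfrac14$. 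I expect (j) — together with this sharp form of (f) — to be the main obstacle; everything else is soft once the cusp-volume estimate (h) (proved, and refined with an error term, in \cite{Cellarosi-Griffin-Osman}) is in hand.
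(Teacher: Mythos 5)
This statement is imported from \cite{Cellarosi-Marklof} (with (h) sharpened in \cite{Cellarosi-Griffin-Osman}); the present paper states it as background and gives no proof, so there is no internal argument for your proposal to be checked against.  Taken on its own terms, your reconstruction from the dynamical model $X(t)=\sqrt{t}\,\Theta_\chi(\Gamma g\Phi^{2\log t})$ is in the right spirit: (b) and (e) follow from right-invariance of $\mu_{\GamG}$ under $\Phi^{4\log a}$ and under the central $\zeta$-translation, and (a) can indeed be obtained either by your reindexing at the level of $S_N$ or, more directly, by noting that $\mathbf 1_{(t_0,t_0+t)}=R(([I,0];\sve{0}{t_0},0))\,\mathbf 1_{(0,t)}$, so that $X(t_0+t)-X(t_0)=\Theta_{\mathbf 1_{(0,t)}}\!\big(\Gamma g\,([I,0];\sve{0}{t_0},0)\big)$ and one concludes by right-invariance of $\mu_{\GamG}$.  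Your arguments for (d) and (g) via (h), Borel--Cantelli, and ergodicity of the geodesic flow are also sound in outline.

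Two steps are not correct as written.  For (c) you assert that the Weyl element $w$ acts through the metaplectic representation ``fixing $\Theta_\chi$ up to a unimodular constant.''  It does not: by \eqref{phi-transform_with_prefactor}, $R(w)=R(\tilde k_{\pi/2})$ is, up to a phase, the Fourier transform $\mathscr F_{\pi/2}$, and $\mathscr F_{\pi/2}\mathbf 1_{(0,1)}(w)=e^{-\pi i w}\,\tfrac{\sin\pi w}{\pi w}$ is nowhere a scalar multiple of $\mathbf 1_{(0,1)}$.  After conjugating $\Phi^s$ by $w$ you are therefore left with $\sqrt t\,\Theta_{R(w)\chi}(\Gamma g\Phi^{2\log t})$ and still owe an argument that $\Theta_{R(w)\chi}$ and $\Theta_\chi$ have the same law under $\mu_{\GamG}$; the $\Gamma$-invariance you invoke is left-invariance and does not supply it.  For (i) your fallback of ``computing a joint fourth moment of two consecutive increments and checking it does not factorize'' is unlikely to succeed: as recorded in this paper's own Corollary~\ref{cor:uncorrelated-increments}, not only are disjoint increments $A,B$ uncorrelated, but $\mathbb E[|A|^2|B|^2]=\mathbb E[|A|^2]\,\mathbb E[|B|^2]$ as well, so the obvious candidate factorizes.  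A cleaner route to the full claim is the tail argument: if the $k$ equal increments over $[0,1]$ were independent, subexponential tail additivity together with (b) and (h) would force $\mathbb P\{|X(1)|>R\}\asymp k\cdot\mathbb P\{|X(1/k)|>R\}\asymp k\cdot k^{-3}R^{-6}$, incompatible with $\tfrac{6}{\pi^2}R^{-6}$ unless $k=1$.  Finally, you correctly diagnose that Kolmogorov--Chentsov only gives $\gamma<\tfrac12-\tfrac1p$ from the available $p<6$ moments and defer (f) at all $\gamma<\tfrac12$ and the logarithmic modulus in (j) to cusp-excursion estimates; that is indeed where the real work in \cite{Cellarosi-Marklof} lies, and the corresponding difficulty reappears in this paper when tightness must be pushed past the Kolmogorov threshold (Proposition~\ref{prop:tail-bound-Holder} and its second-order analogue).
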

\subsection{Explicit representation of the theta process} The theta process can be realized explicitly as follows. We consider a 6-dimensional Lie group $G$ and a lattice $\Gamma<G$ such that the homogeneous space $\GamG$ has finite volume with respect to the projection $\mu_{\GamG}$ of the Haar measure $\mu$ on $G$ onto the quotient $\GamG$. We assume that $\mu$ is normalized so that $\mu_{\GamG}$ is a \emph{probability} measure, i.e. $\mu_{\GamG}(\GamG)=1$. The geodesic flow on $\GamG$ is given by right-multiplication by $\Phi^s$. We have a  $\mu$-almost everywhere defined complex-valued function $\Theta_\chi$ on $G$ that is $\Gamma$-invariant and hence well defined $\mu_{\GamG}$-almost everywhere on $\GamG$. 
The realizations of the theta process  $t\mapsto X(t)$ can be written in terms of the geodesic trajectories of $\mu_{\GamG}$-typical cosets $\Gamma g\in \GamG$. That is, for $t>0$, 
\begin{align}\label{def-X}
    X(t)=\sqrt{t}\,\Theta_{\chi}(\Gamma g\Phi^{2\log t}),
\end{align}
while $X(0)=0$.

\subsection{The universal Jacobi group \texorpdfstring{$G$}{}}\label{section:universalJacobi}
Let $\h=\{w\in\C:\:\Im(w)>0\}$. The group $\sltr$ acts on $\h$ by M\"{o}bius transformations: for $g=\sma{a}{b}{c}{d}\in\sltr$ we have $z\mapsto gz=\frac{az+b}{cz+d}$. For $g\in\sltr$ as above, set $\epsilon_g(z)=\frac{cz+d}{|cz+d|}$. The universal cover of $\sltr$ is defined as 
\begin{align*}
    \tsltr=\{[g,\beta_g]:\: &g\in\sltr,\mbox{ $\beta_g$ a continuous function on $\h$ s.t. $e^{i\beta_g(z)}=\epsilon_g(z)$}\}.
\end{align*}
The group structure on $\tsltr$ is given by
\begin{align*}
    &[g,\beta_g^1][h,\beta_h^2]=[gh,\beta_{gh}^3], &\beta_{gh}^3(z)=\beta_g^1(hz)+\beta_h^2(z),\\
    &[g,\beta_g]^{-1}=[g^{-1},\beta_{g^{-1}}'], &\beta_{g^{-1}}'(z)=-\beta_{g}(g^{-1}z).
\end{align*}
The Heisenberg group $\Hei$ is defined as $\R^2\times \R$ with group law
\begin{align*}
    (\vecxi, t)(\vecxi',t')=\left(\vecxi+\vecxi', t+t'+\ha\omega(\vecxi,\vecxi')\right), 
\end{align*}
where $\omega\!\left( \sve{x\textcolor{white}{'}\!}{y\textcolor{white}{'}\!},\sve{x'}{y'}\right)=xy'-yx'$ denotes the standard symplectic form on $\R^2$. The universal Jacobi group is then defined as 
\begin{align*}
    G=\tsltr\ltimes\Hei
\end{align*}
and has group law given by
\begin{align}\label{group-law-on-G}
    ([g,\beta_g];\vecxi,\zeta)([g',\beta_{g'}'];\vecxi',\zeta')=([gg',\beta_{gg'}''];\vecxi+g\vecxi',\zeta+\zeta'+\tha\omega(\vecxi,g\vecxi')),
\end{align}
where $\beta_{gg'}''(z)=\beta_g(g'z)+\beta_{g'}'(z)$. Using the identification of $\tsltr$ with $\h\times\R$ (via $[g,\beta_g]\mapsto(gi,\beta_g(i))$) the Haar measure on $G$ is given in coordinates 
\begin{align}\label{coordinates-on-G}
    (x+iy,\phi;\sve{\xi_1}{\xi_2},\zeta)
    \end{align}
    as
\begin{align}\label{def-Haar-measure-on-G}
    \de\mu(g)=\frac{3}{\pi^2}\frac{\de x\de y\de\phi\de\xi_1\de\xi_2\de\zeta}{y^2}.
\end{align}
One can define the \emph{Schr\"{o}dinger-Weil} representation $R$ of $G$ onto the group of unitary operators on $\LtR$ (see \cite{Cellarosi-Marklof}). We will only need the operator $R(g)$ for special group elements $g\in G$. Let us first discuss `rotation matrices'.
Let $k_\phi=\sma{\cos\phi}{-\sin\phi}{\sin\phi}{\cos\phi}$ and $\tilde{k}_\phi=[k_\phi,\beta_{k_\phi}]\in\tsltr$. We have, for $f\in\LtR$,  
\begin{align}
    [R((\tilde{k}_\phi;{\mathbf  0},0))f](w)=\e{\sigma_\phi/8}[\mathscr{F}_\phi f](w)\label{phi-transform_with_prefactor}
\end{align}
where 
\begin{align*}
    \sigma_\phi=\begin{cases}
        2\nu,&\mbox{if $\phi=\nu\pi$, $\nu\in\Z$;}\\
        2\nu+1,&\mbox{if $\nu\pi<\phi<(\nu+1)\pi$, $\nu\in\Z$,}
    \end{cases}
\end{align*}
and the unitary operator $\mathscr{F}_\phi$ is defined by 
\begin{align*}
    [\mathscr{F}_\phi f](w)=\begin{cases}
        f(w),&\mbox{if $\phi\equiv 0\bmod 2\pi$,}\\
        f(-w),&\mbox{if $\phi\equiv \pi\bmod 2\pi$,}\\
        |\sin\phi|^{-\ha}\displaystyle\int_{\R}\psi(w,w')f(w')\de w',&\mbox{if $\phi\not\equiv 0\bmod \pi$,}
    \end{cases}
\end{align*}
where $$\psi(w,w')=\e{\frac{\frac{1}{2}(w^2+(w')^2)\cos\phi-w w'}{\sin\phi}}.$$
We use the notation $f_\phi=R((\tilde{k}_\phi;{\mathbf 0},0))f$.  We set 
\begin{align*}
    \kappa_\eta(f)=\sup_{w,\phi}|f_\phi(w)|(1+|w|^2)^{\eta/2} 
\end{align*}
and define the class of functions
\begin{align*}
    \SEtaR=\left\{f:\R\to\R:\:\kappa_\eta(f)<\infty\right\}.
\end{align*}
The class $ \SEtaR$ extends that of Schwartz functions and we refer to functions in $\SEtaR$ for some $\eta>1$ as \emph{regular}. 
Other elements $g\in G$ we need to consider are `geodesic' ones. For $y>0$ let  $a_y=\sma{\sqrt{y}}{0}{0}{1/\sqrt{y}}\in\sltr$ and let $\tilde{a}_y=[a_y,0]\in\tsltr$. We have, for $f\in\LtR$,  
\begin{align}\label{representation-geodesic}
    [R((\tilde{a}_y;{\bm0},0))f](w)=y^{1/4}f(y^{1/2}w).
\end{align}
The geodesic flow on $G$ is given by right multiplication by $\Phi^t=(\tilde{a}_{e^{-t}};{\bm 0,0})$ and descends to a flow on the quotient $\GamG$. Similarly, for $x\in\R$, we set $n_x:=\sma{1}{x}{0}{1}\in\sltr$ and $\tilde n_x:=[n_x,0]\in\tsltr$. The horocycle flow on $G$ is given by right multiplication by $\Psi^x=[\tilde n_x,\mathbf{0},0]$ and also descends to a flow on $\GamG$.
We also need 
\begin{align}\label{representation-Heisenberg-2}
    [R(([I,0];\sve{0}{\tau},0))f](w)=f(w-\tau)
\end{align}
 and
 \begin{align}\label{representation-Heisenberg-3}
    [R(([I,0];\sve{0}{0},\zeta))f](w)=\e{\zeta}f(w).
\end{align}

\subsection{\texorpdfstring{Defining $\Theta_f$ on $G$ for regular $f$}{}}
Given a function $f\in\SEtaR$ with $\eta>1$ we define a function $\Theta_f:G\to\C$ using the Schr\"{o}dinger-Weil representation $R$ of $G$, namely $\Theta_f(g)=\sum_{n\in\Z}[R(g)f](n)$. This definition can be rewritten for $g=(x+i y,\phi;\sve{\xi_1}{\xi_2},\zeta)\in G$ as
\begin{align}
    \Theta_f(g)&=\sum_{n\in\Z}[R(g)f](n)\nonumber\\
    &=y^{1/4}\e{\zeta-\tha\xi_1\xi_2}\sum_{n\in\Z}f_\phi((n-\xi_2)y^{1/2})\e{\tha(n-\xi_2)^2 x+n\xi_1}.\label{def-Theta_f(g)}
\end{align}
The assumption that $f$ be a regular function guarantees that the series in \eqref{def-Theta_f(g)} converges absolutely.

\subsection{\texorpdfstring{$\Gamma$-invariance of $\Theta_f$ for regular $f$}{}}\label{section:Gamma-invariance}
It is shown in \cite{Marklof2003Annals} that for regular $f$ the function $\gamma\mapsto\Theta_f(g)$ is invariant under the left multiplication by elements of a lattice $\Gamma<G$. Hence $\Theta_f$ is well defined on the homogeneous space $\GamG$. The lattice $\Gamma$ can be written in terms of its generators as $\Gamma=\langle\gamma_1,\gamma_2,\gamma_3,\gamma_4,\gamma_5\rangle$, where
\begin{align}
    \gamma_1&=\left(\left[\sma{0}{-1}{1}{0},\arg\right];\sve{0}{0},\tfrac{1}{8}\right),\label{generators-1}\\
    \gamma_2&=\left(\left[\sma{1}{1}{0}{1},0\right];\sve{1/2}{0},0\right),\label{generators-2}\\
    \gamma_3&=\left(\left[\sma{1}{0}{0}{1},0\right];\sve{1}{0},0\right),\label{generators-3}\\
    \gamma_4&=\left(\left[\sma{1}{0}{0}{1},0\right];\sve{0}{1},0\right),\label{generators-4}\\
    \gamma_5&=\left(\left[\sma{1}{0}{0}{1},0\right];\sve{0}{0},1\right).\label{generators-5}
\end{align}

We think of $\Theta_f$ as a function on a fundamental domain $\mathcal{F}_{\Gamma}$ for the action of $\Gamma$ on $G$.
We can take $\mathcal{F}_\Gamma=\{(x+iy,\phi;\vecxi,\zeta)\}=\mathcal{F}_{\sltz}\times[-\tfrac{\pi}{2},\tfrac{\pi}{2})\times[-\ha,\ha)^2\times[-\ha,\ha)$, where $\mathcal{F}_{\sltz}=\{z\in\h:\: |z|>1,\:-\ha\leq\Re z<\ha \}\cup\{z\in\h:\: |z|=1,\: \Re z\leq0\}$ is a fundamental domain\footnote{We choose  the interval $[-\tfrac{\pi}{2},\tfrac{\pi}{2})$ in the $\phi$-coordinate ---instead of $[0,\pi)$  as done in \cite{Cellarosi-Marklof}--- as it makes our analysis easier in the proof of Theorem \ref{thm-joint-equidistribution}} for the action of $\sltz$ on $\h$.

Since $\Gamma$ is a lattice, the Haar measure $\mu$ on $G$ descends to a finite measure $\mu_{\GamG}$ on $\GamG$ and the normalization in \eqref{def-Haar-measure-on-G} is chosen so that $\mu_{\GamG}(\GamG)=1$, i.e. $\mu_{\GamG}$ is a probability measure on $\GamG$ (equivalently, on $\mathcal{F}_\Gamma$).

\subsection{\texorpdfstring{Defining $\Theta_\chi$ along the entire geodesic of almost every point in $\GamG$}{}} \label{section-Theta_chi}
We aim to discuss $\Theta_\chi$, where $\chi=\mathbf{1}_{(0,1)}$. Unfortunately, $\chi$ is not a regular function and hence $\Theta_\chi$ cannot be defined at every point using \eqref{def-Theta_f(g)}.
Although \eqref{def-Theta_f(g)} defines $\Theta_f$ as an element of $\mathrm{L^4}(\GamG,\mu_\GamG)$ for every $f\in\LtR$ (see Corollary 2.4 in \cite{Cellarosi-Marklof}), we may define $\Theta_\chi$ as an absolutely convergent series  $\mu_{\GamG}$-almost everywhere on $\GamG$ in the following strong sense. For $\mu_{\GamG}$-almost every $\Gamma g\in\GamG$ we need to define $\Theta_\chi(\Gamma g\Phi^{2\log t})$ as an absolutely convergent series \emph{for every $t>0$}. In view of \eqref{def-X}, this allows us to make sense of the realizations $t\mapsto X(t)$ of the theta process $X$ as well define curves in $\C$.

It is important to observe that the map $f\mapsto \Theta_f$ is linear. We write $\chi=\chi_L+\chi_R$ where $\chi_L$ is supported on $[0,\frac{2}{3}]$ and is only discontinuous at $0$, and where $\chi_R=\chi_L(1-\cdot)$. Furthermore, we write $\chi_L(w)=\sum_{j=0}^\infty \Delta(2^j w)$ for some \emph{regular} function $\Delta$ supported on $[\frac{1}{6},\frac{2}{3}]$. Such a function $\Delta$ is given explicitly in Section 3.1 of \cite{Cellarosi-Marklof}; it is twice (but not three times) differentiable and has the property that $\Delta\in\mathcal{S}_2(\R)$. Another possibility is to take $\Delta$ to be a $C^\infty$ compactly supported function, as done in Section 5.3 of \cite{Marklof-Welsh-I}. 
Here we take the latter approach. Recalling \eqref{representation-geodesic}-\eqref{representation-Heisenberg-2} 
we write 
\begin{align}
\chi(w)&=\chi_L(w)+\chi_R(w)\nonumber\\
    &=\sum_{j=0}^\infty \Delta(2^j w)+\sum_{j=0}^\infty \Delta(2^j(1-w))\label{partition-of-unity}\\
    &=\sum_{j=0}^\infty 2^{-j/2} [R((\tilde{a}_{2^{2j}},{\bm 0},0))\Delta](w)+\sum_{j=0}^\infty 2^{-j/2} [R\!\left(([I,0];\sve{0}{1},0)(\tilde{a}_{2^{2j}},{\bm0 },0)\right)\Delta_-](w),\label{partition-of-unity-1}
\end{align}
where $\Delta_{-}(w)=\Delta(-w)$. One can also modify the series in \eqref{partition-of-unity} to obtain, as their sum, the indicator of arbitrary open intervals in $\R$. For intervals of the form $(0,b)$ the two series are denoted $\chi_{b,L}$ and $\chi_{b,R}$ in Section 5.1 of \cite{Cellarosi-Griffin-Osman}. By linearity, \eqref{partition-of-unity-1} yields
\begin{align}
    \Theta_\chi(\Gamma g)=\sum_{j=0}^\infty 2^{-j/2} &\Theta_\Delta(\Gamma g \Phi^{-j\log4})+\sum_{j=0}^\infty 2^{-j/2} \Theta_{\Delta_{-}}(\Gamma g ([I,0];\sve{0}{1},0)\Phi^{-j\log4}),\nonumber
\end{align}
and hence 
\begin{align}
    \Theta_\chi(\Gamma g \Phi^s)=\sum_{j=0}^\infty 2^{-j/2} &\Theta_\Delta(\Gamma g \Phi^{s-j\log4})+\sum_{j=0}^\infty 2^{-j/2} \Theta_{\Delta_{-}}(\Gamma g ([I,0];\sve{0}{1},0)\Phi^{s-j\log4}).\label{Theta_chi(gPhi^s)}
\end{align}
Note that the $j$-th term in each of the series in \eqref{Theta_chi(gPhi^s)} is itself defined by \eqref{def-Theta_f(g)} as an absolutely convergent series for every $\Gamma g\in\GamG$ and every $s\in\R$.  
It is shown in \cite{Cellarosi-Marklof} that there exists a full $\mu_{\GamG}$-measure set $D\subset\GamG$ such that for every $\Gamma g\in D$ the series in in \eqref{Theta_chi(gPhi^s)} converges absolutely for every $s\in\R$ (see Theorem 3.10 and Remark 3.3 in \cite{Cellarosi-Marklof}). As a consequence, for each $t$, we can interpret \eqref{def-X} using \eqref{Theta_chi(gPhi^s)} as sum of two exponentially weighted sums along the backward trajectories of the geodesic flow through the points $\Gamma g\Phi^{2\log t}$ and $\Gamma g ([I,0];\sve{0}{1},0)\Phi^{2\log t}$, where $\Gamma g\in\GamG$ is $\mu_{\GamG}$-random.

\section{Background on Rough Paths}\label{section:background-on-rough-paths}
Rough paths theory is a pathwise theory of calculus for irregular processes such as $X$. In this section, we give a brief introduction to rough paths theory. See \cite{Friz-Hairer-Book,Friz-Victoir-Book} for two excellent introductions to rough paths.
\subsection{\texorpdfstring{H\"older Continuity and $p$-variation}{}}

Rough paths theory treats H\"older continuous driving signals or (in the case of stochastic differential equations) stochastic processes that are almost surely H\"older. Equivalently, rough paths theory can treat driving signals which have finite $p$-variation. We recall the definition of H\"older continuity and finite $p$-variation below. 

\begin{definition}\label{def:Holder}
    Define for $\alpha\in (0,1]$ the space $C^\alpha([0,T],\mathbb R^d)$ of $\alpha$-H\"older functions $f:[0,T]\to \mathbb R^d$ equipped with the norm $$\|f\|_\alpha:=\sup_{t\neq s}\frac{|f(t)-f(s)|}{|t-s|^\alpha}.$$
\end{definition}
\begin{definition}\label{def:p-var}
    Define for $p\geq 1$ the space
    $C^{p-\operatorname{var}}([0,T],\mathbb R^d)$ the space of finite $p$-variation functions equipped with the norm
    $$\|f\|_{p-\operatorname{var}}^p:=\sup_{\mathcal P=\{0=t_0<...<t_{n+1}=T\}}\sum_{k=1}^n|f(t_{k+1})-f(t_k)|^2.$$
\end{definition}
\begin{remark}
   The quadratic variation from It\^o calculus should be contrasted with the $2$-variation. We supply the definition below. Note that crucially for the Brownian motion, the quadratic variation on $[0,T]$ is $T$ however the $2$-variation is infinite almost surely. See section 13.9 in \cite{Friz-Victoir-Book} or Section 9 in \cite{Schilling-book}.
\end{remark}
\begin{definition}\label{def:quadratic-variation}
Consider a stochastic process $Z$ on $[0,T]$ and a sequence of partitions $\mathcal P_n=\{0=t_0<...<t_{n+1}=T\}$ with mesh size $|\mathcal P_n|\to 0$. If the limit in probability 
    $$P-\lim_{n\to\infty} \sum_{k=1}^n |Z(t_{k+1})-Z(t_k)|^2 =:[Z,Z](T)$$
    exists and is independent of the choice of partitions then $[Z,Z](T)$ is called the \textbf{quadratic variation} of $Z.$
\end{definition}
Again, we emphasize that the quadratic variation and the $2$ variation are distinct notions. 
\subsection{Rough Paths Theory}

Rough paths are concerned with differential equations of the form
\begin{equation}\label{eq:diff-eq}
    dY(t)=b(Y(t),t)dt+c(Y(t),t) dX(t),
\end{equation}
where $b,c$ are nice enough functions and $X$ is some ``driving" signal $X(t)=(X^1(t),...,X^d(t))$ with bad regularity - in particular $X$ lacking differentiability. In this case, we may interpret equation \eqref{eq:diff-eq} as the integral equation
\begin{equation}\label{eq:int-eq}
    Y(t)=Y(0)+\int_0^t b(Y(s),s)ds+\int_0^T c(Y(s),s) dX(s),
\end{equation}
where the integral of a vector-valued function is the vector of the integrals of each component.

However, if $X$ is not of bounded variation the last integral in equation \eqref{eq:int-eq} might not be well-defined as a Riemann-Stieltjes integral. In order to solve equation \eqref{eq:int-eq} through some type of Picard iteration, one must first make sense of integrals of the form \begin{equation}\label{eq:int-picard}
    \int_0^t c(X(s),s) dX(s).
\end{equation}
We have the following classical result, due to Young. 
\begin{theorem}[\cite{Young-1936}]
    Let $f\in C^\alpha([0,T],\mathbb R^d)$ and let $g\in C^\beta([0,T],\mathbb R^d)$ with $\alpha+\beta>1$. Let $\mathcal P_n=\{0=t_0<t_1<...<t_n=T\}$ be a sequence of partitions whose mesh size is going to $0$. Then the limit
    \begin{equation}\label{eq:Young-integral}
        \int_0^T f(s) dg(s)=\lim_{n\to \infty}\sum_{k=0}^n f(t_k)\left(g(t_{k+1})-g(t_k)\right)
    \end{equation}
    exists and is independent of the sequence of partitions. The integral in equation \eqref{eq:Young-integral} is known as the Young or Riemann-Stieltjes integral.
\end{theorem}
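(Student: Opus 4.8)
The plan is to follow Young's original argument: everything reduces to a single a priori estimate on Riemann sums, obtained by successively deleting partition points. For a partition $\mathcal P=\{s=u_0<u_1<\cdots<u_r=t\}$ of a subinterval $[s,t]\subseteq[0,T]$, write $S(\mathcal P)=\sum_{i=0}^{r-1} f(u_i)\bigl(g(u_{i+1})-g(u_i)\bigr)$ and set $M=\|f\|_\alpha\|g\|_\beta$, using Definition \ref{def:Holder}. The two elementary ingredients are, first, the point-dropping identity: if $\mathcal P'$ is obtained from $\mathcal P$ by deleting one interior point $u_i$, then
\[
S(\mathcal P)-S(\mathcal P')=\bigl(f(u_i)-f(u_{i-1})\bigr)\bigl(g(u_{i+1})-g(u_i)\bigr),
\]
so that $\bigl|S(\mathcal P)-S(\mathcal P')\bigr|\le M\,(u_i-u_{i-1})^\alpha(u_{i+1}-u_i)^\beta\le M\,(u_{i+1}-u_{i-1})^{\alpha+\beta}$; and second, a pigeonhole observation: since $\sum_{i=1}^{r-1}(u_{i+1}-u_{i-1})\le 2(t-s)$, there is always an interior index $i$ with $u_{i+1}-u_{i-1}\le\tfrac{2(t-s)}{r-1}$, so that point can be removed at cost at most $M\bigl(\tfrac{2(t-s)}{r-1}\bigr)^{\alpha+\beta}$.

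Starting from an arbitrary partition of $[s,t]$ with $r$ intervals, I would then delete ``cheap'' interior points one at a time until only $\{s,t\}$ remains. The errors telescope, and because $\alpha+\beta>1$ the series $\sum_{m\ge 1}m^{-(\alpha+\beta)}=\zeta(\alpha+\beta)$ converges, giving the key a priori bound
\[
\Bigl|S(\mathcal P)-f(s)\bigl(g(t)-g(s)\bigr)\Bigr|\le C\,M\,(t-s)^{\alpha+\beta},\qquad C:=2^{\alpha+\beta}\zeta(\alpha+\beta),
\]
with a constant independent of $\mathcal P$. To conclude, I would compare two partitions $\mathcal P,\mathcal Q$ of $[0,T]$ through their common refinement $\mathcal R=\mathcal P\cup\mathcal Q$: on each cell $[t_k,t_{k+1}]$ of $\mathcal P$ the a priori bound applied to the sub-partition induced by $\mathcal R$ gives an error at most $C\,M\,(t_{k+1}-t_k)^{\alpha+\beta}\le C\,M\,|\mathcal P|^{\alpha+\beta-1}(t_{k+1}-t_k)$, and summing over $k$ yields $|S(\mathcal P)-S(\mathcal R)|\le C\,M\,T\,|\mathcal P|^{\alpha+\beta-1}$, with the analogous bound for $\mathcal Q$. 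Hence $|S(\mathcal P)-S(\mathcal Q)|\le C\,M\,T\bigl(|\mathcal P|^{\alpha+\beta-1}+|\mathcal Q|^{\alpha+\beta-1}\bigr)\to 0$ as the meshes shrink, since $\alpha+\beta-1>0$. Thus $(S(\mathcal P_n))_n$ is Cauchy in $\mathbb R^d$ whenever $|\mathcal P_n|\to0$, hence convergent; interleaving two such sequences shows the limit is independent of the sequence chosen.

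The main obstacle is the a priori estimate: one must make the iterated deletion quantitative and, crucially, uniform over all partitions, and it is exactly here that $\alpha+\beta>1$ is used, through the convergence of $\sum_m m^{-(\alpha+\beta)}$. An alternative route, which I would mention, is the abstract sewing lemma applied to the germ $\Xi_{s,t}=f(s)\bigl(g(t)-g(s)\bigr)$: its second-order increment satisfies $\Xi_{s,t}-\Xi_{s,u}-\Xi_{u,t}=-\bigl(f(u)-f(s)\bigr)\bigl(g(t)-g(u)\bigr)=O\bigl(|t-s|^{\alpha+\beta}\bigr)$, so the sewing lemma produces a unique function (up to an additive constant) whose increments are approximated by $\Xi$ to order $\alpha+\beta>1$, and a direct inspection identifies it with the limit of the Riemann sums.
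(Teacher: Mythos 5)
Your argument is correct and complete: it is the classical Love--Young proof via the maximal inequality $\bigl|S(\mathcal P)-f(s)\bigl(g(t)-g(s)\bigr)\bigr|\le 2^{\alpha+\beta}\zeta(\alpha+\beta)\,\|f\|_\alpha\|g\|_\beta\,(t-s)^{\alpha+\beta}$, obtained by successive deletion of a pigeonholed interior point, followed by comparison of arbitrary partitions through their common refinement. The paper does not prove this statement at all --- it is recalled as a classical result with a citation to Young (1936) --- so there is nothing to compare against; your write-up, including the remark that the sewing lemma applied to the germ $\Xi_{s,t}=f(s)\bigl(g(t)-g(s)\bigr)$ gives the same conclusion, is a faithful and correct rendering of the standard argument.
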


Therefore if we assume that $c$ is smooth, then the integral in equation \eqref{eq:int-picard} makes sense as a Young integral if $X\in C^\alpha([0,t],\mathbb R^d)$ for $\alpha>1/2$. However, for lower regularity driving signals, the Riemann-Stieltjes integral defined in equation \eqref{eq:Young-integral} needs to include extra terms which represent the ``iterated integrals" of $X$ against itself. We provide a heuristic derivation below.

\textbf{Heuristic Derivation of Rough Integral}\textit{ If $X$ is a signal that is $\alpha$-H\"older continuous with $\alpha\in (1/3,1/2]$ and $F$ is a smooth function, then for a partition of $[0,t]$, $\mathcal P=\{0=t_0<...<t_n=t\}$ we have that (the a-priori ill defined) integral
\begin{align*}
    \int_0^t F(X(r)) dX(r)&=\sum_{k=0}^n \int_{t_k}^{t_{k+1}} F(X(r))dX(r)\\
    &=\sum_{k=0}^n \int_{t_k}^{t_{k+1}} (F(X(t_k))+F'(X(t_k))(X(r)-X(t_k))+O(|r-t_k|^{2\alpha})dX(r)\\
    &=\sum_{k=0}^n \bigg(F(X(t_k))(X(t_{k+1})-X(t_k))+F'(X(t_k))\int_{t_k}^{t_{k+1}}(X(r)-X(t_k)) dX(r)\\
    &~~~~~~~~~~+O(|t_{k+1}-t_k|^{3\alpha})\bigg).
\end{align*}
As $3\alpha>1$, the remainder term should tend to $0$ as the mesh size of the partition $|\mathcal P|=\displaystyle\max_{0\leq k\leq n-1}(t_{k+1}-t_k)$ tends to $0$. \\\\
Note that we restricted $\alpha\in (1/3,1/2]$. This is purely for exposition: for lower $\alpha$ we would need more iterated integrals. In particular, if $\alpha\in (\frac{1}{n+1},\frac{1}{n}]$ then we need $n-1$ additional terms so that $\alpha(n+1)>1$. In the rest of this paper, it will suffice to consider only $\alpha\in (1/3,1/2]$.} \\

The above heuristic reduces the problem of defining the integral of (smooth) functions of $X$ against $X$, i.e. $\int_0^t F(X(r))dX(r)$, to just defining integrals of the increments of $X$ against $X$, i.e. $\int_{t_k}^{t_{k+1}}(X(r)-X(t_k)) dX(r)$.
The latter is referred to as an \emph{iterated integral} since we may interpret $X(r)-X(t_k)$ as $\int_{t_k}^r dX(\tau)$.
If $X$ is irregular then the iterated integral does not exist ``canonically" as a Riemann-Stieltjes integral and therefore must be \textit{postulated}. There are two main properties we would like such a definition to satisfy: one algebraic and one analytic (see properties (ii) and (iii) in Definition \ref{def:rough-path}).

A rough path above a signal $X\in C^\alpha([0,T],\mathbb R^d)$ is therefore a pair $\mathbf{X}_{s,t}=(X_{s,t}, \mathbb X_{s,t})$ where $X_{s,t}$ is the increment of $X$ from $s$ to $t$ and $\mathbb X_{s,t}$ is a \textit{definition} or \textit{postulation} of the iterated integral $\int_{s}^{t}(X(r)-X(s)) \otimes dX(r)$ where again we note that $\int_{s}^{t}(X(r)-X(s)) \otimes dX(r)$ represents the matrix whose $(i,j)$th entry is $\int_{s}^{t}(X^i(r)-X^i(s))  dX^j(r)$. We give a precise definition below.

\begin{definition}\label{def:rough-path}
    Let $T>0$ and let $\Delta_2^{(0,T)}=\{(s,t):0\leq s\leq t\leq T\}$ denote the $2$-simplex. Given a signal $X\in C^\alpha([0,T], \mathbb R^d)$ with $\alpha\in (1/3,1/2]$ we say $\mathbf X=(X, \mathbb X):\Delta_2^{(0,T)}\to \mathbb R^d\oplus \mathbb R^{d\times d}$ is a \textbf{rough path} above $X$ if for all $s,u,t\in [0,T]$ we have
    \begin{align*}
    &(i) \qquad X_{s,t}=X(t)-X(s)\\
        &(ii) \qquad\mathbb X_{s,t}-\mathbb X_{s,u}-\mathbb X_{u,t}=X_{s,u}\otimes X_{u,t}\\
        &(iii)\qquad \|\mathbf X\|_{\alpha, 2\alpha}:=\sup_{t\neq s}\frac{|X_{s,t}|}{|t-s|^\alpha}+\sup_{t\neq s}\frac{|\mathbb X_{s,t}|}{|t-s|^{2\alpha}}<+\infty.
    \end{align*}
In (iii), $|X_{s,t}|$ denotes any vector norm of $X_{s,t}\in\R^d$ and $|\mathbb{X}_{s,t}|$ any matrix norm of $\mathbb{X}_{s,t}\in\R^{d\times d}$.
We denote the space of rough paths by $\mathscr C^{\alpha}$. The topology generated by the seminorm $\|\cdot\|_{\alpha, 2\alpha}$ is called the \textbf{rough topology}.
\end{definition}

\begin{remark}
    It should be stressed that in Definition \ref{def:rough-path} there is no reference to any iterated integrals. The second order process $\mathbb X$ intuitively encodes a notion of iterated integral but formally, $\mathbb X$ is just some process that takes values in $\mathbb R^{d\times d}$. Once we have this purely abstract object $\mathbb X$ we may make the definition
    \begin{equation*}
        \int_s^t (X(r)-X(s))\otimes dX(r) := \mathbb X_{s,t}.
    \end{equation*}
\end{remark}

\begin{definition}
    We define the space $C^{2\alpha}(\Delta_2^{(0,T)},\mathbb R^{2\times 2})$ to be the set of all functions $\mathbb F:\Delta_2^{(0,T)}\to \mathbb R^{2\times 2}$ so that its seminorm
    $$\|\mathbb F\|_{2\alpha}:=\sup_{s\neq t}\frac{|\mathbb F_{s,t}|}{|t-s|^{2\alpha}}<\infty.$$ Note that this is not quite a H\"older space as defined in Definition \ref{def:Holder}. 
\end{definition}

\begin{remark}\label{remark:Levy-area}
    The difference of iterated integrals
    $$L^{i,j}(s,t):=\int_s^t X^i(r) dX^j(r)-\int_s^t X^j(r) dX^i(r)$$
    is often referred to as the \textbf{L\'evy area}. 
    The reason for this terminology is that if $X^i$ and $X^j$ were piecewise smooth, then by Green-Stokes theorem, the (signed) area swept by the curve $(X^i, X^j)$ in $\mathbb R^2$ from time $s$ to time $t$ would be twice $L^{i,j}(s,t)$.
\end{remark}

One can check that if $X\in C^{\infty}([0,T],\R^d)$ is smooth and $\int_s^t (X^i(r)-X^i(s)) dX^j(r)$ is a Riemann-Stieltjes integral, then the pair $\mathbf{X}_{s,t}:=(X(t)-X(s),\int_{s}^{t}(X(r)-X(s)) \otimes dX(r))$ satisfies Definition \ref{def:rough-path}. In this sense, there is a natural embedding of $C^\infty$ smooth functions into the space of rough paths. The closure of these functions under the rough topology is the space of geometric rough paths.

\begin{definition}\label{def:geometric-rough-paths}
    For $\alpha\in (1/3,1/2]$, denote by $\mathring{\mathscr C}_g^{\alpha}$ the image of the embedding $\iota: C^\infty([0,T],\mathbb R^d)\to\mathscr C^\alpha$ where $\iota(f)_{s,t}=(f(t)-f(s),\int_s^t (f(r)-f(s))f'(r) dr)$. Let $\mathscr C_g^\alpha$ be the closure of $\mathring{\mathscr C}_g^{\alpha}$ under the seminorm $\|\cdot \|_{\alpha, 2\alpha}$ defined in Definition \ref{def:rough-path}. We call $\mathscr C_g^\alpha$ the space of $\textbf{geometric rough paths}$. 
\end{definition}

\begin{remark}
    The space of rough paths $\mathscr C^\alpha$, although a subset of the vector space $\mathbb R^d \oplus \mathbb R^{d\times d}$ is not itself a vector space because of the nonlinear relation (ii) in Definition \ref{def:rough-path}. However, the seminorm $\|\cdot\|_{\alpha,2\alpha}$ can be used to define a metric - thus $\mathscr C^\alpha$ is a metric space. 
\end{remark}

We recall the following lemma, from \cite{Friz-Hairer-Book}, Section 2.2. 
\begin{lemma}
    Let $\mathbf X=(X, \mathbb X)\in \mathscr C_g^\alpha$ for some $\alpha \in (1/3,1/2]$. Then for all $(s,t)\in \Delta_2^{(0,T)}$ we have 
    \begin{equation}\label{eq:weak-geometric}
        \frac{1}{2}(\mathbb X_{s,t}+\mathbb X_{s,t}^t)=\frac{1}{2}X_{s,t}\otimes X_{s,t},
    \end{equation}
    where $\mathbb X^t$ is the transpose of $\mathbb X$.
\end{lemma}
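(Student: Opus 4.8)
The plan is to establish \eqref{eq:weak-geometric} first on the dense class $\mathring{\mathscr C}_g^{\alpha}$ of canonical lifts of smooth paths --- where $\mathbb X$ is an honest Riemann--Stieltjes iterated integral and the identity is just integration by parts --- and then to extend it to all of $\mathscr C_g^\alpha$ by continuity, using that $\mathscr C_g^\alpha$ is \emph{by definition} the closure of $\mathring{\mathscr C}_g^{\alpha}$.

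For the smooth case I would take $f\in C^\infty([0,T],\mathbb R^d)$ and $\mathbf X=\iota(f)$, so that $\mathbb X^{i,j}_{s,t}=\int_s^t (f^i(r)-f^i(s))\,df^j(r)$ with an ordinary (Riemann) integral. Setting $u(r)=f^i(r)-f^i(s)$ and $v(r)=f^j(r)-f^j(s)$, both of which vanish at $r=s$, the Leibniz rule $\int_s^t u\,dv+\int_s^t v\,du=u(t)v(t)-u(s)v(s)$ gives
$$\mathbb X^{i,j}_{s,t}+\mathbb X^{j,i}_{s,t}=(f^i(t)-f^i(s))(f^j(t)-f^j(s))=X^i_{s,t}X^j_{s,t},$$
which in matrix form reads $\mathbb X_{s,t}+\mathbb X^t_{s,t}=X_{s,t}\otimes X_{s,t}$. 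Hence \eqref{eq:weak-geometric} holds for every element of $\mathring{\mathscr C}_g^{\alpha}$.

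For the passage to the limit, fix $(s,t)\in\Delta_2^{(0,T)}$ and pick $f_n\in C^\infty([0,T],\mathbb R^d)$ with $\mathbf X^{(n)}:=\iota(f_n)\to\mathbf X$ in the metric induced by $\|\cdot\|_{\alpha,2\alpha}$ (the inhomogeneous rough-path metric $\|X-Y\|_\alpha+\|\mathbb X-\mathbb Y\|_{2\alpha}$ obtained from Definitions \ref{def:Holder} and the $C^{2\alpha}$-seminorm). By the very form of those two seminorms this forces $X^{(n)}_{s,t}\to X_{s,t}$ in $\mathbb R^d$ and $\mathbb X^{(n)}_{s,t}\to\mathbb X_{s,t}$ in $\mathbb R^{d\times d}$. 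Since $\tfrac12(\mathbb X^{(n)}_{s,t}+(\mathbb X^{(n)}_{s,t})^t)=\tfrac12 X^{(n)}_{s,t}\otimes X^{(n)}_{s,t}$ by the first step, and the left-hand side depends linearly while the right-hand side depends bilinearly --- hence continuously --- on the data, letting $n\to\infty$ yields \eqref{eq:weak-geometric} for $\mathbf X$. Equivalently, the functional $\mathbf X\mapsto \tfrac12(\mathbb X_{s,t}+\mathbb X^t_{s,t})-\tfrac12 X_{s,t}\otimes X_{s,t}$ is continuous on $\mathscr C^\alpha$ and vanishes on the dense subset $\mathring{\mathscr C}_g^{\alpha}$, so it vanishes on the closure $\mathscr C_g^\alpha$.

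I do not expect any genuine obstacle here: the only points requiring care are the bookkeeping of which metric realizes the topology generated by $\|\cdot\|_{\alpha,2\alpha}$, and the (immediate) observation that convergence in that metric implies convergence of the evaluations at each fixed $(s,t)$. The algebraic heart of the statement is simply the integration-by-parts identity for smooth paths; this is exactly what the adjective ``geometric'' encodes, namely that the lift behaves, for these symmetric-part considerations, as if it came from ordinary calculus.
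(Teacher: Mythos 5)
Your proof is correct: the paper does not prove this lemma itself but simply cites it from \cite{Friz-Hairer-Book}, Section 2.2, and your argument (integration by parts for the canonical lift of a smooth path, followed by passing to the limit using that $\mathscr C_g^\alpha$ is by definition the closure of $\mathring{\mathscr C}_g^{\alpha}$ and that convergence in the $\|\cdot\|_{\alpha,2\alpha}$-metric implies pointwise convergence of both components at each fixed $(s,t)$) is exactly the standard proof given there.
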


\begin{remark}
    The set of rough paths satisfying equation \eqref{eq:weak-geometric} is often called the set of \textbf{weakly geometric} rough paths. See \cite{Friz-Hairer-Book}, Section 2.2 for a discussion on why the distinction between geometric and weakly geometric rarely matters. 
\end{remark}

Given a rough path, we may define the class of admissible integrands as the set of \textit{controlled rough paths}. These are the paths that have a Taylor-type expansion.

\begin{definition}\label{def:controlled-rough-path}
    Given a function $X\in C^\alpha$ with $\alpha\in (1/3,1/2]$ we say that $Y\in C^\alpha$ is \textbf{controlled by} $X$ if there exists a matrix-valued function $Y'\in C^\alpha$ (called the \textbf{Gubinelli derivative}) so that
    \begin{equation*}
        Y(t)-Y(s)=Y'(s)(X(t)-X(s))+R_Y(s,t),
    \end{equation*}
where $R_Y(s,t)\in C^{2\alpha}$. 
\end{definition}

We conclude with theorems on integration of rough paths and on the continuity and solution properties of rough differential equations (RDEs).

\begin{theorem}[\cite{Friz-Hairer-Book}, Theorem 4.10]\label{theorem:definition-integral}
    Let $T>0$. Let $\mathbf{X}=(X,\mathbb X)$ be a rough path as in Definition \ref{def:rough-path}. Let $(Y,Y')$ be controlled by $X$ as in Definition \ref{def:controlled-rough-path}. Then the following limit exists
    \begin{equation}\label{eq:rough-integral}
        \int_0^T Y\, d\mathbf X:=\lim_{|\mathcal P|\to 0}\sum_{[s,t]\in \mathcal P} Y_sX_{s,t}+Y_s' \mathbb X_{s,t}
    \end{equation}
    for any sequence of partitions with $|\mathcal P|\to 0$. The expression in \eqref{eq:rough-integral} is called the \textbf{rough integral} of $Y$ against $\mathbf X$.
\end{theorem}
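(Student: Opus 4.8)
The statement is the classical Lyons--Gubinelli construction of the rough integral, and the natural route is to reduce it to the \emph{sewing lemma} (see \cite{Friz-Hairer-Book}, Lemma~4.2): one exhibits a two-parameter ``local approximation'' whose small-scale defect is summable, and the sewing lemma assembles it into an honest path. Concretely, I would set, for $(s,t)\in\Delta_2^{(0,T)}$,
\[
    \Xi_{s,t}:=Y_s\,X_{s,t}+Y'_s\,\mathbb X_{s,t},
\]
so that the Riemann-type sum appearing in \eqref{eq:rough-integral} is exactly $\sum_{[s,t]\in\mathcal P}\Xi_{s,t}$, and then show that $\Xi$ is \emph{almost additive}.

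The crux is the defect $\delta\Xi_{s,u,t}:=\Xi_{s,t}-\Xi_{s,u}-\Xi_{u,t}$ for $s\le u\le t$. Expanding it using additivity of increments ($X_{s,t}=X_{s,u}+X_{u,t}$), Chen's relation (property~(ii) of Definition~\ref{def:rough-path}) written as $\mathbb X_{s,t}-\mathbb X_{s,u}-\mathbb X_{u,t}=X_{s,u}\otimes X_{u,t}$, and the controlled-path Taylor expansion $Y_u-Y_s=Y'_s\,X_{s,u}+R_Y(s,u)$ of Definition~\ref{def:controlled-rough-path}, one sees that the two ``second order'' contributions --- one coming from $-(Y_u-Y_s)X_{u,t}$, the other from Chen's relation --- are both equal, in the appropriate tensor contraction, to $Y'_s\,X_{s,u}\otimes X_{u,t}$ up to sign, and cancel. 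What remains, after the routine bookkeeping of tensor indices, is
\[
    \delta\Xi_{s,u,t}=-\,R_Y(s,u)\,X_{u,t}-\bigl(Y'_u-Y'_s\bigr)\,\mathbb X_{u,t}.
\]
Since $R_Y\in C^{2\alpha}$, $Y'\in C^{\alpha}$ and $\|\mathbf X\|_{\alpha,2\alpha}<\infty$ by (iii),
\[
    |\delta\Xi_{s,u,t}|\;\lesssim\;|u-s|^{2\alpha}|t-u|^{\alpha}+|u-s|^{\alpha}|t-u|^{2\alpha}\;\lesssim\;|t-s|^{3\alpha},
\]
and as $\alpha>1/3$ the exponent $\zeta:=3\alpha$ exceeds $1$. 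This cancellation is the heart of the matter and is exactly why $\mathbb X$ is required to obey Chen's relation: without it the defect would only be of order $|t-s|^{2\alpha}$, which need not be summable.

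Now the sewing lemma applies: a two-parameter function $\Xi$ on $\Delta_2^{(0,T)}$ (valued in a fixed Euclidean space) with $|\delta\Xi_{s,u,t}|\le C|t-s|^{\zeta}$, $\zeta>1$, admits a path $\mathcal I$ on $[0,T]$, unique up to an additive constant, with $|\mathcal I_t-\mathcal I_s-\Xi_{s,t}|\lesssim|t-s|^{\zeta}$, and moreover $\mathcal I_t-\mathcal I_s=\lim_{|\mathcal P|\to0}\sum_{[u,v]\in\mathcal P}\Xi_{u,v}$ along \emph{any} sequence of partitions of $[s,t]$ of vanishing mesh. Applying this with $s=0$, $t=T$ and setting $\int_0^T Y\,d\mathbf X:=\mathcal I_T-\mathcal I_0$ gives both existence and partition-independence of the limit in \eqref{eq:rough-integral}; the same estimate also shows $t\mapsto\int_0^t Y\,d\mathbf X$ is itself controlled by $X$ with Gubinelli derivative $Y$, which I would record since it is what makes the construction iterable.

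The difficulty is not conceptual but lies in two careful-but-routine places: (i) making the algebraic cancellation explicit in the genuinely vector/matrix-valued setting the paper needs, where $Y$, $Y'$ and $\mathbb X$ carry different index types and the contractions must be matched correctly; and (ii) the proof of the sewing lemma itself, a quantitative partition-refinement argument --- dropping a suitably chosen interior point from a partition perturbs the Riemann sum by a geometric-series-summable amount, and one iterates down to the trivial partition and runs a Cauchy argument in the mesh --- which is precisely where partition-independence of the limit originates. Since the sewing lemma is entirely standard I would simply cite it, so that the only new content is the verification of its hypothesis, namely the displayed bound on $\delta\Xi$.
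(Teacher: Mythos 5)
Your argument is correct and is precisely the standard proof of this result: the paper does not prove Theorem \ref{theorem:definition-integral} itself but cites it from \cite{Friz-Hairer-Book} (Theorem 4.10), whose proof is exactly the reduction to the sewing lemma via the defect computation $\delta\Xi_{s,u,t}=-R_Y(s,u)X_{u,t}-(Y'_u-Y'_s)\mathbb X_{u,t}$ that you carry out. Your identification of Chen's relation as the source of the cancellation, the resulting bound of order $|t-s|^{3\alpha}$ with $3\alpha>1$, and the observation that the resulting integral is again controlled by $X$ with Gubinelli derivative $Y$ all match the cited source, so there is nothing to correct.
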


\begin{theorem}\label{theorem:continuity}[\cite{Friz-Hairer-Book}, Theorems 8.3 and 8.5]
   Let $\mathbf X^1$ and $\mathbf X^2$ be two rough paths as in Definition \ref{def:rough-path} with $\|\mathbf X^1\|_{\alpha, 2\alpha},\|\mathbf X^2\|_{\alpha, 2\alpha}\leq M<\infty$. Consider the two rough differential equation
    \begin{align*}
        dY^1&=f(Y^1)\, d\mathbf X^1\\
        dY^2&=f(Y^2)\, d\mathbf X^2,
    \end{align*}
    where $f\in C_b^3$ is thrice differentiable with bounded derivatives and $Y^1(0)=\xi_1$ and $Y^2(0)=\xi_2$. Then there exist unique solutions $Y^1,Y^2$, that are controlled by $X$ with Gubinelli derivatives $f(Y^1), f(Y^2)$ respectively, in the sense of Definition \ref{def:controlled-rough-path}. We also have the estimate
    \begin{equation*}
        \|Y^1-Y^1\|_\alpha\leq C_M(|\xi_1-\xi_2|+\|\mathbf X^1-\mathbf X^2\|_{\alpha, 2\alpha})
    \end{equation*}
    for some $C_M>0$.
\end{theorem}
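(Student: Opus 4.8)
The plan is to establish existence, uniqueness and the stability estimate by the standard contraction-mapping argument in the space of controlled rough paths, following \cite{Friz-Hairer-Book}, Chapter 8. First I would fix a rough path $\mathbf X=(X,\mathbb X)$ with $\|\mathbf X\|_{\alpha,2\alpha}\le M$ and an initial point $\xi$, and on a short subinterval $[0,\tau]\subseteq[0,T]$ work in the Banach space of rough paths $(Y,Y')$ controlled by $X$ in the sense of Definition \ref{def:controlled-rough-path}, with norm $|Y(0)|+|Y'(0)|+\|Y'\|_\alpha+\|R_Y\|_{2\alpha}$. For such a pair the composition $(f(Y),Df(Y)\,Y')$ is again controlled by $X$ (this uses $f\in C_b^2$), so Theorem \ref{theorem:definition-integral} applies and the rough integral $Z(t):=\xi+\int_0^t f(Y)\,d\mathbf X$, viewed as a function of the upper limit, is well defined with $(Z,f(Y))$ controlled by $X$ and remainder of order $2\alpha$. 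I would then define the solution map $\mathcal M_{\mathbf X,\xi}(Y,Y'):=(Z,f(Y))$ on the affine slice $\{Y(0)=\xi,\ Y'(0)=f(\xi)\}$; a fixed point of $\mathcal M_{\mathbf X,\xi}$ is exactly a solution of the RDE in the sense of the statement.

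Next, using the quantitative bounds on the rough integral from Theorem \ref{theorem:definition-integral} together with $\|f\|_{C_b^3}$ (the third derivative is what makes $(Y,Y')\mapsto(f(Y),Df(Y)\,Y')$ locally Lipschitz between controlled-rough-path spaces), I would show that for $\tau=\tau(M,\|f\|_{C_b^3})$ small enough $\mathcal M_{\mathbf X,\xi}$ maps a suitable closed ball of this space into itself and is a strict contraction there. The Banach fixed point theorem then yields a unique controlled solution on $[0,\tau]$ with Gubinelli derivative $f(Y)$. Since $f$ is bounded, the a priori bound on the solution over $[0,\tau]$ depends only on $M$, $\|f\|_{C_b^3}$ and $\tau$ but not on $\xi$, so the same $\tau$ works at every step and concatenating solutions on $[0,\tau],[\tau,2\tau],\dots$ produces a unique solution on all of $[0,T]$.

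For the stability estimate I would run the fixed point argument for the data $(\mathbf X^1,\xi_1)$ and $(\mathbf X^2,\xi_2)$ in tandem. The point to be careful about is that $(Y^1,(Y^1)')$ and $(Y^2,(Y^2)')$ are controlled by \emph{different} signals $X^1$ and $X^2$, so they cannot be subtracted in a linear space; instead I would use the standard distance between controlled rough paths over different rough paths (as in \cite{Friz-Hairer-Book}, Section 8), which jointly records $|\xi_1-\xi_2|$, $\|X^1-X^2\|_\alpha$, $\|\mathbb X^1-\mathbb X^2\|_{2\alpha}$, the difference of the Gubinelli derivatives and the difference of the remainders. Continuity of the rough-integral map of Theorem \ref{theorem:definition-integral} in all of its arguments --- driving rough path, integrand and Gubinelli derivative --- shows that $\mathcal M_{\mathbf X^1,\xi_1}$ and $\mathcal M_{\mathbf X^2,\xi_2}$ are close in this distance whenever their data are, with constants depending only on $M$ and $\|f\|_{C_b^3}$. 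Combining this with the contraction estimate above through the usual manipulation (bounding $\mathrm{dist}(Y^1,Y^2)$ by $\mathrm{dist}(Y^1,\mathcal M_{\mathbf X^2,\xi_2}Y^1)+\mathrm{dist}(\mathcal M_{\mathbf X^2,\xi_2}Y^1,\mathcal M_{\mathbf X^2,\xi_2}Y^2)$ and absorbing the second term) and iterating over the finitely many subintervals of Step 2 yields $\|Y^1-Y^2\|_\alpha\le C_M(|\xi_1-\xi_2|+\|\mathbf X^1-\mathbf X^2\|_{\alpha,2\alpha})$.

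The hard part will be the stability step: because $\mathscr C^\alpha$ is not a linear space one must set up the correct notion of distance between controlled rough paths living over distinct base signals, prove a local Lipschitz estimate for the rough integral in that distance, and --- the most delicate bookkeeping --- check that every constant depends only on $M$ and $\|f\|_{C_b^3}$, not on the individual rough paths or initial conditions, so that the per-interval bounds chain consistently over $[0,T]$. The invariance-of-a-ball and contraction estimates in the first two steps, together with the concatenation argument, are routine once the quantitative form of Theorem \ref{theorem:definition-integral} is available.
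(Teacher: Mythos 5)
The paper does not supply a proof of this theorem: it is quoted verbatim from \cite{Friz-Hairer-Book}, Theorems 8.3 and 8.5, and used as a black box. Your sketch follows the standard Friz--Hairer argument faithfully --- Picard/Banach fixed point in the space of controlled rough paths on a short interval, concatenation over a partition (valid because $f$ is bounded so $\tau$ can be chosen uniformly), and the continuity estimate via a metric on controlled rough paths living over distinct base rough paths --- so it takes essentially the same route as the cited source, and the one genuinely delicate point you flag (that the stability constant must depend only on $M$ and $\|f\|_{C_b^3}$ so the local bounds chain across subintervals) is indeed the crux of the Friz--Hairer proof.
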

\begin{remark}
    Theorem \ref{theorem:continuity} stands in stark contrast with the It\^o setting. In the It\^o setting, the solution map is not continuous in the driving signal (see the first two chapters of \cite{Friz-Hairer-Book}) - that is, two different driving signals might be ``close" but the corresponding solutions have wildly different behavior. However, by adding the ``iterated integrals" we can restore continuity giving rise to a robust approximation theory. 
\end{remark}

\section{\texorpdfstring{Stochastic Calculus for $X$ - Additional Properties}{}}
First we show that the classical It\^o and Young theory do not work for the theta process. To this end,  we discuss additional analytic and probabilistic properties of $X$. We rely on the explicit representation \eqref{def-X}. In this section, we will use the notation $X=(X^1,X^2)$ following the usual identification of $\C$ with $\R^2$.  
\begin{proposition}\label{prop:mean-zero}
    For all $t\geq 0$, $E(X(t))=0$.
\end{proposition}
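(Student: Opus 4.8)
\textbf{Proof strategy for Proposition \ref{prop:mean-zero}.} The plan is to exploit the explicit representation \eqref{def-X}, namely $X(t)=\sqrt t\,\Theta_\chi(\Gamma g\Phi^{2\log t})$ with $\Gamma g$ distributed according to the invariant probability measure $\mu_{\GamG}$, together with the rotational invariance (property (e) of the theta process). First I would observe that for $t=0$ the claim is trivial since $X(0)=0$. For $t>0$, I would write $E(X(t))=\sqrt t\int_{\GamG}\Theta_\chi(\Gamma g\Phi^{2\log t})\,\de\mu_{\GamG}(\Gamma g)$. Since $\mu_{\GamG}$ is $\Phi^s$-invariant (the geodesic flow preserves Haar measure), the right multiplication by $\Phi^{2\log t}$ can be absorbed, so it suffices to show $\int_{\GamG}\Theta_\chi\,\de\mu_{\GamG}=0$.

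To evaluate this integral I would use the decomposition $\chi=\sum_{j\ge0}2^{-j/2}(\ldots)$ from \eqref{partition-of-unity-1} and linearity of $f\mapsto\Theta_f$, reducing matters (by dominated convergence, justified by the $\mathrm L^4$ bound and absolute convergence recalled in Section \ref{section-Theta_chi}) to showing $\int_{\GamG}\Theta_f\,\de\mu_{\GamG}=0$ for regular $f$. For regular $f$ one unfolds using \eqref{def-Theta_f(g)} and integrates over the fundamental domain $\mathcal F_\Gamma$; the cleanest route is to exploit the rotational invariance directly: property (e) says $e^{2\pi i\theta}X(t)\sim X(t)$ for every $\theta$, hence $e^{2\pi i\theta}E(X(t))=E(X(t))$ for all $\theta$, which forces $E(X(t))=0$. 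This is the shortest argument and I would present it as the main line, noting that it requires knowing $E(X(t))$ exists — which follows from property (h), since $|X(1)|$ (and hence each $|X(t)|=\sqrt t\,|X(1)|$ in distribution, by scaling (b)) has tails $\sim R^{-6}$, so $E|X(t)|<\infty$.

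Alternatively, and more in the spirit of a self-contained computation, I would integrate $\Theta_f$ over $\mathcal F_\Gamma$ directly: the $\phi$-integral runs over $[-\tfrac\pi2,\tfrac\pi2)$ and the representation-theoretic identity \eqref{phi-transform_with_prefactor} shows that replacing $\phi$ by $\phi+\pi/2$ (or using the $\gamma_1$ generator \eqref{generators-1}, which acts essentially as a quarter-turn Fourier transform and multiplies by a fixed eighth root of unity) rotates $\Theta_f$ by a nontrivial root of unity while preserving the measure; averaging then kills the integral. Either way the key mechanism is the same: the $\mathrm U(1)$-symmetry built into the Jacobi group (the center of the Heisenberg part together with the metaplectic rotations) acts on $\Theta_\chi$ by a character, and integrating a nontrivial character against an invariant measure yields zero.

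\textbf{Main obstacle.} The only genuine subtlety is the justification of the interchange of expectation with the infinite sum in \eqref{partition-of-unity-1} (equivalently, integrability of $\Theta_\chi$ on $\GamG$): one must invoke the $\mathrm L^4(\GamG,\mu_{\GamG})$ membership of $\Theta_f$ for $f\in\mathrm L^2(\R)$ (Corollary 2.4 in \cite{Cellarosi-Marklof}) to control the tail of the series $\sum_j 2^{-j/2}\Theta_\Delta(\cdot\,\Phi^{-j\log4})$ in $\mathrm L^1$, using that $\Phi$-invariance keeps each term's norm bounded so the series converges absolutely in $\mathrm L^1(\mu_{\GamG})$. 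Once this is in place, linearity and the rotational-invariance (or root-of-unity) argument finish the proof immediately.
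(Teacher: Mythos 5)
Your main line of argument — rotational invariance (property (e)) forces $e^{2\pi i\theta}E(X(t))=E(X(t))$ for all $\theta$, hence $E(X(t))=0$, with integrability supplied by the tail bound (h) — is exactly the paper's proof, which rotates by $\pi/2$ to conclude each component has a symmetric distribution. The additional machinery you sketch (unfolding $\Theta_\chi$, dominated convergence for the dyadic series, integrating over $\mathcal F_\Gamma$) is not needed once the rotational-invariance argument is in place.
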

\begin{proof}
  By property (vii) in Theorem 1.3 in \cite{Cellarosi-Marklof} we have that $X=(X^1,X^2)=(X^2,-X^1)$ in distribution, where we rotated by $\pi/2$. Therefore the distribution of each component is symmetric. 
\end{proof}
\begin{proposition}\label{prop:covariance} We have for all $s,t$ that 
    $$E(X(t) \overline{X(s)})-E(X(t))E(\overline{X(s)})=E(X(t) \overline{X(s)})=\min(s,t).$$ Furthermore, we have that $E[X^i(t)X^j(s))]=\frac{1}{2}\min(t,s)\delta_{i,j}$ for $i,j\in \{1,2\}$.
\end{proposition}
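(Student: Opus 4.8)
The plan is to use the explicit representation \eqref{def-X} together with the invariance of $\mu_{\GamG}$ under the geodesic flow. Since $\mathrm E X(t)=0$ by Proposition \ref{prop:mean-zero}, the first displayed equality in the statement is automatic, so I only need $\mathrm E(X(t)\overline{X(s)})=\min(s,t)$. Assume $0<s\le t$ (for $t\le s$, conjugate and use the symmetry of $\min$). Write $X(t)=\sqrt t\,\Theta_\chi(\Gamma g\,\Phi^{2\log t})$ with $\Gamma g$ drawn from $\mu_{\GamG}$, substitute $\Gamma g\mapsto\Gamma g\,\Phi^{-2\log s}$ (measure-preserving), and use that $R$ is a representation together with \eqref{representation-geodesic}, which gives $[R(\Phi^\tau)f](w)=e^{-\tau/4}f(e^{-\tau/2}w)$ and hence $\Theta_\chi(\Gamma g\,\Phi^\tau)=\Theta_{\chi_\tau}(\Gamma g)$ with $\chi_\tau(w)=e^{-\tau/4}\chi(e^{-\tau/2}w)$. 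Since $\chi=\mathbf 1_{(0,1)}$, for $\tau=2\log(t/s)\ge0$ one has $\chi_\tau=\sqrt{s/t}\,\mathbf 1_{(0,t/s)}$, and one arrives at
\[
  \mathrm E\bigl(X(t)\overline{X(s)}\bigr)=\sqrt{st}\int_{\GamG}\Theta_{\chi_\tau}(\Gamma g)\,\overline{\Theta_\chi(\Gamma g)}\,\de\mu_{\GamG}(\Gamma g).
\]

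The second ingredient is the Siegel-type identity $\langle\Theta_f,\Theta_h\rangle_{\mathrm L^2(\GamG,\mu_{\GamG})}=\langle f,h\rangle_{\mathrm L^2(\R)}$: for regular $f,h$ the theta transform $f\mapsto\Theta_f$ of \eqref{def-Theta_f(g)} is an isometry into $\mathrm L^2(\GamG)$ with precisely the normalization $\mu_{\GamG}(\GamG)=1$ fixed by \eqref{def-Haar-measure-on-G}, and it extends to all $f,h\in\mathrm L^2(\R)$ since $\Theta_f\in\mathrm L^4(\GamG)$ for every $f\in\mathrm L^2(\R)$ (Corollary 2.4 in \cite{Cellarosi-Marklof}). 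Applying it with $f=\chi_\tau$, $h=\chi$ gives $\int\Theta_{\chi_\tau}\overline{\Theta_\chi}\,\de\mu_{\GamG}=\sqrt{s/t}\,\langle\mathbf 1_{(0,t/s)},\mathbf 1_{(0,1)}\rangle_{\mathrm L^2(\R)}=\sqrt{s/t}$ (as $t/s\ge1$), so $\mathrm E(X(t)\overline{X(s)})=\sqrt{st}\cdot\sqrt{s/t}=s=\min(s,t)$. In particular $\mathrm E|X(t)|^2=t$.

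For the componentwise statement, use the distributional symmetry $X\sim -iX$ (equivalently $(X^1,X^2)\sim(X^2,-X^1)$) established in the proof of Proposition \ref{prop:mean-zero}: it gives $\mathrm E(X(t)X(s))=\mathrm E\bigl((-iX(t))(-iX(s))\bigr)=-\mathrm E(X(t)X(s))$, hence $\mathrm E(X(t)X(s))=0$. Writing $X=X^1+iX^2$ and separating real and imaginary parts of $\mathrm E(X(t)\overline{X(s)})=\min(s,t)$ and $\mathrm E(X(t)X(s))=0$ yields $\mathrm E(X^1(t)X^1(s))=\mathrm E(X^2(t)X^2(s))$ with $\mathrm E(X^1(t)X^1(s))+\mathrm E(X^2(t)X^2(s))=\min(s,t)$, together with $\mathrm E(X^1(t)X^2(s))=\mathrm E(X^2(t)X^1(s))$ and $\mathrm E(X^1(t)X^2(s))=-\mathrm E(X^2(t)X^1(s))$; solving gives $\mathrm E(X^i(t)X^j(s))=\tfrac12\min(t,s)\delta_{i,j}$.

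The main obstacle is making the two algebraic inputs rigorous for the non-regular $\chi$: \eqref{def-Theta_f(g)} does not define $\Theta_\chi$ pointwise, so both $\Theta_\chi(\Gamma g\Phi^\tau)=\Theta_{\chi_\tau}(\Gamma g)$ and the $\mathrm L^2$-isometry must be obtained for $\chi$ through an approximation. I would do this by working along the geodesic with the decomposition \eqref{Theta_chi(gPhi^s)} of $\Theta_\chi$ into the regular pieces $\Theta_\Delta,\Theta_{\Delta_-}$, applying the two identities termwise, and summing; the interchange of $\mathrm E$, the geodesic substitution, and the series is justified by the uniform exponential control of \eqref{Theta_chi(gPhi^s)} from \cite{Cellarosi-Marklof}, the $\mathrm L^4$ bound on $\Theta_f$, and the $R^{-6}$ tail of $X(t)$ (see \cite{Cellarosi-Griffin-Osman}), which supplies the integrability needed for the exchanges. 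An alternative, essentially equivalent, route avoids the group picture: by the invariance principle (Theorem \ref{thm-existence-of-theta-process}), $\mathrm E(X_N^i(t)X_N^j(s))\to\mathrm E(X^i(t)X^j(s))$ once $\sup_N\mathrm E|X_N(t)|^{2+\delta}<\infty$; expanding $X_N(t)\overline{X_N(s)}$ from \eqref{def-S_N(x,alpha,beta)}--\eqref{def-X_N(t)} and integrating over $x$ leaves a diagonal term $\tfrac1N\#\{n\le\min(\lfloor sN\rfloor,\lfloor tN\rfloor)\}\to\min(s,t)$ and an off-diagonal term $\tfrac1N\sum_{m\ne n}\e{\alpha(m-n)}\int_\R\e{(\tha(m^2-n^2)+\beta(m-n))x}\,\de\lambda(x)$, whose vanishing for a general absolutely continuous $\lambda$ is exactly the variance/covariance estimate underlying the central limit theorem of Jurkat--van Horne \cite{Jurkat-2,Jurkat-3,Jurkat-4} and the invariance principle of \cite{Cellarosi-Marklof}; that off-diagonal bound is the genuinely nontrivial ingredient on this route.
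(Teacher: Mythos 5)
Your proposal is correct and follows essentially the same route as the paper: the paper likewise reduces to $E(X(t)\overline{X(s)})$ via Proposition \ref{prop:mean-zero}, writes $X(t)=\Theta_{\mathbf 1_{(0,t)}}(\Gamma g)$ (relation (4.57) in \cite{Cellarosi-Marklof}, which already packages the geodesic-flow rescaling you carry out by hand), applies the $\mathrm L^2$-isometry of the theta transform (Lemma 2.3/Corollary 2.4 there) to get $\int\chi_{(0,t)}\chi_{(0,s)}=\min(s,t)$, and then uses rotational invariance to handle the components. Your extra care about the non-regularity of $\chi$ and the alternative prelimit computation via $X_N$ are fine but unnecessary once those cited results are invoked.
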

\begin{proof}
    We know that $X$ is centered by Proposition \ref{prop:mean-zero} so we just need to compute $E(X(t) \overline{X(s)})$. By relation (4.57) in \cite{Cellarosi-Marklof} we have that $X(t)=\Theta_{(0,t)}(\Gamma g).$ By Lemma 2.3 or Corollary 2.4 in \cite{Cellarosi-Marklof} we know that $E(X(t) \overline{X(s)})=\int_{\mathbb R} \chi_{(0,t)}(r) \chi_{(0,s)}(r) dr=\min(s,t).$

    Now we will check the components. To do this, again we will use property (vii) in Theorem 1.3 in \cite{Cellarosi-Marklof}.  
    
    If $i\neq j$ we can rotate by $\pi$ to get that $(X^1,X^2)=(-X^1,X^2)$ in distribution. Therefore
    \begin{equation*}
        E(X^1(t)X^2(s))=E(-X^1(t)X^2(s)),
    \end{equation*}
    so $E(X^1(t)X^2(s))=0$.
    
    If $i=j$, rotate by $\pi$ to get that $(X^1,X^2)=(X^2,-X^1)$ in distribution. Therefore $X^1=X^2$ in distribution and we have proved the claim. 
\end{proof}
\begin{corollary}\label{cor:uncorrelated-increments}
    The increments of $X$ are uncorrelated. That is, for $s_1\leq t_1\leq s_2\leq t_2$ we have that $E((X(t_1)-X(s_1))\overline{(X(t_2)-X(s_2))}=0.$ Additionally, we have that 
    \begin{align*}E(|(X(t_1)-X(s_1))|^2&|(X(t_2)-X(s_1))|^2)=(t_1-s_1)(t_2-s_2)\\
    &=E(|(X(t_1)-X(s_1))|^2)E(|(X(t_2)-X(s_2))|^2).
    \end{align*}
\end{corollary}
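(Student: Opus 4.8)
The first assertion is immediate from Propositions \ref{prop:mean-zero} and \ref{prop:covariance}. Set $U_i:=X(t_i)-X(s_i)$; then $E(U_i)=0$, and expanding bilinearly,
$$E\big(U_1\overline{U_2}\big)=E\big(X(t_1)\overline{X(t_2)}\big)-E\big(X(t_1)\overline{X(s_2)}\big)-E\big(X(s_1)\overline{X(t_2)}\big)+E\big(X(s_1)\overline{X(s_2)}\big).$$
Since $s_1\le t_1\le s_2\le t_2$, the identity $E(X(t)\overline{X(s)})=\min(s,t)$ evaluates the four terms as $t_1,\,t_1,\,s_1,\,s_1$, so the sum vanishes; equivalently one argues componentwise from $E(X^i(t)X^j(s))=\tfrac12\min(t,s)\delta_{ij}$.

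For the second assertion the plan is to pass to the explicit representation \eqref{def-X}. Using linearity of $f\mapsto\Theta_f$ together with the relation $X(t)=\Theta_{\mathbf 1_{(0,t)}}(\Gamma g)$ from the proof of Proposition \ref{prop:covariance}, we may write $U_i=\Theta_{f_i}(\Gamma g)$ with $f_i:=\mathbf 1_{(s_i,t_i)}$, and hence
$$E\big(|U_1|^2|U_2|^2\big)=\int_{\GamG}\big|\Theta_{f_1}(g)\big|^2\big|\Theta_{f_2}(g)\big|^2\,\de\mu_{\GamG}(g).$$
Since the $f_i$ are sharp indicators, not regular functions, one carries out the computation first with the regular pieces supplied by the partition-of-unity decomposition \eqref{partition-of-unity} (adapted to the intervals $(s_i,t_i)$, cf.\ the discussion after \eqref{partition-of-unity-1}), for which $\Theta_f$ is the absolutely convergent series \eqref{def-Theta_f(g)}, and then recovers the general statement by passing to the limit, the $\mathrm{L}^4(\GamG,\mu_{\GamG})$-control of Corollary 2.4 in \cite{Cellarosi-Marklof} guaranteeing that all integrals and interchanges are legitimate.

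The computation is a fourth-moment identity for theta series. Writing $\Theta_{f_1}(g)\Theta_{f_2}(g)=\sum_{(n_1,n_2)\in\Z^2}[R(g)f_1](n_1)[R(g)f_2](n_2)$ exhibits the product as a rank-two theta sum of the kind discussed in the introduction, so $|\Theta_{f_1}\Theta_{f_2}|^2$ is a series over $(n_1,n_2,m_1,m_2)\in\Z^4$. Integrating term by term over $\GamG$ against \eqref{def-Haar-measure-on-G}: integration over the Heisenberg coordinate $\xi_1$ forces $n_1+n_2=m_1+m_2$, and what remains, after the analogue of the unfolding used for the second-moment identity, is a sum of \emph{resonant} contributions organised by how the frequency pair $\{n_1,n_2\}$ matches $\{m_1,m_2\}$. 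The \emph{parallel} resonances --- $n_1$ cancelling $m_1$ inside the first interval and $n_2$ cancelling $m_2$ inside the second --- assemble, exactly as in the second-moment identity, into $\|f_1\|_{\LtR}^2\|f_2\|_{\LtR}^2=(t_1-s_1)(t_2-s_2)$, while every remaining (\emph{cross}) resonance carries an inner-product factor of the form $\langle f_1,f_2\rangle_{\LtR}$ or $\langle f_1,\check f_2\rangle_{\LtR}$ with $\check f(w):=f(-w)$, all of which vanish because $f_1,f_2$ are indicators of disjoint subintervals of $(0,\infty)$. Since $E(|U_i|^2)=\|f_i\|_{\LtR}^2=t_i-s_i$ by Proposition \ref{prop:covariance}, this gives the claimed factorization.

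The hard part will be carrying out this term-by-term integration for the \emph{sharp} cutoffs $f_i$ and bookkeeping the surviving resonances: one cannot invoke a Gaussian/Wick factorization, since the heavy tails of $X$ already preclude it at fourth order, so the vanishing of the cross terms must be established directly, and it is genuinely the disjointness and positivity of the supports of $f_1,f_2$ --- not any a priori independence --- that forces the non-Gaussian corrections to drop out, leaving precisely $E(|U_1|^2)E(|U_2|^2)$.
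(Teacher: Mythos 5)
Your argument is correct and follows the paper's own route: the first claim is just the bilinear expansion of the covariance from Proposition \ref{prop:covariance} (the four terms $t_1,t_1,s_1,s_1$ cancel), and the second claim reduces, via $X(t_i)-X(s_i)=\Theta_{\mathbf 1_{(s_i,t_i]}}(\Gamma g)$ and linearity of $f\mapsto\Theta_f$, to a fourth-moment identity for theta functions whose test functions have disjoint supports. The only real difference is that the paper disposes of that identity in one line by citing Lemma 2.3 and Corollary 2.4 of \cite{Cellarosi-Marklof} (together with relation (4.58) there), and those results already hold for arbitrary $f\in\LtR$ --- so your mollification-and-limit step is unnecessary, and the ``hard part'' you defer at the end (the resonance bookkeeping showing that the surviving pairings are the parallel ones contributing $\|f_1\|^2\|f_2\|^2$ plus cross terms weighted by $\langle f_1,f_2\rangle$ and $\langle f_1,\check f_2\rangle$, which vanish for disjoint supports in $(0,\infty)$) is precisely the content of that citation rather than something that needs to be re-derived here.
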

\begin{proof}
    The first part is immediate from Proposition \ref{prop:covariance}. The second part is from Lemma 2.3 and Corollary 2.4, second part, in \cite{Cellarosi-Marklof} and relation (4.58) in \cite{Cellarosi-Marklof}.
\end{proof}
\begin{proposition}\label{prop:QV}
    The quadratic variation of $X$ is on the interval $(0,t)$ is $t$. The quadratic variations of $X^1$ and $X^2$ are both $\frac{t}{2}$.
\end{proposition}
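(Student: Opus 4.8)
The plan is to compute the quadratic variation directly from the invariance principle together with the known covariance structure, using a classical $L^2$ argument adapted to the finite-moment setting of $X$. First I would fix a sequence of partitions $\mathcal P_n=\{0=t_0<t_1<\cdots<t_{m_n}=t\}$ of $(0,t)$ with mesh $|\mathcal P_n|\to 0$, and consider the sums $Q_n:=\sum_k |X(t_{k+1})-X(t_k)|^2$. The first step is to identify the expectation: by Corollary \ref{cor:uncorrelated-increments} (or directly from Proposition \ref{prop:covariance}), $E|X(t_{k+1})-X(t_k)|^2=t_{k+1}-t_k$, so $E(Q_n)=t$ for every $n$. The second step is to show $\operatorname{Var}(Q_n)\to 0$ along the chosen partitions, which by Chebyshev yields convergence in probability to $t$. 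Expanding the variance, the diagonal terms contribute $\sum_k \operatorname{Var}(|X(t_{k+1})-X(t_k)|^2)$ and the cross terms contribute $\sum_{k\neq\ell}\operatorname{Cov}(|X(t_{k+1})-X(t_k)|^2,|X(t_{\ell+1})-X(t_\ell)|^2)$; by the second part of Corollary \ref{cor:uncorrelated-increments} the cross terms vanish identically (the squared moduli of disjoint increments are uncorrelated, indeed their product has the product expectation). For the diagonal terms, scaling (property (b)) and stationarity (property (a)) give $\operatorname{Var}(|X(t_{k+1})-X(t_k)|^2)=(t_{k+1}-t_k)^2\operatorname{Var}(|X(1)|^2)$, and $\operatorname{Var}(|X(1)|^2)<\infty$ because $X$ has $6-\varepsilon$ moments (property (h)); hence the diagonal sum is bounded by $|\mathcal P_n|\cdot t\cdot\operatorname{Var}(|X(1)|^2)\to 0$. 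This proves $[X,X](t)=t$.

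For the components, I would run the identical argument with $|X^i(t_{k+1})-X^i(t_k)|^2$ in place of $|X(t_{k+1})-X(t_k)|^2$, for $i=1,2$. By the second identity in Proposition \ref{prop:covariance}, $E|X^i(t_{k+1})-X^i(t_k)|^2=\tfrac12(t_{k+1}-t_k)$, so $E(Q_n^{(i)})=t/2$. The cross-term cancellation again follows from the independence-of-squared-increments statement in Corollary \ref{cor:uncorrelated-increments} applied to the components (one may deduce the componentwise statement from the $\mathbb C$-valued one together with the rotational symmetries in property (e), since $|X^i|^2$ is a sum of terms of the form $X^i(t_{k+1}-\cdot)X^i(\cdot)$ controlled by the same joint moments); the diagonal variance is again $O((t_{k+1}-t_k)^2)$ using scaling, stationarity and the finiteness of the fourth moment of $X^1(1)$, which is implied by property (h). Summing and sending the mesh to zero gives the quadratic variation $t/2$ for each of $X^1$ and $X^2$.

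The main obstacle I anticipate is making the cross-term cancellation fully rigorous for the \emph{components} $X^i$, rather than for the modulus $|X|$: Corollary \ref{cor:uncorrelated-increments} as stated gives the factorization $E(|X(t_1)-X(s_1)|^2|X(t_2)-X(s_2)|^2)=E|X(t_1)-X(s_1)|^2\,E|X(t_2)-X(s_2)|^2$ for the complex increments, and one needs the analogue with $|X^i|^2$. I would handle this by expanding $|X(\cdot)|^2=|X^1|^2+|X^2|^2$, using Proposition \ref{prop:covariance} and the $\pi/2$-rotation symmetry (property (e) with $\theta=1/4$, which interchanges $X^1$ and $X^2$ up to sign) to show each of the four products $E(|X^i(t_1)-X^i(s_1)|^2|X^j(t_2)-X^j(s_2)|^2)$ equals the corresponding product of expectations. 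A secondary, more technical point is that the convergence in Definition \ref{def:quadratic-variation} must be checked to be independent of the partition sequence; this is automatic from the $L^2$ bound above, since the variance estimate $\operatorname{Var}(Q_n)\le |\mathcal P_n|\,t\,\operatorname{Var}(|X(1)|^2)$ depends on the partition only through its mesh. One should also record that $\operatorname{Var}(|X(1)|^2)$ and $\operatorname{Var}(|X^i(1)|^2)$ are genuinely finite, which is where the heavy-tail exponent $6$ (as opposed to $4$) is essential: $|X(1)|^2$ has $3-\varepsilon$ moments, so its variance exists.
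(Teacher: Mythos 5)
Your argument for the main claim is essentially the paper's: both compute $E(Q_n)=t$ from the covariance, then kill the variance via Chebyshev, using the factorization of fourth moments of disjoint increments (Corollary \ref{cor:uncorrelated-increments}) to discard the cross terms and a fourth-moment bound for the diagonal. The only cosmetic difference there is that the paper gets $\operatorname{var}(|X(t_{k+1})-X(t_k)|^2)=(t_{k+1}-t_k)^2$ exactly from the explicit identity $E|X(t)-X(s)|^4=2(t-s)^2$ (Lemma 2.3/Corollary 2.4 of \cite{Cellarosi-Marklof}), whereas you derive the same $O((t_{k+1}-t_k)^2)$ bound from scaling, stationarity and finiteness of $E|X(1)|^4$; both are fine. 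Where you genuinely diverge is the component statement: the paper dispatches it in one line by asserting that the quadratic covariation of $X^1$ and $X^2$ vanishes, while you rerun the $L^2$ argument for each $X^i$ and correctly observe that this requires a \emph{componentwise} factorization $E\bigl((\Delta_1 X^i)^2(\Delta_2 X^i)^2\bigr)=E(\Delta_1 X^i)^2\,E(\Delta_2 X^i)^2$ that is not literally contained in Corollary \ref{cor:uncorrelated-increments}. Your sketch for closing this gap via rotation by $\pi/2$ is not quite enough on its own: it yields $A_{11}=A_{22}$ and $A_{12}=A_{21}$ for the four mixed moments $A_{ij}=E\bigl((\Delta_1X^i)^2(\Delta_2X^j)^2\bigr)$, and together with the complex identity only $2A_{11}+2A_{12}=(t_1-s_1)(t_2-s_2)$; to conclude $A_{11}=A_{12}$ you need invariance under a generic rotation angle (or the explicit moment computation on $\GamG$ from \cite{Cellarosi-Marklof}). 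So your route is viable but has one step still to be completed; on the other hand it is arguably more careful than the paper's, since knowing $[X^1,X^2]=0$ and $[X]=[X^1]+[X^2]=t$ does not by itself establish that each $[X^i]$ exists and equals $t/2$ without an appeal to the symmetry $X^1\sim X^2$.
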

\begin{proof}
    Take a sequence of partitions $\mathcal P_n=\{0=t_0<...<t_{n+1}=t\}$ with $|\mathcal P_n|\to 0.$ Then we have that 
    \begin{align*}
        E\left(\sum_{k=0}^n |X(t_{k+1})-X(t_k)|^2\right)&=\sum_{k=0}^n E(|X(t_{k+1})-X(t_k)|^2)\\
        &= \sum_{k=0}^n t_{k+1}-t_k\\
        &=t.
    \end{align*}
Now, checking that the quadratic variation we have that
\begin{align*}
    E\left(\left(\sum_{k=0}^n |X(t_{k+1})-X(t_k)|^2-t\right)^2\right)&=  \operatorname{var}\left(\sum_{k=0}^n |X(t_{k+1})-X(t_k)|^2\right)\\
    &=\sum_{k=0}^n\operatorname{var}(|X(t_{k+1})-X(t_k)|^2,
\end{align*}
where in the last step we invoked Corollary \ref{cor:uncorrelated-increments}. Again, using Lemma 2.3 and Corollary 2.4 from \cite{Cellarosi-Marklof}, we have that
\begin{align*}
    \operatorname{var}(|X(t_{k+1})-X(t_k)|^2)&=E[|X(t_{k+1})-X(t_k)|^4]-(E[|X(t_{k+1})-X(t_k)|^2])^2\\
    &=2(t_{k+1}-t_k)^2-(t_{k+1}-t_k)^2\\
    &=(t_{k+1}-t_k)^2.
\end{align*}
Therefore we have that 
\begin{align*}
  \sum_{k=0}^n\operatorname{var}(|X(t_{k+1})-X(t_k)|^2&=\sum_{k=0}^n(t_{k+1}-t_k)^2\\
  &\leq |\mathcal P_n| t\\
  &\to 0
\end{align*}
as $n\to\infty$.
To check that the quadratic variation of each component is $\frac{t}{2}$, we simply use that the quadratic covariation between $X^1$ and $X^2$ is $0$ because $X^1$ and $X^2$ are uncorrelated. 
\end{proof}

\begin{proposition}\label{prop:no-p-var-plt2}
    For all $p<2$ and for all intervals $[0,t]$ we have that $\mu_{\GamG}(\{\Gamma g\in \Gamma\setminus G: \|X\|_{p-\operatorname{var}}=\infty\})=1$. 
\end{proposition}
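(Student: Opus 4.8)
The plan is to show that the $p$-variation sums of $X$ along the dyadic partitions already diverge almost surely, by comparing them with the quadratic-variation sums: the latter aggregate to the deterministic value $t$, while the maximal increment over the partition tends to $0$ because $X$ has continuous sample paths. Concretely, fix $p<2$ and the interval $[0,t]$, and let $\mathcal P_n=\{t^{(n)}_k=kt\,2^{-n}:0\le k\le 2^n\}$ be the dyadic partitions, of mesh $t\,2^{-n}$. Write $X_{s,u}=X(u)-X(s)$ and $S_n=\sum_{k=0}^{2^n-1}|X_{t^{(n)}_k,t^{(n)}_{k+1}}|^2$. The computation carried out in the proof of Proposition~\ref{prop:QV}, which rests on the uncorrelatedness of the squared increments from Corollary~\ref{cor:uncorrelated-increments}, gives $E[S_n]=t$ and $E[(S_n-t)^2]=\sum_{k=0}^{2^n-1}(t^{(n)}_{k+1}-t^{(n)}_k)^2=t^2\,2^{-n}$. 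Since $\sum_n t^2\,2^{-n}<\infty$, Chebyshev's inequality together with the Borel--Cantelli lemma yields $S_n\to t$ almost surely.

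On the other hand, by Theorem~\ref{thm-existence-of-theta-process} the theta process has almost surely continuous sample paths (indeed $\gamma$-H\"older for every $\gamma<1/2$), so on a full-measure event the restriction of $X$ to $[0,t]$ is uniformly continuous and $M_n:=\max_{0\le k<2^n}|X_{t^{(n)}_k,t^{(n)}_{k+1}}|\to 0$. Intersecting this with $\{S_n\to t\}$ we still have a full-measure event, on which, for every $n$ so large that $M_n>0$, the elementary bound $|X_{t^{(n)}_k,t^{(n)}_{k+1}}|^2\le M_n^{\,2-p}\,|X_{t^{(n)}_k,t^{(n)}_{k+1}}|^{p}$ (valid because $2-p>0$), summed over $k$, gives
\begin{equation*}
\sum_{k=0}^{2^n-1}|X_{t^{(n)}_k,t^{(n)}_{k+1}}|^{p}\ \ge\ M_n^{\,p-2}\,S_n,
\end{equation*}
whose right-hand side tends to $+\infty$ as $n\to\infty$, since $S_n\to t>0$ while $M_n^{\,p-2}\to\infty$; in particular $M_n>0$ for all large $n$, so the estimate applies. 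Therefore $\|X\|_{p-\operatorname{var}}\ge\sup_n\sum_{k}|X_{t^{(n)}_k,t^{(n)}_{k+1}}|^{p}=+\infty$ almost surely. By the explicit representation \eqref{def-X}, in which $\Gamma g$ is distributed according to $\mu_{\GamG}$, this almost-sure statement is precisely $\mu_{\GamG}(\{\Gamma g\in\GamG:\|X\|_{p-\operatorname{var}}=\infty\})=1$.

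I expect no genuine obstacle here; the two points that need a little care are obtaining almost-sure (rather than merely $L^2$) convergence $S_n\to t$ --- which is why one restricts to the dyadic partitions, so that the $L^2$-errors $t^2\,2^{-n}$ are summable and Borel--Cantelli applies directly --- and ensuring that $\{S_n\to t\}$ and $\{M_n\to 0\}$ hold on one common full-measure event, so that the deterministic inequality above can be invoked path by path. Note that only the second and fourth moments of the increments of $X$ enter the argument, so the heavy-tail behaviour of $X$ plays no role in this particular statement.
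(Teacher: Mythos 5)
Your proof is correct and follows essentially the same route as the paper: both arguments compare the $p$-variation sum to the quadratic-variation sum via the bound $|X_{t_k,t_{k+1}}|^2\le M_n^{2-p}|X_{t_k,t_{k+1}}|^p$, use Proposition \ref{prop:QV} to make the quadratic sums converge to $t$ almost surely along some sequence of partitions, and use continuity of the paths to send $M_n\to0$. The only (harmless) difference is how almost-sure convergence is obtained: the paper extracts a subsequence from convergence in probability for an arbitrary sequence of partitions, whereas you fix dyadic partitions and use the summable $L^2$ errors with Borel--Cantelli.
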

\begin{proof}
Take a sequence of partitions $\mathcal P_n=\{0=t_0<...<t_{n+1}=t\}$ with $|\mathcal P_n|\to 0.$ By Proposition \ref{prop:QV} we have that the variational sum 
$$V_{n,2}=\sum_{k=0}^n |X(t_{k+1})-X(t_k)|^2$$
satisfies 
\begin{equation*}
    \lim_{n\to\infty} V_{n,2}=t,
\end{equation*}
in $L^2$ and hence in probability. Therefore there exists a subsequence $\mathcal P_{n_k}$ where 
\begin{equation*}
    \lim_{n_k\to\infty} V_{n_k,2}=t,
\end{equation*}
almost surely. For almost all $\Gamma g\in \Gamma \setminus G$ and for $p<2$ we have that 
\begin{equation*}
    V_{n_k,2}(g)\leq \max_{0\leq k\leq n_k+1} (|X(t_{k+1})-X(t_k)|^{2-p}) V_{n_k,p}(g),
\end{equation*}
whose left hand side goes to $t$. The first term on the right hand side goes to $0$ as $X$ is a continuous function on the interval $[0,t]$ and hence uniformly continuous. Therefore $V_{n_k,p}(g)$ must diverge as $k\to\infty$.
\end{proof}
\begin{corollary}\label{cor:not-gamma-holder-ggt12}
    For all $\gamma >1/2$ and for all intervals $[0,t]$ we have that $\mu_{\GamG}(\{\Gamma g\in \Gamma\setminus G: \|X\|_{\gamma}=\infty\})=1$. 
\end{corollary}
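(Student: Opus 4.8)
The plan is to deduce the corollary directly from Proposition \ref{prop:no-p-var-plt2} via the elementary embedding of H\"older-continuous paths into the space of paths of finite $p$-variation. Fix $\gamma>\tfrac12$ and put $p:=1/\gamma$, so that $p<2$. The one computation needed is the standard observation that for any $f\in C^\gamma([0,t],\R^d)$ and any partition $\mathcal P=\{0=t_0<t_1<\cdots<t_{n+1}=t\}$ one has
\begin{equation*}
\sum_{k=0}^{n}|f(t_{k+1})-f(t_k)|^{p}\;\leq\;\|f\|_\gamma^{\,p}\sum_{k=0}^{n}|t_{k+1}-t_k|^{\gamma p}\;=\;\|f\|_\gamma^{\,p}\,t,
\end{equation*}
since $\gamma p=1$ (here I read the sum in Definition \ref{def:p-var} with exponent $p$). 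Hence $\|f\|_{p-\operatorname{var}}<\infty$ whenever $\|f\|_\gamma<\infty$, and taking contrapositives gives the inclusion of events
\begin{equation*}
\{\Gamma g\in\GamG:\ \|X\|_{p-\operatorname{var}}=\infty\}\ \subseteq\ \{\Gamma g\in\GamG:\ \|X\|_\gamma=\infty\}.
\end{equation*}

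It then remains only to invoke Proposition \ref{prop:no-p-var-plt2} with this particular value $p<2$: it gives $\mu_{\GamG}(\{\Gamma g:\ \|X\|_{p-\operatorname{var}}=\infty\})=1$. By the displayed inclusion, the event $\{\Gamma g:\ \|X\|_\gamma=\infty\}$ contains a set of full $\mu_{\GamG}$-measure, hence itself has $\mu_{\GamG}$-measure $1$, which is exactly the assertion of the corollary.

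I do not anticipate any genuine obstacle here: the statement is a one-line soft consequence of the previous proposition together with the H\"older-to-$p$-variation comparison. The only points requiring a little care are bookkeeping ones --- matching the exponent conventions in Definitions \ref{def:Holder} and \ref{def:p-var}, and noting that the claim is made for each fixed interval $[0,t]$ and each fixed $\gamma>\tfrac12$ separately, so that no uncountable union of null sets (which could in principle spoil the full-measure conclusion) is ever taken.
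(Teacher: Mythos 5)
Your argument is correct and is precisely the (implicit) one the paper intends: the corollary is stated without proof as an immediate consequence of Proposition \ref{prop:no-p-var-plt2} via the standard embedding $C^\gamma\hookrightarrow C^{p\text{-var}}$ with $p=1/\gamma<2$, which is exactly the inclusion you establish. Your remarks on the exponent convention in Definition \ref{def:p-var} and on fixing $\gamma$ and $[0,t]$ to avoid uncountable unions of null sets are both apt.
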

\begin{corollary}\label{corollary:young-doesn't-work}
    Young techniques do not work for $X$. 
\end{corollary}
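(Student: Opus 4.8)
The plan is to make precise what ``Young techniques do not work'' means and then read it off from the results already established. Recall that Young's theorem (the integral \eqref{eq:Young-integral}, in both its H\"older and its $p$-variation form) produces a pathwise, tag-independent Riemann--Stieltjes integral $\int_0^T f\,\de g$ whenever $f$ and $g$ have complementary regularity: $f\in C^\alpha$, $g\in C^\beta$ with $\alpha+\beta>1$, or $f$ of finite $p$-variation and $g$ of finite $q$-variation with $\frac1p+\frac1q>1$. To solve the integral equation \eqref{eq:int-eq} driven by $X$ via a Picard iteration one must at least give meaning, through such a theory, to the iterated integral $\int_0^t(X^i(r)-X^i(0))\,\de X^j(r)$ appearing in \eqref{eq:int-picard}. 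I would show that this integral does not exist.

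First, the hypotheses of Young's theorem are never met for self-integration of $X$. By Corollary \ref{cor:not-gamma-holder-ggt12}, for $\mu_{\GamG}$-almost every $\Gamma g$ the path $X$ fails to be $\gamma$-H\"older for every $\gamma>\tfrac12$, so the H\"older form of Young's theorem cannot apply (it would require $X\in C^\alpha$ with $2\alpha>1$). By Proposition \ref{prop:no-p-var-plt2}, for $\mu_{\GamG}$-almost every $\Gamma g$ the path $X$ has infinite $p$-variation for every $p<2$, and since $\frac1p+\frac1p>1$ forces $p<2$, the $p$-variation form of Young's theorem also never applies. Thus Young's theory is literally inapplicable to $\int_0^t X\,\de X$.

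Second, and substantively, the Riemann--Stieltjes integral of $X^1$ against itself genuinely fails to exist, not merely for lack of verifiable hypotheses. Suppose, for contradiction, that on a set of positive $\mu_{\GamG}$-measure the limit
\begin{equation*}
    \int_0^t \bigl(X^1(r)-X^1(0)\bigr)\,\de X^1(r)=\lim_{|\mathcal P|\to0}\sum_{k}\bigl(X^1(\tau_k)-X^1(0)\bigr)\bigl(X^1(t_{k+1})-X^1(t_k)\bigr)
\end{equation*}
exists and is independent of the choice of tags $\tau_k\in[t_k,t_{k+1}]$. Comparing the left-endpoint tagging $\tau_k=t_k$ with the right-endpoint tagging $\tau_k=t_{k+1}$, the two Riemann sums would have the same limit along every partition sequence with mesh tending to $0$; but their difference equals
\begin{equation*}
    \sum_{k}\bigl(X^1(t_{k+1})-X^1(t_k)\bigr)^2,
\end{equation*}
which by Proposition \ref{prop:QV} converges in probability to $\tfrac{t}{2}$. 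Passing to a subsequence of partitions, as in the proof of Proposition \ref{prop:no-p-var-plt2}, we obtain almost sure convergence to $\tfrac{t}{2}\neq0$ on a full $\mu_{\GamG}$-measure set, contradicting the requirement that this difference tend to $0$. Hence $\int_0^t(X^1(r)-X^1(0))\,\de X^1(r)$ does not exist as a Young/Riemann--Stieltjes integral, and the same applies to the integrals $\int_0^t(X^i(r)-X^i(0))\,\de X^j(r)$ needed in \eqref{eq:int-picard}; consequently $\int_0^t c(X(s),s)\,\de X(s)$ cannot be made sense of by Young's theory.

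There is no serious obstacle here: the argument is a direct consequence of the nonvanishing quadratic variation of $X$ established in Proposition \ref{prop:QV}, which is exactly the classical obstruction to pathwise self-integration --- the same phenomenon that, for Brownian motion, forces the distinction between the It\^o and Stratonovich integrals. The only points requiring care are the precise formulation of the claim and the routine passage from convergence in probability (Proposition \ref{prop:QV}) to almost sure convergence along a subsequence. This is precisely what necessitates the rough path construction carried out in the remainder of the paper.
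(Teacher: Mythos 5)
Your proposal is correct, and its first half is exactly the paper's (unstated) reasoning: the corollary is left without proof in the paper precisely because it follows immediately from Corollary \ref{cor:not-gamma-holder-ggt12} and Proposition \ref{prop:no-p-var-plt2} --- the complementary-regularity hypothesis $\alpha+\beta>1$ (equivalently $\frac1p+\frac1q>1$) can never be verified for self-integration of $X$, so Young's theorem is simply inapplicable. Your second half goes beyond the paper and is a genuine strengthening: rather than merely observing that the hypotheses fail, you show via the tag-dependence argument that the Riemann--Stieltjes integral $\int_0^t(X^1(r)-X^1(0))\,\de X^1(r)$ cannot exist at all, since the discrepancy between left- and right-tagged sums is the quadratic variation sum, which by Proposition \ref{prop:QV} converges in probability (hence a.s.\ along a subsequence) to $\tfrac{t}{2}\neq0$. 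This is the classical obstruction and it is correctly executed, including the subsequence extraction. Two minor caveats: (i) Young's theorem as stated in the paper uses left-endpoint tags, so strictly speaking your tag-dependence argument rules out the tag-independent Riemann--Stieltjes integral rather than convergence of the specific left-tagged sums; this does not matter for the corollary as stated, but is worth being precise about. (ii) For the off-diagonal integrals $i\neq j$ the tag discrepancy is the quadratic covariation of $X^1$ and $X^2$, which is $0$, so your ``the same applies'' should be restricted to the diagonal cases $i=j$; these already suffice, since a Young theory for $X$ would have to produce all four iterated integrals.
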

\begin{theorem}\label{theorem:not-semimartingale}
      The process $X=(X^1,X^2)$ is not a semimartingale with respect to the natural filtration $\mathcal F_{t}=\sigma(\{X(s)\}_{0\leq s\leq t})$. That is, there do not exist adapted continuous $V_1,V_2$ of bounded variation and local continuous martingales $M_1,M_2$ so that $X=(V_1+M_1,V_2+M_2).$ 
\end{theorem}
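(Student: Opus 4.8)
The plan is to argue by contradiction. Suppose $X=(X^1,X^2)$ is a semimartingale for its natural filtration $\mathcal F_t$ (by Stricker's theorem this is the weakest such hypothesis: it rules out $X$ being a semimartingale for \emph{any} filtration it is adapted to). Since $X$ is continuous the decomposition $X=M+V$ into a continuous local martingale $M=(M^1,M^2)$ and a continuous adapted finite-variation process $V=(V^1,V^2)$ is unique, and we may take $M_0=V_0=0$ because $X_0=0$ and $\mathcal F_0$ is trivial.

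First I would identify the martingale part. Since $V^1,V^2$ have finite variation, $[X^i,X^j]_t=[M^i,M^j]_t$; Proposition \ref{prop:QV} gives $[M^1]_t=[M^2]_t=t/2$, and the rotational invariance (property (e)) gives $X^1\pm X^2\overset{d}{=}\sqrt 2\,X^1$, hence $[X^1\pm X^2]_t=t$ and, by polarization, $[M^1,M^2]_t=[X^1,X^2]_t=0$. By L\'evy's characterization of Brownian motion, $\sqrt 2\,M$ is a standard planar Brownian motion; in particular $M(t)$ is Gaussian, with moments of all orders. Expanding $|X(t)-X(s)|^2=|{(M(t)-M(s))}+{(V(t)-V(s))}|^2$ and using Proposition \ref{prop:covariance} (which yields $\mathbb E|X(t)-X(s)|^2=|t-s|=\mathbb E|M(t)-M(s)|^2$) together with Cauchy--Schwarz on the cross term, one also obtains the a priori bound $\mathbb E|V(t)-V(s)|^2\le 4|t-s|$.

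Next I would show that $V\equiv 0$. The scaling property (b) and uniqueness of the continuous semimartingale decomposition force $V$ to be $\tfrac12$-self-similar in law, $a^{-1}V(a^2\,\cdot)\overset{d}{=}V(\cdot)$; stationarity of the increments of $X$ (property (a)) likewise transfers to $V$ (identifying the canonical decomposition with a measurable functional of the filtered path and invoking Stricker's theorem for the time-shifted process); and $\mathbb E[V(t)]=\mathbb E[X(t)]-\mathbb E[M(t)]=0$ by Proposition \ref{prop:mean-zero}. Passing to the total-variation process $A_t$ of $V$ on $[0,t]$ --- which is nonnegative, nondecreasing, starts at $0$, is $\tfrac12$-self-similar in law and has stationary increments --- additivity gives $\mathbb E[A_1]=n\,\mathbb E[A_{1/n}]=n\cdot n^{-1/2}\,\mathbb E[A_1]=\sqrt n\,\mathbb E[A_1]$ for all $n$, which forces $A_1=0$, hence $V\equiv 0$. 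Then $X=M$ would be a Brownian motion, contradicting the heavy-tail asymptotics of property (h) ($\mathbb E|X(1)|^6=\infty$, whereas $M(1)$ has all moments) and the non-independence of the increments (property (i)).

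The main obstacle is the last step. Two points need care: (i) transferring the distributional symmetries of $X$ to $V$ through the natural filtration, which rests on uniqueness of the continuous semimartingale decomposition and on Stricker's theorem; and (ii), more seriously, the integrability of $A_1$ needed to pass from $\mathbb E[A_1]=\sqrt n\,\mathbb E[A_1]$ to $A_1=0$, since a priori $A_1$ is only known to be finite almost surely. I would treat (ii) by a stopping-time localization, or by exploiting $\mathbb E|V(t)-V(s)|^2\le 4|t-s|$ to control the variation of $V$ along a fixed mesh before refining; an alternative, self-similarity-free route would be to extract from the conditional identity $\mathbb E[\,|X(t)-X(s)|^2\mid\mathcal F_s\,]=|t-s|+2\,\mathbb E[\langle M_t-M_s,V_t-V_s\rangle\mid\mathcal F_s]+\mathbb E[\,|V_t-V_s|^2\mid\mathcal F_s]$ and the Gaussianity of $M$ enough integrability of $V$ to contradict property (h) outright.
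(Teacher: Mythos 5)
Your overall strategy coincides with the paper's, which delegates the work to Lemma \ref{lemma:semimartingale-is-BM} in the appendix: decompose $X=M+V$, identify $M$ as a scaled planar Brownian motion via the quadratic variation and L\'evy's characterization, show $V\equiv 0$ by combining stationarity of increments with $\tfrac12$-self-similarity (both transferred to $V$ through uniqueness of the continuous decomposition), and then contradict the heavy tails of $X(1)$. Your identification of the martingale part is, if anything, cleaner than the paper's: you obtain $[X^1,X^2]=0$ from rotational invariance rather than from mere uncorrelatedness, and you explicitly flag that transferring the distributional symmetries to $V$ requires uniqueness of the decomposition plus Stricker's theorem, a point the appendix uses silently. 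Your observation that $[M^i]_t=t/2>0$ also quietly disposes of the bounded-variation branch of the dichotomy, which the paper instead excludes by citing Proposition \ref{prop:no-p-var-plt2}.

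The one place you diverge is the mechanism for $V\equiv 0$, and your primary mechanism does not close as stated. The identity $\mathbb{E}[A_1]=\sqrt{n}\,\mathbb{E}[A_1]$ only forces $\mathbb{E}[A_1]\in\{0,\infty\}$, and a priori $A_1$ is merely finite almost surely; neither localization nor the bound $\mathbb{E}|V(t)-V(s)|^2\le 4|t-s|$ obviously rules out $\mathbb{E}[A_1]=\infty$, since the mesh-$1/n$ variation sum has expectation exactly $\sqrt{n}\,\mathbb{E}|V(1)|$, which is entirely consistent with infinite expected total variation. The paper avoids this by staying at the level of second moments: a tower-property computation of $\mathbb{E}[V(t)X(s)]$ yields $\mathbb{E}[(V(t)-V(s))^2]=C'(t-s)$ with $C'=\mathbb{E}[V(1)^2]$, so the quadratic variation of $V$ would be $C't$; since $V$ is continuous of bounded variation its quadratic variation vanishes, whence $C'=0$ and $V\equiv 0$. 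This is precisely the ``exploit the $L^2$ bound'' alternative you gesture at, but run through quadratic rather than total variation, and it is the route you should take to make the argument airtight.
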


\begin{proof}
Assume that both $X^1$ and $X^2$ have this decomposition.  By Lemma \ref{lemma:semimartingale-is-BM} we have that $X^1, X^2$ are either scaled Brownian motions or of bounded variation. Therefore by Proposition \ref{prop:no-p-var-plt2}, we have that $X^1,X^2$ are just scaled  Brownian motions. However by Theorem 1.3 in \cite{Cellarosi-Marklof}, $X$ only has finitely many moments, a contradiction.

\end{proof}
\subsection{\texorpdfstring{Failure of It\^o calculus for $X$}{}}
It\^o calculus has been a celebrated theory for stochastic calculus for martingales such as the Brownian motion. As the process $X$ shares several of the same properties of Brownian motion, it is natural to first see if we can apply classical It\^o techniques. The well known Bichteler-Dellacherie theorem gives a condition for when It\^o calculus can be applied. See \cite{Protter-Stochastic-Calculus} for an introduction to stochastic calculus with semimartingales. 
\begin{theorem}[Bichteler-Dellacherie]
    Let $X(t)$ be a c\`adl\`ag process. Then the following are equivalent
    \begin{itemize}
        \item $X$ is a ``good integrator".
        \item $X$ is a semimartingale. That is there exists some process $Y$ adapted to $X$ with locally finite variation so that the process $Z=X-Y$ is a local martingale. 
    \end{itemize}
\end{theorem}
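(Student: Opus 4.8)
This is the classical Bichteler--Dellacherie theorem, used in this paper only as a black box; in the write-up it suffices to cite it from \cite{Protter-Stochastic-Calculus}. Were one to prove it, the plan I would follow is the route of Protter's Chapter III (or the streamlined Beiglb\"ock--Schachermayer--Veliyev argument), splitting into an easy and a hard implication.

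\emph{Semimartingale $\Rightarrow$ good integrator.} First I would dispose of the easy direction. Suppose $X=M+A$ with $M$ a local martingale, $A$ adapted c\`adl\`ag of locally finite variation, and $X_0=0$. After localising and applying the fundamental theorem of local martingales I would write $M=M'+B$ with $M'$ locally in $L^2$ and $B$ of locally finite variation, absorbing $B$ into $A$. For a simple predictable $H$ with $\|H\|_\infty\le 1$ one has $\int_0^T H\,dX=\int_0^T H\,dM'+\int_0^T H\,dA$; Doob's maximal inequality together with the $L^2$ isometry control the first term in $L^0$ uniformly over $\|H\|_\infty\le1$, while $\bigl|\int_0^T H\,dA\bigr|\le \mathrm{Var}_{[0,T]}(A)<\infty$ almost surely controls the second. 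Hence $H\mapsto\int_0^T H\,dX$ is continuous from simple predictable processes with the uniform norm into $L^0(\mathbb P)$, i.e. $X$ is a ``good integrator''.

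\emph{Good integrator $\Rightarrow$ semimartingale.} This is the substantive half, and here is the plan. I would first localise, stopping $X$ on exit from a large ball, to reduce to $X$ bounded (then appeal to the routine fact that a process which is a semimartingale up to each term of a localising sequence is itself a semimartingale). The ``good integrator'' hypothesis then says precisely that
\[
\mathcal K=\Bigl\{\textstyle\int_0^T H\,dX\ :\ H\text{ simple predictable},\ \|H\|_\infty\le1\Bigr\}
\]
is bounded in $L^0(\mathbb P)$. Next I would invoke the Mokobodzki--Nikishin functional-analytic lemma to produce an equivalent probability $\mathbb Q\sim\mathbb P$ with bounded density under which $\mathcal K$ is bounded in $L^1(\mathbb Q)$, say by $C$. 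Testing against the simple predictable integrand $H=\sum_i\mathrm{sgn}\bigl(E_{\mathbb Q}[X_{t_{i+1}}-X_{t_i}\mid\mathcal F_{t_i}]\bigr)\mathbf 1_{(t_i,t_{i+1}]}$ over an arbitrary partition $0=t_0<\dots<t_n=T$ gives
\[
\sum_i E_{\mathbb Q}\bigl|E_{\mathbb Q}[X_{t_{i+1}}-X_{t_i}\mid\mathcal F_{t_i}]\bigr| = E_{\mathbb Q}\Bigl[\textstyle\int_0^T H\,dX\Bigr]\le C,
\]
so $X$ is a $\mathbb Q$-quasimartingale. By Rao's theorem it decomposes as the difference of two nonnegative c\`adl\`ag supermartingales, and by the Doob--Meyer decomposition each of these is a local martingale minus a predictable increasing process; hence $X$ is a $\mathbb Q$-semimartingale. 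Finally I would use that the semimartingale property is invariant under an equivalent change of measure: with $Z$ the c\`adl\`ag density martingale linking $\mathbb P$ and $\mathbb Q$, an integration-by-parts (Girsanov-type) computation converts a $\mathbb Q$-semimartingale decomposition of $X$ into a $\mathbb P$-semimartingale decomposition $X=M+A$, which is the claim.

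\emph{Main obstacle.} The hard part is the step turning $L^0(\mathbb P)$-boundedness of $\mathcal K$ into $L^1(\mathbb Q)$-boundedness under an equivalent measure: this is the only place the ``good integrator'' hypothesis is exploited in an essential, non-elementary way, and it rests on the Mokobodzki/Nikishin lemma from the Koml\'os/Banach--Saks circle of ideas. The remaining deep inputs --- the Doob--Meyer decomposition, Rao's quasimartingale decomposition, and invariance of the semimartingale property under equivalent changes of measure --- I would take as known. Since all of this is classical, for the present paper the single reference \cite{Protter-Stochastic-Calculus} suffices.
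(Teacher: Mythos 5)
Your proposal is correct and matches the paper's treatment: the Bichteler--Dellacherie theorem is stated here without proof and used purely as a black box, cited from \cite{Protter-Stochastic-Calculus}. Your sketch of the standard argument (easy direction via the fundamental theorem of local martingales and Doob's inequality; hard direction via localisation, the Mokobodzki--Nikishin lemma, Rao's quasimartingale decomposition, Doob--Meyer, and invariance under equivalent change of measure) is the accepted route and is accurate in outline.
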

\begin{corollary}
    It\^o calculus techniques do not work for $X$. 
\end{corollary}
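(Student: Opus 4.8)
The plan is to obtain this as an immediate consequence of the Bichteler--Dellacherie theorem stated above together with Theorem~\ref{theorem:not-semimartingale}. First I would spell out what it means for It\^o calculus to ``work'': the construction of the stochastic integral $\int_0^t H\,\de X$ for bounded predictable integrands $H$, with the dominated-convergence and continuity properties that make It\^o calculus a usable theory, is exactly the assertion that $X$ is a \emph{good integrator} in the sense of Bichteler--Dellacherie. By that theorem this is equivalent to $X$ being a semimartingale with respect to its filtration.

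Second, I would invoke Theorem~\ref{theorem:not-semimartingale}: the process $X=(X^1,X^2)$ admits no decomposition $X=(V_1+M_1,V_2+M_2)$ with $V_i$ adapted of bounded variation and $M_i$ local continuous martingales, relative to the natural filtration $\mathcal F_t=\sigma(\{X(s)\}_{0\le s\le t})$. Hence $X$ is not a semimartingale, so by Bichteler--Dellacherie it is not a good integrator; consequently It\^o-type stochastic calculus cannot be set up with $X$ as the driving signal, which is the claim.

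There is essentially no remaining obstacle: all the substantive content lies in Theorem~\ref{theorem:not-semimartingale}, which in turn rests on Lemma~\ref{lemma:semimartingale-is-BM} and on the heavy-tail estimate of Theorem~1.3 in \cite{Cellarosi-Marklof}, forcing any putative martingale part to have all moments in contradiction with $X$ possessing only $6-\varepsilon$ of them. The one point worth a sentence of care is the filtration: since failing to be a semimartingale for one's own filtration precludes being a semimartingale for any sub-filtration, the corollary is already stated in its strongest form, and no further argument is needed there. I would close by noting that this failure --- together with the earlier Corollary~\ref{corollary:young-doesn't-work} ruling out Young integration --- is precisely the deficiency that rough paths theory is designed to repair, which motivates the constructions in the remainder of the paper.
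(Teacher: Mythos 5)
Your argument is exactly the paper's: the corollary's proof in the paper is just a pointer to Theorem~\ref{theorem:not-semimartingale}, which combined with the Bichteler--Dellacherie equivalence (stated immediately before the corollary) gives that $X$ is not a good integrator. Your write-up simply makes that one-line deduction explicit, so there is nothing to add.
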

\begin{proof}
    See Theorem \ref{theorem:not-semimartingale}.
\end{proof}
\section{Strategy of Proof of the Main Theorem}\label{section:strategy}
Theorem \ref{theorem:not-semimartingale} and Corollary \ref{corollary:young-doesn't-work} show that classical approaches to stochastic calculus for $X$ such as the It\^o calculus or Riemann-Stieltjes integration do not work. Therefore we intend to construct a canonical rough path above $X$ which would allow for all of the benefits detailed in Section \ref{section:background-on-rough-paths}. 

To this aim, we consider the piecewise smooth $X_N:[0,T]\to \mathbb R^2$ as in \eqref{def-X_N(t)} and ``enhance" it with well defined, canonical iterated integrals defined through Riemann-Stieltjes integration. We denote this by
$$\mathbf X_N=(X_N,\mathbb X_N),$$
where $\mathbb X_N:\Delta_2^{(0,T)}\to \mathbb R^{2\times 2}.$ Here, we have that
$$\mathbb X_N^{i,j}(s,t):=\int_s^t (X_N^i(r)-X_N^j(s))dX_n^j(r)$$
is a well defined Riemann-Stieltjes integral as $X_N$ is piecewise smooth.

We stress that the sums $S_{\lfloor tN\rfloor}$ appearing in the definition of $X_N$ are random since $x$ is randomly distributed according to a probability measure $\lambda$ on $\R$ absolutely continuous with respect to the Lebesgue measure. The parameters $(\alpha,\beta)$ are fixed and not both rational throughout our discussion.

There are two things we need to show:
\begin{enumerate}
    \item Convergence of the finite dimensional distributions of $\mathbf X_N$.
    \item Tightness of the laws of $\mathbf X_N$ on the space $\mathscr C_{g}^\gamma$ for any $\gamma\in (0,\frac{1}{2})$.
\end{enumerate}
As $\mathscr C_g^\gamma$ can be seen as the space of $\gamma$-H\"older functions ``enhanced" with extra information (see Definition \ref{def:geometric-rough-paths} and part (iii) of Definition \ref{def:rough-path}), it is necessary to first show tightness of $X_N$ on the space of H\"older continuous functions $C^\gamma([0,T],\mathbb R^2)$ for any $\gamma\in (0,\frac{1}{2}).$ This is done in Section \ref{section:tightness-of-X_N}. 

Interestingly, $\mathbb X_N$ can be written in terms of higher rank theta functions and some remainders that go to $0$ in probability. In particular, we need to study a rank two theta sum over a triangle which arises because the iterated integrals $\mathbb X_N$ are double integrals over the triangle. This decomposition is described in Section \ref{section:rough-path-to-theta}. However, before that, we must introduce the higher ranked theta functions that we need. We introduce them in Section \ref{section:intro-theta-2-sum}. In Section \ref{section:defining-theta-T} we construct the rank two theta sum over a triangle. We show tightness in Section \ref{section:tightness} and convergence of the finite dimensional distributions in Section \ref{section:fdds}. 

\section{\texorpdfstring{Tightness of $X_N$}{}}\label{section:tightness-of-X_N}
In this section, we show that the laws of $X_N$ are tight on $C^\gamma([0,T],\R^2)$ for $\gamma<1/2.$ For simplicity, in our proofs we will set $T=1$ and consider the spaces $C^\gamma([0,1],\R^2)$.

\subsection{\texorpdfstring{Lack of Tightness on $C^{\gamma}$ for $\gamma>\frac{1}{2}$}{}}
Although we know from Corollary \ref{cor:not-gamma-holder-ggt12} that the laws of $X_N$ cannot be tight on $C^\gamma([0,T],\R^2)$  for $\gamma>1/2$ because the limiting process is not in $C^\gamma$, for expository reasons we give a proof directly from $X_N$. Note that we consistently suppress the explicit dependence of $X_N$ upon $x$ in our notation.

\begin{proposition}\label{prop:lambda(p-var>R)=1}
    Let $R>0$, let $p<2$ and let $\lambda$ be a Borel probability measure on $\mathbb R$ that is absolutely continuous with respect to Lebesgue measure, then
    \begin{equation}\label{eq:p-var-lt2}
        \sup_{N\in \mathbb N}\lambda \left(\left\{x\in \mathbb R: \|X_N\|_{p-\operatorname{var}}>R
        \right\}\right)=1.
    \end{equation}
\end{proposition}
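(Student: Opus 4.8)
The plan is to exhibit, for each $N$, an explicit set of $x$ of full $\lambda$-measure on which $\|X_N\|_{p-\operatorname{var}}>R$, or more precisely to show that the $p$-variation of $X_N$ blows up as $N\to\infty$ uniformly enough that the supremum over $N$ of the $\lambda$-probabilities equals $1$. The key point is that $X_N$ is a piecewise-linear interpolation of the partial sums $\frac{1}{\sqrt N}S_{\lfloor tN\rfloor}(x)$, and each increment over a lattice interval $[\tfrac{n-1}{N},\tfrac{n}{N}]$ has modulus exactly $\frac{1}{\sqrt N}|S_n(x)-S_{n-1}(x)| = \frac{1}{\sqrt N}$, since $|\e{\cdots}|=1$. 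So along the canonical partition $\mathcal P_N=\{0,\tfrac1N,\dots,1\}$ the $p$-variational sum is bounded below by $\sum_{n=1}^N \big(\tfrac{1}{\sqrt N}\big)^p = N^{1-p/2}$, which tends to $+\infty$ as $N\to\infty$ because $p<2$. Crucially this bound is completely deterministic: it holds for \emph{every} $x\in\mathbb R$, not just $\lambda$-almost every $x$.

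The steps, in order: First I would fix $R>0$ and $p<2$ and choose $N_0$ so large that $N_0^{1-p/2}>R^p$ (equivalently $N_0^{1/p-1/2}>R$, using $\|f\|_{p-\operatorname{var}}^p=\sup_{\mathcal P}\sum|f(t_{k+1})-f(t_k)|^p$ per the normalization in Definition \ref{def:p-var}). Second, for that $N_0$ and any $x$, evaluate the variational sum along the uniform partition of mesh $1/N_0$: each consecutive difference of $X_{N_0}$ has absolute value $1/\sqrt{N_0}$ (reading off \eqref{def-X_N(t)} at $t=n/N_0$), so the sum equals $N_0\cdot N_0^{-p/2}=N_0^{1-p/2}>R^p$, whence $\|X_{N_0}\|_{p-\operatorname{var}}>R$. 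Third, conclude that $\{x:\|X_{N_0}\|_{p-\operatorname{var}}>R\}=\mathbb R$, so its $\lambda$-measure is $1$, and therefore the supremum over all $N$ in \eqref{eq:p-var-lt2} is $1$.

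Strictly speaking there is essentially no obstacle here — the statement is robust and the argument is elementary — but the one point that deserves a sentence of care is the bookkeeping with the exponent in Definition \ref{def:p-var}, where $\|f\|_{p-\operatorname{var}}^p$ is written with a sum of \emph{squares} (an apparent typo for $p$-th powers); I would simply state that we use $\sum_k|f(t_{k+1})-f(t_k)|^p$, which is the intended definition, and note that with either reading the conclusion is the same since the terms $1/\sqrt N$ are bounded by $1$. The only other thing worth remarking is that we never need absolute continuity of $\lambda$ for this particular proposition — it is stated for uniformity with the rest of the section — because the bound is pointwise in $x$; I would mention this parenthetically rather than dwell on it.
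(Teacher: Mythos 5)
Your proposal is correct and follows essentially the same route as the paper: both evaluate the variational sum along the uniform partition of mesh $1/N$, use that each lattice increment of $X_N$ has modulus exactly $1/\sqrt N$ so the sum $N^{1-p/2}$ is deterministic and exceeds $R^p$ for large $N$ when $p<2$, and conclude the indicator set has measure $1$ or $0$ accordingly. Your side remarks (the exponent typo in Definition \ref{def:p-var} and the fact that absolute continuity of $\lambda$ is not actually used) are accurate.
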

\begin{proof}
    For each $N$ we can consider the uniform partition to get the lower bound
    \begin{align*}
        \lambda (\{x\in \mathbb R: &\|X_N\|_{p-\operatorname{var}}>R\})\\
        &\geq \lambda \left(\left\{x\in \mathbb R: \left(\sum_{k=0}^{N-1} \left|X_N\left(\frac{k+1}{N}\right)-X_N\left(\frac{k}{N}\right)\right|^p\right)^{\frac{1}{p}}>R
        \right\}\right)\\
        &=\lambda \left(\left\{x\in \mathbb R: \left(\sum_{k=0}^{N-1} \left|\frac{1}{\sqrt{N}}\right|^p\right)^{\frac{1}{p}}>R
        \right\}\right)\\
        &=\lambda \left(\left\{x\in \mathbb R: N^{\frac{1}{p}-\frac{1}{2}}>R
        \right\}\right)\\
        &=\begin{cases}
            1 &\text{ if }N^{\frac{1}{p}-\frac{1}{2}}>R\\
            0&\text{ else}.
        \end{cases}
    \end{align*}
If $p<2$ then $\frac{1}{p}-\frac{1}{2}>0$, proving equation \eqref{eq:p-var-lt2}.
\end{proof}
\begin{corollary}
    Let $R>0$, let $\gamma>1/2$ and let $\lambda$ be a Borel probability measure on $\mathbb R$ that is absolutely continuous with respect to Lebesgue measure, then
    \begin{equation}\label{eq:not-gamma-holder-gt1/2}
        \sup_{N\in \mathbb N}\lambda \left(\left\{x\in \mathbb R: \|X_N\|_{\gamma}>R
        \right\}\right)=1.
    \end{equation}
\end{corollary}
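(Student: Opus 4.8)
The plan is to deduce this immediately from Proposition~\ref{prop:lambda(p-var>R)=1} via the elementary embedding $C^\gamma([0,1],\R^2)\hookrightarrow C^{p-\operatorname{var}}([0,1],\R^2)$ with $p=1/\gamma$. For any $f\in C^\gamma([0,1],\R^2)$ and any partition $\mathcal P=\{0=t_0<\cdots<t_{n+1}=1\}$ one has, since $\gamma p=1$,
\[
\sum_{k=0}^n |f(t_{k+1})-f(t_k)|^{p}\ \le\ \|f\|_\gamma^{p}\sum_{k=0}^n (t_{k+1}-t_k)^{\gamma p}\ =\ \|f\|_\gamma^{p},
\]
so that $\|f\|_{p-\operatorname{var}}\le\|f\|_\gamma$. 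Since $\gamma>1/2$ forces $p=1/\gamma<2$, for every \emph{fixed} $N$ the event $\{x:\|X_N\|_\gamma>R\}$ contains $\{x:\|X_N\|_{p-\operatorname{var}}>R\}$, hence
\[
\sup_{N\in\mathbb N}\lambda\!\left(\{x:\|X_N\|_\gamma>R\}\right)\ \ge\ \sup_{N\in\mathbb N}\lambda\!\left(\{x:\|X_N\|_{p-\operatorname{var}}>R\}\right)\ =\ 1
\]
by Proposition~\ref{prop:lambda(p-var>R)=1} applied with this $p<2$; since the left-hand side is at most $1$, equality follows.

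Alternatively, and mirroring the proof of Proposition~\ref{prop:lambda(p-var>R)=1} directly, one can bypass the $p$-variation entirely. On the uniform partition of $[0,1]$ into $N$ subintervals, $X_N(x;k/N)=N^{-1/2}S_k(x)$, so the increment of $X_N$ across $[k/N,(k+1)/N]$ is $N^{-1/2}\bigl(S_{k+1}(x)-S_k(x)\bigr)=N^{-1/2}\e{(\tha (k+1)^2+\beta(k+1))x+\alpha(k+1)}$, which has modulus exactly $N^{-1/2}$. Hence for \emph{every} $x\in\R$,
\[
\|X_N\|_\gamma\ \ge\ \frac{|X_N(x;(k+1)/N)-X_N(x;k/N)|}{(1/N)^\gamma}\ =\ N^{\gamma-1/2},
\]
and $\gamma>1/2$ gives $N^{\gamma-1/2}\to\infty$, so $\lambda(\{x:\|X_N\|_\gamma>R\})=1$ for all sufficiently large $N$, whence the supremum is $1$.

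There is no real obstacle: the one point worth stressing is that the comparison between the $\gamma$-H\"older norm and the $(1/\gamma)$-variation norm is used at each fixed level $N$ rather than in a limit, so the measure-one conclusion of Proposition~\ref{prop:lambda(p-var>R)=1} transfers verbatim. It is also worth remarking, as in the preceding discussion, that this gives a direct proof from $X_N$ of the non-tightness on $C^\gamma$ for $\gamma>1/2$ that was already implied by Corollary~\ref{cor:not-gamma-holder-ggt12}.
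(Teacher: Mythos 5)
Your first argument is exactly the paper's proof: the pointwise bound $\|f\|_{p\text{-}\operatorname{var}}\le\|f\|_\gamma$ with $p=1/\gamma<2$ gives the inclusion of events $\{\|X_N\|_{p\text{-}\operatorname{var}}>R\}\subseteq\{\|X_N\|_\gamma>R\}$ at each fixed $N$, and Proposition~\ref{prop:lambda(p-var>R)=1} finishes. The alternative direct computation on the uniform partition is a correct (and slightly more self-contained) restatement of the same underlying fact, but it is not needed.
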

\begin{proof}
    We have the inclusion 
    $$\left\{x\in \mathbb R: \|X_N\|_{p-\operatorname{var}}>R \right\}\subseteq \left\{x\in \mathbb R: \|X_N\|_{\gamma}>R
        \right\}$$ where $p=\frac{1}{\gamma}$. Proposition \ref{prop:lambda(p-var>R)=1} then yields \eqref{eq:not-gamma-holder-gt1/2}.
\end{proof}\subsection{Kolmogorov Tightness Criterion is Not Enough}
The standard way of proving tightness on H\"older spaces is through Kolmogorov's tightness criterion. However Kolmogorov is often non optimal when the underlying sequence of processes lacks moments. The theta process $X$ only has $6-\varepsilon$ moments so Kolmogorov's tightness criterion in insufficient which we show here for expository reasons. 
\begin{proposition}\label{prop:moment-of-increments-1st-order}
    Let $p<4$. Then uniformly over $N$, we have that $$E(|X_N(t)-X_N(s)|^p)\ll |t-s|^{\frac{p}{2}}.$$
\end{proposition}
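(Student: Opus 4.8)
The plan is to control the $p$-th moment of the increment $X_N(t)-X_N(s)$ uniformly in $N$, for $p<4$, by reducing to a bound on the moments of partial theta sums and then interpolating. Recall that by \eqref{def-X_N(t)} the increment $X_N(t)-X_N(s)$ is, up to the piecewise-linear interpolation at the endpoints, equal to $\frac{1}{\sqrt N}\sum_{n=\lfloor sN\rfloor+1}^{\lfloor tN\rfloor}\e{(\tfrac12 n^2+\beta n)x+\alpha n}$. The key input is a uniform-in-$M,\ell$ bound of the form $E_\lambda\big(\big|\sum_{n=\ell+1}^{\ell+M}\e{(\tfrac12 n^2+\beta n)x+\alpha n}\big|^p\big)\ll M^{p/2}$ for $p<4$. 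This is exactly the kind of moment bound that underlies the tightness half of Theorem \ref{thm-existence-of-theta-process} in \cite{Cellarosi-Marklof}: by stationarity of the increments of the limit (property (a)) together with the uniform integrability coming from the fact that $X$ has $6-\varepsilon$ moments (property (h)), one expects such a bound at the level of $X_N$. Concretely, I would invoke the $L^p$-boundedness of $\Theta_\chi$ on $\GamG$ (the $\mathrm L^4$ statement quoted as Corollary 2.4 in \cite{Cellarosi-Marklof}, extended to $\mathrm L^{6-\varepsilon}$ via the tail estimate (h)) together with the identification of $X(t)$ with $\Theta_{(0,t)}$, and the equidistribution of the relevant horocycle lifts that lets one pass from the arithmetic average over $x$ to the integral over $\GamG$.

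The main steps, in order, would be: (1) Write the increment as a normalized partial theta sum plus a boundary correction of size $O(N^{-1/2})$ coming from the interpolation, which is harmless. (2) Establish the second-moment identity $E(|X_N(t)-X_N(s)|^2)\ll |t-s|$ directly — this is elementary, since the exponentials $\e{(\tfrac12 n^2+\beta n)x}$ are orthonormal in $L^2(\lambda)$ up to a negligible error controlled by the Fourier decay of the density of $\lambda$, so the diagonal terms contribute $(\lfloor tN\rfloor-\lfloor sN\rfloor)/N\ll |t-s|$ and the off-diagonal terms are small. (3) Establish the fourth-moment bound $E(|X_N(t)-X_N(s)|^4)\ll |t-s|^2$; this is where the content lies and where one genuinely needs the automorphic machinery — one expands the fourth power into a sum over quadruples $(n_1,n_2,n_3,n_4)$, and the diagonal contribution $n_1=n_3$, $n_2=n_4$ (and permutations) gives the main term $\asymp (M/N)^2\asymp |t-s|^2$, while the off-diagonal terms must be shown to be lower order. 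Rather than attempting this by elementary exponential-sum estimates, I would quote the bound $E(|X(t)-X(s)|^4)\ll |t-s|^2$ for the limit process (consequence of Proposition \ref{prop:covariance}, Corollary \ref{cor:uncorrelated-increments}, the scaling property (b), and finiteness of the sixth moment (h)), together with uniform integrability of $|X_N(t)-X_N(s)|^4$ along the weak convergence $X_N\Rightarrow X$ — the latter again furnished by the uniform $L^{6-\varepsilon}$ bound — to transfer the estimate from $X$ to $X_N$ uniformly in $N$. (4) Interpolate: for $2\le p<4$, $E(|X_N(t)-X_N(s)|^p)\le E(|X_N(t)-X_N(s)|^2)^{(4-p)/2}E(|X_N(t)-X_N(s)|^4)^{(p-2)/2}\ll |t-s|^{(4-p)/2}|t-s|^{p-2}=|t-s|^{p/2}$ by Hölder's inequality; for $p<2$ use $p$-norm monotonicity against the $p=2$ case together with the bounded time horizon $[0,1]$.

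The hard part will be step (3): producing the fourth-moment bound with the correct exponent $|t-s|^2$ uniformly over $N$ and over all $0\le s<t\le 1$. The delicate point is that a naive diagonal count gives an extra factor from near-diagonal quadruples, and one must ensure the off-diagonal cancellation is strong enough — this is precisely the regime where the equidistribution results of \cite{Marklof-Duke-1999,Cellarosi-Marklof} are needed, as opposed to classical Weyl-type bounds. I expect the cleanest route is to avoid re-deriving the exponential-sum estimate and instead argue by transfer from the limit: combine the weak convergence $X_N\Rightarrow X$ on $C_0$, the uniform $L^{6-\varepsilon}$ bound on increments (which gives uniform integrability of the fourth powers), and the explicit value $E(|X(t)-X(s)|^4)=2|t-s|^2$ available from the scaling and covariance structure of $X$, to conclude that $\sup_N E(|X_N(t)-X_N(s)|^4)\ll|t-s|^2$. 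If a fully self-contained exponential-sum proof is preferred, one falls back on the second- and fourth-moment computations carried out in \cite{Cellarosi-Marklof} for the tightness of $X_N$ itself, since the present statement is essentially the moment estimate underlying that tightness.
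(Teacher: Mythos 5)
Your proposal takes a genuinely different route from the paper, and the route has a gap at its central step. The paper's proof is a two-line layer-cake argument: write
\begin{equation*}
E\!\left(\frac{|X_N(t)-X_N(s)|^p}{|t-s|^{p/2}}\right)=\int_0^\infty \lambda\!\left(\left\{x:\frac{|X_N(t)-X_N(s)|}{|t-s|^{1/2}}>R^{1/p}\right\}\right)dR,
\end{equation*}
observe that the normalized increment is exactly $|\Theta_\chi|$ evaluated at a suitable point with $y\leq 1$ (the analogue of \eqref{key-relation-X_N-Theta_chi} for the interval $(s,t]$, using the scaling in the $y$-coordinate), and then invoke the uniform-in-$y$ tail bound $\lambda(|\Theta_\chi|>R)\ll (1+R)^{-4}$ of Proposition \ref{prop:3.17-simplified}. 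The integrand is then $\ll (1+R^{1/p})^{-4}$, which is integrable precisely when $p<4$ --- that is where the hypothesis $p<4$ enters, and no moment computation, interpolation, or passage to the limit is needed.

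The gap in your argument is step (3). You want the endpoint bound $\sup_N E(|X_N(t)-X_N(s)|^4)\ll|t-s|^2$ and propose to obtain it by transferring $E(|X(t)-X(s)|^4)=2|t-s|^2$ from the limit process via uniform integrability of the fourth powers, "furnished by the uniform $L^{6-\varepsilon}$ bound." No such uniform-in-$N$ bound exists in the paper: the $6-\varepsilon$ moments in property (h) are a property of the \emph{limit} $X$ (equivalently of $\Theta_\chi$ under Haar measure on $\Gamma\backslash G$), whereas the only tail bound available uniformly over $N$ (equivalently over $0<y\leq1$) is the $R^{-4}$ decay of Proposition \ref{prop:3.17-simplified}, which gives uniform control of moments only for $p$ strictly less than $4$. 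This is exactly why the proposition is stated for $p<4$. Without a uniform moment of order strictly above $4$, you cannot justify convergence of fourth moments along the weak convergence $X_N\Rightarrow X$, and even granting pointwise convergence in $(s,t)$ you would still need uniformity over all pairs $(s,t)$ and all $N$ simultaneously. Your fallback of quoting "the fourth-moment computations underlying the tightness in \cite{Cellarosi-Marklof}" does not exist in the form you describe --- tightness there is not obtained from a Kolmogorov-type fourth-moment criterion (the present paper explicitly points out that Kolmogorov's criterion is insufficient here). Steps (1), (2) and the Hölder interpolation in (4) are fine but are rendered unnecessary once the tail bound is used directly.
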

\begin{proof}
    We have by inverse CDF formula for expectation that
    \begin{align*}
        E\!\left(\frac{|X_N(t)-X_N(s)|^p}{|t-s|^{p/2}}\right)&=\int_0^\infty\lambda\!\left(\left\{x\in \mathbb R:\frac{|X_N(t)-X_N(s)|^p}{|t-s|^{\frac{p}{2}}}>R\right\}\right)dR\\
        &= \int_0^\infty \lambda\!\left(\left\{x\in \mathbb R:\frac{|X_N(t)-X_N(s)|}{|t-s|^{\frac{1}{2}}}>R^{\frac{1}{p}}\right\}\right)dR\\
        &\leq \int_0^\infty \lambda\left(\left\{x\in \mathbb R:|\Theta_\chi|>R^{\frac{1}{p}}\right\}\right)dR\\
        &<\infty,
    \end{align*}
    by Proposition 3.17 in \cite{Cellarosi-Marklof}.
\end{proof}
\begin{proposition}
    Let $\gamma\in (0,\frac{1}{4})$. Then the laws of $X_N$ are tight on $C^\gamma([0,T],\R^2)$.
\end{proposition}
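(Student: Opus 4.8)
The plan is to invoke the classical Kolmogorov tightness criterion on H\"older spaces, feeding it the uniform moment bound just obtained in Proposition~\ref{prop:moment-of-increments-1st-order}. Recall the criterion (see, e.g., \cite{Friz-Victoir-Book} or \cite{Friz-Hairer-Book}): if $(Z_N)_N$ is a family of $\R^2$-valued processes on $[0,T]$ with $Z_N(0)=0$ almost surely, and if there are constants $p>1$, $\beta>0$, $C>0$, all independent of $N$, with
\begin{equation*}
    E\big(|Z_N(t)-Z_N(s)|^p\big)\le C\,|t-s|^{1+\beta}\qquad\text{for all }s,t\in[0,T],
\end{equation*}
then $\sup_N E\big(\|Z_N\|_{\gamma'}^p\big)<\infty$ for every $\gamma'<\beta/p$; in particular the laws of $Z_N$ are tight on $C^\gamma([0,T],\R^2)$ for every $\gamma<\beta/p$.

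First I would fix $\gamma\in(0,\tfrac14)$ and choose an exponent $p$ that makes the criterion work. Since $\gamma>0$ we have $\tfrac1{1/2-\gamma}\in(2,4)$, so we may pick $p\in\big(\tfrac1{1/2-\gamma},4\big)$; then $p>2$ and $\tfrac12-\tfrac1p>\gamma$. By Proposition~\ref{prop:moment-of-increments-1st-order} (valid for this $p<4$) there is $C=C(p)$, independent of $N$ and of $s,t$, with $E\big(|X_N(t)-X_N(s)|^p\big)\le C\,|t-s|^{p/2}$. Writing $p/2=1+\beta$ with $\beta=\tfrac p2-1>0$, and noting $X_N(0)=0$ for all $N$ by \eqref{def-X_N(t)}, the Kolmogorov criterion applies with $\beta/p=\tfrac12-\tfrac1p>\gamma$. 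Hence there is $\gamma'\in(\gamma,\tfrac12-\tfrac1p)$ with $K:=\sup_N E\big(\|X_N\|_{\gamma'}^p\big)<\infty$.

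Finally I would convert this uniform bound into tightness in the usual way. For $M>0$ the set $\mathcal K_M:=\{f\in C^{\gamma'}([0,T],\R^2):\ f(0)=0,\ \|f\|_{\gamma'}\le M\}$ is bounded in $C^{\gamma'}$, hence equicontinuous and uniformly bounded, so by Arzel\`a--Ascoli together with the interpolation $C^{\gamma'}\hookrightarrow C^\gamma$ (for $\gamma<\gamma'$) it is relatively compact in $C^\gamma([0,T],\R^2)$. By Markov's inequality, $\lambda(\|X_N\|_{\gamma'}>M)\le K/M^p$ uniformly in $N$, so $\lambda(X_N\notin\mathcal K_M)\le K/M^p$ can be made as small as we like by taking $M$ large. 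This is precisely tightness of the laws of $X_N$ on $C^\gamma([0,T],\R^2)$.

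The argument has essentially no obstacle beyond bookkeeping; the only thing to watch is the moment budget. Kolmogorov's criterion yields H\"older exponent up to $\beta/p=\tfrac12-\tfrac1p$, which tends to $\tfrac14$ only as $p\uparrow4$, while Proposition~\ref{prop:moment-of-increments-1st-order} supplies the required $p$-th moment bound precisely for $p<4$. Consequently the endpoint $\gamma=\tfrac14$ is not reached by this method; since the true H\"older regularity of $X$ is any $\gamma<\tfrac12$, a sharper (non-Kolmogorov) tightness argument is needed later in this section to cover the full range.
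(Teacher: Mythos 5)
Your proof is correct and follows the same route as the paper: apply Kolmogorov's tightness criterion with the uniform moment bound of Proposition \ref{prop:moment-of-increments-1st-order} for $p<4$, obtaining tightness on $C^\gamma$ for all $\gamma<\tfrac12-\tfrac1p$, which covers the full range $\gamma\in(0,\tfrac14)$ as $p\uparrow 4$. The extra detail you supply (Markov plus the compact embedding $C^{\gamma'}\hookrightarrow C^\gamma$) is just the standard unpacking of the criterion, and your closing remark about why the endpoint $\tfrac14$ is not reached matches the paper's own motivation for the sharper argument that follows.
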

\begin{proof}
    By Kolmogorov tightness criterion, we have that the laws of $X_N$ are tight for all $0<\gamma<\frac{p/2-1}{p}=\frac{1}{2}-\frac{1}{p}.$ Proposition \ref{prop:moment-of-increments-1st-order} with $p=4-\varepsilon$ and $0<\varepsilon<2$ gives the statement.
\end{proof}
\begin{remark}
    If we had a version of Proposition \ref{prop:moment-of-increments-1st-order} that held for all $p$, Kolmogorov tightness would give us tightness on $C^\gamma([0,T],\R^2)$ for all $\gamma<\frac{1}{2}-\frac{1}{p}$ which, as $p\to\infty$, would give us the expected tightness on $C^\gamma([0,T],\R^2)$ for all $\gamma<\frac{1}{2}$. This is not a unique phenomenon about $X$. For example, let $f(t)$ be a smooth deterministic function and let $Z$ be a random variable with no (or few) moments. Then the law of $Z(t)=Zf(t)$ is obviously tight on any H\"older space but Kolmogorov tightness criterion does not work.
\end{remark}

\subsection{\texorpdfstring{Tightness of $X_N$ on $C^\gamma$ for $\gamma<\frac{1}{2}$}{}}
In this subsection we finally prove tightness of $X_N$.

The key observation from \cite{Cellarosi-Marklof}, relating the normalized sum $X_N$ and the theta function $\Theta_\chi$,  is that
\begin{align}\label{key-relation-X_N-Theta_chi}
    X_N(x;t)=e^{\frac{s}{4}}\Theta_\chi(x+i ye^{-s},0;\sve{\alpha+\beta x}{0},0)+\mathcal{R}_N(x),
\end{align}
where $\chi=\mathbf{1}_{(0,1)}$, $y=N^{-2}$, $s=2\log t$, and $\mathcal{R}(x)_N$ goes to zero as $N\to\infty$ uniformly in $t$.
Using the notations of Section \ref{section-Theta_chi} we write the indicator of $(0,1)$ as $\chi=\chi_{L}+\chi_{R}$. 
We will use the following result (or slight variations thereof for indicators of other intervals) several times.
\begin{proposition}[\cite{Cellarosi-Marklof}, Proposition 3.17 therein]\label{prop:3.17-simplified} Let $\lambda$ be a Borel probability measure on $\R$ which is absolutely continuous with respect to the Lebesgue measure. Let $f\in\{\chi_{L},\chi_{R},\chi\}$. Then, uniformly in $y\leq 1$, $R>0$, and $(\vecxi,\zeta)\in\Hei$, we have 
\begin{align}\label{statement-prop3.17}
    \lambda\left(\left\{x\in\R:\:\left|\Theta_{f}(x+i y,0;\vecxi,\zeta)\right|>R\right\}\right)\ll \frac{1}{(1+R)^4}.
\end{align}
\end{proposition}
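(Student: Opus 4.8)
The plan is to use the partition of unity $\chi=\chi_L+\chi_R$ from Section~\ref{section-Theta_chi} together with the linearity of $f\mapsto\Theta_f$ to reduce everything to a tail bound for the single \emph{regular} building block $\Delta$, and then to recombine the dyadic pieces along the geodesic flow. Since $|\Theta_\chi|\le|\Theta_{\chi_L}|+|\Theta_{\chi_R}|$ and $(1+\tfrac12R)^{-4}\ll(1+R)^{-4}$, it is enough to bound $\lambda(\{x:|\Theta_{\chi_L}(x+iy,0;\vecxi,\zeta)|>R\})$ and the analogous quantity for $\chi_R$. For $\chi_L$ I would use \eqref{Theta_chi(gPhi^s)} with $s=0$: since right translation by $\Phi^{-j\log4}$ fixes $\phi=0$ and the Heisenberg coordinates $(\vecxi,\zeta)$ while multiplying the height by $4^{j}$, the identity reads
\begin{equation*}
\Theta_{\chi_L}(x+iy,0;\vecxi,\zeta)=\sum_{j\ge0}2^{-j/2}\,\Theta_\Delta(x+i\,4^{j}y,0;\vecxi,\zeta),
\end{equation*}
and for fixed $y>0$ this is a finite sum, since $\Delta$ is compactly supported and hence the terms with $4^{j}y>1$ vanish for all but $O(1)$ indices. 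The $\chi_R$ series in \eqref{partition-of-unity-1} leads to the same analysis, with $\Delta$ replaced by the equally regular reflection $\Delta_{-}(w)=\Delta(-w)$ and the Heisenberg coordinates shifted by a fixed amount (the $x$-dependent parts of that shift entering only through unimodular factors, which do not affect $|\Theta_{\Delta_{-}}|$). So it suffices to treat $\chi_L$.

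The key analytic input — and the step I expect to be the main obstacle — is the corresponding tail bound for the regular block: uniformly in $\tau\le1$, $T>0$ and $(\vecxi,\zeta)\in\Hei$,
\begin{equation*}
\lambda(\{x\in\R:\ |\Theta_\Delta(x+i\tau,0;\vecxi,\zeta)|>T\})\ \ll\ (1+T)^{-4}.
\end{equation*}
By \eqref{def-Theta_f(g)} the quantity $\Theta_\Delta(x+i\tau,0;\vecxi,\zeta)$ equals a unimodular factor times $\tau^{1/4}$ times the incomplete quadratic Weyl sum $\sum_{n}\Delta((n-\xi_2)\tau^{1/2})\e{\tha(n-\xi_2)^2x+n\xi_1}$, of length $\asymp\tau^{-1/2}$ with a fixed smooth, compactly supported cutoff; the Heisenberg shifts enter only through a translation of the cutoff and through additive twists of the phase. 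The displayed bound is then the sharp large-deviation estimate for such sums: large values of the sum occur only when $x$ lies in a major arc about a rational $a/q$ with $q\ll\tau^{-1/2}$, and summing the lengths of these arcs, weighted by the sizes of the corresponding Gauss sums, produces exactly the exponent $4$ — the ``$\psi^{4}$'' phenomenon of Fiedler--Jurkat--K\"orner \cite{FJK1977}, here in the form uniform in $\tau\le1$ and $(\vecxi,\zeta)$. Equivalently one may use the pointwise majorant $|\Theta_\Delta(x+i\tau,0;\vecxi,\zeta)|\ll_\Delta Y(x+i\tau)^{1/4}$, where $Y(z)=\sup_{\gamma\in\sltz}\Im(\gamma z)$ is the maximal height in the $\sltz$-orbit, together with the uniform Diophantine bound $\operatorname{Leb}(\{x\in[0,1]:Y(x+i\tau)>U\})\ll U^{-1}$, and then pass from Lebesgue measure on a bounded interval to a general absolutely continuous $\lambda$ by a routine approximation. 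The delicate point is obtaining the exact exponent $4$ \emph{uniformly} in the height $\tau\le1$ and in the Heisenberg shifts, rather than $4-\varepsilon$ or a bound with a $\log(1/\tau)$ loss; this is precisely where the smoothness and compact support of $\Delta$ are used, and why the partition of unity is introduced in the first place.

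Granting this, I would conclude by recombining the dyadic pieces. Put $J:=\lfloor\log_4(1/y)\rfloor$, so $4^{j}y\le1$ exactly for $j\le J$. For $j>J$ at most two terms of the series are nonzero, each satisfying $2^{-j/2}|\Theta_\Delta(x+i\,4^{j}y,0;\vecxi,\zeta)|\le2^{-j/2}(4^{j}y)^{1/4}\|\Delta\|_\infty=y^{1/4}\|\Delta\|_\infty$, so the tail $\sum_{j>J}2^{-j/2}|\Theta_\Delta(x+i\,4^{j}y,0;\vecxi,\zeta)|$ is bounded by an absolute constant $C$. The claim is trivial for $R\le2C$, so assume $R>2C$; then $\lambda(\{|\Theta_{\chi_L}|>R\})\le\lambda(\{\sum_{0\le j\le J}2^{-j/2}|\Theta_\Delta(x+i\,4^{j}y,0;\vecxi,\zeta)|>R/2\})$. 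Fix $\varepsilon\in(0,\tfrac12)$ and weights $w_j:=\tfrac R2(1-2^{-\varepsilon})2^{-\varepsilon j}$, which sum to $R/2$; a union bound gives $\lambda(\{|\Theta_{\chi_L}|>R\})\le\sum_{0\le j\le J}\lambda(\{|\Theta_\Delta(x+i\,4^{j}y,0;\vecxi,\zeta)|>2^{j/2}w_j\})$. Since $4^{j}y\le1$ for $j\le J$ and $2^{j/2}w_j=\tfrac12(1-2^{-\varepsilon})R\,2^{(1/2-\varepsilon)j}$, the estimate of the previous paragraph bounds each summand by $\ll_\varepsilon(R\,2^{(1/2-\varepsilon)j})^{-4}=R^{-4}2^{-(2-4\varepsilon)j}$, and summing the geometric series over $j\ge0$ yields $\lambda(\{|\Theta_{\chi_L}|>R\})\ll_\varepsilon R^{-4}\ll(1+R)^{-4}$. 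With the reductions of the first paragraph this proves the Proposition; the same argument with exponent $6$ in place of $4$ (using the improved tail of \cite{Cellarosi-Griffin-Osman} for $\Theta_\Delta$) would give the sharper version.
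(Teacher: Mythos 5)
Your write-up is correct in structure, but note that the paper does not actually reprove this statement: its ``proof'' consists of the citation to Proposition 3.17 of \cite{Cellarosi-Marklof} for $f=\chi$, together with the remark that the cases $f=\chi_L,\chi_R$ follow by running that proof with one of the two series in \eqref{partition-of-unity-1} deleted. What you have written is a faithful reconstruction of the Cellarosi--Marklof argument. The identity $\Theta_{\chi_L}(x+iy,0;\vecxi,\zeta)=\sum_{j\ge0}2^{-j/2}\Theta_\Delta(x+i4^jy,0;\vecxi,\zeta)$ is right (right multiplication by $\Phi^{-j\log4}$ fixes $\phi$ and the Heisenberg coordinates and scales the height by $4^j$), the weighted union bound over $0\le j\le J$ with weights proportional to $2^{-\varepsilon j}$ produces exactly the convergent series $\sum_j2^{-(2-4\varepsilon)j}$, and disposing of $R\le 2C$ trivially is the correct way to upgrade $R^{-4}$ to $(1+R)^{-4}$. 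The same scheme, in its $\varepsilon$-discounted form, is carried out explicitly in the paper's Lemma \ref{lemma:integral-over-gamma-R4}, Lemma \ref{lemma:tail-bound-smooth-f-random-y} and Proposition \ref{proposition:tail-with-random-y}, so your argument is consistent with everything surrounding the statement.

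Three caveats. (i) ``At most two terms of the series are nonzero'' for $j>J$ should read ``at most $O(1)$'': the support condition $(n-\xi_2)(4^jy)^{1/2}\in[\tfrac16,\tfrac23]$ confines $4^jy$ to a multiplicative window of ratio $16$, which can meet three consecutive powers of $4$; this is harmless. (ii) All of the analytic content sits in the asserted block estimate $\lambda\left(\left\{x:|\Theta_\Delta(x+i\tau,0;\vecxi,\zeta)|>T\right\}\right)\ll(1+T)^{-4}$ uniformly in $\tau\le1$ and $(\vecxi,\zeta)$. You identify the correct mechanism (the majorant $|\Theta_\Delta|\ll_\Delta\bigl(\sup_{\gamma\in\sltz}\Im(\gamma(x+i\tau))\bigr)^{1/4}$ via $\Gamma$-invariance and the growth bound in the cusp, plus the $\ll U^{-1}$ measure bound for the set where the maximal height exceeds $U$), but for a self-contained proof this must be written out; it is precisely Lemmas 3.18--3.20 of \cite{Cellarosi-Marklof}. (iii) The step you call ``routine approximation'' from Lebesgue measure to a general absolutely continuous $\lambda$ is the one genuinely delicate point: the estimate obtained on $[0,1]$ has to be pushed through the periodized density $\sum_kh(\cdot+k)$, which is controlled only when $h$ is essentially bounded. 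With the stated uniformity over all $(\vecxi,\zeta)\in\Hei$ and $y\le1$, the exponent $4$ does not survive an integrable singularity of $h$ at a resonant point: for instance $h(x)\asymp|x|^{-1/2}$ near $0$ and $\vecxi=\mathbf{0}$ give $|\Theta_\chi(x+iy,0;\mathbf{0},0)|\ge\tfrac12y^{-1/4}>R$ on all of $\{|x|\le y/100\}$ when $y\asymp R^{-4}$, a set of $\lambda$-measure $\asymp R^{-2}$. So this is exactly where the hypothesis on $\lambda$ (boundedness of the density, or a $\lambda$-dependent and non-uniform constant) does real work, and it deserves more than the word ``routine.''
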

\begin{proof}
For $f=\chi$, this is Proposition 3.17 in \cite{Cellarosi-Marklof}. A simpler version of the proof of that proposition (obtained by replacing the sum of two series in \eqref{partition-of-unity-1} by either one of the series) yields the statement for $f=\chi_L$ and $f=\chi_R$.
\end{proof}

In \eqref{statement-prop3.17} we have a fixed $0<y\leq 1$ and the implied constant is independent of $y$. We will need a version of Proposition \eqref{prop:3.17-simplified} where $y$ is a function of $x$, as long as it is bounded away from $0$. We have the following
\begin{proposition}\label{proposition:tail-with-random-y}
    Let $\lambda$ be a Borel probability measure on $\mathbb R$ which is absolutely continuous with respect to Lebesgue measure. Let $f\in\{\chi_{L},\chi_{R},\chi\}$. Let $R>0$ and let $0<\varepsilon<\frac{1}{4}$. Then for all $y=y(x)$ with $0<y_0\leq  y(x)\leq 1$,  $(\vecxi(x), \zeta(x))\in\Hei$ we have the bound
    \begin{equation*}
        \lambda\left(\left\{x\in \mathbb R: \left|\Theta_{f}(x+iy(x),0; \vecxi(x), \zeta(x))\right|>\frac{R}{(y(x))^\varepsilon}\right\}\right)\ll_\varepsilon\frac{1}{(1+R)^4},
    \end{equation*}
   where the implied constant is independent of $y_0$.
\end{proposition}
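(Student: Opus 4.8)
The plan is to reduce the statement with variable $y=y(x)$ to the fixed-$y$ estimate in Proposition \ref{prop:3.17-simplified} by a dyadic decomposition of the range $[y_0,1]$ and a covering argument, trading a loss of size $y^{-\varepsilon}$ for the summability of a geometric series. First I would partition the admissible range of $y$ as $[y_0,1]\subseteq\bigcup_{k\geq 0}I_k$ where $I_k=[2^{-(k+1)},2^{-k})$, keeping only the finitely many $k$ with $2^{-(k+1)}<1$ and $2^{-k}>y_0$; on each such block the quantity $y(x)^{-\varepsilon}$ is comparable to $2^{k\varepsilon}$, up to a factor $2^\varepsilon$. Writing $A_k=\{x\in\R:\ y(x)\in I_k\}$, we have
\begin{equation*}
\left\{x:\ |\Theta_f(x+iy(x),0;\vecxi(x),\zeta(x))|>\tfrac{R}{y(x)^\varepsilon}\right\}\ \subseteq\ \bigcup_{k\geq 0}\Big(A_k\cap\big\{x:\ |\Theta_f(x+iy(x),0;\vecxi(x),\zeta(x))|> R\,2^{k\varepsilon}\big\}\Big),
\end{equation*}
so that $\lambda$ of the left-hand set is at most $\sum_k \lambda(A_k\cap B_k)$ with $B_k$ the corresponding sublevel set.

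The key step is to bound $\lambda(A_k\cap B_k)$ for each fixed $k$ using Proposition \ref{prop:3.17-simplified}. The obstacle is that on $A_k$ the point at which $\Theta_f$ is evaluated still has an $x$-dependent imaginary part $y(x)$, whereas Proposition \ref{prop:3.17-simplified} wants a fixed $y$. I would handle this by slicing once more: cover the short interval $I_k$ (of length $2^{-(k+1)}$) by a bounded number — in fact, one suffices if we are willing to use that Proposition \ref{prop:3.17-simplified} holds \emph{uniformly} over $0<y\leq 1$ — actually the cleanest route is to not fix $y$ at all but to observe that the estimate \eqref{statement-prop3.17} is uniform in $y\in(0,1]$ and in $(\vecxi,\zeta)$. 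Concretely, for each fixed $k$ and each fixed value $y\in I_k$ one has, by Proposition \ref{prop:3.17-simplified} applied with that $y$ and with the (here, genuinely fixed) Heisenberg parameters,
\begin{equation*}
\lambda\big(\{x:\ |\Theta_f(x+iy,0;\vecxi,\zeta)|>R\,2^{k\varepsilon}\}\big)\ \ll\ \frac{1}{(1+R\,2^{k\varepsilon})^4}\ \ll\ \frac{2^{-4k\varepsilon}}{(1+R)^4}.
\end{equation*}
The difficulty is that $y$, $\vecxi$, $\zeta$ are functions of $x$, so one cannot literally freeze them. To make the slicing rigorous I would instead decompose $\R$ (the domain of $x$) into the level sets $A_k$ of $y(\cdot)$ as above, and on $A_k$ further decompose according to a fine grid in $(y,\vecxi,\zeta)$; but because the conclusion of Proposition \ref{prop:3.17-simplified} is \emph{uniform} over all $(y,\vecxi,\zeta)$ with $0<y\leq 1$, the set $\{x:\ |\Theta_f(x+iy(x),0;\vecxi(x),\zeta(x))|>R'\}$ is contained, for each $x$, in the union over the (uncountably many) parameters of the corresponding fixed-parameter sublevel sets — which is not directly summable. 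The correct and honest argument is: one does \emph{not} need to freeze the parameters, because Proposition 3.17 in \cite{Cellarosi-Marklof} is in fact proved by bounding $\lambda\{x:\ |\Theta_f(x+iy(x),0;\vecxi,\zeta)|>R\}$ with the implied constant uniform in the data; inspecting its proof, the same argument gives the bound with $y=y(x)$ provided $y(x)\geq y_0$, the only place the lower bound on $y$ enters being to keep certain volume factors bounded. So the real plan is: quote the uniform-in-parameters form, apply it on each dyadic block $A_k$ with threshold $R\,2^{k\varepsilon}$ to get $\lambda(A_k\cap B_k)\ll 2^{-4k\varepsilon}(1+R)^{-4}$, and sum.

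Finally I would sum the geometric series: since $\varepsilon>0$,
\begin{equation*}
\lambda\left(\left\{x:\ |\Theta_f(x+iy(x),0;\vecxi(x),\zeta(x))|>\tfrac{R}{y(x)^\varepsilon}\right\}\right)\ \ll\ \sum_{k\geq 0}\frac{2^{-4k\varepsilon}}{(1+R)^4}\ =\ \frac{1}{(1+R)^4}\cdot\frac{1}{1-2^{-4\varepsilon}}\ \ll_\varepsilon\ \frac{1}{(1+R)^4},
\end{equation*}
which is exactly the claimed bound, with implied constant depending on $\varepsilon$ (through $1/(1-2^{-4\varepsilon})$) but independent of $y_0$, $R$, and the functions $\vecxi,\zeta$. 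The restriction $\varepsilon<\tfrac14$ is what guarantees the exponent $4\varepsilon<1$ does no harm — indeed any $\varepsilon>0$ works for the summation, and the bound $\varepsilon<\tfrac14$ is presumably imposed only so that later applications (where this tail is integrated against $dR$ to produce a moment of order $<4$, cf. Proposition \ref{prop:moment-of-increments-1st-order}) retain enough integrability. The main obstacle, as indicated, is the bookkeeping needed to legitimately apply the fixed-$y$ tail bound when $y$, $\vecxi$, $\zeta$ all vary with $x$; this is resolved by appealing to the uniform form of Proposition 3.17 in \cite{Cellarosi-Marklof} rather than attempting to freeze parameters, after which only an elementary dyadic summation remains.
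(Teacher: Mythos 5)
There is a genuine gap, and it sits exactly at the step you yourself flag as the difficulty. Your dyadic decomposition in $y$ reduces matters to bounding $\lambda\big(A_k\cap\{x:|\Theta_f(x+iy(x),0;\vecxi(x),\zeta(x))|>R\,2^{k\varepsilon}\}\big)$, but within each block $A_k$ the argument $y(x)$ still varies over $I_k$, so Proposition \ref{prop:3.17-simplified} does not apply as stated; and your fallback --- that ``inspecting the proof'' of Proposition 3.17 in \cite{Cellarosi-Marklof} shows it goes through verbatim for $x$-dependent $y$ with a constant independent of $y_0$ --- is not correct. That proof majorizes $|\Theta_f|$ by the height function $H$ of \eqref{def-height-function-H}, unfolds the sum over $\Gamma_\infty\backslash\operatorname{PSL}(2,\Z)$, and for each $c$ computes the Lebesgue measure of the explicit interval $\{x:c^2(x^2+y^2)<R^{-4}y\}$ together with a cutoff on which $c$ contribute; both steps use that $y$ is a constant. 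For $x$-dependent $y$ these sets are not intervals, and the constant-threshold version of the bound cannot be recovered through the majorant: for almost every $x$ one has $\sup_{0<y\leq 1}H(x+iy)=\infty$ (a.e.\ $x$ is not badly approximable), so the tail of $\sup_y H$ does not decay at all, and without the discount the unfolded sum produces the divergent series $\sum_c\phi(c)c^{-2}$. The factor $(y(x))^{-\varepsilon}$ is therefore not bookkeeping to be absorbed by a geometric series; it is the essential input that makes the variable-$y$ estimate provable.

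The paper's proof feeds the discount directly into the height-function computation: the sublevel condition becomes $c^2(x^2+y^2)/y^{1+4\varepsilon}<R^{-4}$, and the pointwise inequality $x^{1-4\varepsilon}y^{1+4\varepsilon}\leq x^2+y^2$ eliminates the $y(x)$-dependence entirely, leaving the $x$-only condition $c^2|x|^{1-4\varepsilon}<R^{-4}$ whose measure is $\ll R^{-4}c^{-2/(1-4\varepsilon)}$; summing against $\phi(c)$ yields the convergent Dirichlet series $\zeta(s-1)/\zeta(s)$ at $s=\tfrac{2}{1-4\varepsilon}>2$ (Lemmas \ref{lemma:integral-over-gamma-R4} and \ref{lemma:tail-bound-smooth-f-random-y}), after which the dyadic decomposition \eqref{partition-of-unity-1} of $\chi$ into regular pieces, truncated at $J_0=\lceil\log_2 y_0^{-1}\rceil$, finishes as in Proposition 3.17 of \cite{Cellarosi-Marklof}. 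This also shows your closing speculation is off the mark: the hypothesis $\varepsilon<\tfrac14$ and the $\varepsilon$-dependence of the constant are consumed in this very proof (the exponent must exceed $2$, and the constant blows up as $\varepsilon\to0$ through the pole of $\zeta(s-1)$ at $s=2$), not merely in later applications. Your block decomposition could in principle be completed by redoing the height-function estimate on each $A_k$ using $y(x)\asymp 2^{-k}$ there, but that computation is precisely the missing content rather than something obtainable by citation.
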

We need a few lemmas to proceed. The first lemma is perhaps interesting in its own right due to the connection with the Riemann-$\zeta$ function. It is the reason why the `discount' by $(y(x))^\varepsilon$ in Proposition \ref{proposition:tail-with-random-y} is necessary. As $\varepsilon\to 0$ the following constant blows up due to the pole in the $\zeta$ function at $1$. We first recall the function $H$ defined in equation (3.125) in \cite{Cellarosi-Marklof}
\begin{equation}
    H(x+iy):=\sum_{\gamma \in \Gamma_\infty \setminus \operatorname{PSL}(2,\mathbb Z)}y_\gamma^{1/2}\chi_{[1/2,\infty)}(y_\gamma^{1/2}),
\end{equation}
where $\Gamma_\infty=\left\{\begin{pmatrix}
    1&&x\\
    0&&1
\end{pmatrix}: x\in \mathbb Z\right\}$ is the stabilizer of the cusp and $y_\gamma:=\Im(\gamma (x+iy))$ where $\gamma$ acts by M\"obius transformations. $H$ can be extended to a function on $G$ by defining $H\left(x+iy;\phi,\boldsymbol \xi,\zeta\right):=H(x+iy).$

\begin{lemma}\label{lemma:integral-over-gamma-R4}
    Let $R> 1$, $0<\varepsilon<\frac{1}{4}$ and $r^\varepsilon_\gamma(y)=\mathbf 1_{\{y_\gamma^{1/4}>R/y^\varepsilon\}}$, where $y(x)$ is dependent on $x$ and $0<y\leq 1$, as Proposition \ref{proposition:tail-with-random-y}. Then 
    \begin{equation*}
        \int_0^1\sum_{\gamma \in \Gamma_\infty \setminus \hat \Gamma} r_\gamma^\varepsilon(y)dx \leq 2\sum_{c=1}^\infty \frac{\phi(c)}{c^{2/(1-4\varepsilon)}}R^{-4}=2\frac{\zeta\left(\frac{2}{1-4\varepsilon}-1\right)}{\zeta\left(\frac{2}{1-4\varepsilon}\right)}R^{-4},
    \end{equation*}
where $\zeta$ is the classical Riemann-$\zeta$ function and $\phi$ is the Euler totient function.
\end{lemma}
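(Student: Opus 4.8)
The plan is to unfold everything in terms of the bottom row of $\gamma$ and then, for each such row, estimate the Lebesgue measure of the set of $x\in[0,1]$ on which $r_\gamma^\varepsilon$ equals $1$, finally summing the resulting bounds over the coset space. Recall that for $\gamma\in\mathrm{PSL}(2,\mathbb{Z})$ with bottom row $(c,d)$ (a coprime pair) one has $y_\gamma=\Im\bigl(\gamma(x+iy)\bigr)=y/\bigl((cx+d)^2+c^2y^2\bigr)$, and that $(c,d)$ is precisely the datum left invariant under left multiplication by $\Gamma_\infty$, so that $\Gamma_\infty\backslash\hat\Gamma$ is parametrized by coprime pairs $(c,d)$ with $c\geq 0$. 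The cusp-fixing coset $c=0$ satisfies $y_\gamma=y\leq 1<R^4\leq R^4/y^{4\varepsilon}$ (using $R>1$ and $0<y\leq 1$), hence $r_\gamma^\varepsilon\equiv 0$ there, so only $c\geq 1$ contributes, and for such a pair the defining condition $y_\gamma^{1/4}>R/y^\varepsilon$ reads
\[
 (cx+d)^2+c^2y^2<\frac{y^{1+4\varepsilon}}{R^4}.
\]

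The crucial step, and the reason $\varepsilon>0$ is needed, is to split this into the two necessary conditions $c^2y^2<y^{1+4\varepsilon}R^{-4}$ and $(cx+d)^2<y^{1+4\varepsilon}R^{-4}$ and use the first to eliminate the $x$-dependent quantity $y=y(x)$ from the second. Dividing the first by $y^2$ and using $1-4\varepsilon>0$ to invert $t\mapsto t^{\,1-4\varepsilon}$ gives $y<(c^2R^4)^{-1/(1-4\varepsilon)}$; inserting this into the second, and using that $t\mapsto t^{\,1+4\varepsilon}$ is increasing, yields $|cx+d|<\delta_c:=c^{-\kappa}R^{-2\kappa-2}<1$, where $\kappa:=\tfrac{1+4\varepsilon}{1-4\varepsilon}>1$. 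Consequently $r_\gamma^\varepsilon(y(x))\leq\mathbf 1\{|cx+d|<\delta_c\}$ pointwise in $x$, independently of the choice of $y(\cdot)$ (in particular the lower bound $y_0$ never enters), so by nonnegativity and Tonelli
\[
 \int_0^1\sum_{\gamma\in\Gamma_\infty\backslash\hat\Gamma}r_\gamma^\varepsilon(y)\,dx\;\leq\;\sum_{c\geq 1}\ \sum_{\substack{d\in\mathbb{Z}\\ \gcd(c,d)=1}}\ \int_0^1\mathbf 1\{|cx+d|<\delta_c\}\,dx .
\]

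Next I would carry out the elementary count. For fixed $c\geq 1$, the change of variables $u=cx$ together with the substitution $e=-d$ gives
\[
 \sum_{\substack{d\in\mathbb{Z}\\ \gcd(c,d)=1}}\int_0^1\mathbf 1\{|cx+d|<\delta_c\}\,dx=\frac1c\sum_{\substack{e\in\mathbb{Z}\\ \gcd(c,e)=1}}\bigl|[0,c]\cap(e-\delta_c,e+\delta_c)\bigr| .
\]
For $c\geq 2$ the integers $e$ coprime to $c$ whose interval meets $[0,c]$ are exactly $e\in\{1,\dots,c-1\}$ (the endpoints $0$ and $c$ being excluded by coprimality), and since $\delta_c<1$ each contributes exactly $2\delta_c$; for $c=1$ the two boundary integers $0,1$ contribute $\delta_c$ each. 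In every case the right-hand side equals $2\phi(c)\delta_c/c$. Summing over $c\geq 1$ and using the identities $1+\kappa=\tfrac{2}{1-4\varepsilon}$ and $2\kappa+2=\tfrac{4}{1-4\varepsilon}$,
\[
 \int_0^1\sum_{\gamma\in\Gamma_\infty\backslash\hat\Gamma}r_\gamma^\varepsilon(y)\,dx\;\leq\;2R^{-4/(1-4\varepsilon)}\sum_{c\geq 1}\frac{\phi(c)}{c^{\,2/(1-4\varepsilon)}}\;\leq\;2R^{-4}\sum_{c\geq 1}\frac{\phi(c)}{c^{\,2/(1-4\varepsilon)}},
\]
the last step because $R>1$ and $\tfrac{4}{1-4\varepsilon}\geq 4$. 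Finally the classical Dirichlet series identity $\sum_{c\geq1}\phi(c)c^{-s}=\zeta(s-1)/\zeta(s)$, valid for $\Re s>2$, applied with $s=\tfrac{2}{1-4\varepsilon}>2$ (so that $s-1=\kappa>1$ and both zeta values are finite), converts the last sum into $\zeta\!\bigl(\tfrac{2}{1-4\varepsilon}-1\bigr)\big/\zeta\!\bigl(\tfrac{2}{1-4\varepsilon}\bigr)$, which is the asserted bound.

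The only genuinely delicate point is the decoupling in the second paragraph: one must use $1-4\varepsilon>0$ to turn the ``vertical'' constraint $c^2y^2<y^{1+4\varepsilon}R^{-4}$ into an honest upper bound for $y(x)$, after which the ``horizontal'' constraint on $cx+d$ becomes controllable $x$ by $x$; everything else (the formula for $y_\gamma$, the coprime-pair description of $\Gamma_\infty\backslash\hat\Gamma$, the residue count, and the totient Dirichlet series) is standard. This is also exactly where the estimate breaks down as $\varepsilon\downarrow 0$: then $\kappa\downarrow 1$ and the numerator $\zeta(\kappa)\to\zeta(1)=+\infty$, which is the reason, alluded to in the text, that the discount $(y(x))^\varepsilon$ in Proposition~\ref{proposition:tail-with-random-y} cannot be removed.
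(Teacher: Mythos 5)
Your proposal is correct and follows essentially the same route as the paper: discard the $c=0$ coset using $R>1$, $y\leq 1$, use the positivity of $\varepsilon$ (i.e.\ $1-4\varepsilon>0$) to decouple the $x$-dependent $y(x)$ from the constraint and reduce to a pure measure estimate $|cx+d|\ll c^{-2/(1-4\varepsilon)}R^{-4/(1-4\varepsilon)}$, count $\phi(c)$ coprime residues, and sum the totient Dirichlet series $\sum\phi(c)c^{-s}=\zeta(s-1)/\zeta(s)$. The only cosmetic difference is the decoupling mechanism — you split the constraint into two necessary conditions and use the vertical one to bound $y(x)$, whereas the paper invokes the elementary inequality $x^{1-4\varepsilon}y^{1+4\varepsilon}\leq x^2+y^2$ — and the two yield the identical bound.
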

\begin{proof}
    Similarly to equation (3.129) in the proof of Lemma 3.19 in \cite{Cellarosi-Marklof}, we have that 
    \begin{equation}
        \int_0^1\sum_{\gamma \in \Gamma_\infty \setminus \hat \Gamma} r_\gamma^\varepsilon(y)dx=r_{\operatorname{Id}}^\varepsilon(y)+\sum_{c=1}^\infty \sum_{d' \operatorname{mod} c, (c,d')=1}\int_{-\infty}^\infty r_{\operatorname{Id}}^\varepsilon\left(\frac{y(x)}{c^2(x^2+y^2(x))}\right)dx.
    \end{equation}
Since $R> 1$ and $y\leq 1$ we have that $r_{\operatorname{Id}}^\varepsilon(y)=0$, so we just need to study $J_c:=\int_{-\infty}^\infty r_{\operatorname{Id}}^\varepsilon\left(\frac{y(x)}{c^2(x^2+y^2(x))}\right)dx$. Rewriting $J_c$ gives
\begin{equation}\label{eq:trivial-number-bound}
    J_c=\int_{-\infty}^\infty \mathbf 1_{\left\{\frac{c^2(x^2+y^2)}{y}< R^{-4}y^{4\varepsilon}\right\}}(x)dx=\int_{-\infty}^\infty \mathbf 1_{\left\{\frac{c^2(x^2+y^2)}{y^{1+4\varepsilon}}< R^{-4}\right\}}(x)dx.
\end{equation}
We have the bound
\begin{equation*}
    x^{1-4\varepsilon}y^{1+4\varepsilon}\leq x^2+y^2
\end{equation*}
for positive $x,y$. Indeed, letting $x=ky$ where $k=x/y>0$ gives that equation \eqref{eq:trivial-number-bound} reduces to showing $k^{1-4\varepsilon}\leq k^2+1$, which is easily verifiable. 
By enlarging the region of integration, we have that 
\begin{equation*}
    J_c \leq \int_{-\infty}^\infty \mathbf 1_{\left\{c^2 |x|^{1-4\varepsilon}< R^{-4}\right\}}(x)dx,
\end{equation*}
which is readily computed to yield the bound
\begin{equation*}
    J_c\leq 2 R^{-\frac{4}{1-4\varepsilon}}\frac{1}{c^{\frac{2}{1-4\varepsilon}}}\leq 2 R^{-4}\frac{1}{c^{\frac{2}{1-4\varepsilon}}}.
\end{equation*}
Therefore
\begin{equation*}
     \int_0^1\sum_{\gamma \in \Gamma_\infty \setminus \hat \Gamma} r_\gamma^\varepsilon(y)dx\leq \sum_{c=1}^\infty 2 R^{-4}\frac{1}{c^{\frac{2}{1-4\varepsilon}}} \phi(c),
\end{equation*}
where $\phi$ is the Euler totient function. It is well known that the Dirichlet series $\sum_{c=1}^\infty \frac{\phi(c)}{c^s}$ converges to $\frac{\zeta(s-1)}{\zeta(s)}$ if $\Re(s)>2$. Using that $\frac{2}{1-4\varepsilon}>2$ concludes the proof.
\end{proof}
\begin{lemma}\label{lemma:tail-bound-smooth-f-random-y}
    Let $f\in \mathcal S_\eta(\R)$, $\eta>1$, $y(x)$ and $\lambda$ as in Proposition \ref{proposition:tail-with-random-y}. Then
    \begin{equation*}
        \lambda\left(\left\{x\in \mathbb R:|\Theta_f(x+iy(x),\phi, \mathbf \xi(x), \zeta(x))|>\frac{R}{(y(x))^\varepsilon}\right\}\right)\ll \frac{1}{(1+R)^4}
    \end{equation*}
\end{lemma}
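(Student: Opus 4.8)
The plan is to reduce the tail bound for $\Theta_f$ with the $x$-dependent height $y(x)$ to an estimate involving the geometry-of-numbers majorant $H$ introduced above, exactly as is done for the fixed-$y$ case in \cite{Cellarosi-Marklof}. The starting point is the classical pointwise bound for regular $f$: for $f\in\mathcal S_\eta(\R)$ with $\eta>1$ one has, uniformly in $\phi$ and in $(\vecxi,\zeta)\in\Hei$, an estimate of the shape
\begin{equation*}
    |\Theta_f(x+iy,\phi;\vecxi,\zeta)|\ll_\eta \|f\|_{\mathcal S_\eta}\,\big(1+H(x+iy)\big),
\end{equation*}
which is essentially Proposition 3.16 / the discussion around equation (3.125) in \cite{Cellarosi-Marklof}. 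Since $H$ depends only on the $\h$-coordinate and not on $\phi$ or the Heisenberg part, this already removes the $\phi,\vecxi,\zeta$ dependence; the only subtlety is that now the argument is $x+iy(x)$ rather than $x+iy$ with $y$ constant. So first I would invoke this bound to write
\begin{equation*}
    \lambda\Big(\Big\{x:\ |\Theta_f(x+iy(x),\phi;\vecxi(x),\zeta(x))|>\tfrac{R}{y(x)^\varepsilon}\Big\}\Big)\ \leq\ \lambda\Big(\Big\{x:\ 1+H(x+iy(x))\gg \tfrac{R}{y(x)^\varepsilon}\Big\}\Big),
\end{equation*}
absorbing $\|f\|_{\mathcal S_\eta}$ into the implied constant and using that $0<y(x)\le 1$ so $y(x)^{-\varepsilon}\ge 1$ and the ``$+1$'' can be discarded at the cost of a harmless constant, i.e. the event is contained in $\{H(x+iy(x))\gg R/y(x)^\varepsilon\}$ for $R$ large (and for bounded $R$ the bound $\ll 1/(1+R)^4$ is trivial).

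Next I would unfold the definition of $H$ as a sum over $\gamma\in\Gamma_\infty\backslash\mathrm{PSL}(2,\Z)$ of $y_\gamma^{1/2}\chi_{[1/2,\infty)}(y_\gamma^{1/2})$, and pass to a union bound over $\gamma$: if $H(x+iy(x))$ is large then at least one term $y_\gamma(x)^{1/2}$ must be large, and moreover — this is the standard trick used in the proof of Lemma 3.19 in \cite{Cellarosi-Marklof} — because the heights $y_\gamma$ of distinct cosets are well-separated (at most one can exceed any given threshold $\ge 1$, and the rest decay geometrically), one has $H(x+iy(x))\gg R/y(x)^\varepsilon$ only if $y_\gamma(x)^{1/4}>c R/y(x)^\varepsilon$ for some $\gamma$, up to adjusting constants. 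Hence the $\lambda$-measure in question is bounded by
\begin{equation*}
    \lambda\Big(\Big\{x:\ \exists\,\gamma\in\Gamma_\infty\backslash\mathrm{PSL}(2,\Z),\ y_\gamma(x)^{1/4}>\tfrac{cR}{y(x)^\varepsilon}\Big\}\Big)\ \le\ \int_{\R}\sum_{\gamma\in\Gamma_\infty\backslash\mathrm{PSL}(2,\Z)} r_\gamma^\varepsilon(y(x))\,\de\lambda(x),
\end{equation*}
in the notation of Lemma \ref{lemma:integral-over-gamma-R4}. Since $\lambda$ is absolutely continuous with bounded density (or, after a standard reduction, can be taken to be Lebesgue measure restricted to a bounded interval and then a partition-of-unity / density argument handles the general case), this integral is dominated by a constant times $\int_0^1\sum_\gamma r_\gamma^\varepsilon(y(x))\,\de x$, which Lemma \ref{lemma:integral-over-gamma-R4} bounds by $2\,\zeta(\tfrac{2}{1-4\varepsilon}-1)/\zeta(\tfrac{2}{1-4\varepsilon})\cdot R^{-4}$. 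Combining with the trivial bound for bounded $R$ gives the claimed $\ll 1/(1+R)^4$ with an implied constant depending on $\varepsilon$ through the $\zeta$-quotient.

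The main obstacle I anticipate is the first step: making precise and citing correctly the uniform pointwise majorization $|\Theta_f(\cdot)|\ll \|f\|_{\mathcal S_\eta}(1+H(\cdot))$ for regular $f$ and ensuring that it is genuinely uniform in $\phi$ and in the Heisenberg coordinates — the $\phi$ uniformity is exactly why $\mathcal S_\eta$ (defined via $\kappa_\eta(f)=\sup_{w,\phi}|f_\phi(w)|(1+|w|^2)^{\eta/2}$) is the right class — together with the bookkeeping needed to convert ``$H$ is large'' into ``some $y_\gamma^{1/4}$ exceeds $R/y^\varepsilon$'' using the separation of the heights $y_\gamma$; this separation argument is where the power $1/4$ (rather than $1/2$) enters and must be tracked consistently with the definition of $r_\gamma^\varepsilon$ in Lemma \ref{lemma:integral-over-gamma-R4}. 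Once that reduction is in place, the rest is Lemma \ref{lemma:integral-over-gamma-R4} plus the absolute continuity of $\lambda$, both of which are already available.
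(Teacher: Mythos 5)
Your proposal is correct and follows essentially the same route as the paper: majorize $|\Theta_f|$ by the height function $H$ (the paper cites Lemma 3.18 of Cellarosi--Marklof for exactly this), use periodicity and absolute continuity of $\lambda$ to reduce to a Lebesgue integral over $[0,1]$, convert the event $\{H(x+iy(x))>R/(C_1 y(x)^\varepsilon)\}$ into the sum $\sum_\gamma r_\gamma^\varepsilon(y)$, and conclude with Lemma \ref{lemma:integral-over-gamma-R4}. The "bookkeeping" you flag as the main obstacle (passing from $H$ large to some $y_\gamma^{1/4}$ exceeding the threshold, with the exponent $1/4$ matching the definition of $r_\gamma^\varepsilon$) is precisely the step the paper's proof compresses into a single equality, so your outline if anything makes that reduction more explicit.
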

\begin{proof}
    Lemma 3.18 in \cite{Cellarosi-Marklof} gives that 
    \begin{align*}
        \int_{\mathbb R} \mathbf 1_{\left\{|\Theta_f(x+iy(x),\phi, \mathbf \xi(x), \zeta(x))|>R/y^\varepsilon(x)\right\}}&\lambda (dx)\leq \int_{\mathbb R} \mathbf 1_{\left\{H(x+iy(x))>R/(C_1y^\varepsilon(x))\right\}}\lambda (dx).
    \end{align*}
Since $H(z+1)=H(z)$ we have 
\begin{equation*}
    \int_{\mathbb R} \mathbf 1_{\left\{H(x+iy(x))>R/(C_1y^\varepsilon(x))\right\}}\lambda (dx)=\int_{\mathbb R/\mathbb Z} \mathbf 1_{\left\{H(x+iy(x))>R/(C_1y^\varepsilon(x))\right\}}\lambda_{\mathbb Z}(dx),
\end{equation*}
where $\lambda_{\mathbb Z}$ is pushforward under the projection map. Since $\lambda$ is absolutely continuous with respect to Lebesgue measure, we have that 
\begin{equation*}
    \int_{\mathbb R/\mathbb Z} \mathbf 1_{\left\{H(x+iy(x))>R/(C_1y^\varepsilon(x))\right\}}\lambda_{\mathbb Z}(dx)\ll \int_{\mathbb R/\mathbb Z} \mathbf 1_{\left\{H(x+iy(x))>R/(C_1y^\varepsilon(x))\right\}}dx
\end{equation*}
For $R\geq C_1$ we have that
\begin{align*}
   \int_{\mathbb R/\mathbb Z} \mathbf 1_{\left\{H(x+iy(x))>R/(C_1y^\varepsilon(x))\right\}}dx&=\int_0^1\sum_{\gamma \in \Gamma_\infty \setminus \hat \Gamma} r_\gamma^\varepsilon(y)dx\\
   &\ll R^{-4},
\end{align*}
where the last inequality comes from Lemma \ref{lemma:integral-over-gamma-R4}.
\end{proof}

\begin{proof}[Proof of Proposition \ref{proposition:tail-with-random-y}]
The proof is analogous to that of Proposition 3.17 given in \cite{Cellarosi-Marklof}. We only give a proof for $f=\chi$, since for $f=\chi_L,\chi_R$ we can simply suppress one of the two sums in the argument below. We can write, for  $J_x=\lceil \log_2 (y(x))^{-1}\rceil$,
\begin{align*}
\Theta_\chi(x+i&y(x),0, \mathbf \xi(x), \zeta(x))=\sum_{j=0}^{J_x} 2^{-j/2}\Theta_\Delta((x+iy(x),0, \mathbf \xi(x), \zeta(x))\Phi^{-(2\log 2) j})\\
&+\sum_{j=0}^{J_x} 2^{-j/2}\Theta_{\Delta_-}\left((x+iy(x),0, \mathbf \xi(x), \zeta(x))\left(1; \begin{pmatrix}
    0\\
    1
\end{pmatrix},0\right)\Phi^{-(2\log 2) j}\right)
\end{align*}
Using the triangle inequality we have that 
\begin{align*}
|\Theta_\chi(x+iy(x),0, \mathbf \xi(x), \zeta(x))|&\leq\sum_{j=0}^{J_x} 2^{-j/2}|\Theta_\Delta((x+iy(x),0, \mathbf \xi(x), \zeta(x))\Phi^{-(2\log 2) j})|\\
&+\sum_{j=0}^{J_x} 2^{-j/2}\left|\Theta_{\Delta_-}\!\left((x+iy(x),0, \mathbf \xi(x), \zeta(x))\left(1; { \begin{pmatrix}
    0\\
    1
\end{pmatrix}},0\right)\Phi^{-(2\log 2) j}\right)\right|\\
&\leq \sum_{j=0}^{J_0} 2^{-j/2}|\Theta_\Delta((x+iy(x),0, \mathbf \xi(x), \zeta(x))\Phi^{-(2\log 2) j})|\\
&~~~~+\sum_{j=0}^{J_0} 2^{-j/2}\left|\Theta_{\Delta_-}\!\left((x+iy(x),0, \mathbf \xi(x), \zeta(x))\left(1; {\begin{pmatrix}
    0\\
    1
\end{pmatrix}},0\right)\Phi^{-(2\log 2) j}\right)\right|,
\end{align*}
where $J_0=\lceil \log_2y_0^{-1}\rceil$. The rest of the proof is identical to the proof of Proposition 3.17 in \cite{Cellarosi-Marklof}.
\end{proof}
    With Proposition \ref{proposition:tail-with-random-y} in hand, we can show the following H\"older estimate. 
\begin{proposition}\label{prop:tail-bound-Holder}
    Let $R>0$, let $0<\gamma<\frac{1}{2}$ and let $\lambda$ be a probability measure on $\mathbb R$ absolutely continuous with respect to Lebesgue measure. Then we have the tail bound
    \begin{equation*}
        \lambda(\left\{x\in \mathbb R:\|X_N\|_\gamma>R\right\})\ll \frac{1}{(1+R)^4},
    \end{equation*}
    uniformly over $N$, but where the implied constant is dependent on $\gamma$.
\end{proposition}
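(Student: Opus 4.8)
The plan is to reduce the Hölder bound for $X_N$ to the tail bound for $\Theta_\chi$ provided by Proposition \ref{proposition:tail-with-random-y}, via a Garsia--Rodemich--Rumsey (GRR)-type argument, following the strategy used in \cite{Cellarosi-Marklof} for pointwise estimates but now upgrading to the full Hölder seminorm with the correct power. The key relation \eqref{key-relation-X_N-Theta_chi} writes $X_N(x;t) = e^{s/4}\Theta_\chi(x+iye^{-s},0;\sve{\alpha+\beta x}{0},0) + \mathcal R_N(x)$ with $y = N^{-2}$, $s = 2\log t$, so that the height of the point on which $\Theta_\chi$ is evaluated is $y e^{-s} = N^{-2} t^{-2}$. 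Since $t\in[0,1]$ this height is at most $N^{-2}\le 1$, but it is \emph{not} bounded below; this is precisely why the flexibility of Proposition \ref{proposition:tail-with-random-y} (allowing $y=y(x)$ ranging down to an arbitrary $y_0$, at the cost of a discount $y^{-\varepsilon}$) is needed. In fact the scaling property means that for $t$ near $0$ the increment $X_N(t)-X_N(s)$ behaves like $\sqrt t$ times a copy of $\Theta_\chi$ at a bounded height, and it is the stationarity/scaling built into \eqref{def-X_N(t)} that lets us control all increments by (translates/rescalings of) a single random variable $|\Theta_f|$ with a uniform tail.

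Concretely, first I would record the discrete/continuous comparison: because $X_N$ is piecewise affine with breakpoints at $k/N$ and each segment has slope of modulus $\sqrt N$, it suffices to bound $\sup_{k\ne \ell}\frac{|X_N(\ell/N)-X_N(k/N)|}{|(\ell-k)/N|^\gamma}$ over lattice points and absorb the interpolation error (this costs only a constant depending on $\gamma$, since on a single segment of length $1/N$ the increment is at most $1/\sqrt N$, which is $\le |1/N|^\gamma$ for $\gamma<1/2$ and $N\ge1$). Next, using stationarity of the increments of $X_N$ in the index $n$ (the increment from $k/N$ to $\ell/N$ is, up to a deterministic unimodular phase, $\frac{1}{\sqrt N}S_{\ell-k}(x')$ for a shifted parameter $x'=x$, $\alpha'=\alpha+kx\beta+\ldots$, which leaves $\lambda$ quasi-invariant because $\lambda\ll\mathrm{Leb}$), together with the scaling identity $\frac{1}{\sqrt N}S_{\ell-k} = \sqrt{(\ell-k)/N}\cdot\frac{1}{\sqrt{\ell-k}}S_{\ell-k}$ and \eqref{key-relation-X_N-Theta_chi} applied at "effective $N$" equal to $\ell-k$, I would bound $|X_N(\ell/N)-X_N(k/N)|$ by $(\tfrac{\ell-k}{N})^{1/2}(|\Theta_\chi(\,\cdot\,)| + |\mathcal R_{\ell-k}|)$ where the point has height $(\ell-k)^{-2}\le1$, bounded below only by $N^{-2}$. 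Writing $(\tfrac{\ell-k}{N})^{1/2} = (\tfrac{\ell-k}{N})^{\gamma}\cdot(\tfrac{\ell-k}{N})^{1/2-\gamma}$ and noting $(\tfrac{\ell-k}{N})^{1/2-\gamma}\le (\ell-k)^{1/2-\gamma}\cdot$ (height of $\Theta$)$^{?}$ — more precisely the height is $(\ell-k)^{-2}$ so $(\ell-k)^{1/2-\gamma} = (\text{height})^{-(1/4-\gamma/2)}$ — I can match this against the permitted discount $y^{-\varepsilon}$ in Proposition \ref{proposition:tail-with-random-y} provided $\varepsilon := 1/4-\gamma/2$, which lies in $(0,1/4)$ exactly when $\gamma\in(0,1/2)$. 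This is the step where the constraint $\gamma<1/2$ enters in an essential way, and also where one sees the appearance of the $\zeta$-function blowup as $\gamma\uparrow1/2$.

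Having identified $|X_N(\ell/N)-X_N(k/N)| \le (\tfrac{\ell-k}{N})^\gamma\bigl(\tfrac{R}{y^\varepsilon} + O(1)\bigr)$ on the event $\{|\Theta_f(\text{point of height }y)|\le R/y^\varepsilon\}$ (uniformly over the finitely-many relevant translated parameters), I would then apply a union bound / GRR-type chaining over dyadic scales of $(\ell-k)/N$. For each scale $2^{-m}$, there are $O(N)$ relevant pairs, each contributing a bad event of $\lambda$-measure $\ll (1+R)^{-4}$ by Proposition \ref{proposition:tail-with-random-y} (with the implied constant independent of the lower height cutoff, i.e. independent of $N$) — but a naive union bound over $O(N)$ events is not summable, so I would instead run the standard Besov--Hölder (GRR) embedding: control the double integral $\int_0^1\int_0^1 \frac{|X_N(u)-X_N(v)|^q}{|u-v|^{1+\gamma' q}}\,du\,dv$ in $L^1(\lambda)$ for suitable $q<4$ and $\gamma'>\gamma$ using Proposition \ref{prop:moment-of-increments-1st-order} and Proposition \ref{proposition:tail-with-random-y}, and then use the GRR inequality to convert an $L^1(\lambda)$ moment bound on this quantity into the tail bound $\lambda(\|X_N\|_\gamma>R)\ll (1+R)^{-4}$; Markov's inequality on the GRR functional, combined with the fourth-moment tail (which gives $E_\lambda[\text{GRR functional}]<\infty$ uniformly in $N$ because $q<4$), yields the claimed $(1+R)^{-4}$ decay rather than merely the weaker rate a second-moment argument would give. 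The main obstacle is bookkeeping the dependence on $N$: one must verify that the GRR constant, the number of dyadic scales, and the implied constants in Proposition \ref{proposition:tail-with-random-y} and Proposition \ref{prop:moment-of-increments-1st-order} are all uniform in $N$ (they are — Proposition \ref{proposition:tail-with-random-y} is explicitly stated with constant independent of $y_0$), and to choose $q = 4-\delta$ and $\gamma' = \gamma+\delta'$ so that $\gamma' < 1/2 - 1/q$, which is possible precisely because $\gamma<1/2$. I expect the delicate point to be the interplay between the height-dependent discount $y^{-\varepsilon}$ and the dyadic scale: one must check that after the rescaling the effective $\varepsilon$ stays in $(0,1/4)$ uniformly over all pairs and all $N$, which is exactly the computation $\varepsilon = 1/4-\gamma/2$ above, and that the $O(1)$ remainder from $\mathcal R$ in \eqref{key-relation-X_N-Theta_chi} truly is negligible against $R/y^\varepsilon$ — this follows since $\mathcal R_N\to0$ uniformly, so for the tail ($R$ large) it is harmless, and for bounded $R$ it only affects the implied constant.
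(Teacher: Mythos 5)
Your reduction to grid points, the rescaling of each increment to a value of $\Theta_\chi$ at a point of height $(\ell-k)^{-2}$, and the identification of $\varepsilon=\tfrac14-\tfrac\gamma2$ as the exponent that makes Proposition \ref{proposition:tail-with-random-y} applicable all match the paper's proof. The gap is in how you close the argument. Your final step is a Garsia--Rodemich--Rumsey/Besov embedding driven by the moment bound of Proposition \ref{prop:moment-of-increments-1st-order}. That route is exactly the Kolmogorov-type argument that the paper's subsection ``Kolmogorov Tightness Criterion is Not Enough'' rules out: since the increments have only $p<4$ finite moments, GRR with exponent $q<4$ embeds into $C^{\gamma'-1/q}$ and therefore only reaches H\"older exponents below $\tfrac12-\tfrac1q<\tfrac14$, not the full range $\gamma<\tfrac12$; your condition ``$\gamma'<\tfrac12-\tfrac1q$, which is possible precisely because $\gamma<\tfrac12$'' is false once $q<4$ is forced. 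Moreover, Markov's inequality applied to the $q$-th power GRR functional can only yield a tail of order $R^{-q}$ with $q<4$, strictly weaker than the claimed $(1+R)^{-4}$. So the proposal, as written, proves neither the stated range of $\gamma$ nor the stated decay rate.

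The paper closes the argument with no chaining and no moments. After reducing to grid points, it observes that the maximum of the H\"older quotient over pairs is attained at some \emph{random} pair $(k_\ast(x),k_\ast'(x))$, so the event $\{\|X_N\|_\gamma>R\}$ is (up to constants) a \emph{single} tail event for $\Theta_\chi$ evaluated at one point whose height $y(x)=(k_\ast'(x)-k_\ast(x))^{-2}\in[N^{-2},1]$ is a measurable function of $x$. This is precisely the situation Proposition \ref{proposition:tail-with-random-y} was built for: the prefactor coming from $\left|\tfrac{N}{k_\ast'-k_\ast}\right|^{\gamma-1/2}$ is traded for the permitted discount $y(x)^{-(1/4-\gamma/2)}$, and the random-height tail bound then gives $\ll(1+R)^{-4}$ in one stroke, uniformly in $N$. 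You correctly noticed that the naive union bound over pairs diverges, but the fix is not to retreat to a moment estimate (which caps $\gamma$ at $\tfrac14$ and degrades the exponent); it is to exploit that only the argmax pair matters, so that the supremum collapses to one application of the tail bound with a random $y$.
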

\begin{proof}
By standard considerations we can just check the H\"older norm on grid points. More precisely, we have that 
\begin{equation}\label{comparison-Holder-norm_v_grid-points}
     \|X_N\|_\gamma\ll \max_{1<k<k'\leq N} \frac{\frac{1}{\sqrt N}\left|X_N\!\left(\frac{k'}{N}\right)-X_N\!\left(\frac{k}{N}\right)\right|}{\left|\frac{k'-k}{N}\right|^{\gamma} },
\end{equation}
where the implied constant is independent of $N$. Therefore, uniformly over $N$ we have ($C$ being the constant implied by \eqref{comparison-Holder-norm_v_grid-points})
\begin{align*}
     \lambda(\{x\in \mathbb R:\|X_N\|_\gamma>R\})
     &\ll 
     \lambda\left(\left\{x\in \mathbb R:\max_{1\leq k<k'\leq N} \frac{\left|X_N\!\left(\frac{k'}{N}\right)-X_N\!\left(\frac{k}{N}\right)\right|}{\left|\frac{k'-k}{N}\right|^{\gamma} }>\frac{R}{C}\right\}\right)\\
     &= \lambda\left(\left\{x\in \mathbb R: \frac{\frac{1}{\sqrt N}\left|\sum_{k_\ast(x)<j\leq k_\ast'(x)}z_j\right|}{\left|\frac{k_\ast'(x)-k_\ast(x)}{N}\right|^{\gamma} }>\frac{R}{C}\right\}\right),
\end{align*}
where $k_\ast(x)$ and $k_\ast'(x)$ are the arg max (if there are multiple pairs the arbitrary choice of the least indices is made). Note that we have $y(x):=\frac{1}{(k_\ast'(x)-k_\ast(x))^2}\geq \frac{1}{N^2}$. Rewriting the sum in terms of the function $\Theta_\chi$, similarly to \eqref{key-relation-X_N-Theta_chi}, we have that 
\begin{align*}
     \lambda(\{x\in \mathbb R:&\|X_N\|_\gamma>R\})\\
     &\ll \lambda\bigg(\bigg\{x\in \mathbb R: \left|\frac{N}{k_\ast'(x)-k_\ast(x)}\right|^{\gamma-\frac{1}{2}}\left|\Theta_\chi\left(x+i\frac{1}{(k_\ast'(x)-k_\ast(x))^2},0;{\begin{pmatrix}
     \alpha+\beta x\\
     0\end{pmatrix}}, 0 
     \right)\right|>\frac{R}{C}\bigg\}\bigg)\\
     &=\lambda\bigg(\bigg\{x\in \mathbb R: \left|\Theta_\chi\left(x+iy(x),0;{ \begin{pmatrix}
     \alpha+\beta x\\
     0\end{pmatrix}}, 0 
     \right)\right|>\frac{RN^{\frac{1}{2}-\gamma}}{Cy^{\frac{1}{4}-\frac{\gamma}{2}}(x)}\bigg\}\bigg)\\
     &\leq \lambda\bigg(\bigg\{x\in \mathbb R: \left|\Theta_\chi\left(x+iy(x),0;{\begin{pmatrix}
     \alpha+\beta x\\
     0\end{pmatrix}}, 0 
     \right)\right|>\frac{R}{Cy^{\frac{1}{4}-\frac{\gamma}{2}}(x)}\bigg\}\bigg),
\end{align*}
where $y(x)=\frac{1}{(k_\ast'(x)-k_\ast(x))^2}$. Proposition \ref{proposition:tail-with-random-y} concludes the proof.
\end{proof}
The final proposition of this section gives tightness on $C^\gamma$.
\begin{proposition}\label{prop:1st-order-tightness}
    For all $\gamma\in (0,\frac{1}{2})$, the laws of $X_N$ are tight on $C^\gamma([0,T],\R^2)$.
\end{proposition}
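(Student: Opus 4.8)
The plan is to deduce tightness from the uniform polynomial tail bound of Proposition \ref{prop:tail-bound-Holder} together with the compactness of the inclusion of $\gamma'$-H\"older functions into $\gamma$-H\"older functions for $\gamma<\gamma'$. Fix $\gamma\in(0,\tfrac12)$ and pick an exponent $\gamma'$ with $\gamma<\gamma'<\tfrac12$. Since each curve $X_N(x;\cdot)$ is piecewise affine it lies in $C^{\gamma'}([0,T],\R^2)$ with (random) finite norm, and $X_N(x;0)=0$; hence each $\mathbb P_N$ is a well-defined Borel probability measure on $C^\gamma([0,T],\R^2)$ concentrated on the closed subspace of functions vanishing at $0$.

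First I would record the relevant compactness fact: for every $M>0$ the set
\[
K_M:=\bigl\{f\in C^\gamma([0,T],\R^2):\ f(0)=0,\ \|f\|_{\gamma'}\le M\bigr\}
\]
is compact in $C^\gamma([0,T],\R^2)$. Indeed, elements of $K_M$ satisfy $\|f\|_\infty\le M\,T^{\gamma'}$ and are uniformly $\gamma'$-H\"older, hence equicontinuous, so by Arzel\`a--Ascoli $K_M$ is relatively compact in the space of continuous functions with the sup norm; a standard interpolation argument then shows that if $f_n\to f$ uniformly with $\sup_n\|f_n\|_{\gamma'}\le M$ then in fact $f_n\to f$ in the $\gamma$-H\"older seminorm, so $K_M$ is closed and compact in $C^\gamma([0,T],\R^2)$ (this is classical; see \cite{Friz-Victoir-Book}).

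Next, given $\varepsilon>0$, I would apply Proposition \ref{prop:tail-bound-Holder} with the exponent $\gamma'$ in place of $\gamma$: there is a constant $C_{\gamma'}$, independent of $N$, such that $\lambda(\{x\in\R:\ \|X_N\|_{\gamma'}>R\})\le C_{\gamma'}(1+R)^{-4}$ for all $R>0$ and all $N$. Choosing $M=M(\varepsilon)$ with $C_{\gamma'}(1+M)^{-4}<\varepsilon$ gives
\[
\mathbb P_N(K_M)=\lambda\bigl(\{x\in\R:\ \|X_N\|_{\gamma'}\le M\}\bigr)\ \ge\ 1-\varepsilon
\]
uniformly in $N$. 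Since $\varepsilon>0$ was arbitrary and each $K_M$ is compact in $C^\gamma([0,T],\R^2)$, the family $\{\mathbb P_N\}_{N\ge1}$ is tight on $C^\gamma([0,T],\R^2)$, as claimed.

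I do not anticipate a genuine obstacle here: the only point demanding care is the compactness of the embedding of $\gamma'$-H\"older into $\gamma$-H\"older (vanishing at $0$), specifically the interpolation step upgrading uniform convergence to $\gamma$-H\"older convergence on $\gamma'$-H\"older-bounded sets. Everything else is an immediate consequence of the uniform $(1+R)^{-4}$ tail estimate of Proposition \ref{prop:tail-bound-Holder}, which is precisely the input that lets us bypass the (insufficient) Kolmogorov criterion discussed earlier in this section.
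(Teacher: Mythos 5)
Your proof is correct and follows essentially the same route as the paper: pick $\gamma'\in(\gamma,\tfrac12)$, invoke the uniform tail bound of Proposition \ref{prop:tail-bound-Holder} at exponent $\gamma'$, and use the compactness of the embedding $C^{\gamma'}\hookrightarrow C^{\gamma}$ to conclude. The only difference is that you spell out the Arzel\`a--Ascoli and interpolation details behind the compact embedding, which the paper simply cites as standard.
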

\begin{proof}
Let $\gamma\in(0,\frac{1}{2})$ and pick $\gamma'\in (\gamma,\frac{1}{2})$. The embedding $C^{\gamma'}\hookrightarrow C^\gamma$ is compact. Proposition \ref{prop:tail-bound-Holder} gives that balls in $C^{\gamma'}$ can have arbitrarily large probabilities uniformly in $N$. 
\end{proof}
\section{\texorpdfstring{The 2-dimensional theta function  $\Theta^{(2)}$  and $\mathbb X_N$}{}}\label{section:intro-theta-2-sum}
As we will shortly see, $\mathbb X_N$ can be described in terms of higher ranked theta sums. We introduce the basic theory for the theta sums we need in this section. For a more general theory, see \cite{Marklof-Welsh-I,Marklof-Welsh-II}. The definitions here are analogues of those given in \cite{Cellarosi-Marklof} and in addition agree with those in \cite{Marklof-Welsh-I,Marklof-Welsh-II} when we make the appropriate mappings.
\begin{definition}
    For $F\in L^2(\mathbb R^2,\mathbb R)$ and pair of angles $(\phi_1,\phi_2)$ we define the function $\mathscr{F}_{\phi_1,\phi_2} F$ in $9$ cases. First, if $\phi_1\not \equiv 0 \mod \pi $ and $\phi_2\not \equiv 0 \mod \pi $ then 
    \begin{equation*}
    \begin{split}
        [\mathscr{F}_{\phi_1,\phi_2}f](w_1,w_2)
        &=\frac{1}{|\sin \phi_1|^{1/2}}\frac{1}{|\sin \phi_2|^{1/2}}\int_{\mathbb R^2}e\left(\frac{1/2(w_1^2+(w_1')^2)\cos \phi_1-w_1 w_1'}{\sin \phi_1}\right)\\
   & \times e\left(\frac{1/2(w_2^2+(w_2')^2)\cos \phi_2-w_2 w_2'}{\sin \phi_2}\right)F(w_1',w_2')dw_1' dw_2'.
        \end{split}
    \end{equation*}
    If $\phi_1 \equiv 0 \mod 2\pi$ then $[\mathscr{F}_{\phi_1,\phi_2} F](w_1,w_2)=[\mathscr{F}_{\phi_2} F](w_1,w_2)$ where $\mathscr{F}_{\phi_2} F$ denotes the first-order $\phi$-transform in the second coordinate. If $\phi_1 \equiv \pi \mod 2\pi$ then $[\mathscr{F}_{\phi_1,\phi_2}F](w_1,w_2)=[\mathscr{F}_{\phi_2}F](-w_1,w_2)$ where $\mathscr{F}_{\phi_2} F$ denotes the first-order $\phi$-transform in the second coordinate. We similarly define for the other $6$ cases. We also similarly define $f_{\phi_1,\phi_2}$ as in equation \eqref{phi-transform_with_prefactor}. 
\end{definition}
\begin{definition}
    Given a function $F:\R^2\to\R$ and $\eta_1,\eta_2>0$ define $\kappa_{\eta_1,\eta_2}(f)$ by 
    \begin{equation*}
        \kappa_{\eta_1,\eta_2}(F):=\sup_{\phi_1,\phi_2, w_1,w_2}|F_{\phi_1,\phi_2}(w_1,w_2)|(1+|w_1|)^{\eta_1}(1+|w_2|)^{\eta_2}.
    \end{equation*}
Additionally, define $\mathcal S_{\eta_1,\eta_2}(\R^2)=\{F: \mathbb R^2\to\mathbb R: \kappa_{\eta_1,\eta_2}(F)<\infty\}.$
We shall refer to functions in $\mathcal{S}_{\eta_1,\eta_2}(\R^2)$ with $\eta_1,\eta_2>1$ as \emph{regular}.
\end{definition}

Using the Schr\"{o}dinger-Weil representation $R$ of $G$ on $\mathcal U(\LtR)$, we can construct the product representation $R^{(2)}$ of $G^2$ on $\mathcal U(\mathrm L^2(\R^2))$. In practice, for $(g_1,g_2)\in G^2$ the unitary operator $R^{(2)}(g_1,g_2)$ acts on $f\in\mathrm{L}^2(\R^2)$ as the operator $R(g_1)$ on $w_1\mapsto f(w_1,w_2)$ and as the operator $R(g_2)$ on $w_2\mapsto f(w_1,w_2)$.

\begin{definition}\label{def:Theta^(2)}
    For regular $f\in\mathcal{S}_{\eta_1,\eta_2}(\R^2)$ with define the 2-dimensional theta function $\Theta^{(2)}_f:G\times G\to\C$ as 
    \begin{align}\label{def-Theta-2-representation-R^2}
        \Theta^{(2)}_{F}(g_1,g_2):=\sum_{n_1,n_2\in\Z}[R^{(2)}(g_1,g_2)F](n_1,n_2).
    \end{align}
    Using the Iwasawa-Heisenberg coordinates 
    $g_j=(x_j+iy_j,\phi_j;\sve{\xi_{j,1}}{\xi_{j,2}},\zeta_j)$, with $j=1,2$,  definition \eqref{def-Theta-2-representation-R^2} reads as 
        \begin{align}
        \label{def-big-Theta}
        \Theta_F^{(2)}(g_1,g_2):=&y_1^{\frac{1}{4}}y_2^{\frac{1}{4}}\e{\zeta_1-\xi_{1,1}\xi_{1,2}}\e{\zeta_2-\xi_{2,1}\xi_{2,2}}\sum_{n_1,n_2\in \mathbb Z} F_{\phi_1,\phi_2}\!\left((n_1-\xi_{1,2})y_1^{\frac{1}{2}},(n_2-\xi_{2,2})y_2^{\frac{1}{2}}\right)\\
        &\times e\left(\tfrac{1}{2}(n_1-\xi_{1,2})^2x_1+n_1\xi_{1,1}\right)e\left(\tfrac{1}{2}(n_2-\xi_{2,2})^2x_2+n_2\xi_{2,1}\right).\nonumber
        \end{align}
\end{definition}
We show an analogue of Lemma 2.1 in \cite{Cellarosi-Marklof} where we bound $\Theta_F^{(2)}$ in terms of the height in the cusp of $G\times G$ 
\begin{lemma}\label{lemma:growth-in-cusp-second-order}
    Given $\xi_{1,2}\in \mathbb R$ and $\xi_{2,2}\in \mathbb R$, write $\xi_{1,2}=m_1+\theta_1$ and $\xi_{2,2}=m_2+\theta_2$ where $m_1,m_2\in \mathbb Z$ and $-\frac{1}{2}\leq \theta_1,\theta_2< \frac{1}{2}$. Let $\eta_1,\eta_2>1$. Then there exists a constant $C_{\eta_1,\eta_2}$ so that for all $F\in \mathcal{S}_{\eta_1,\eta_2}(\R^2)$, all $y_1,y_2\geq \frac{1}{2}$, and all $x_1,x_2, \phi_1,\phi_2,\boldsymbol{\xi}_1, \boldsymbol{\xi}_2, \zeta_1,\zeta_2$ that 
    \begin{equation}
    \begin{split}
        \left|\Theta_F^{(2)}(g_1,g_2)\right|\leq C_{\eta_1,\eta_2}\kappa_{\eta_1,\eta_2}(F)y_1^{\frac{1}{4}}y_2^{\frac{1}{4}},
        \end{split}
    \end{equation}
    where $g_1, g_2$ are as in Definition \ref{def:Theta^(2)}.
\end{lemma}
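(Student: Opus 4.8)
The plan is to reduce everything to the one-dimensional estimate underlying Lemma 2.1 of \cite{Cellarosi-Marklof}, using that $R^{(2)}(g_1,g_2)$ acts on the two variables separately, so that the only genuinely two-dimensional input is the pointwise decay of $F_{\phi_1,\phi_2}$ encoded in $\kappa_{\eta_1,\eta_2}(F)$. Starting from the explicit expansion \eqref{def-big-Theta}, all the factors other than $y_1^{1/4}y_2^{1/4}$ and $F_{\phi_1,\phi_2}$ --- namely $\e{\zeta_j-\xi_{j,1}\xi_{j,2}}$, the exponentials $\e{\tfrac12(n_j-\xi_{j,2})^2x_j+n_j\xi_{j,1}}$, and the unimodular phase prefactors built into $F_{\phi_1,\phi_2}$ as in \eqref{phi-transform_with_prefactor} --- have modulus one, so the triangle inequality gives
\begin{equation*}
\left|\Theta_F^{(2)}(g_1,g_2)\right|\le y_1^{1/4}y_2^{1/4}\sum_{n_1,n_2\in\Z}\left|F_{\phi_1,\phi_2}\!\left((n_1-\xi_{1,2})y_1^{1/2},(n_2-\xi_{2,2})y_2^{1/2}\right)\right|.
\end{equation*}
By definition of $\kappa_{\eta_1,\eta_2}(F)$ the summand is at most $\kappa_{\eta_1,\eta_2}(F)\,(1+|(n_1-\xi_{1,2})y_1^{1/2}|)^{-\eta_1}(1+|(n_2-\xi_{2,2})y_2^{1/2}|)^{-\eta_2}$, and the double sum factors as a product of two one-dimensional sums.

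It then remains to bound, uniformly in $y\ge\tfrac12$ and in $\xi_2\in\R$, the one-dimensional sum $\sum_{n\in\Z}\bigl(1+|(n-\xi_2)y^{1/2}|\bigr)^{-\eta}$ for $\eta\in\{\eta_1,\eta_2\}$. Writing $\xi_2=m+\theta$ with $m\in\Z$, $-\tfrac12\le\theta<\tfrac12$, and translating $n\mapsto n+m$ removes the integer part; the $n=0$ term contributes at most $1$, while for $|n|\ge 1$ one has $|n-\theta|\ge|n|/2$ and $y^{1/2}\ge 2^{-1/2}$, hence $|(n-\theta)y^{1/2}|\ge|n|/(2\sqrt2)$, so the tail is dominated by $2\sum_{n\ge1}(1+n/(2\sqrt2))^{-\eta}$, which converges because $\eta>1$. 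This produces a constant $C_\eta$ depending only on $\eta$; taking $C_{\eta_1,\eta_2}:=C_{\eta_1}C_{\eta_2}$ and combining with the previous display yields the asserted inequality.

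There is no real obstacle here; the one point to be careful about is the uniformity of the one-dimensional estimate in the cusp height $y\ge\tfrac12$ and in the shift $\xi_2$, which is precisely where the hypotheses $\eta_1,\eta_2>1$ and $y_1,y_2\ge\tfrac12$ are used. As in \cite{Cellarosi-Marklof}, restricting to $y_j\ge\tfrac12$ is not a loss, since this range covers a neighbourhood of the cusp and the general case is recovered afterwards by a fundamental-domain decomposition together with the $\Gamma$-invariance of the relevant quantities.
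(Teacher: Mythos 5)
Your proof is correct and follows essentially the same route as the paper's: apply the triangle inequality to the expansion \eqref{def-big-Theta}, use the definition of $\kappa_{\eta_1,\eta_2}(F)$ to dominate the summand, and exploit $y_1,y_2\ge\tfrac12$ to make the resulting factored double sum converge uniformly. Your version is in fact slightly more careful, since you make explicit the translation by the integer part of $\xi_{j,2}$ and the bound $|n-\theta|\ge|n|/2$ that guarantee uniformity in $\boldsymbol\xi_1,\boldsymbol\xi_2$, a point the paper's proof leaves implicit.
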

\begin{proof}
    We just need to bound \begin{align*}M:=\sum_{n_1,n_2\in\Z}& F_{\phi_1,\phi_2}\!\left((n_1-\xi_{1,2})y_1^{\frac{1}{2}},(n_2-\xi_{2,2})y_2^{\frac{1}{2}}\right)\\
    &~\times e\left(\tfrac{1}{2}(n_1-\xi_{1,2})^2x_1+n_1\xi_{1,1}\right)e\left(\tfrac{1}{2}(n_2-\xi_{2,2})^2x_2+n_2\xi_{2,1}\right).\end{align*} In this case, we have that
    \begin{align*}
        |M|
        &\leq\sum_{n_1,n_2\in\Z} \left|F_{\phi_1,\phi_2}\left((n_1-\xi_{1,2})y_1^{\frac{1}{2}},(n_2-\xi_{2,2})y_2^{\frac{1}{2}}\right)\right|\\
        &\leq \sum_{n_1,n_2}\frac{\kappa_{\eta_1,\eta_2}(F)}{\left(1+\left|(n_1-\xi_{1,2})y_1^{\frac{1}{2}}\right|\right)^{\eta_1}\left(1+\left|(n_2-\xi_{2,2})y_2^{\frac{1}{2}}\right|\right)^{\eta_2}}\\
         &\leq \sum_{n_1,n_2}\frac{\kappa_{\eta_1,\eta_2}(F)}{\left(1+\left|(n_1-\xi_{1,2})(\frac{1}{2})^{\frac{1}{2}}\right|\right)^{\eta_1}\left(1+\left|(n_2-\xi_{2,2})(\frac{1}{2})^{\frac{1}{2}}\right|\right)^{\eta_2}}\\
        &= C_{\eta_1,\eta_2} \kappa_{\eta_1,\eta_2}(F).
    \end{align*}
\end{proof}
\subsection{\texorpdfstring{Invariance of $\Theta^{(2)}$ under $\Gamma\times \Gamma$}{}}
Let $\Gamma$ be as in Section \ref{section:Gamma-invariance}. We establish automorphy of $\Theta_f^{(2)}$ under $\Gamma\times \Gamma$ for regular $F$.
\begin{proposition}\label{prop:invariance-under-gamma-gamma}
    Let $F\in S_{\eta_1,\eta_2}(\mathbb R^2)$. Then $\Theta_F^{(2)}$ is a well defined function $(\Gamma\times \Gamma)\backslash (G\times G)\to \mathbb C$.
\end{proposition}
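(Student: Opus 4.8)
The plan is to reduce the two-dimensional statement to the one-dimensional invariance of $\Theta_f$ under $\Gamma$ established in Section \ref{section:Gamma-invariance}, using the product structure of the representation $R^{(2)}$. First I would observe that, by construction, $R^{(2)}(g_1,g_2)$ acts on a tensor product $\mathrm L^2(\R)\otimes\mathrm L^2(\R)\cong\mathrm L^2(\R^2)$ as $R(g_1)\otimes R(g_2)$. Hence for a factorized function $F(w_1,w_2)=f_1(w_1)f_2(w_2)$ with $f_1,f_2\in\mathcal S_{\eta}(\R)$ (taking $\eta=\min(\eta_1,\eta_2)>1$) we have the clean factorization
\begin{align*}
    \Theta^{(2)}_F(g_1,g_2)=\sum_{n_1,n_2\in\Z}[R(g_1)f_1](n_1)\,[R(g_2)f_2](n_2)=\Theta_{f_1}(g_1)\,\Theta_{f_2}(g_2),
\end{align*}
where absolute convergence (and therefore the legitimacy of splitting the double sum) is guaranteed by Lemma \ref{lemma:growth-in-cusp-second-order} together with the regularity of $f_1,f_2$. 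Since $\Theta_{f_1}$ is $\Gamma$-invariant in $g_1$ and $\Theta_{f_2}$ is $\Gamma$-invariant in $g_2$ (Section \ref{section:Gamma-invariance}, following \cite{Marklof2003Annals}), the product $\Theta^{(2)}_F$ is $(\Gamma\times\Gamma)$-invariant for every such factorized $F$.

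Next I would extend this to general regular $F$ by a density/linearity argument. The map $F\mapsto\Theta^{(2)}_F$ is linear, and finite linear combinations of factorized regular functions are dense in $\mathcal S_{\eta_1,\eta_2}(\R^2)$ in the $\kappa_{\eta_1,\eta_2}$-topology; moreover, by Lemma \ref{lemma:growth-in-cusp-second-order}, for fixed $(g_1,g_2)$ with $y_1,y_2$ bounded below the evaluation $F\mapsto\Theta^{(2)}_F(g_1,g_2)$ is continuous in this topology (in fact it is bounded by $C_{\eta_1,\eta_2}\kappa_{\eta_1,\eta_2}(F)y_1^{1/4}y_2^{1/4}$ after moving the relevant point into the fundamental domain via $\Gamma\times\Gamma$). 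Therefore the identity $\Theta^{(2)}_F((\gamma_1,\gamma_2)(g_1,g_2))=\Theta^{(2)}_F(g_1,g_2)$, which holds on the dense set of factorized functions, passes to the limit and holds for all regular $F$. Alternatively, and perhaps more cleanly, I would avoid density altogether: since each generator $\gamma_k$ of $\Gamma$ acts on $\Theta_f$ via an explicit unitary intertwining relation coming from $R(\gamma_k)f=f$ (automorphy factor bookkeeping as in \cite{Marklof2003Annals}), the same computation run coordinate-by-coordinate for the ten generators of $\Gamma\times\Gamma$ (five in each factor, acting trivially on the other) shows directly from \eqref{def-big-Theta} that the prefactors and the lattice shifts in the summation indices recombine to leave the sum unchanged; this is a term-by-term verification legitimized by the absolute convergence from Lemma \ref{lemma:growth-in-cusp-second-order}.

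Concretely, I would carry out the generator check: it suffices to verify invariance under $(\gamma_k,\mathrm{id})$ and $(\mathrm{id},\gamma_k)$ for $k=1,\dots,5$, and by symmetry only the five elements $(\gamma_k,\mathrm{id})$ need to be treated. For $k=2,3,4,5$ these are Heisenberg-type or unipotent translations and the verification is a direct manipulation of \eqref{def-big-Theta}: the shift $\xi_{1,1}\mapsto\xi_{1,1}+\tfrac12$, $\xi_{1,2}\mapsto\xi_{1,2}$, etc., together with integer reindexing $n_1\mapsto n_1+1$, leaves the first-coordinate part of the sum invariant exactly as in the one-dimensional case, while the second-coordinate factor is untouched. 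The only genuinely delicate generator is $\gamma_1$, the lift of $\sma{0}{-1}{1}{0}$ with the $\tfrac18$ Heisenberg component and the $\arg$ branch of $\beta$: here one uses \eqref{phi-transform_with_prefactor}, i.e. the transformation of $f_{\phi_1}$ under the metaplectic Fourier transform $\mathscr F_{\phi_1}$ and the $\e{\sigma_{\phi_1}/8}$ prefactor, to see that applying $R(\gamma_1)$ in the first variable is the classical theta-inversion (Poisson summation / Landsberg–Schaar) identity applied to the $n_1$-sum; the $n_2$-sum rides along as a fixed absolutely convergent series, which is exactly where Lemma \ref{lemma:growth-in-cusp-second-order} is needed to justify interchanging it with the Fourier-analytic operation. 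I expect this $\gamma_1$ step, and specifically the careful tracking of the $\phi_1$-dependent phase $\sigma_{\phi_1}$ and the sign conventions for $|\sin\phi_1|^{-1/2}$, to be the main obstacle; everything else is routine once the factorization $\Theta^{(2)}_{f_1\otimes f_2}=\Theta_{f_1}\cdot\Theta_{f_2}$ is in place and one appeals to the already-established one-dimensional automorphy.
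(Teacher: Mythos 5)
Your second, ``generator-by-generator'' route is viable and, suitably carried out, proves the proposition; but the paper's actual argument is different and considerably shorter, and your first route has a genuine gap. The paper does not factorize $F$ and does not redo any metaplectic computation: it performs a partial summation in $n_2$, writing
\begin{align*}
\Theta^{(2)}_F(g_1,g_2)=y_2^{1/4}\,\e{\zeta_2-\xi_{2,1}\xi_{2,2}}\sum_{n_2\in\Z}\e{\tfrac12(n_2-\xi_{2,2})^2x_2+n_2\xi_{2,1}}\,\Theta_{f_{g_2,n_2}}(g_1),
\qquad f_{g_2,n_2}(w):=F_{0,\phi_2}\!\left(w,(n_2-\xi_{2,2})y_2^{1/2}\right),
\end{align*}
so that for each fixed $n_2$ the inner $n_1$-sum is literally a rank-one theta function of $g_1$ with a \emph{regular} test function (its $\kappa_{\eta_1}$-norm is controlled by $\kappa_{\eta_1,\eta_2}(F)$). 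The known $\Gamma$-invariance of $\Theta_{f}$ for regular $f$ then applies term by term, the coefficients depend only on $g_2$, and invariance in the first slot follows; the second slot is symmetric. This slicing trick is exactly what lets one quote the one-dimensional automorphy instead of re-proving it, which is what your generator check would amount to (your identification of $\gamma_1$ and the interchange of the $n_2$-sum with the Poisson-summation step as the only delicate points is accurate, and absolute convergence does justify it).

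The gap is in your first route: the claim that finite linear combinations of factorized functions $f_1\otimes f_2$ are dense in $\mathcal{S}_{\eta_1,\eta_2}(\R^2)$ for the $\kappa_{\eta_1,\eta_2}$-topology is asserted without justification and is far from obvious. That ``norm'' is a supremum over \emph{all} pairs of metaplectic rotations $(\phi_1,\phi_2)$ of a weighted sup-norm, so you would need simultaneous uniform approximation of $F_{\phi_1,\phi_2}$ by tensor sums for every angle pair; nothing in the paper (or in standard tensor-product density results, which are for Fr\'echet topologies on Schwartz space, not for this Banach-type norm) provides this. If you want to keep the factorization idea you must either prove that density statement or replace it by the partial-summation reduction above, which needs no approximation at all.
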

\begin{proof}
    Using the Iwasawa-Heisenberg coordinates $g_j=(x_j+iy_j,\phi_j;\sve{\xi_{j,1}}{\xi_{j,2}},\zeta_j)$, $j=1,2$ we have that 
        \begin{align*}
        \Theta_F^{(2)}(g_1,g_2)
        &=y_1^{\frac{1}{4}}\,\e{\zeta_2-\xi_{2,1}\xi_{2,2}} \sum_{n_2\in\Z} \e{\frac12(n_2-\xi_{2,2})^2x+n_2\xi_{2,1}}\Theta_{f_{g_2,n_2}}(g_1),
        \end{align*}
        where $f_{g_2,n_2}:\mathbb R\to\mathbb R$ is defined by $w\mapsto F_{0,\phi_2}(w, (n_2-\xi_{2,2})y_2^{1/2}).$ Using the invariance of $\Theta$ under $\Gamma$ gives that $\Theta_F^{(2)}(\gamma g_1,g_2)=\Theta_F^{(2)}(g_1,g_2)$ for any $\gamma\in \Gamma$. A similar argument shows that $\Theta_F^{(2)}( g_1,\gamma g_2)=\Theta_F^{(2)}(g_1,g_2)$ for any $\gamma\in \Gamma$. Therefore $\Theta_F^{(2)}(\gamma_1 g_1,\gamma_2 g_2)=\Theta_F^{(2)}(g_1,g_2)$ for any $\gamma_1,\gamma_2\in \Gamma$, concluding the proof. 
\end{proof}
We also define a fundamental domain of $(\Gamma\times \Gamma)$   acting on $G\times G$ as $\mathcal F_{\Gamma}\times \mathcal F_{\Gamma},$ where $\mathcal F_\Gamma$ is defined as in equation (2.57) in \cite{Cellarosi-Marklof}.

\section{\texorpdfstring{Rewriting $\mathbf{X}_N$ in terms of horocycle lifts and $(\Theta,\Theta^{(2)})$}{}}\label{section:rough-path-to-theta}

Recall $X_N(x;t)$ defined in \eqref{def-X_N(t)}. For simplicity, as already done above, we drop the dependence upon $x$ (as well as the already dropped dependence upon $\alpha,\beta$ which are fixed) in the notation and simply write $X_N(t)$.
We use the shorthands   
\begin{align*}
    X_N^1(t)=\Re(X_N(t)),\hspace{.3cm}X_N^2(t)=\Im(X_N(t)).
\end{align*}
Equivalently, $X_N^1(t)$ (resp. $X_N^2(t)$) is obtained by replacing $\e{\cdot}$ with $\cos(2\pi\cdot)$ (resp. $\sin(2\pi\cdot)$) in \eqref{def-X_N(t)}.
We also use the notations
\begin{align}
    a_k&=a_k(x;\alpha,\beta)=\cos\!\left(2\pi\left[(\tha k^2+k \beta)x+\alpha k\right]\right),\label{shorthand-a_k}\\
    b_k&=b_k(x;\alpha,\beta)=\sin\!\left(2\pi\left[(\tha k^2+k \beta)x+\alpha k\right]\right),\label{shorthand-b_k}\\
    z_k&=z_k(x;\alpha,\beta)=\e{(\tha k^2+k \beta)x+\alpha k}=a_k+ib_k.\label{shorthand-z_k}
\end{align}

Define $\mathbb{X}_N:\Delta_2^{(0,T)}\to\R^{2\times2}$ as 
$\mathbb{X}_N(s,t)=\left(\mathbb{X}_N^{i,j}(s,t)\right)_{i,j\in\{1,2\}}$, where 
\begin{align*}
    \mathbb{X}_N^{i,j}(s,t)=\int_{s}^{t}\int_{s}^{r} \de X_N^i(u)\,\de X_N^j(r)=\int_{s}^{t}(X_N^i(r)-X_N^i(s))\,\de X_N^j(r)
\end{align*}
We need to consider the functions $\mathbf{X}_N:\Delta_2^{(0,T)}\to\R^2\times\R^{2\times 2}$ given by 
\begin{align}\label{mathbfX_N(s,t)}
\mathbf{X}_N(s,t)=\mathbf{X}_N(x;\alpha,\beta;s,t)=\left(X_N(t)-X_N(s);\mathbb{X}_N(s,t)\right).
\end{align}
Note that in our set up, $\mathbf{X}_N$ is a \emph{random} function on the simplex since it depends on $x$ and $x$ is distributed according to the probability measure $\lambda$. The random function $\mathbf{X}_N$ also depends on the parameters $\alpha$ and $\beta$, which are fixed and not both rational. 
We aim to show that the finite-dimensional distributions of these random functions have a limit in distribution as $N\ti$. Before doing that, we will write $\mathbf{X}_N(s,t)$ in terms of a pair of horocycle lifts in $\GamG$ and in terms of theta functions $\Theta$ and $\Theta^{(2)}$.

\begin{lemma}\label{lemma:grid-approximation-with-continuous-time}
Given $s<t$, and $i,j\in\{1,2\}$, we have for almost every $x$ that
\begin{align}\label{XX_N-at-times_m/N_n/N+O-term}
\mathbb{X}^{i,j}_N(s,t)=\mathbb{X}^{i,j}_N\!\left(\tfrac{m}{N},\tfrac{n}{N}\right)+O\!\left(\tfrac{\log N}{\sqrt{N}}\right)
\end{align}
as $N\ti$, where $m=\lfloor sN\rfloor$ and $n=\lfloor tN\rfloor$.  
\end{lemma}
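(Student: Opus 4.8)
The plan is to reduce the continuous-time iterated integral $\mathbb{X}^{i,j}_N(s,t)$ to the grid-point version $\mathbb{X}^{i,j}_N(\tfrac{m}{N},\tfrac{n}{N})$ by splitting the integral at the gridpoints and estimating the two ``boundary pieces'' coming from the intervals $[\tfrac{m}{N},s]$ and $[\tfrac{n}{N},t]$, together with the correction to the base point $X^i_N(s)$ versus $X^i_N(\tfrac m N)$. First I would write the difference explicitly: since $\mathbb{X}^{i,j}_N(s,t)=\int_s^t (X^i_N(r)-X^i_N(s))\,\de X^j_N(r)$ and similarly for the gridpoint version, one gets
\begin{align*}
\mathbb{X}^{i,j}_N(s,t)-\mathbb{X}^{i,j}_N\!\left(\tfrac{m}{N},\tfrac{n}{N}\right)
&= \int_s^t (X^i_N(r)-X^i_N(s))\,\de X^j_N(r)-\int_{m/N}^{n/N}\!\!\left(X^i_N(r)-X^i_N\!\left(\tfrac{m}{N}\right)\right)\de X^j_N(r).
\end{align*}
I would regroup this as (i) the endpoint contributions $-\int_{m/N}^{s}(\cdots)\,\de X^j_N + \int_{n/N}^{t}(\cdots)\,\de X^j_N$, and (ii) the base-point shift term $-\left(X^i_N(s)-X^i_N(\tfrac m N)\right)\int_s^{n/N}\de X^j_N(r)$, i.e.\ a finite number of elementary integrals over intervals of length $\le \tfrac1N$.

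The key estimate is that on each subinterval $[\tfrac{k}{N},\tfrac{k+1}{N}]$ the path $X_N$ is affine with velocity of size $O(\sqrt N)$: indeed $X_N(\tfrac{k+1}{N})-X_N(\tfrac kN)=\tfrac{1}{\sqrt N}z_{k+1}$ with $|z_{k+1}|=1$, so $|\de X^j_N|$ over such an interval is $\le \tfrac{1}{\sqrt N}$, and the increment $|X^i_N(r)-X^i_N(s)|$ for $r$ within $O(1/N)$ of $s$ is also $O(1/\sqrt N)$. Thus each boundary piece in (i) is $O(1/N)$. For the base-point shift term (ii), $|X^i_N(s)-X^i_N(\tfrac m N)|=O(1/\sqrt N)$, while $\int_s^{n/N}\de X^j_N(r)=X^j_N(\tfrac nN)-X^j_N(s)$ has modulus bounded by the total variation of $X_N$ on $[s,\tfrac nN]$. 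Here is where the $\log N$ enters: one crude but sufficient bound is $|X^j_N(\tfrac nN)-X^j_N(s)|\ll \|X_N\|_{\gamma}\,|t-s|^\gamma$ which is $O(1)$ a.s.\ (or even just $\le \sqrt N \cdot 1 = O(\sqrt N)$ times nothing useful); the sharper route is to use that $S_{\lfloor tN\rfloor}(x)=O(\sqrt N \log N)$ for a.e.\ $x$ (the classical metric bound of Fiedler--Jurkat--K\"orner, or simply the a.s.\ finiteness of $\|X_N\|_\gamma$ uniformly, cf.\ Proposition~\ref{prop:tail-bound-Holder}), so that $|X^j_N(\tfrac nN)-X^j_N(s)|=O(\log N)$ uniformly, whence (ii) is $O(\tfrac{\log N}{\sqrt N})$. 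Collecting (i) and (ii) gives the claimed $O(\tfrac{\log N}{\sqrt N})$.

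The main obstacle is getting the uniform-in-$t$ bound on the increment $X^j_N(\tfrac nN)-X^j_N(s)$ with the right logarithmic power; a pathwise pointwise bound on a single partial sum is not automatically uniform over all $n\le N$, so I would instead invoke the a.s.\ uniform H\"older bound (Proposition~\ref{prop:tail-bound-Holder}, giving $\|X_N\|_\gamma=O(1)$ a.s.\ along the sequence, hence $|X^j_N(\tfrac nN)-X^j_N(s)|\ll |t-s|^\gamma=O(1)$), which already yields $O(1/\sqrt N)$ for term (ii) and therefore an even better error than stated; alternatively, the explicit representation \eqref{key-relation-X_N-Theta_chi} together with the growth bound on $\Theta_\chi$ in the cusp (Lemma 2.1 of \cite{Cellarosi-Marklof}, which produces a factor $(\log N)^{1/4}$ via the modulus of continuity, property (j)) gives the $\tfrac{\log N}{\sqrt N}$ form directly. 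Everything else is a routine splitting and term-by-term $O(1/N)$ estimate, valid for almost every $x$ since the only non-deterministic input is the a.s.\ finiteness of the relevant H\"older/cusp bound.
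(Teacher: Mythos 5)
Your decomposition and estimates are essentially the paper's proof: split the difference into the integral over the $O(1/N)$-measure boundary set plus the base-point correction, then use the Lipschitz constant $\sqrt N$ of $X_N$ on subintervals together with the almost-everywhere bound $|X_N^j|\ll\log N$ on the partial sums. One step is misstated, though it does not sink the argument: for the boundary piece $\int_{n/N}^{t}(X_N^i(r)-X_N^i(s))\,\de X_N^j(r)$ the variable $r$ is within $O(1/N)$ of $t$, \emph{not} of $s$, so $|X_N^i(r)-X_N^i(s)|$ is not $O(1/\sqrt N)$; it is only $O(\log N)$ by the same a.e.\ bound you use for term (ii), and that piece is therefore $O(\log N/\sqrt N)$ rather than $O(1/N)$ — still within the claimed error, which is exactly how the paper handles it. Also, your fallback justification via Proposition \ref{prop:tail-bound-Holder} is weaker than you suggest: a tail bound uniform in $N$ gives boundedness of $\|X_N\|_\gamma$ in probability (tightness), not an almost-sure $O(1)$ bound uniform in $N$, so the metric (Fiedler--Jurkat--K\"orner type) bound you cite first is the one to rely on for an a.e.\ statement.
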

\begin{proof}
    Observe that
    \begin{align*}
      \bigg|&\mathbb{X}^{i,j}_N(s,t)-\mathbb{X}^{i,j}_N\!\left(\tfrac{m}{N},\tfrac{n}{N}\right) \bigg|\\
      &=\bigg|\int_s^t (X_N^i(r)-X_N^i(s))dX_N^j(r)-\int_{\frac{m}{N}}^{\frac{n}{N}} \left(X_N^i(r)-X_N^i\!\left(\tfrac{m}{N}\right)\right)dX_N^j(r)\bigg|\\
      &\leq \left|\int_{[s,t]\smallsetminus [\frac{m}{N},\frac{n}{N}]} X_N^i(r) dX_N^j(r)\right|+\left|X_N^i(s)\left(X_N^j(t)-X_N^j(s)\right)-X_N^i\!\left(\tfrac{m}{N}\right)\left(X_N^j\left(\tfrac{n}{N}\right)-X_N^j\!\left(\tfrac{m}{N}\right)\right)\right|\\
      &:=I_1+I_2,
    \end{align*}
where $$I_1=\left|\int_{[s,t]\smallsetminus [\frac{m}{N},\frac{n}{N}]} X_N^i(r) dX_N^j(r)\right|$$ and $$I_2=\left|X_N^i(s)\left(X_N^j(t)-X_N^j(s)\right)-X_N^i\left(\tfrac{m}{N}\right)\left(X_N^j\!\left(\tfrac{n}{N}\right)-X_N^j\!\left(\tfrac{m}{N}\right)\right)\right|.$$
To bound $I_1$, note that for all $r$  we have $|X_N^i(r)|\ll \log(N)$ for almost every $x$ by paucity. Also $|\dot X_N^j(r)|=\sqrt N$ for every $r$ which is not a grid point. Therefore
\begin{align*}
    I_1&\ll \sqrt N \log N\left(t-\frac{\lfloor tN\rfloor}{N}+s-\frac{\lfloor sN\rfloor}{N}\right)\\
    &=\frac{\sqrt N \log N}N\left(\{tN\}+\{sN\}\right)\\
    &\ll \frac{\log N}{\sqrt N}.
\end{align*}
To handle $I_2$, add and subtract $X_N^i\left(\frac{m}{N}\right)(X_N^j(t)-X_N^j(s))$ to get that
\begin{align*}
    I_2&\leq \left|\left(X_N^i(s)-X_N^i\left(\tfrac{m}{N}\right)\right)(X_N^j(t)-X_N^j(s))\right|+\left|X_N^i\!\left(\tfrac{m}{N}\right)\left((X_N^j(t)-X_N^j(s))-\left(X_N^j\!\left(\tfrac{n}{N}\right)-X_N^j\!\left(\tfrac{m}{N}\right)\right)\right)\right|\\
    &=\left(s-\frac{m}N\right)\left|X_N^j(t)-X_N^j(s)\right|+\left|X_N^i\left(\frac{m}{N}\right)\left((X_N^j(t)-X_N^j(s))-\left(X_N^j\left(\frac{n}{N}\right)-X_N^j\left(\frac{m}{N}\right)\right)\right)\right|\\
    &\ll \log N\left(s-\frac{m}N\right) +\log N\left(t-\frac{n}{N}\right)+\log N\left(s-\frac{m}{N}\right)
\end{align*} 
by paucity. Therefore, as $s-\frac{m}{N}\ll \frac{1}{N}$, we have proved claim \eqref{XX_N-at-times_m/N_n/N+O-term}.
\end{proof}

In view of Lemma \ref{lemma:grid-approximation-with-continuous-time}, it is enough to consider $\mathbb{X}_N^{i,j}\!\left(\tfrac{m}{N},\tfrac{n}{N}\right)$, where for sufficiently large $N$ we have  $m<n$. The following proposition allows us to write $\mathbb{X}_N^{i,j}$ at arbitrary grid points in terms of $X_N^i$ and $X_N^j$ at \emph{consecutive} grid points. 
\begin{proposition}
For $i,j\in \{1,2\}$ we have that
\begin{align}
    \mathbb{X}_N^{i,j}\!\left(\tfrac{m}{N},\tfrac{n}{N}\right)=
    &\sum_{m+1\leq k<\ell\leq n}\left(X_N^{i}\!\left(\tfrac{k}{N}\right)-X_N^i\!\left(\tfrac{k-1}{N}\right)\right)\left(X_N^j\!\left(\tfrac{\ell}{N}\right)-X_N^j\!\left(\tfrac{\ell-1}{N}\right)\right)+\nonumber\\ 
    &+\ha\sum_{m+1\leq k\leq n}\left(X_N^i\!\left(\tfrac{k}{N}\right)-X_N^i\!\left(\tfrac{k-1}{N}\right)\right)\left(X_N^j\!\left(\tfrac{k}{N}\right)-X_N^j\!\left(\tfrac{k-1}{N}\right)\right).\label{XX_N^ij-2}
\end{align}
\end{proposition}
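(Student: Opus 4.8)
The plan is to reduce the statement to the additivity (``Chen'') relation for Riemann--Stieltjes integrals, together with a one-line computation on each affine piece of $X_N$. The starting observation is that, by \eqref{def-X_N(t)}, the curve $X_N$ is affine on each interval $[\tfrac{k-1}{N},\tfrac kN]$, with nodes on the grid $\tfrac1N\Z$ and increments $X_N^i(\tfrac kN)-X_N^i(\tfrac{k-1}{N})=\tfrac{1}{\sqrt N}(z_k)_i$ (writing $(z_k)_1=a_k$, $(z_k)_2=b_k$ as in \eqref{shorthand-a_k}--\eqref{shorthand-z_k}); in fact only the piecewise affineness is used here.

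First I would record the splitting identity: for $\tfrac mN\le u\le \tfrac nN$,
\[
\mathbb{X}_N^{i,j}\!\left(\tfrac mN,\tfrac nN\right)=\mathbb{X}_N^{i,j}\!\left(\tfrac mN,u\right)+\mathbb{X}_N^{i,j}\!\left(u,\tfrac nN\right)+\left(X_N^i(u)-X_N^i\!\left(\tfrac mN\right)\right)\left(X_N^j\!\left(\tfrac nN\right)-X_N^j(u)\right),
\]
which follows by writing $\int_{m/N}^{n/N}=\int_{m/N}^{u}+\int_{u}^{n/N}$ and adding and subtracting $X_N^i(u)$ in the integrand of the second integral; this is exactly property (ii) of Definition \ref{def:rough-path}, valid here because $X_N$ is piecewise smooth, so every integral in sight is an honest Riemann--Stieltjes integral. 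Applying this identity successively at the interior nodes $\tfrac{m+1}{N},\dots,\tfrac{n-1}{N}$ — a short induction on $n-m$, in which each new split off the largest node contributes precisely the cross terms with that new largest index — yields
\[
\mathbb{X}_N^{i,j}\!\left(\tfrac mN,\tfrac nN\right)=\sum_{k=m+1}^{n}\mathbb{X}_N^{i,j}\!\left(\tfrac{k-1}{N},\tfrac kN\right)+\sum_{m+1\le k<\ell\le n}\left(X_N^i\!\left(\tfrac kN\right)-X_N^i\!\left(\tfrac{k-1}{N}\right)\right)\left(X_N^j\!\left(\tfrac\ell N\right)-X_N^j\!\left(\tfrac{\ell-1}{N}\right)\right),
\]
which is already the first (double) sum in \eqref{XX_N^ij-2}.

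Then I would evaluate each diagonal term directly. On $[\tfrac{k-1}{N},\tfrac kN]$ the affine parametrization gives $X_N^i(r)-X_N^i(\tfrac{k-1}N)=N\left(r-\tfrac{k-1}N\right)v_k^i$ and $\de X_N^j(r)=N v_k^j\,\de r$, where $v_k^i:=X_N^i(\tfrac kN)-X_N^i(\tfrac{k-1}N)$, whence
\[
\mathbb{X}_N^{i,j}\!\left(\tfrac{k-1}{N},\tfrac kN\right)=N^2 v_k^i v_k^j\int_{(k-1)/N}^{k/N}\left(r-\tfrac{k-1}{N}\right)\de r=\tha\, v_k^i v_k^j.
\]
Summing over $m+1\le k\le n$ produces the second sum in \eqref{XX_N^ij-2}, and combining the two displays finishes the proof.

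There is essentially no analytic obstacle here; the only points requiring care are the bookkeeping in the induction (correctly matching which cross terms $v_k^i v_\ell^j$ appear at each split and checking that the final range is $m+1\le k<\ell\le n$), the harmless relabelling between the local indices $1,\dots,n-m$ and the global grid indices $m+1,\dots,n$, and noticing that the diagonal integral carries the coefficient $\tha$ — which is exactly the origin of the factor $\ha$ in \eqref{XX_N^ij-2}.
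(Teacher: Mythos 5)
Your proof is correct and follows essentially the same route as the paper: evaluate the diagonal block $\mathbb{X}_N^{i,j}\!\left(\tfrac{k-1}{N},\tfrac{k}{N}\right)$ on each affine piece and then assemble the pieces via the iterated Chen relation (the paper simply cites Exercise 2.4 in \cite{Friz-Hairer-Book} for this step, which you instead carry out by an explicit induction on the nodes). Note moreover that your value $\tfrac{1}{2}\left(X_N^i(\tfrac{k}{N})-X_N^i(\tfrac{k-1}{N})\right)\left(X_N^j(\tfrac{k}{N})-X_N^j(\tfrac{k-1}{N})\right)$ for the diagonal term is the correct one --- it is what the proposition and \eqref{XX_N^11-1}--\eqref{XX_N^22-1} require --- whereas the paper's displayed computation drops a factor of $N$ (the prefactor should be $N^2$, not $N$) and ends with an extraneous $\tfrac{1}{2N}$.
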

\begin{proof}
    First, let us check what happens on consecutive grid points. 
\begin{align*}
    \mathbb X_N^{i,j}\left(\tfrac{k-1}{N},\tfrac{k}{N}\right)&=\int_{\frac{k-1}{N}}^{\frac{k}{N}} \left(X_N^i(r)-X_N^i\left(\tfrac{k-1}{N}\right)\right)dX_N^j(r)\\
    &=N \left(X_N^i\!\left(\tfrac{k}{N}\right)-X_N^i\!\left(\tfrac{k-1}{N}\right)\right)\left(X_N^j\!\left(\tfrac{k}{N}\right)-X_N^j\!\left(\tfrac{k-1}{N}\right)\right) \int_{\frac{k-1}{N}}^{\frac{k}{N}} \left(r-\tfrac{k-1}{N}\right) dr \\
    &=\frac{1}{2N}\left(X_N^i\!\left(\tfrac{k}{N}\right)-X_N^i\!\left(\tfrac{k-1}{N}\right)\right)\left(X_N^j\!\left(\tfrac{k}{N}\right)-X_N^j\!\left(\tfrac{k-1}{N}\right)\right).
\end{align*}
Using an extended version of Chen's relation (see Exercise 2.4 in \cite{Friz-Hairer-Book}) concludes the proof.
\end{proof}

Rewriting 
\eqref{XX_N^ij-2} using \eqref{shorthand-a_k}-\eqref{shorthand-b_k}, we have
\begin{align}
    \mathbb{X}_N^{1,1}\!\left(\tfrac{m}{N},\tfrac{n}{N}\right)&=\frac{1}{N}\sum_{m+1\leq k<\ell\leq n}a_k a_\ell+\frac{1}{2N}\sum_{m+1\leq k\leq n}a_k^2=:\mathcal{A}^{1,1}_N+\mathcal{B}^{1,1}_N
    \label{XX_N^11-1}\\
    \mathbb{X}_N^{1,2}\!\left(\tfrac{m}{N},\tfrac{n}{N}\right)&=\frac{1}{N}\sum_{m+1\leq k<\ell\leq n}a_k b_\ell+\frac{1}{2N}\sum_{m+1\leq k\leq n}a_k b_k=:\mathcal{A}^{1,2}_N+\mathcal{B}^{1,2}_N\label{XX_N^12-1}\\
    \mathbb{X}_N^{2,1}\!\left(\tfrac{m}{N},\tfrac{n}{N}\right)&=\frac{1}{N}\sum_{m+1\leq k<\ell\leq n}b_k a_\ell+\frac{1}{2N}\sum_{m+1\leq k\leq n}a_k b_k=:\mathcal{A}^{2,1}_N+\mathcal{B}^{1,2}_N\label{XX_N^21-1}\\
    \mathbb{X}_N^{2,2}\!\left(\tfrac{m}{N},\tfrac{n}{N}\right)&=\frac{1}{N}\sum_{m+1\leq k<\ell\leq n}b_k b_\ell+\frac{1}{2N}\sum_{m+1\leq k\leq n}b_k^2=:\mathcal{A}^{2,2}_N+\mathcal{B}^{2,2}_N,
    \label{XX_N^22-1}
\end{align}
where $\mathcal{A}^{i,j}_N=\mathcal{A}^{i,j}_N(x;\alpha,\beta;s,t)$ and $\mathcal{B}^{i,j}_N=\mathcal{B}^{i,j}_N(x;\alpha,\beta;s,t)$.
Let us now rewrite the sums $\mathcal{A}^{i,j}_N$.  Consider the two sums  $\mathcal{H}_N^{\pm}=\mathcal{H}_N^{\pm}(x;\alpha,\beta;s,t)
$ defined as
\begin{align}\label{H^pm}
    \mathcal{H}_N^{\pm}=\frac{1}{N}\sum_{m+1\leq k<\ell\leq n} (z_k\pm\overline z_k)z_\ell=:\mathcal{I}_N\pm\mathcal{J}_N,
\end{align}
where $\mathcal{I}_N=\mathcal{I}_N(x;\alpha,\beta;s,t)$ and $\mathcal{J}_N=\mathcal{J}_N(x;\alpha,\beta;s,t)$.The limiting distribution, as $n\to\infty$, for the double sum $\mathcal{J}_N=\frac{1}{N}\sum_{m+1\leq k<\ell\leq n}\overline{z_k}z_\ell$ (note the ``triangular'' set of indices) will be crucial in our analysis. Note that  
$\mathcal{H}_N^{+}=\frac{2}{N}\sum_{m+1\leq k<\ell\leq n}a_k z_\ell$ and $\mathcal{H}_N^{-}=\frac{2i}{N}\sum_{m+1\leq k<\ell\leq n}b_k z_\ell$ and therefore we can write
\begin{align}\label{rewriting_A^ij_in_terms_of_H^pm}
    \mathcal{A}^{1,1}_N=\Re(\tha\mathcal{H_N^+}),\hspace{.5cm}\mathcal{A}^{1,2}_N=\Im(\tha\mathcal{H_N^+}),\hspace{.5cm}\mathcal{A}^{2,1}_N=\Re(\tfrac{1}{2i}\mathcal{H_N^-}),\hspace{.5cm}\mathcal{A}^{2,2}_N=\Im(\tfrac{1}{2i}\mathcal{H_N^-}).
\end{align}
Finally,  consider the sums $\mathcal{L}_N=\mathcal{L}_N(x;\alpha,\beta;s,t)$ and 
$\mathcal{M}_N=\mathcal{M}_N(x;\alpha,\beta;s,t)$ defined as
\begin{align}
    \mathcal{L}_N&=
    \frac{1}{\sqrt{N}}\sum_{m+1\leq k\leq n}z_k
    =X_N(t)-X_N(s),\label{def-L_N}\\
    \mathcal{M}_N&=\frac{1}{N}\sum_{m\leq k\leq n}z_k^2,
\end{align}
so that 
\begin{align}\label{L=2I+M}
    \mathcal{L}_N^2=2\mathcal{I}_N +\mathcal{M}_N.
\end{align}
Therefore, combining \eqref{XX_N^11-1}--\eqref{L=2I+M} we can write
\begin{align}
    \mathbb{X}_N^{1,1}\!\left(\tfrac{m}{N},\tfrac{n}{N}\right)&=\Re(\tfrac{1}{4}\mathcal{L}_N^2-\tfrac{1}{4}\mathcal{M_N}+\tha\mathcal{J}_N)+\mathcal{B}^{1,1}_N\label{rewriting_X_N^11},\\
    \mathbb{X}_N^{1,2}\!\left(\tfrac{m}{N},\tfrac{n}{N}\right)&=\Im(\tfrac{1}{4}\mathcal{L}_N^2-\tfrac{1}{4}\mathcal{M_N}+\tha\mathcal{J}_N)+\mathcal{B}^{1,2}_N\label{rewriting_X_N^12},\\
    \mathbb{X}_N^{2,1}\!\left(\tfrac{m}{N},\tfrac{n}{N}\right)&=\Re(\tfrac{1}{4i}\mathcal{L}_N^2-\tfrac{1}{4i}\mathcal{M}_N-\tfrac{1}{2i}\mathcal{J}_N)+\mathcal{B}^{1,2}_N\label{rewriting_X_N^21},\\
    \mathbb{X}_N^{2,2}\!\left(\tfrac{m}{N},\tfrac{n}{N}\right)&=\Im(\tfrac{1}{4i}\mathcal{L}_N^2-\tfrac{1}{4i}\mathcal{M}_N-\tfrac{1}{2i}\mathcal{J}_N)+\mathcal{B}^{2,2}_N\label{rewriting_X_N^22}.
\end{align}


\begin{lemma}\label{lem-B_N^ij_and_M_N-tend-to-0-in-probability} Let $\lambda$ be a probability measure on $\R$ which is absolutely continuous with respect to the Lebesgue measure. Let $x$ be randomly distributed according to $\lambda$. Let $(\alpha,\beta)\notin\Q^2$ and let $s<t$. Then the four sum $\mathcal{M}_N(x;\alpha,\beta;s,t)$, $\mathcal{B}_N^{1,1}(x;\alpha,\beta;s,t)$, $\mathcal{B}_N^{1,2}(x;\alpha,\beta;s,t)$, $\mathcal{B}_N^{2,2}(x;\alpha,\beta;s,t)$ all tend to zero in probability as $N\ti$. That is, for every $\varepsilon>0$,
\begin{align}
&\lim_{N\to\infty}\lambda\left\{x\in\R:\: \left|\frac{1}{N}\sum_{m+1\leq k\leq n}z_k(x;\alpha,\beta)^2\right|>\varepsilon\right\}=0,\label{statement-sum-M-goes-to-0-in-prob}\\
    &\lim_{N\to\infty}\lambda\left\{x\in\R:\: \left|\frac{1}{N}\sum_{m+1\leq k\leq n}a_k(x;\alpha,\beta)^2\right|>\varepsilon\right\}=0,\label{statement-sum-B^11-goes-to-0-in-prob}\\
    &\lim_{N\to\infty}\lambda\left\{x\in\R:\: \left|\frac{1}{N}\sum_{m+1\leq k\leq n}a_k(x;\alpha,\beta) b_k(x;\alpha,\beta)\right|>\varepsilon\right\}=0,\label{statement-sum-B^12-goes-to-0-in-prob}\\
    &\lim_{N\to\infty}\lambda\left\{x\in\R:\: \left|\frac{1}{N}\sum_{m+1\leq k\leq n} b_k(x;\alpha,\beta)^2\right|>\varepsilon\right\}=0.\label{statement-sum-B^22-goes-to-0-in-prob}
\end{align}
In addition we have that 
\begin{align}
&\lim_{N\to\infty}\lambda\left\{x\in\R:\: \sup_{0\leq m\neq n\leq N}\left|\frac{1}{N}\sum_{m+1\leq k\leq n}z_k(x;\alpha,\beta)^2\right|>\varepsilon\right\}=0,\label{statement-sum-M-goes-to-0-in-prob-sup}\\
    &\lim_{N\to\infty}\lambda\left\{x\in\R:\: \sup_{0\leq m\neq n\leq N}\left|\frac{1}{N}\sum_{m+1\leq k\leq n}a_k(x;\alpha,\beta)^2\right|>\varepsilon\right\}=0,\label{statement-sum-B^11-goes-to-0-in-prob-sup}\\
    &\lim_{N\to\infty}\lambda\left\{x\in\R:\: \sup_{0\leq m\neq n\leq N}\left|\frac{1}{N}\sum_{m+1\leq k\leq n}a_k(x;\alpha,\beta) b_k(x;\alpha,\beta)\right|>\varepsilon\right\}=0,\label{statement-sum-B^12-goes-to-0-in-prob-sup}\\
    &\lim_{N\to\infty}\lambda\left\{x\in\R:\: \sup_{0\leq m\neq n\leq N}\left|\frac{1}{N}\sum_{m+1\leq k\leq n} b_k(x;\alpha,\beta)^2\right|>\varepsilon\right\}=0.\label{statement-sum-B^22-goes-to-0-in-prob-sup}
\end{align}
\end{lemma}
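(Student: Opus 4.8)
The plan is to reduce the entire lemma --- all four fixed-$(s,t)$ statements and all four supremum statements --- to a single size bound for the quadratic exponential sum $\sum_{m+1\le k\le n}z_k^2$, where $m=\lfloor sN\rfloor$ and $n=\lfloor tN\rfloor$. Writing $\theta_k=(\tha k^2+\beta k)x+\alpha k$, so that $z_k=\e{\theta_k}$, $a_k=\Re z_k$, $b_k=\Im z_k$, the double-angle identities give $z_k^2=\e{2\theta_k}$, $a_k^2=\ha(1+\Re z_k^2)$, $b_k^2=\ha(1-\Re z_k^2)$ and $a_kb_k=\ha\,\Im z_k^2$. Thus $\mathcal M_N=\tfrac1N\sum_{m+1\le k\le n}z_k^2$, while each of $\tfrac1N\sum_{m+1\le k\le n}a_k^2$, $\tfrac1N\sum_{m+1\le k\le n}b_k^2$, $\tfrac1N\sum_{m+1\le k\le n}a_kb_k$ is a real or imaginary part of $\tfrac12\mathcal M_N$ up to an explicit deterministic term. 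So it will be enough to show that $\tfrac1N\bigl|\sum_{m+1\le k\le n}z_k^2\bigr|\to 0$ in probability for fixed $s<t$, together with $\sup_{0\le m\ne n\le N}\tfrac1N\bigl|\sum_{m+1\le k\le n}z_k^2\bigr|\to 0$ in probability.

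The point I would exploit is that $\sum_{m+1\le k\le n}z_k^2$ is itself a difference of two quadratic Weyl sums of the form \eqref{def-S_N(x,alpha,beta)}, but with a \emph{doubled} argument: since $z_k^2=\e{(\tha k^2+\beta k)(2x)+(2\alpha)k}$,
\[
\sum_{m+1\le k\le n}z_k^2=S_n(2x;2\alpha,\beta)-S_m(2x;2\alpha,\beta).
\]
Two elementary remarks make the existing theory applicable: the law of $2x$ (the push-forward of $\lambda$ under $x\mapsto 2x$) is again absolutely continuous with respect to Lebesgue measure, and $(2\alpha,\beta)\notin\Q^2$ (if $\beta\notin\Q$ this is immediate; if $\beta\in\Q$ then $\alpha\notin\Q$, so $2\alpha\notin\Q$). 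Hence the almost-sure bound on partial quadratic Weyl sums already invoked in the proof of Lemma~\ref{lemma:grid-approximation-with-continuous-time} (``paucity'', from \cite{Cellarosi-Marklof}) applies to $S_n(2x;2\alpha,\beta)$: for $\lambda$-almost every $x$, $\max_{0\le n\le N}|S_n(2x;2\alpha,\beta)|\ll_x\sqrt N\,\log N$.

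Given this, the rest is immediate: for $\lambda$-almost every $x$,
\[
\sup_{0\le m\ne n\le N}\frac1N\Bigl|\sum_{m+1\le k\le n}z_k^2\Bigr|\le\frac2N\max_{0\le n\le N}\bigl|S_n(2x;2\alpha,\beta)\bigr|\ll_x\frac{\log N}{\sqrt N}\longrightarrow 0,
\]
which is stronger than the eight asserted convergences (it gives almost-sure, uniform-in-$(m,n)$ convergence with a rate, whence convergence in probability; the $x$-dependence of the implied constant is immaterial for the latter). Substituting into the reductions of the first paragraph completes the proof. The main point to stress is that the only genuinely non-trivial ingredient is imported: it is the almost-sure size bound for quadratic Weyl sums, one of the main outputs of \cite{Cellarosi-Marklof}; everything else is bookkeeping --- spotting $z_k^2$ as a doubled-frequency Weyl sum, the case-check $(2\alpha,\beta)\notin\Q^2$, and passing from two-sided partial sums to one-sided maxima to get the uniformity over $0\le m\ne n\le N$. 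If one wished to avoid invoking that bound, the fixed-$(s,t)$ statements alone could instead be obtained from a second-moment computation, using $E_\lambda(z_k^2\overline{z_\ell^2})=\e{2\alpha(k-\ell)}\,\widehat\lambda(-(k-\ell)(k+\ell+2\beta))$ together with the Riemann--Lebesgue lemma for the density of $\lambda$ (the argument of $\widehat\lambda$ having modulus $\gg|k-\ell|N$ away from the diagonal, since $k+\ell$ is typically of order $N$ on the summation range), which gives $E_\lambda|\mathcal M_N|^2\to 0$; the supremum statements, though, seem to need the a.s. Weyl-sum bound in an essential way.
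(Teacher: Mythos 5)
Your reduction is exactly the paper's: the double-angle identities turn $\mathcal M_N$, $\mathcal B_N^{1,1}$, $\mathcal B_N^{1,2}$, $\mathcal B_N^{2,2}$ into (deterministic terms plus) the doubled-frequency Weyl sum $\tfrac1N\sum_{m+1\le k\le n}z_k(2x;2\alpha,\beta)$, and the two sanity checks --- that the push-forward of $\lambda$ under $x\mapsto 2x$ is still absolutely continuous and that $(2\alpha,\beta)\notin\Q^2$ --- are precisely the ones the paper makes. Where you diverge is the concluding step. The paper stays at the level of distributional statements: for fixed $s<t$ it observes that $\tfrac1{\sqrt N}\sum_{m+1\le k\le n}z_k(2x;2\alpha,\beta)=X_N(2x;2\alpha,\beta;t)-X_N(2x;2\alpha,\beta;s)$ converges in distribution by Theorem~\ref{thm-existence-of-theta-process}, so the extra factor $N^{-1/2}$ (``overnormalization'') forces convergence to $0$ in probability; for the supremum statements it invokes the uniform-in-$N$ H\"older tail bound of Proposition~\ref{prop:tail-bound-Holder} in the same way. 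You instead import an almost-sure bound $\max_{0\le n\le N}|S_n(2x;2\alpha,\beta)|\ll_x\sqrt N\log N$, which yields all eight statements at once, almost surely and with a rate. That is a cleaner and stronger conclusion, but it rests on a heavier external input: an a.s.\ logarithmic bound for quadratic Weyl sums at \emph{fixed} $(\alpha,\beta)\notin\Q^2$ and $\lambda$-a.e.\ $x$. The paper does informally appeal to such a bound (``paucity'') in the proof of Lemma~\ref{lemma:grid-approximation-with-continuous-time}, so you are not assuming more than the paper does elsewhere; still, it is worth noting that this bound does not follow from the tail estimates proved in this paper (Borel--Cantelli over $N$ with the $R^{-4}$ tails fails for $R\sim\log N$), whereas the paper's argument for this lemma needs only results already established internally. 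Your closing remark that the fixed-$(s,t)$ cases also follow from a second-moment computation via the Riemann--Lebesgue lemma is a correct third route for that half of the lemma.
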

\begin{proof} We write
\begin{align}
    \frac{1}{N}\sum_{m+1\leq k\leq n}z_k(x;\alpha,\beta)^2=\frac{1}{N}\sum_{m+1\leq k\leq n}z_k(2x;2\alpha,\beta).\label{pf-sum-M-goes-to-0-in-prob-1}
\end{align}
Similarly, using the identities $\cos^2(\theta)=\tha+\tha\cos(2\theta)$,  $\cos(\theta)\sin(\theta)=\frac{\sin(2\theta)}{2}$,  and $\sin^2(\theta)=\tha-\tha\cos(2\theta)$, we get
\begin{align}
    &\frac{1}{N}\sum_{m+1\leq k\leq n}a_k(x;\alpha,\beta)^2=\frac{1}{2N}+\frac{1}{2N}\sum_{m+1\leq k\leq n}a_k(2x;2\alpha,\beta),\label{pf-sum-B^11-goes-to-0-in-prob-1}\\
    &\frac{1}{N}\sum_{m+1\leq k\leq n}a_k(x;\alpha,\beta) b_k(x;\alpha,\beta)= \frac{1}{2N}\sum_{m+1\leq k\leq n}b_k(2x;2\alpha,\beta),\label{pf-sum-B^12-goes-to-0-in-prob-1}\\
    &\frac{1}{N}\sum_{m+1\leq k\leq n}b_k(x;\alpha,\beta)^2 =\frac{1}{2N}-\frac{1}{2N}\sum_{m+1\leq k\leq n}a_k(2x;2\alpha,\beta).\label{pf-sum-B^22-goes-to-0-in-prob-1}
\end{align}
    Note that 
\begin{align}
&\frac{1}{\sqrt{N}}\sum_{m+1\leq k\leq n}z_k(2x;2\alpha,\beta)=X_N(2x;2\alpha,\beta;t)-X_N(2x;2\alpha,\beta;s)\label{pf-sum-M-goes-to-0-in-prob-2},\\
&\frac{1}{\sqrt{N}}\sum_{m+1\leq k\leq n}a_k(2x;2\alpha,\beta)=X_N^1(2x;2\alpha,\beta;t)-X_N^1(2x;2\alpha,\beta;s)\label{pf-sum-B^11-goes-to-0-in-prob-2},\\
&\frac{1}{\sqrt{N}}\sum_{m+1\leq k\leq n}b_k(2x;2\alpha,\beta)=X_N^2(2x;2\alpha,\beta;t)-X_N^2(2x;2\alpha,\beta;s)\label{pf-sum-B^12-goes-to-0-in-prob-2}.
\end{align}
Let $h$ denote the density of $\lambda$. Then, since $x$ is distributed according to $\lambda$, then $\tilde x=2x$ is distributed according to the probability measure $\tilde\lambda$, with density $\tilde h(\tilde x)=\ha h(\tilde x/2)$. Theorem \ref{thm-existence-of-theta-process} applied to $\tilde\lambda$ and $(2\alpha,\beta)$ implies that the sums in \eqref{pf-sum-M-goes-to-0-in-prob-2}, \eqref{pf-sum-B^11-goes-to-0-in-prob-2}, and \eqref{pf-sum-B^12-goes-to-0-in-prob-2} tend, in distribution, to $X(t)-X(s)$, $\Re(X(t)-X(s))$, and $\Im(X(t)-X(s))$ respectively,  as $N\to\infty$. Therefore, due to  overnormalization, the sums in \eqref{pf-sum-M-goes-to-0-in-prob-1}--
\eqref{pf-sum-B^22-goes-to-0-in-prob-1} tends to zero in distribution. Convergence to zero in distribution then implies convergence to zero in probability. Similarly, equations \eqref{statement-sum-B^11-goes-to-0-in-prob-sup}-\eqref{statement-sum-B^22-goes-to-0-in-prob-sup} follow from Proposition \ref{prop:tail-bound-Holder} and the overnormalization. 
\end{proof}

\begin{remark}\label{rk-what-to-with-L_N-and-J_N}
    Taking into account \eqref{XX_N-at-times_m/N_n/N+O-term}, \eqref{rewriting_X_N^11}--\eqref{rewriting_X_N^22}, and Lemma \ref{lem-B_N^ij_and_M_N-tend-to-0-in-probability}, in order to show that $\mathbf{X}_N(s,t)=(X_N(t)-X_N(s),\mathbb{X}_N(s,t))$ has a limit in distribution on $\mathscr C_g^{\gamma}$ for $\gamma<\frac{1}{2}$ as $N\ti$, it is enough to show that the random variables $\mathcal{J}_N(x;\alpha,\beta;s,t)$ and $\mathcal{L}_N(x;\alpha,\beta;s,t)$,  defined in \eqref{H^pm} and \eqref{def-L_N}, have a \emph{joint} limiting distribution as $N\ti$ as functions in $C^{2\gamma}(\Delta^{(0,T)}_2,\mathbb R^2\oplus \mathbb R^{2\times 2})$.  To this end, we will write  both $\mathcal{J}_N(x;\alpha,\beta,s,t)$ and $\mathcal{L}_N(x;\alpha,\beta,s,t)$ in terms of a pair of horocycle lifts in $\GamG\times\GamG$ and prove that such pair becomes equidistributed as $N\ti$ according to a measure $\hat\mu_{\GamG\times\GamG}$. 
\end{remark}
    The strategy outlined in the previous remark will actually allow to do more: we can fix $k\geq1$ and fix $\underline s=(s_1,\ldots,s_k)$, $\underline{t}=(t_1,\ldots, t_k)$ with $s_\ell<t_\ell$ for $1\leq \ell\leq k$ and show that the $k$-tuple of $\R^2\oplus\R^{2\times 2}$-valued random variables $(\mathbf{X}_N(x;\alpha,\beta;s_\ell,t_\ell))_{1\leq \ell\leq k}$ (where $x$ is $\lambda$-random) has a limiting distribution as $N\ti$. This will give the convergence of finite-dimensional distributions of the to those of a certain random $\R^2\times\R^{2\times2}$-valued function $\mathbb{X}$ on the simplex.

The limiting distribution of $\mathcal{L}_N$ alone comes directly from from Theorem \ref{thm-existence-of-theta-process}. We have that, for $x$ randomly distributed according to $\lambda$ and $(\alpha,\beta)\notin\Q^2$, the random variables  $\mathcal{L}_N(x;\alpha,\beta;s,t)$ converge in distribution to $X(t)-X(s)$ as $N\ti$. 
One key step in the proof of theorem \ref{thm-existence-of-theta-process} given  \cite{Cellarosi-Marklof} is to  rewrite $X_N(x;t)$ as a $t$-dependent function of a (single) horocycle lift as
\begin{align}\label{X_N(t)-as-Theta_1_(0,t]}  X_N(x;\alpha,\beta;t)=\Theta_{\mathbf{1}_{(0,t]}}\!\left((I;\sve{\alpha+\beta x}{0},0)\Psi^x\Phi^{2\log N}\right).
\end{align}
Therefore, we can write
\begin{align}\label{L_N(t)-as-Theta_1_(s,t]}
\mathcal{L}_N(x;\alpha,\beta;s,t)=\Theta_{\mathbf{1}_{(s,t]}}\!\left((I;\sve{\alpha+\beta x}{0},0)\Psi^x\Phi^{2\log N}\right). 
\end{align}
Note that in \eqref{X_N(t)-as-Theta_1_(0,t]}-\eqref{L_N(t)-as-Theta_1_(s,t]} we are deliberately ignoring terms that go to zero in probability as $N\to\infty$, uniformly in $s,t$ (such as the ones coming from the linear interpolation).
Let us focus on the sum $\mathcal{J}_N$. We have, by \eqref{def-big-Theta}, 
\begin{align}
    \mathcal{J}_N(x;\alpha,\beta;s,t)
    &=\frac{1}{N}\sum_{\lfloor s N\rfloor+1\leq k<\ell\leq \lfloor tN\rfloor}\e{-(\tha k^2+\beta k)x -k\alpha}\e{(\tha \ell^2+\beta\ell)x+\ell \alpha}\nonumber\\
    &={\Theta}_{T_{(s,t]}}^{(2)}\left(\left(-x+\tfrac{i}{N^2},0;\sve{-\alpha-\beta x}{0},0\right),\left(x+\tfrac{i}{N^2},0;\sve{\alpha+\beta x}{0},0\right)\right),\nonumber\\
    &={\Theta}_{T_{(s,t]}}^{(2)}\left( \left(I;\sve{-\alpha-\beta x}{0},0\right)\Psi^{-x}\Phi^{2\log N},\left(I;\sve{\alpha+\beta x}{0},0\right)\Psi^x\Phi^{2\log N}\right)
    \label{writing-J_N-using-horocycles}
\end{align}
where $T_{(s,t]}=\mathbf{1}_{\mathfrak{T}_{(s,t]}}$, and $\mathfrak{T}_{(s,t]}=\{(x,y)\in\R^2:\:s<x<y\leq t\}$ is the triangular region in the plane with vertices $(s,s),(s,t),(t,t)$. 
Note that only part of the perimeter of the triangle is included in this region:  the horizontal  segment $\{(x,t)\in\R^2:\:s<x<t\}$ is part of $\mathfrak{T}_{(s,t]}$, while  the vertical edge, the diagonal edge, and all three vertices of the triangle are not.
We will also write
\begin{align}\label{T_open-triangle-s,t}
    T_{(s,t)}=\mathbf{1}_{\mathfrak{T}_{(s,t)}},
\end{align}
where $\mathfrak{T}_{(s,t)}=\operatorname{int}(\mathfrak T_{(s,t]})=\{(x,y)\in\R^2:\:s<x<y< t\}$.
We will establish tightness for $\mathcal{J}_N$ in H\"{o}lder spaces, as well as the existence of its limiting finite-dimensional distributions. In both of these tasks, we may replace $\Theta^{(2)}_{T_{(s,t]}}$ with $\Theta^{(2)}_{T_{(s,t)}}$ since the function $T_{(s,t]}-T_{(s,t)}=\mathbf{1}_{(s,t)\times\{t\}}$ is supported on a line segment, see Lemma \ref{lemma:lines-dont-matter}.

\section{\texorpdfstring{Decomposing $\Theta^{(2)}_{{T}_{(0,1)}}$ }{}}\label{section:defining-theta-T}
In this section, we decompose $\Theta^{(2)}_{{T}_{(0,1)}}$ in a similar way as the decomposition of $\Theta_\chi$, using the linearity of $F\mapsto \Theta^{(2)}_F$. However, we decompose the triangle into seven pieces instead of two --- one for each corner, one for each edge and, a smooth remainder.

\subsection{General geometric decomposition of indicator functions}\label{subsection-dyadic-decompositions}
Let $f_0:\mathbb R\to \mathbb R$ be a smooth nonnegative function so that $f_0(x)=0$ for $x\leq 0$, $f_0(x)=1$ for $x\geq 1$, and so that 
\begin{equation}\label{eq:f_0-interval-def}
    f_0(x)+f_0(1-x)=1
\end{equation}
for $x\in (0,1)$. Define for $0<c_1<c_2<c_3<1$ with $p:=\frac{c_3-c_2}{c_2-c_1}>1$ the function
\begin{equation}\label{fc1c2c3}
    f_{c_1,c_2,c_3}(x):=
    \begin{cases}
        0&\text{ if } x\leq c_1\\
        f_0\left(\frac{1}{c_2-c_1}x-\frac{c_1}{c_2-c_1}\right)&\text{ if } c_1 \leq x\leq c_2\\
        f_0\left(\frac{c_3}{c_3-c_2}-\frac{1}{c_3-c_2}x\right)&\text{ if }c_2\leq x\leq c_3\\
        0&\text{ if }x\geq c_3,
    \end{cases}
\end{equation}
For instance, the function $\Delta$ from Section \ref{section-Theta_chi} can be written as $f_{\frac{1}{6},\frac{1}{3},\frac{2}{3}}$. We also define
the function
\begin{equation}\label{Fc1c2c3}
  F_{c_1,c_2,c_3}(x):=  \sum_{j=0}^\infty f_{c_1,c_2,c_3}\left(\left(\tfrac{c_3-c_2}{c_2-c_1}\right)^jx\right)=\sum_{j=0}^\infty f_{c_1,c_2,c_3}\left(p^jx\right).
\end{equation}
 See Figure \ref{fig:fF1121613} for one such function which we will use later.
\begin{figure}[h]
    \centering
    \hspace{-.8cm}
\includegraphics[width=15cm]{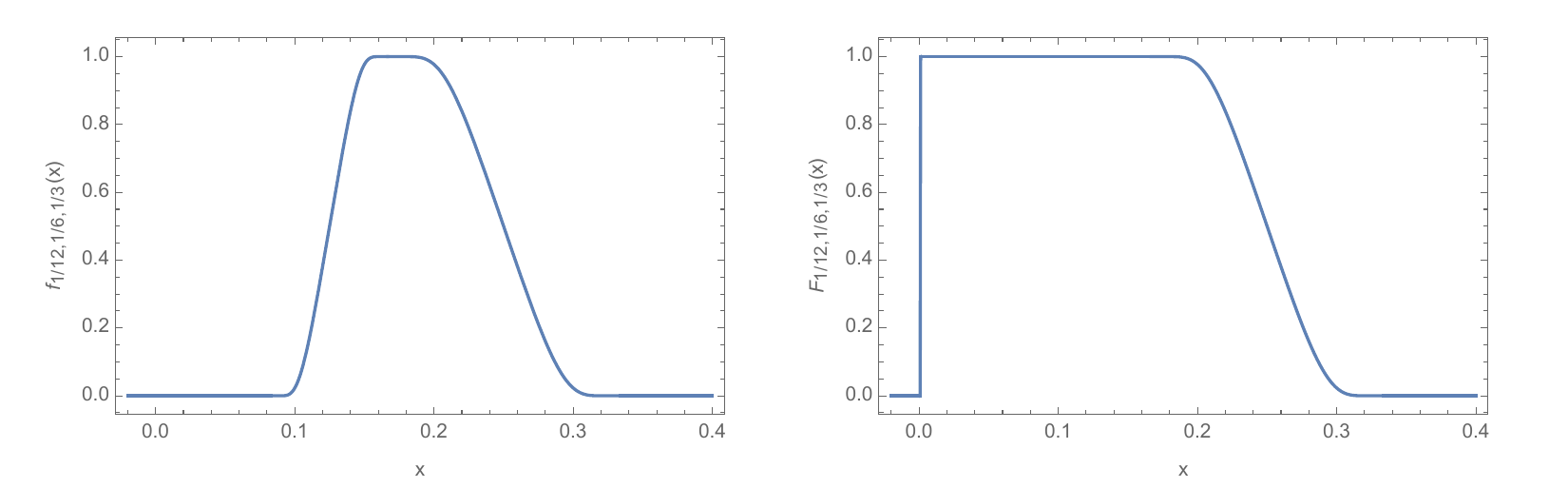}
    \caption{The functions $x\mapsto f_{\frac{1}{12},\frac{1}{6},\frac{1}{3}}(x)$ and $x\mapsto F_{\frac{1}{12},\frac{1}{6},\frac{1}{3}}(x)$  as in \eqref{fc1c2c3} and \eqref{Fc1c2c3}, obtained from the function 
$f_0(x)=\frac{h(x)}{h(x)+h(1-x)}$, where $h(x)=0$ for $x\leq 0$ and $h(x)=e^{-1/x}$ for $x>0$. These functions will be used Section \ref{subsection:Triangle}. In this case $p=2$.}
    \label{fig:fF1121613}
\end{figure}
Note that for all $j\in \mathbb N$ we have
\begin{equation*}
    f_{c_1,c_2,c_3}(p^j x):=
    \begin{cases}
        0&\text{ if } x\leq p^{-j}c_1\\
        f_0\left(\frac{1}{c_2-c_1}xp^j-\frac{c_1}{c_2-c_1}\right)&\text{ if } c_1p^{-j} \leq x\leq c_2p^{-j}\\
        f_0\left(\frac{c_3}{c_3-c_2}-\frac{1}{c_3-c_2}xp^j\right)&\text{ if }c_2p^{-j}\leq x\leq p^{-j} c_3\\
        0&\text{ if }x\geq c_3p^{-j},
    \end{cases}
\end{equation*}
Let $x\in (0,c_2)$. Then there is some $j_x\geq 0$ so that $x\in (c_2 p^{-j_x-1},c_2 p^{-j_x})$. Therefore 
\begin{equation*}
     f_{c_1,c_2,c_3}(p^j x)=
     \begin{cases}
         f_0\left(\frac{1}{c_2-c_1}xp^j-\frac{c_1}{c_2-c_1}\right)&\text{ if }j=j_x\\
         f_0\left(\frac{c_3}{c_3-c_2}-\frac{1}{c_3-c_2}xp^j\right)&\text{ if }j=j_x+1\\
         0&\text{ else. }
     \end{cases}
\end{equation*}
Therefore
\begin{align*}
        &F_{c_1,c_2,c_3}(x)=f_{c_1,c_2,c_3}(p^{j_x} x)+f_{c_1,c_2,c_3}(p^{j_x+1} x)\\
        &=f_0\left(\tfrac{1}{c_2-c_1}xp^{j_x}-\tfrac{c_1}{c_2-c_1}\right)+f_0\left(\tfrac{c_3}{c_3-c_2}-\tfrac{1}{c_3-c_2}xp^{j_x+1}\right)\\
        &=f_0\left(\tfrac{1}{c_2-c_1}xp^{j_x}-\tfrac{c_1}{c_2-c_1}\right)+f_0\left(\tfrac{c_3}{c_3-c_2}-\tfrac{1}{c_3-c_2}xp^{j_x}\tfrac{c_3-c_2}{c_2-c_1}\right)\\
        &=f_0\left(\tfrac{1}{c_2-c_1}xp^{j_x}-\tfrac{c_1}{c_2-c_1}\right)+f_0\left(\tfrac{c_3}{c_3-c_2}-\frac{1}{c_2-c_1}xp^{j_x}\right)\\
        &=f_0\left(\tfrac{1}{c_2-c_1}xp^{j_x}-\tfrac{c_1}{c_2-c_1}\right)+f_0\left(1-\left(\tfrac{1}{c_2-c_1}xp^{j_x}-\tfrac{c_1}{c_2-c_1}\right)\right)\\
        &=1,
\end{align*}
where in the last line we used relation \eqref{eq:f_0-interval-def}. If $x\in (c_2,c_3)$ we have that $f_{c_1,c_2,c_3}(p^j x)=0$ if $j>0$ but $f_{c_1,c_2,c_3}(x)\neq 0$. If $x\geq c_3$ then $f_{c_1,c_2,c_3}(p^jx)=0$ for all $j$. In summary we have
\begin{equation*}
    F_{c_1,c_2,c_3}(x)=
    \begin{cases}
        0&\text{ if }x\not\in (0,c_3)\\
        1&\text{ if }x\in (0,c_2)\\
        f_{c_1,c_2,c_3}\left(x\right)&\text{ if }x\in (c_2,c_3).
    \end{cases}
\end{equation*}
Functions of the form \eqref{Fc1c2c3} only have a jump discontinuity at $0$ and are building blocks for indicators of intervals since $F_{c_1,c_2,c_3}(x)+F_{c_1,c_2,c_3}(c_2+c_3-x)=\mathbf{1}_{(0,c_2+c_3)}(x)$. For instance, in \eqref{partition-of-unity}, we have written the indicator of $(0,1)$ as the sum of two series, namely $\chi_L(x)=F_{\frac{1}{6},\frac{1}{3},\frac{2}{3}}(x)$ and $\chi_R(x)=F_{\frac{1}{6},\frac{1}{3},\frac{2}{3}}(1-x)$.

\subsection{Triangle}\label{subsection:Triangle}
We will write the indicator of the triangle as a sum of $3$ ``corner'' functions (each of which will be written as the affine image of a product of two infinite series),  $3$ ``edge'' functions (each of which will be written as the affine image of an infinite series times a smooth function), and a smooth one. First, using the notation from Section \ref{subsection-dyadic-decompositions},  we define a template corner function as
\begin{equation}\label{def:TCor}
    {T}_{\operatorname{Cor}}(x,y):=F_{\frac{1}{12}, \frac{1}{6}, \frac{1}{3}}(x)\:F_{\frac{1}{12}, \frac{1}{6}, \frac{1}{3}}(y).
\end{equation}

We note that
\begin{equation*}
     {T}_{\operatorname{Cor}}(x,y)=
     \begin{cases}
         0&\text{ if }(x,y)\not \in \left(0,\frac{1}{3}\right)^2\\
         1&\text{ if }(x,y)\in \left(0,\frac{1}{6}\right)^2\\
         \text{smooth}&\text{ else, }
     \end{cases}
\end{equation*}
see Figure \ref{fig:template-functions} (left panels).
We also define the template line function as 
\begin{equation}\label{def:TLine}
    {T}_{\operatorname{Line}}(x,y):=\left(F_{\frac{1}{12}, \frac{1}{6}, \frac{1}{3}}(\tfrac{1}{2}-x)+\mathbf{1}_{\{\frac{1}{2}\}}(x)+F_{\frac{1}{12}, \frac{1}{6}, \frac{1}{3}}(-\tfrac{1}{2}+x)\right)F_{\frac{1}{24}, \frac{1}{12}, \frac{1}{6}}(y).
\end{equation}
We note that for $0<y<\frac{1}{12}$ we have that
\begin{equation*}
      {T}_{\operatorname{Line}}(x,y)=
      \begin{cases}
          0&\text{ for }{x\leq \frac{1}{6}}\\
          f_{\frac{1}{12}, \frac{1}{6}, \frac{1}{3}}(\frac{1}{2}-x)&\text{ for }{\frac{1}{6}\leq x\leq \frac{1}{3}}\\
          1&\text{ for } \frac{1}{3}\leq x\leq \frac{2}{3}\\
          f_{\frac{1}{12}, \frac{1}{6}, \frac{1}{3}}(x-\frac{1}{2})&\text{ for }{\frac{2}{3}\leq x\leq \frac{5}{6}}\\
          0&\text{ for }{x\geq \frac{5}{6}}.
      \end{cases}
\end{equation*}
Also, if $y>\frac{1}{6}$ or $y<0$ then $ {T}_{\operatorname{Line}}(x,y)=0$, see Figure \ref{fig:template-functions} (right panels). 

\begin{figure}[h!]
\hspace{-.8cm}\includegraphics[width=13cm]{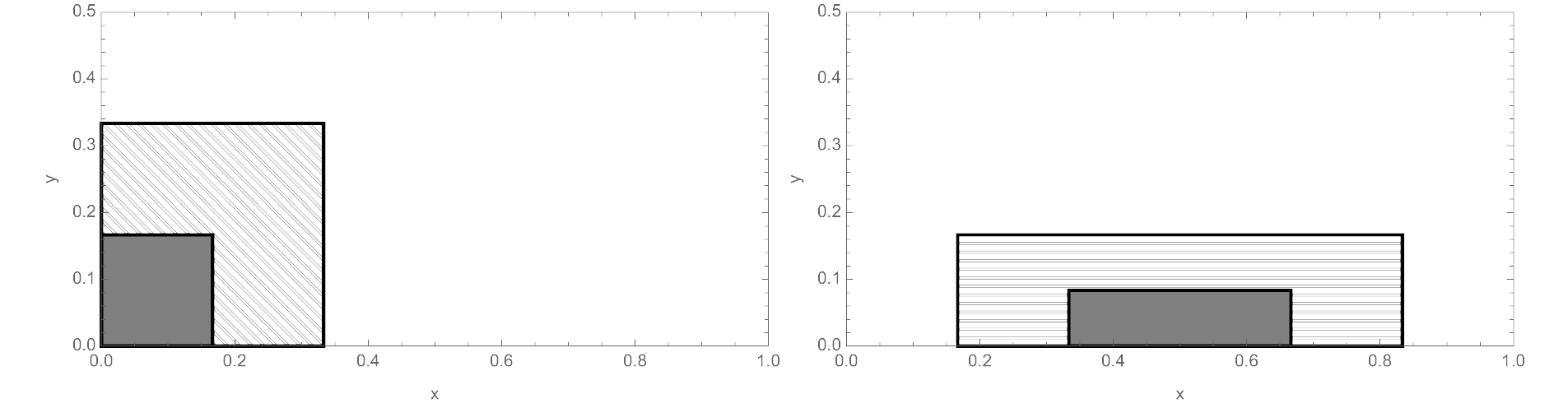}
\includegraphics[width=6cm]{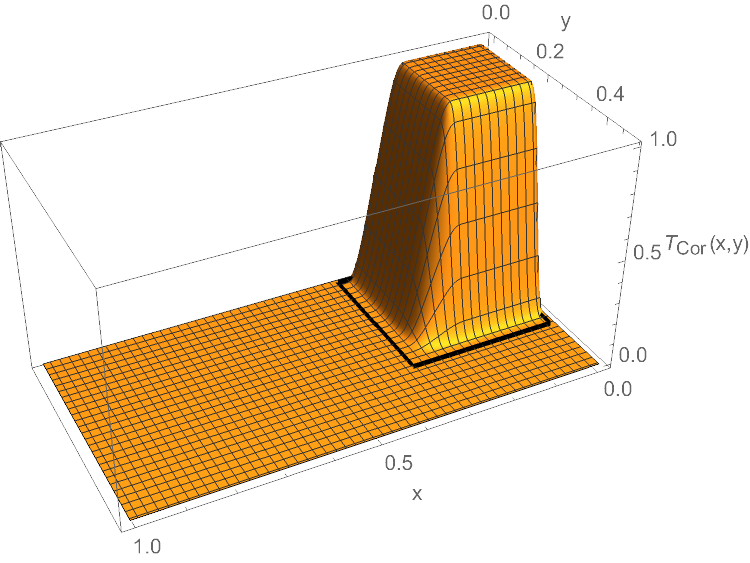}
\includegraphics[width=6cm]{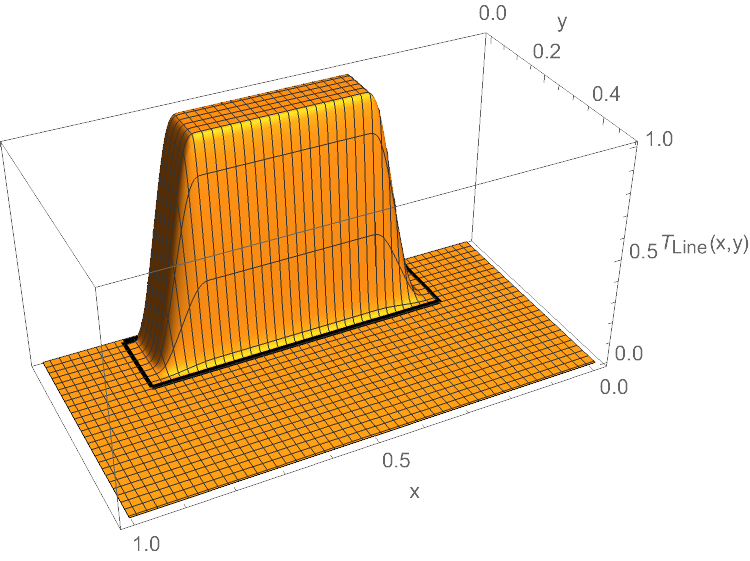}
    \caption{Top panels: The coloured and shaded area represent the support of the template corner function \eqref{def:TCor} (left) and of the template line function \eqref{def:TLine} (right). 
     In both cases, the solid colour represents the region where the function is constant equal to 1.
    Bottom panels: The graphs of  $(x,y)\mapsto T_{\operatorname{Cor}}(x,y)$ and of $(x,y)\mapsto T_{\operatorname{Line}}(x,y)$. The corner template  function has jump discontinuities along $\{(x,0):\: 0\leq x< \frac{1}{4}\}\cup\{(0,y):\: 0\leq y<
 \frac{1}{4}\}$ and is smooth otherwise. The line template  function has jump discontinuities along the line segment $\{(x,0):\: \frac{1}{4}< x< \frac{3}{4}\}$ and is smooth otherwise. The boundary of the support of each function from the top panels is also indicated in the bottom panels.} 
    \label{fig:template-functions}
\end{figure}

We now map the template corner function \eqref{def:TCor} to each of the corners of the open triangle $\operatorname{int}( \mathfrak{T}_{(0,1)})$. At the right-angle vertex $(0,1)$ we consider 
$$(w_1,w_2)\mapsto \mathfrak{C}_{0,1}(w_1,w_2):=T_{\operatorname{Cor}}(w_1,1-w_2),$$
whose support is shown in blue in Figure \ref{fig:corners-and-sides-diagrams}. At the acute-angle vertices $(0,0)$ and $(1,1)$ we define
\begin{align*}
       (w_1,w_2)&\mapsto\mathfrak{C}_{0,0}(w_1,w_2):= T_{\operatorname{Cor}}(w_1,w_2-w_1),\\
    (w_1,w_2)&\mapsto \mathfrak{C}_{1,1}(w_1,w_2):=T_{\operatorname{Cor}}(1-w_2,w_2-w_1),
\end{align*}
respectively. Their supports are shown in cyan and green, respectively, in Figure \ref{fig:corners-and-sides-diagrams}. Observe that the supports of the three corner functions have trivial pairwise intersections.
We also map the template line function \eqref{def:TLine} to the three edges of the triangle. To this end, we consider the functions
\begin{align*}
    (w_1,w_2)&\mapsto \mathfrak{L}_{\operatorname{h}}(w_1,w_2):=T_{\operatorname{Line}}(w_1,1-w_2)\\
    (w_1,w_2)&\mapsto \mathfrak{L}_{\operatorname{v}}(w_1,w_2):= T_{\operatorname{Line}}(w_2,w_1)\\
    (w_1,w_2)&\mapsto \mathfrak{L}_{\operatorname{d}}(w_1,w_2):=T_{\operatorname{Line}}(w_1,w_2-w_1),
\end{align*}
supported along the horizontal, vertical, and diagonal edge, respectively. Their supports are shown, respectively, in red, magenta, and yellow in  Figure \ref{fig:corners-and-sides-diagrams}.  Observe that the only nontrivial pairwise intersection of the supports of these line functions is comes from $\mathfrak{L}_{\operatorname{h}}$ and $\mathfrak{L}_{\operatorname{d}}$.
By construction, if we add the three corner functions and the three line functions, we obtain a function that is identically $0$ in the complement of  $\mathfrak{T}_{(0,1)}$ and, crucially, limits to $1$ as we approach the boundary of the triangle from its interior. In other words, the function 
\begin{align}\label{sum-6-functions}
\mathfrak{C}_{0,0}+\mathfrak{C}_{0,1}+\mathfrak{C}_{1,1}+ \mathfrak{L}_h+\mathfrak{L}_v+\mathfrak{L}_d
\end{align} is only discontinuous along the perimeter of the triangle, where it jumps from $0$ to $1$.
\begin{figure}
    \centering
    \includegraphics[width=10cm]{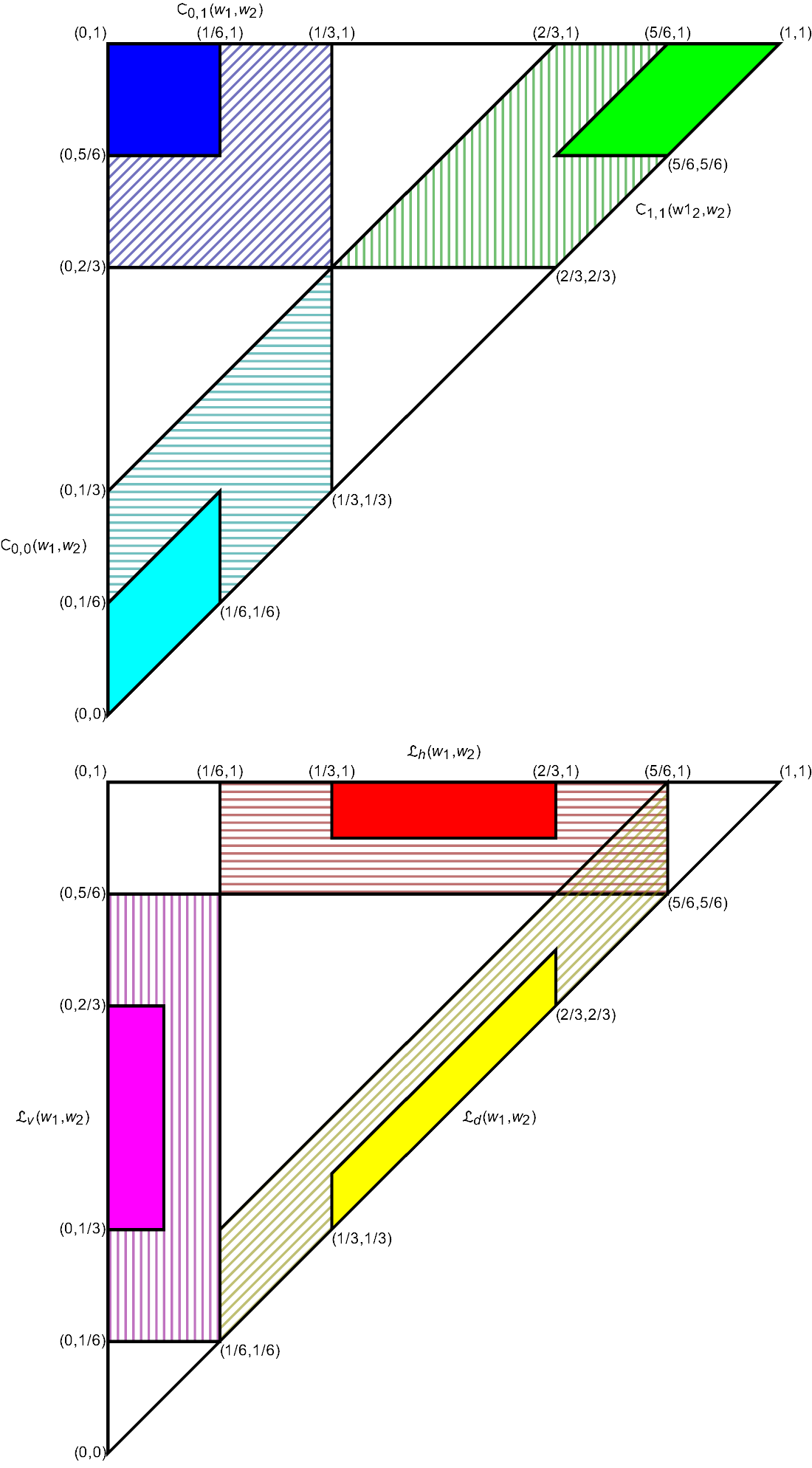}
    \caption{Top panel: the supports of three corner functions $\mathfrak{C}_{0,1}, \mathfrak{C}_{0,0}, \mathfrak{C}_{1,1}$. Bottom panel: the supports of the three edge functions $\mathfrak{L}_h, \mathfrak{L}_v, \mathfrak{L}_d$. In both panels, the solid colors indicate the regions where the corresponding functions are identically $1$}
    \label{fig:corners-and-sides-diagrams}
\end{figure}
We illustrate the way the various corner and line functions ``fit together'' in Figures \ref{fig:fit-at-the-corners-1}-\ref{fig:fit-at-the-corners-2}.
\begin{figure}
    \begin{center}
    \includegraphics[width=8cm]{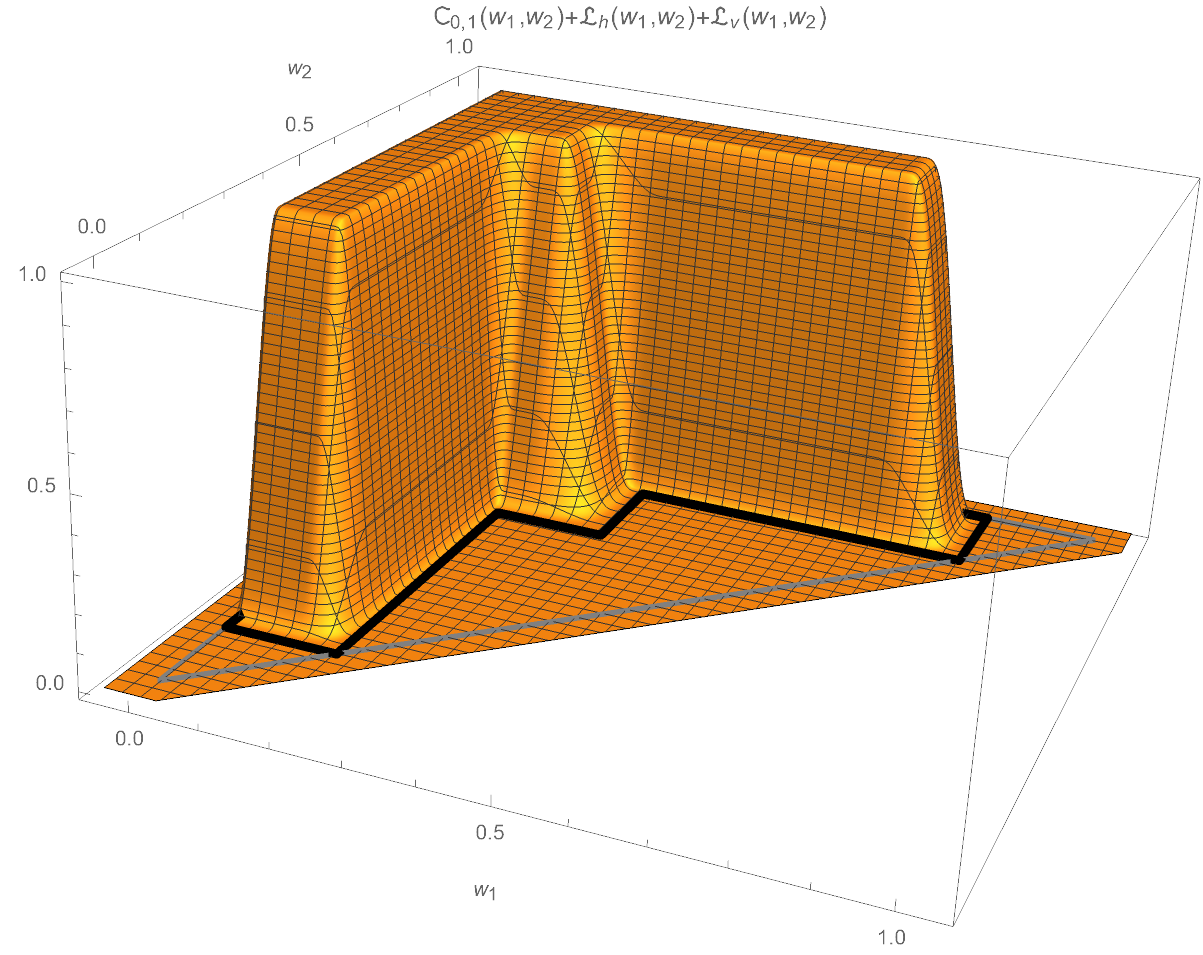}
    \end{center}
    \caption{The sum $\mathfrak{C}_{0,1}+\mathfrak{L}_h+\mathfrak{L}_v$. Note that this function is identically $1$ on $(0,\frac{1}{6}]\times[\frac{5}{6},1)\cup [\frac{1}{6},\frac{2}{3}]\times[\frac{11}{12},1)\cup(0,\frac{1}{12}]\times[\frac{1}{3},\frac{5}{6}]$ and nonzero only on $(0,\frac{1}{3})\times(\frac{3}{2},1)\cup (\frac{1}{6},\frac{5}{6})\times(\frac{5}{6},1)\cup(0,\frac{1}{6})\times(\frac{1}{6},\frac{5}{6})$. }
    \label{fig:fit-at-the-corners-1}
\end{figure}
\begin{figure}[h!]
    \begin{center}
    \includegraphics[width=7cm]{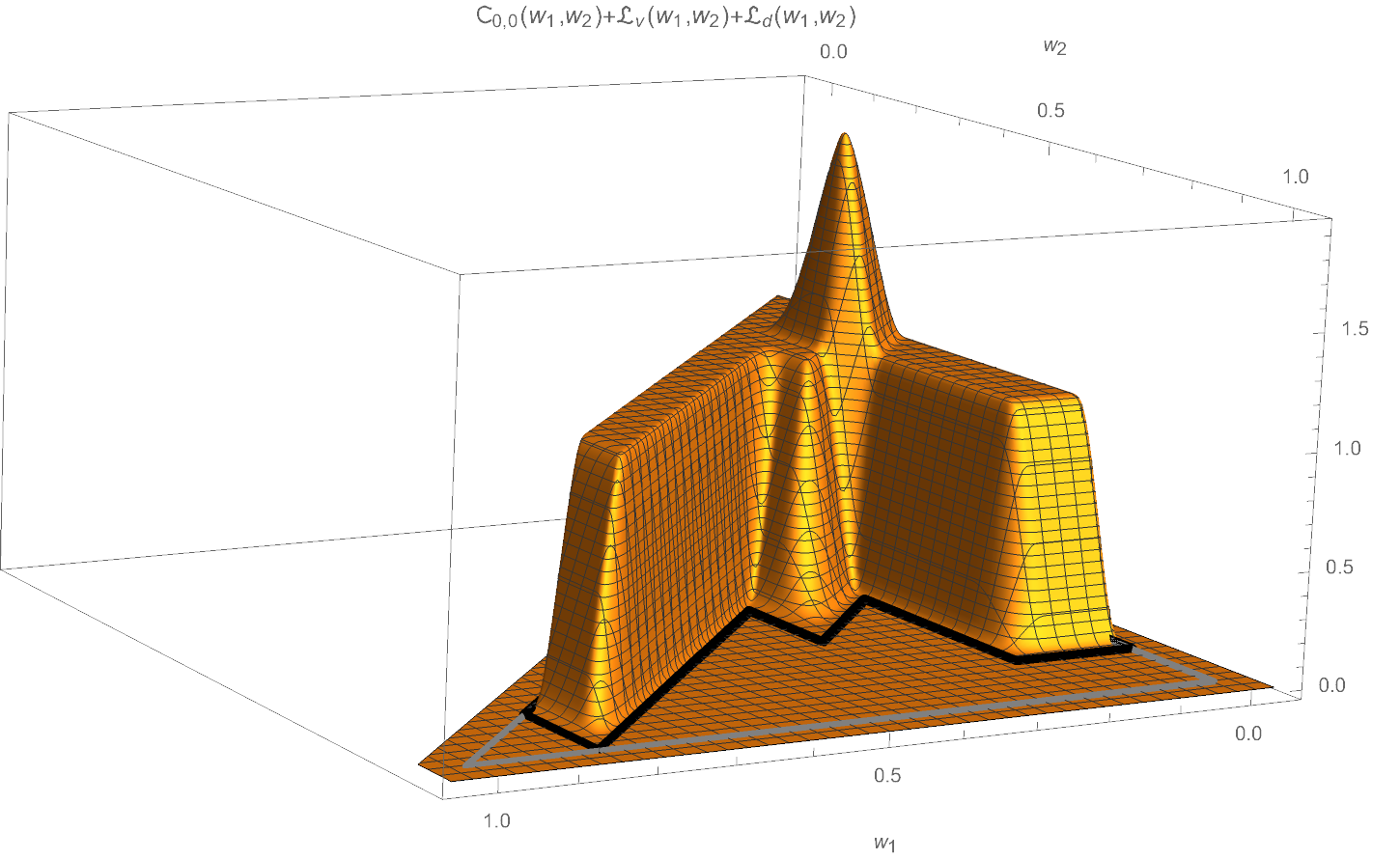}
    \includegraphics[width=7cm]{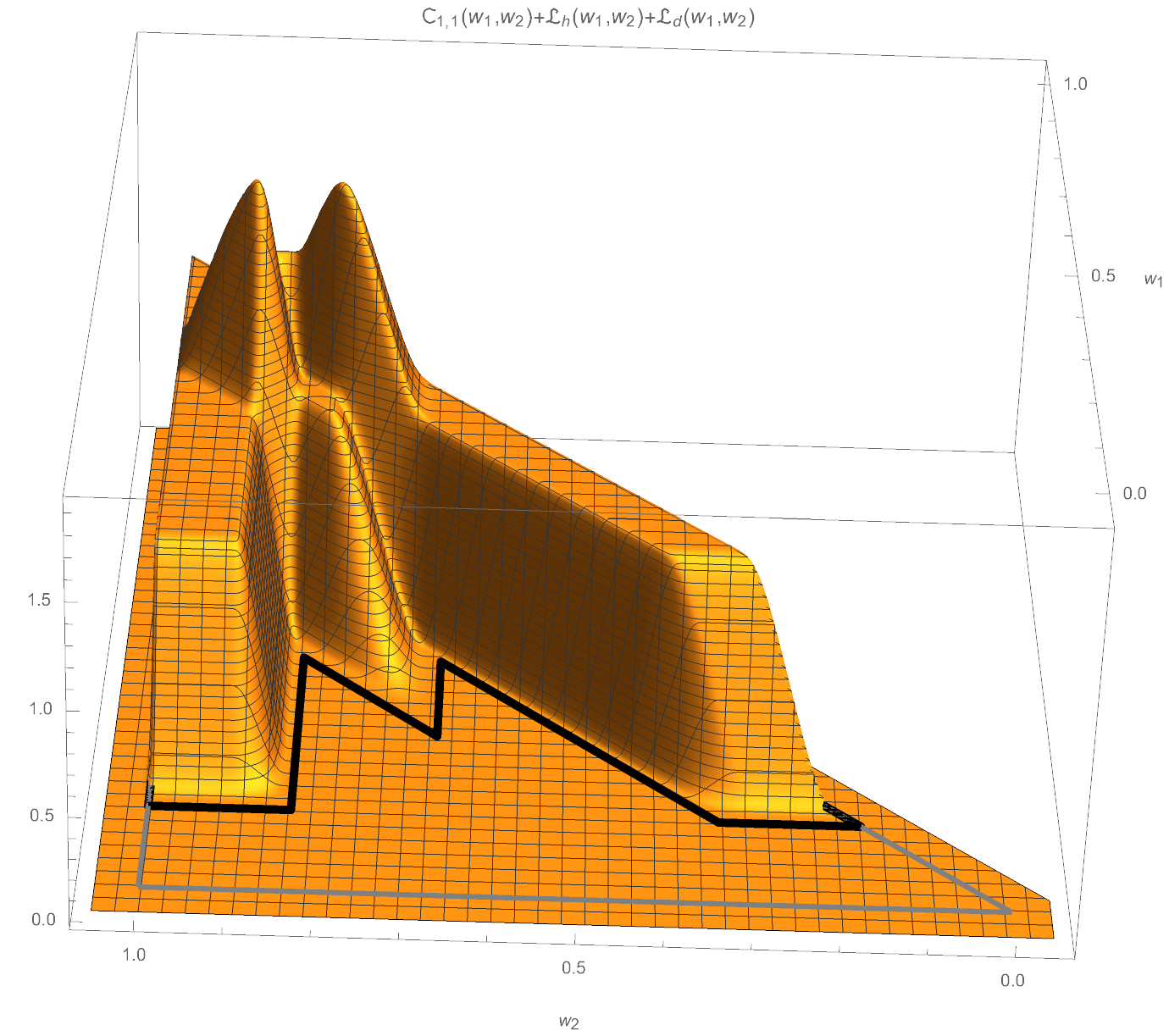}
    \end{center}
    \caption{Left panel: $\mathfrak{C}_{0,0}+\mathfrak{L}_v+\mathfrak{L}_d$. The `bump' that goes above $1$ is due to the overlap of the supports of $\mathfrak{C}_{0,0}$ and $\mathfrak{L}_v$ (in cyan and magenta, respectively, in Figure \ref{fig:corners-and-sides-diagrams})  Right panel: the sum $\mathfrak{C}_{1,1}+\mathfrak{L}_h+\mathfrak{L}_d$. 
    The two `bumps' that go above one are due to the overlap of the supports of the three functions (in green, red, and yellow, respectively, in Figure \ref{fig:corners-and-sides-diagrams}). 
    The important feature in both panels is the fact that approaching the boundary of the triangle from its interior, each sum --in a neighbourhood of its corner-- limits to $1$. In other words, the contribution of the `bumps' vanishes at the boundary of the triangle. 
    The reason the two sums $\mathfrak{C}_{0,0}+\mathfrak{L}_v+\mathfrak{L}_d$ and $\mathfrak{C}_{1,1}+\mathfrak{L}_h+\mathfrak{L}_d$ cannot be obtain from one another by a rotation is that $\mathfrak{L}_d$ is obtained from the template line function \eqref{def:TLine} by means of a vertical shear.}
    \label{fig:fit-at-the-corners-2}
\end{figure} 
Everywhere in the interior of the triangle, the sum \eqref{sum-6-functions} is smooth. An important consequence of this fact is that we can find a seventh function $\mathfrak{F}_\text{smooth}$ which is smooth everywhere on $\R^2$, with $\mathfrak{F}_\text{smooth}(w_1,w_2)=0$ if $(w_1,w_2)\notin\mathfrak{T}_{(0,1)}$, such that 
\begin{align}
\label{eq:decomposition-of-triangle}
    T_{(0,1)}=\mathfrak{C}_{0,0}+\mathfrak{C}_{0,1}+\mathfrak{C}_{1,1}+ \mathfrak{L}_h+\mathfrak{L}_v+\mathfrak{L}_d+\mathfrak{F}_{\text{smooth}}.
    \end{align}
\begin{remark}\label{remark:where-line-segments-come-from}
    Expanding \eqref{def:TLine}, we can write the template line function in terms of two `rectangular' corner functions, 
$(x,y)\mapsto F_{\frac{1}{12},\frac{1}{6},\frac{1}{3}}(\frac{1}{2}-x) F_{\frac{1}{24},\frac{1}{12},\frac{1}{6}}(y)=T_{\operatorname{Cor}}(\frac{1}{2}-x,2y)$ and $(x,y)\mapsto F_{\frac{1}{12},\frac{1}{6},\frac{1}{3}}(-\frac{1}{2}+x)F_{\frac{1}{24},\frac{1}{6},\frac{1}{3}}(y)=T_{\operatorname{Cor}}(\frac{1}{2}-x,2y)$, and the function $(x,y)\mapsto T_{\operatorname{Segm}}(x,y):=\mathbf{1}_{\{\frac{1}{2}\}}(x)F_{\frac{1}{24},\frac{1}{12},\frac{1}{6}}(y)$,  supported on the vertical line segment $\{(\frac{1}{2},y):\: 0\leq y\leq \frac{1}{6}\}$. Similarly, we can write each line function $\mathfrak{L}_h$, $\mathfrak{L}_v$, $\mathfrak{L}_d$ in terms of (possibly sheared) images of these rectangular corner functions and functions supported on vertical or horizontal line segments. 
The next lemma will allow us to ignore the contribution of specific functions supported along such line segments, as well the horizontal line segment supporting $T_{(0,1]}-T_{(0,1)}$.

\end{remark}

\begin{lemma}\label{lemma:lines-dont-matter}
    Let $\lambda$ be a probability measure on $\R$ which is absolutely continuous with respect to the Lebesgue measure. Let $x$ be randomly distributed according to $\lambda$. Let $F$ be one of the following functions supported on horizontal or vertical line segments:
    \begin{itemize}
        \item $\mathbf{1}_{(0,1)\times \{1\}}$,
        \item $(w_1,w_2)\mapsto T_{\operatorname{Segm}}(w_2,w_1)$,
        \item $(w_1,w_2)\mapsto T_{\operatorname{Segm}}(w_1,1-w_2)$,
        \item $(w_1,w_2)\mapsto T_{\operatorname{Segm}}(w_1,w_2-w_1)$
    \end{itemize}
    Then 
    \begin{align}\label{lemma:lines-dont-matter-statement-tightness}
    \lambda\left(\left\{x\in \mathbb R: \left| \Theta_{F}^{(2)}((-x+iy_1,0;\vecxi_1,\zeta_1),(x+iy_2,0;\vecxi_2,\zeta_2))\right|>R\right\}\right)&\ll \frac{1}{(1+R)^4}.
    \end{align}
uniformly in $0<y_1,y_2\leq 1$ and $(\vecxi_1,\zeta_1),(\vecxi_2,\zeta_2)\in\Hei$.      Moreover, for every $\alpha,\beta,\zeta_1,\zeta_2\in\R$ we have
    \begin{align}\label{lemma:lines-dont-matter-statement-fdd}
    \lim_{N\to\infty} \lambda \bigg(\bigg\{x\in \R:  \left|\Theta^{(2)}_F\!\left((-x+\tfrac{i}{N^2},0;\sve{-\alpha-\beta x}{0},\zeta_1),(x+\tfrac{i}{N^2},0;\sve{\alpha+\beta x}{0},\zeta_2)\right)\right|
    >\delta\bigg\}\bigg)=0
\end{align}
for every $\delta>0$.
Additionally, let $0<\varepsilon<\frac{1}{4}$. Then for all $y_1(x),y_2(x)$ with $0<y_0\leq  y_1(x),y_2(x)\leq 1$, we have uniformly in $(\vecxi_1(x),\zeta_1(x)),(\vecxi_2(x),\zeta_2(x))\in\Hei$ the estimate
\begin{equation}\label{eq:lines-dont-matter-random-y}
     \lambda\left(\left\{x\in \mathbb R: \left| \Theta_{F}^{(2)}((-x+iy_1(x),0;\vecxi_1(x),\zeta_1(x)),(x+iy_2(x),0;\vecxi_2(x),\zeta_2(x)))\right|>\frac{R}{y_1^\varepsilon(x)y_2^\varepsilon(x)}\right\}\right)\ll \frac{1}{(1+R)^4}.
\end{equation}
\end{lemma}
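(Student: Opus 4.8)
The plan is to reduce each of the four rank-$2$ theta values appearing in the statement to a rank-$1$ theta value, and then invoke Propositions~\ref{prop:3.17-simplified} and \ref{proposition:tail-with-random-y} (or their verbatim analogues). The key structural observation is that each listed $F$ is, in the relevant variables, a \emph{point mass in one coordinate} times a one-dimensional function of the other: for instance $(w_1,w_2)\mapsto T_{\operatorname{Segm}}(w_1,1-w_2)=\mathbf 1_{\{1/2\}}(w_1)\,F_{\frac{1}{24},\frac{1}{12},\frac{1}{6}}(1-w_2)$ is pinned in $w_1$, while $\mathbf 1_{(0,1)\times\{1\}}=\mathbf 1_{(0,1)}(w_1)\,\mathbf 1_{\{1\}}(w_2)$ is pinned in $w_2$. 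Since in all three assertions $\Theta^{(2)}_F$ is evaluated at group elements whose $\phi$-coordinates vanish — so that $F_{\phi_1,\phi_2}=F$ — I would substitute directly into the explicit expansion \eqref{def-big-Theta}. The summation over the index attached to the pinned variable then collapses, because a condition of the form $(n-\xi)y^{1/2}=c$ has at most one integer solution $n$: if there is none the whole sum vanishes, and otherwise that single term contributes only a unimodular factor (note that $\e{\tha(n-\xi)^2 x}$ has modulus $1$). Completing the square in the surviving sum and absorbing the summation-index-independent factors (including, in the sheared case $T_{\operatorname{Segm}}(w_1,w_2-w_1)$, the shift, into the Heisenberg coordinate), one is left with
\[
\bigl|\Theta^{(2)}_F(g_1,g_2)\bigr|=y_p^{1/4}\,\bigl|\Theta_{h}\bigl(z+iy_s,0;\vecxi',\zeta'\bigr)\bigr|\le\bigl|\Theta_{h}\bigl(z+iy_s,0;\vecxi',\zeta'\bigr)\bigr|,
\]
where $y_p\in\{y_1,y_2\}$ is the imaginary part of the pinned coordinate (hence $y_p\le 1$), $z+iy_s$ is the surviving coordinate (with $z\in\{x,-x\}$ and $y_s\in\{y_1,y_2\}$), and $(\vecxi',\zeta')\in\Hei$ is some element that in the random case depends on $x$. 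Running this through the four functions $F$, the surviving one-dimensional $h$ is always either $\mathbf 1_{(0,1)}=\chi$ or one of the geometric-decomposition functions of Section~\ref{subsection-dyadic-decompositions}, namely $F_{\frac{1}{24},\frac{1}{12},\frac{1}{6}}$ or $w\mapsto F_{\frac{1}{24},\frac{1}{12},\frac{1}{6}}(1-w)$; like $\chi_L,\chi_R$, each such $h$ decomposes dyadically into translated dilates of a single regular (here smooth, compactly supported) function.

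With this reduction, \eqref{lemma:lines-dont-matter-statement-tightness} is immediate: the tail bound for $|\Theta_{h}(z+iy_s,0;\vecxi',\zeta')|$ is Proposition~\ref{prop:3.17-simplified} when $h=\chi$, and for the geometric-decomposition $h$ it holds by the \emph{same} argument as Proposition~3.17 of \cite{Cellarosi-Marklof} (that proof uses only the dyadic structure and the majorization by the cusp-height function $H$, i.e.\ Lemmas~3.18--3.19 of \cite{Cellarosi-Marklof}); the bound $\ll(1+R)^{-4}$ there is uniform over imaginary part $\le1$ and over the Heisenberg coordinate, and since our reduction discards only a unimodular factor and a power $y_p^{1/4}\le1$, the resulting bound is uniform in $0<y_1,y_2\le1$ and in $(\vecxi_1,\zeta_1),(\vecxi_2,\zeta_2)$. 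For \eqref{lemma:lines-dont-matter-statement-fdd} one has $y_1=y_2=N^{-2}$, so $y_p^{1/4}=N^{-1/2}$, and therefore
\[
\lambda\bigl(\bigl\{\,x:|\Theta^{(2)}_F|>\delta\,\bigr\}\bigr)\le\lambda\bigl(\bigl\{\,x:|\Theta_{h}(z+iN^{-2},0;\vecxi',\zeta')|>N^{1/2}\delta\,\bigr\}\bigr)\ll(1+N^{1/2}\delta)^{-4}\longrightarrow 0
\]
as $N\to\infty$, using the rank-$1$ bound with $y=N^{-2}\le1$ (and, if $z=-x$, replacing $\lambda$ by its image under negation, which is again absolutely continuous). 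For \eqref{eq:lines-dont-matter-random-y} with $0<y_0\le y_1(x),y_2(x)\le1$, the reduction gives $|\Theta^{(2)}_F|\le|\Theta_{h}(z+iy_s(x),0;\vecxi'(x),\zeta'(x))|$; since the discount factor of the pinned coordinate is $\le1$, one has $\{|\Theta^{(2)}_F|>R/(y_1^\varepsilon y_2^\varepsilon)\}\subseteq\{|\Theta_{h}(z+iy_s(x),\cdot)|>R/y_s^\varepsilon(x)\}$, and Proposition~\ref{proposition:tail-with-random-y} (for $h=\chi$), resp.\ its verbatim analogue for the geometric-decomposition $h$ — proved exactly as Proposition~\ref{proposition:tail-with-random-y} via Lemmas~\ref{lemma:integral-over-gamma-R4} and \ref{lemma:tail-bound-smooth-f-random-y} — gives $\ll_\varepsilon(1+R)^{-4}$ with implied constant independent of $y_0$.

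The only genuinely new ingredient is the first step: recognizing the collapse of the rank-$2$ sum to a rank-$1$ sum and bookkeeping the unimodular prefactors and the modified Heisenberg coordinate $(\vecxi',\zeta')$. I expect this, together with the (entirely routine) check that the rank-$1$ estimates of \cite{Cellarosi-Marklof} carry over to the shifted and reflected building blocks $F_{\frac{1}{24},\frac{1}{12},\frac{1}{6}}(\pm\,\cdot)$ — which they do, since those proofs use only the dyadic structure — to be the main and essentially only obstacle; everything downstream is a direct citation. One should also record, before the argument, that for the non-regular $F$ in the list (the $T_{\operatorname{Segm}}$-type functions, supported on a single line segment) the symbol $\Theta^{(2)}_F$ at a point with $\phi_1=\phi_2=0$ is simply read off from \eqref{def-big-Theta}, which after the collapse is a finite or absolutely convergent sum, so that no separate existence discussion is needed.
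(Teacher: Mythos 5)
Your proposal is correct and takes essentially the same route as the paper: each $F$ factors (after the collapse of the pinned index, which contributes only a unimodular phase times $y^{1/4}\le 1$) into a rank-one theta function of a slightly modified $\chi$, $\chi_L$ or $\chi_R$, and the three claims then follow from Proposition \ref{prop:3.17-simplified}, the overnormalization, and Proposition \ref{proposition:tail-with-random-y} respectively. The only (harmless) deviation is in \eqref{lemma:lines-dont-matter-statement-fdd}, where you use the uniform tail bound with $R=N^{1/2}\delta$ instead of citing the limit theorem (Lemma 4.7 of \cite{Cellarosi-Marklof}) as the paper does; both are valid.
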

\begin{proof}
    If $F(w_1,w_2)=\mathbf{1}_{(0,1)\times \{1\}}(w_1,w_2)=\chi(w_1)\delta_{1}(w_2)$, then 
\begin{align}\label{Theta^(2)_product}
        \Theta^{(2)}_F((-x+iy_1, 0;\vecxi_1,\zeta_1),(x+iy_2,0;\vecxi_2,\zeta_2))
        =
        \,\Theta_\chi(-x+iy_1,0;\vecxi_1,\zeta)\Theta_{\delta_{1}}(x+iy_2,0;\vecxi_2,\zeta_2)
    \end{align}
    and
    \begin{align*}
        \Theta_{\delta_{1}}(x+iy_2,0;\vecxi_2,\zeta_2)=
        \begin{cases}
        y_2^{\frac{1}{4}}
        \e{\zeta_2
        +\frac{1}{2}
        \frac{x}{y_2}+\frac{\xi_{2,1}}{y_2^{1/2}}
        }&\mbox{if $\frac{y_2^{1/2}\xi_{2,2}+1}{y^{1/2}}\in\Z$},\\
        0&\mbox{otherwise;}
        \end{cases}
        \end{align*}
        is bounded in absolute value by $1$ for all $x\in\R$, $0<y_2\leq1$, and $(\vecxi_2,\zeta_2)\in\Hei$.
        Therefore Proposition \ref{prop:3.17-simplified} implies
        \begin{align}
            &\lambda\left(\left\{x\in\R: \left| \Theta_{F}^{(2)}\!\left((-x+iy,0;\vecxi_1,\zeta_1),(x+iy,0;\vecxi_2,\zeta_2)\right)\right|>R\right\}\right)
            \leq \lambda\left(\left\{x\in\R:\left| \Theta_{\chi}\!\left(-x+iy,0;\vecxi_1,\zeta_1\right)\right|>R\right\}\right)\nonumber\\
            &\ll \frac{1}{(1+R)^4}\nonumber
        \end{align}
        uniformly in $0<y_1,y_2\leq1$ and $(\vecxi_1,\zeta_1),(\vecxi_2,\zeta_2)\in\Hei$. 
        If $F(w_1,w_2)=T_{\operatorname{Segm}}(w_2,w_1)=F_{\frac{1}{24},\frac{1}{12},\frac{1}{6}}(w_1)\delta_{\frac{1}{2}}(w_2)=\chi_L(4w_1)\delta_{\frac{1}{2}}(w_2)$ the same argument works, but we apply Proposition \ref{prop:3.17-simplified} to $\Theta_{\chi_L(4\cdot)}$. 
        For the other two functions, the argument is again the same, after swapping the role of $w_1$ and $w_2$, and each time applying Proposition \ref{prop:3.17-simplified} to a slightly modified version of $\chi_L$ or $\chi_R$. This proves \eqref{lemma:lines-dont-matter-statement-tightness} for all four functions.

        Now let $\varepsilon>0$ be given. If $F(w_1,w_2)=\mathbf{1}_{(0,1)\times \{1\}}(w_1,w_2)=\chi(w_1)\delta_{1}(w_2)$, then using \eqref{Theta^(2)_product} with $y_1=y_2=N^{-2}$, $\vecxi_1=\sve{-\alpha-\beta x}{0}$, and $\vecxi_2=\sve{\alpha+\beta x}{0}$, we can bound the measure of the set in the left-hand-side of \eqref{lemma:lines-dont-matter-statement-fdd} by 
        \begin{align*}
           \lambda\!\left(\left\{x\in\R:\: \left|\tfrac{1}{\sqrt{N}}\Theta_{\chi}(-x+\frac{i}{N^2},0;\sve{-\alpha-\beta x}{0},\zeta_1)\right|>\varepsilon\right\}\right).
        \end{align*}
        The overnormalization by $\sqrt{N}$ and Lemma 4.7 in \cite{Cellarosi-Marklof} yield \eqref{lemma:lines-dont-matter-statement-fdd}. The argument for the other three functions is similar, each time applying Lemma 4.7 in \cite{Cellarosi-Marklof} to a slightly different compactly supported, Riemann integrable function on $\R$. 
        The proof of equation \eqref{eq:lines-dont-matter-random-y} follows similarly, using Proposition \ref{proposition:tail-with-random-y}. 
\end{proof}
\begin{remark}\label{remark:line-segments-for-other-triangles}
    Lemma \ref{lemma:lines-dont-matter} is written, for simplicity, with the triangle $\mathfrak{T}_{(0,1)}$ in mind. Minor changes allow us to use it in arbitrary triangles $\mathfrak{T}_{(s,t)}$ as well.
\end{remark}

\section{\texorpdfstring{Tightness of $\mathbf X_N$}{}}\label{section:tightness}
Recall that  $\mathbf{X}_N:\Delta_2^{(0,T)}\to\R^2\oplus\R^{2\times 2}$, defined in  \eqref{mathbfX_N(s,t)}, is a random function for each $N\geq1$. Also recall the space of geometric rough paths introduced in Definition \ref{def:geometric-rough-paths}. 
Recall from Section \ref{section:rough-path-to-theta}, equations \eqref{rewriting_X_N^11}-\eqref{rewriting_X_N^22} and Lemma \ref{lemma:grid-approximation-with-continuous-time} that 
\begin{align*}
    \mathbb{X}_N^{1,1}\!\left(\tfrac{m}{N},\tfrac{n}{N}\right)&=\Re(\tfrac{1}{4}\mathcal{L}_N^2+\tha\mathcal{J}_N)+R_{1,1}\\
    \mathbb{X}_N^{1,2}\!\left(\tfrac{m}{N},\tfrac{n}{N}\right)&=\Im(\tfrac{1}{4}\mathcal{L}_N^2+\tha\mathcal{J}_N)+R_{1,2}\\
    \mathbb{X}_N^{2,1}\!\left(\tfrac{m}{N},\tfrac{n}{N}\right)&=\Re(\tfrac{1}{4i}\mathcal{L}_N^2-\tfrac{1}{2i}\mathcal{J}_N)+R_{2,1}\\
    \mathbb{X}_N^{2,2}\!\left(\tfrac{m}{N},\tfrac{n}{N}\right)&=\Im(\tfrac{1}{4i}\mathcal{L}_N^2-\tfrac{1}{2i}\mathcal{J}_N)+R_{2,2},
\end{align*}
where $R_{i,j}$ go to $0$ in probability on $C^{2\gamma}(\Delta_2^{(0,T)},\mathbb R^{2\times 2})$. By Slutsky's theorem, in order to show convergence of $\mathbb X_N$ on $C^{2\gamma}(\Delta_2^{(0,T)},\mathbb R^{2\times 2})$ we just need to show convergence in distribution of
\begin{align*}
    \bar{\mathbb{X}}_N^{1,1}\!\left(\tfrac{m}{N},\tfrac{n}{N}\right)&:=\Re(\tfrac{1}{4}\mathcal{L}_N^2+\tha\mathcal{J}_N)\\
    \bar{\mathbb{X}}_N^{1,2}\!\left(\tfrac{m}{N},\tfrac{n}{N}\right)&:=\Im(\tfrac{1}{4}\mathcal{L}_N^2+\tha\mathcal{J}_N)\\
    \bar{\mathbb{X}}_N^{2,1}\!\left(\tfrac{m}{N},\tfrac{n}{N}\right)&:=\Re(\tfrac{1}{4i}\mathcal{L}_N^2-\tfrac{1}{2i}\mathcal{J}_N)\\
    \bar{\mathbb{X}}_N^{2,2}\!\left(\tfrac{m}{N},\tfrac{n}{N}\right)&:=\Im(\tfrac{1}{4i}\mathcal{L}_N^2-\tfrac{1}{2i}\mathcal{J}_N)
\end{align*}
as $N\to\infty$.
Also recall Lemma \ref{lemma:grid-approximation-with-continuous-time} which shows that studying ${\mathbb X}_N$ on grid points is sufficient. The main result of this section is the following, which is one of the two main ingredients needed to prove Theorem \ref{theorem:main}.

\begin{theorem}\label{theorem:tightness-second-order}
For all $\gamma\in (0,\frac{1}{2})$ the laws of $\mathbf{X}_N=(X_N,\bar{\mathbb{X}}_N)$ are are tight on $\mathscr C_g^{\gamma}$.   
\end{theorem}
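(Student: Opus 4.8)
The plan is to split tightness on $\mathscr C_g^{\gamma}$ into a first-order and a second-order tightness statement, and then to reduce the second-order one to a uniform tail bound for $\mathcal J_N$ of the same shape as Proposition \ref{prop:tail-bound-Holder}. Since $\|\cdot\|_{\gamma,2\gamma}$ is the sum of the $C^{\gamma}$-seminorm of $X_N$ and the $C^{2\gamma}(\Delta_2^{(0,T)})$-seminorm of the second-order component, since closed balls of $\|\cdot\|_{\gamma',2\gamma'}$ with $\gamma'\in(\gamma,\tfrac12)$ are precompact for $\|\cdot\|_{\gamma,2\gamma}$ (the compact-embedding argument of Proposition \ref{prop:1st-order-tightness} applies verbatim on the simplex to maps vanishing on the diagonal, cf.\ \cite{Friz-Victoir-Book}), since the relations (ii) and \eqref{eq:weak-geometric} pass to the relevant limits, and since each $(X_N,\mathbb X_N)$ is a geometric rough path --- the canonical Riemann--Stieltjes lift of the piecewise-linear $X_N$ --- with $\mathbb X_N-\bar{\mathbb X}_N\to0$ in probability (Lemma \ref{lem-B_N^ij_and_M_N-tend-to-0-in-probability} and its $\sup$-versions, upgraded from sup-norm to $C^{2\gamma}$-seminorm by a standard interpolation in the mesh), it is enough to prove, for every $\gamma'\in(\gamma,\tfrac12)$, uniform-in-$N$ tail bounds for $\|X_N\|_{\gamma'}$ and for $\|\bar{\mathbb X}_N\|_{2\gamma'}$ that vanish as $R\to\infty$. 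The first is exactly Proposition \ref{prop:tail-bound-Holder}.

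By \eqref{rewriting_X_N^11}--\eqref{rewriting_X_N^22} and Lemma \ref{lemma:grid-approximation-with-continuous-time}, controlling $\bar{\mathbb X}_N$ in $C^{2\gamma'}(\Delta_2)$ amounts to controlling $\mathcal L_N(s,t)^2=(X_N(t)-X_N(s))^2$ and $\mathcal J_N(s,t)$. For the square, $\|\mathcal L_N^2\|_{2\gamma'}\le\|X_N\|_{\gamma'}^2$, so its tail is again governed by Proposition \ref{prop:tail-bound-Holder}, and everything hinges on $\mathcal J_N$. Mimicking the proof of Proposition \ref{prop:tail-bound-Holder}, I would: (a) bound $\|\mathcal J_N\|_{2\gamma'}$ by a maximum over grid pairs of $|\mathcal J_N(\tfrac kN,\tfrac\ell N)|/|\tfrac{\ell-k}{N}|^{2\gamma'}$ (using Lemma \ref{lemma:grid-approximation-with-continuous-time} and interpolation to treat non-grid $(s,t)$ and scales below $1/N$, where $\mathcal J_N$ is anyway $O(1/N)$); (b) pass to the random argmax $(k_\ast(x),\ell_\ast(x))$ and set $y(x)=(\ell_\ast(x)-k_\ast(x))^{-2}\ge N^{-2}$; (c) start from \eqref{writing-J_N-using-horocycles} and use that right-multiplication by $\Phi^{2\log(t-s)}$ together with a Heisenberg translation rescales the triangle $\mathfrak T_{(s,t)}$ to $\mathfrak T_{(0,1)}$ and turns the imaginary parts $N^{-2}$ of both horocycle lifts into $y(x)$, to obtain
\begin{equation*}
\|\mathcal J_N\|_{2\gamma'}\ \ll\ N^{2\gamma'-1}\,\frac{\bigl|\Theta^{(2)}_{T_{(0,1)}}(\tilde g_1(x),\tilde g_2(x))\bigr|}{y(x)^{\,1/2-\gamma'}},
\end{equation*}
where $N^{2\gamma'-1}\le1$ because $\gamma'<\tfrac12$ and $\tilde g_1(x),\tilde g_2(x)$ are horocycle lifts sharing the imaginary part $y(x)$ and with the remaining parameters depending on $x,\alpha,\beta$. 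It then remains to prove a tail bound for $|\Theta^{(2)}_{T_{(0,1)}}|/y^{1/2-\gamma'}$ with this random $y$, uniformly in the other parameters.

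This last bound is the genuinely new content, and it is obtained from the seven-piece decomposition \eqref{eq:decomposition-of-triangle} of $T_{(0,1)}$ and the linearity of $F\mapsto\Theta^{(2)}_F$. The right-angle corner $\mathfrak C_{0,1}$ and the horizontal and vertical edge functions $\mathfrak L_h,\mathfrak L_v$, after being expanded as in Remark \ref{remark:where-line-segments-come-from}, decompose into honest products $F_1(w_1)F_2(w_2)$ (for which $\Theta^{(2)}_{F_1\otimes F_2}=\Theta_{F_1}\Theta_{F_2}$, so that, $\tilde g_1,\tilde g_2$ sharing the imaginary part $y(x)$, the quotient becomes a product of two one-dimensional quantities $|\Theta_{F_i}(\tilde g_i)|/y(x)^{\varepsilon}$ with $\varepsilon=\tfrac14-\tfrac{\gamma'}{2}\in(0,\tfrac14)$, each controlled by Proposition \ref{proposition:tail-with-random-y} applied to $F_i$ --- an admissible one-dimensional cutoff up to rescaling --- yielding a tail $\ll(1+R)^{-2}$) plus segment-supported pieces that are precisely those covered by \eqref{eq:lines-dont-matter-random-y} in Lemma \ref{lemma:lines-dont-matter}, in the form valid for an arbitrary triangle (Remark \ref{remark:line-segments-for-other-triangles}). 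The smooth remainder $\mathfrak F_{\mathrm{smooth}}$ is regular; moving $\tilde g_1,\tilde g_2$ into a fundamental domain (Proposition \ref{prop:invariance-under-gamma-gamma}) and combining the crude growth estimate of Lemma \ref{lemma:growth-in-cusp-second-order} with the height-function tail mechanism behind Propositions \ref{prop:3.17-simplified} and \ref{proposition:tail-with-random-y} (the function $H$ and Lemma \ref{lemma:integral-over-gamma-R4}) yields the same tail for its contribution.

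The main obstacle is the part of the triangle attached to its diagonal edge: the acute-angle corners $\mathfrak C_{0,0},\mathfrak C_{1,1}$ and the diagonal edge function $\mathfrak L_d$ produce ``sheared'' rectangular building blocks of the form $(w_1,w_2)\mapsto F_1(w_1)F_2(w_2-w_1)$, and since the shear $(w_1,w_2)\mapsto(w_1,w_2-w_1)$ is \emph{not} an element of $G\times G$, $\Theta^{(2)}$ of such a function does not factor as a product of two one-dimensional theta functions. I expect to treat these by resolving the jump at $0$ in the ``diagonal'' block $F_2(\cdot)$ dyadically as in \eqref{partition-of-unity-1}, so that $F_2(w_2-w_1)$ becomes a $2^{-j/2}$-weighted series of geodesic translates of the fixed smooth bump $\Delta$, and then combining the \emph{uniform-in-$x$} bound for $\Theta_\Delta$ along horocycle lifts deep in the cusp (the mechanism of Proposition \ref{prop:3.17-simplified}, via $H$ and Lemma \ref{lemma:integral-over-gamma-R4}) with the summability of the weights $2^{-j/2}$, the only leftovers being segment-supported terms covered by Lemma \ref{lemma:lines-dont-matter}. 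Once the tail bound for $|\Theta^{(2)}_{T_{(0,1)}}|/y^{1/2-\gamma'}$ is established --- and, by the same decomposition applied to a general triangle, for $|\Theta^{(2)}_{T_{(s,t)}}|$ --- steps (a)--(c) assemble into the required uniform tail bound for $\|\mathcal J_N\|_{2\gamma'}$, which together with the bound for $\|X_N\|_{\gamma'}$ completes the proof.
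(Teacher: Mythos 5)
Your overall architecture is the paper's: compact embedding $\mathscr C_g^{\gamma'}\hookrightarrow\mathscr C_g^{\gamma}$, reduction of the second-order tail to $\mathcal L_N^2$ and $\mathcal J_N$, the rescaling of $\Theta^{(2)}_{T_{(s,t)}}$ to $\Theta^{(2)}_{T_{(0,1)}}$ with a random $y(x)\geq N^{-2}$ and the discount $y(x)^{-(1/2-\gamma')}$, the seven-piece decomposition \eqref{eq:decomposition-of-triangle}, and the treatment of $\mathfrak C_{0,1},\mathfrak L_h,\mathfrak L_v$, the segment pieces, and the smooth remainder all match Propositions \ref{prop:tail-bound-second-order-Holder}, \ref{prop:tail-random-y-second-order}, \ref{prop:bound-unshear-building-blocks-random-y}, \ref{prop:bound-for-second-order-smooth-random-y} and Lemma \ref{lemma:lines-dont-matter}.

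The gap is in the sheared components $\mathfrak C_{0,0},\mathfrak C_{1,1},\mathfrak L_d$, which you correctly identify as the main obstacle but then dispose of with an argument that does not work. After the dyadic resolution, each summand is of the form $(w_1,w_2)\mapsto f(2^jw_1)f(2^k(w_2-w_1))$; this is still a sheared function, not a tensor product in $(w_1,w_2)$, and the shear $(w_1,w_2)\mapsto(w_1,w_2-w_1)$ is not implemented by any element of $G\times G$ (the representation $R^{(2)}$ acts separately on each coordinate). Consequently $\Theta^{(2)}$ of such a summand is a genuinely coupled rank-two lattice sum --- the cutoff in the variable $n_2-n_1$ does not commute with the quadratic phases in $n_1$ and $n_2$ separately --- and there is no way to invoke ``the uniform-in-$x$ bound for $\Theta_\Delta$ along horocycle lifts,'' which is a rank-one statement. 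What is needed, and what your sketch omits entirely, is a bound on the rank-two theta function of a \emph{smooth} cutoff composed with the shear, uniform in the dyadic shear parameters $\tau=2^j,\eta=2^k$. The paper obtains this by mapping $G\times G$ into $\mathbb H(\R^2)\rtimes\operatorname{Sp}(2,\R)$, where the shear \emph{is} a group element $P_{j,k}$, and then proving Lemma \ref{lemma:H-bound-for-sheared}: after absorbing the diagonal part of $\Omega$ into the geodesic coordinates, the residual unipotent factor is at bounded distance from the identity, so the Marklof--Welsh height function $D$ (Corollary 4.1 and Lemma 4.4 in \cite{Marklof-Welsh-II}, plus the fundamental-domain comparison of Lemma \ref{lemma:embedding-of-fundamental-domains}) yields $|\tilde\Theta^{(2)}|\ll H(x_1+i\tau^2y_1)H(x_2+i\eta^2y_2)$ uniformly in $\tau,\eta$. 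Only then does the double sum over $(j,k)$ with weights $2^{-j/2}2^{-k/2}$ and a $\delta_j\delta_k'$ union bound close the estimate (Propositions \ref{prop:smooth-f-sheared-lambda-bound-random-y} and \ref{prop:bound-shear-building-blocks-random-y}). A secondary inaccuracy: for the corner functions both blocks carry a jump, so one needs the double dyadic series \eqref{eq:F-decompose-as-sum-of-fg}, not a single $2^{-j/2}$-weighted one. Without a substitute for Lemma \ref{lemma:H-bound-for-sheared}, your proof of the tail bound for $\Theta^{(2)}_{T_{(0,1)}}$, and hence of the theorem, is incomplete.
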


To prove this theorem, we need tightness for $X_N$ (already established in Proposition \ref{prop:1st-order-tightness}) \emph{and} for $\bar{\mathbb{X}}_N$. 
In light of 
\eqref{def-L_N}, tightness for $\mathcal{L}_N$ is the same as tightness for $X_N$ (done in Proposition  \ref{prop:1st-order-tightness}). Therefore we focus on establishing tightness for $\mathcal{J}_N$ and, thanks to \eqref{writing-J_N-using-horocycles}, we must consider the rank-2 theta function $\Theta^{(2)}_{T_{(s,t]}}$ evaluated along a specific pair of horocycle lifts. 
Thanks to the first part of Lemma \ref{lemma:lines-dont-matter} and Remark \ref{remark:line-segments-for-other-triangles}, we may instead consider $\Theta^{(2)}_{T_{(s,t)}}$. We will first focus on the case $s=0, t=1$.

Using  linearity,  
we aim to write the theta function $\Theta_{T_{(0,1)}}^{(2)}$ in terms of $7$ theta functions, one for each term in \eqref{eq:decomposition-of-triangle}. There are $3$ ``sheared" components, $\mathfrak C_{0,0},\mathfrak C_{1,1}$ and $\mathfrak L_d$ corresponding to the two acute corners and the hypotenuse of the triangle (in cyan, green, and yellow respectively in Figure \ref{fig:corners-and-sides-diagrams}), and the rest are non-sheared. The non-sheared components are easier to handle, but the sheared components require mapping $G\times G$ into a larger group.

\begin{remark}
    Unfortunately, as we write the $T_{(0,1)}$ as a sum of $7$ pieces, as in Section \ref{section:defining-theta-T} we will have an abundance of repetitive tail bounds. In order to prove tightness in the same way as Proposition \ref{prop:1st-order-tightness} we must prove tail bounds for each of these terms with random $y$ such as in Proposition \ref{proposition:tail-with-random-y}. Additionally, we prove tail bounds for non-random $y$ which we make use of in Section \ref{section:fdds}. For expository reasons, we include these tail bounds together even though we do not use tail bounds for  deterministic $y$ in this section. As the tail bounds in this section are repetitive we will often omit details in proofs. 
\end{remark}

We first recall the function $H$ defined in equation (3.125) in \cite{Cellarosi-Marklof}
\begin{equation}\label{def-height-function-H}
    H(x+iy):=\sum_{\gamma \in \Gamma_\infty \setminus \operatorname{PSL}(2,\mathbb Z)}y_\gamma^{1/2}\chi_{[1/2,\infty)}(y_\gamma^{1/2}),
\end{equation}
where $\Gamma_\infty=\left\{\begin{pmatrix}
    1&&x\\
    0&&1
\end{pmatrix}: x\in \mathbb Z\right\}$ is the stabilizer of the cusp and $y_\gamma:=\Im(\gamma (x+iy))$,  with $\gamma$ acting by M\"obius transformations. The height function $H$ can be extended to a function on $G$ by defining $H\left(x+iy;\phi,\boldsymbol \xi,\zeta\right):=H(x+iy).$
\subsection{Estimates for non-sheared components}\label{subsection-estimates-for-nonsheared}
In this subsection, we establish tail bounds for smooth $f$ and for functions $$F\in \{\mathfrak{C}_{0,1},\mathfrak{L}_h,\mathfrak{L}_v\}.$$ In these three cases, $\Theta_F^{(2)}$ is really just a product of two rank $1$ theta functions. We follow the same strategy as the proof of Proposition 3.17 in \cite{Cellarosi-Marklof} and the proof for the tightness in Proposition \ref{prop:1st-order-tightness}.  We start with a bound for regular $F$, which will be building blocks for our non-sheared and sheared components. 
\begin{lemma}\label{lemma:HH-bound-for-unshear}
Let $F\in \mathcal S_{\eta_1,\eta_2}(\mathbb R^2)$ and consider any $(g_1,g_2)\in G\times G$, where $g_j=(x_j+iy_j,\phi_j;\sve{\xi_{j,1}}{\xi_{j,2}},\zeta_j)$ for $j=1,2$. We then have the estimate 
    \begin{equation}\label{eq:lemma:HH-bound-for-unshear}
        |\Theta_F^{(2)}(g_1,g_2)|\ll H(x_1+iy_1)H(x_2+iy_2),
    \end{equation}
    where the implied constant depends only on $f$.
\end{lemma}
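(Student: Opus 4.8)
The plan is to reduce the rank-2 estimate to the rank-1 bound from \cite{Cellarosi-Marklof}, Lemma 2.1 (the analogue of our Lemma~\ref{lemma:growth-in-cusp-second-order} but with the height function $H$ on the right-hand side), applied one variable at a time. First I would recall that, as observed in the proof of Proposition~\ref{prop:invariance-under-gamma-gamma}, for fixed $g_2$ we may expand
\begin{align*}
    \Theta_F^{(2)}(g_1,g_2)
    &=y_2^{\frac{1}{4}}\,\e{\zeta_2-\xi_{2,1}\xi_{2,2}}\sum_{n_2\in\Z}\e{\tfrac12(n_2-\xi_{2,2})^2 x_2+n_2\xi_{2,1}}\,\Theta_{f_{g_2,n_2}}(g_1),
\end{align*}
where $f_{g_2,n_2}(w)=F_{0,\phi_2}\bigl(w,(n_2-\xi_{2,2})y_2^{1/2}\bigr)$. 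The point is that $F\in\mathcal S_{\eta_1,\eta_2}(\R^2)$ with $\eta_1,\eta_2>1$ forces $f_{g_2,n_2}\in\mathcal S_{\eta_1}(\R)$ with $\kappa_{\eta_1}(f_{g_2,n_2})\ll \kappa_{\eta_1,\eta_2}(F)\,(1+|(n_2-\xi_{2,2})y_2^{1/2}|)^{-\eta_2}$; this is the one calculation worth spelling out, and it uses the definition of the two-dimensional $\phi$-transform together with the fact that partial Fourier-type transforms in the second variable do not spoil decay in the first.

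Next I would apply the rank-1 height bound (\cite{Cellarosi-Marklof}, Lemma~2.1) to each $\Theta_{f_{g_2,n_2}}(g_1)$, obtaining
\begin{align*}
    |\Theta_{f_{g_2,n_2}}(g_1)|\ll \kappa_{\eta_1}(f_{g_2,n_2})\,H(x_1+iy_1)\ll \kappa_{\eta_1,\eta_2}(F)\,\frac{H(x_1+iy_1)}{(1+|(n_2-\xi_{2,2})y_2^{1/2}|)^{\eta_2}}.
\end{align*}
Summing the resulting estimate over $n_2$ and bounding $y_2^{1/4}\sum_{n_2}(1+|(n_2-\xi_{2,2})y_2^{1/2}|)^{-\eta_2}$, one has to be slightly careful: this sum is $O(1)$ only when $y_2$ is bounded below, and grows like $y_2^{-3/4}$ as $y_2\to0$. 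The clean way to match the stated conclusion $|\Theta_F^{(2)}(g_1,g_2)|\ll H(x_1+iy_1)H(x_2+iy_2)$ is to invoke that for the relevant group elements $(g_1,g_2)$ one works in a fundamental domain where $y_1,y_2\geq \tfrac{1}{2}$ (as in Lemma~\ref{lemma:growth-in-cusp-second-order}), so that the $n_2$-sum is $O(1)$ and $H(x_2+iy_2)\geq y_2^{1/2}\geq \tfrac{1}{\sqrt 2}$ is bounded below; alternatively one symmetrizes the argument by also extracting the $n_1$-sum and using that the genuine content of Lemma~2.1 of \cite{Cellarosi-Marklof} is that the rank-1 theta function is controlled by $H$, which dominates $y^{1/4}$ for $y$ in the relevant range. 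Either way, combining the two one-variable estimates gives the product bound.

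The main obstacle is precisely this last bookkeeping step: making sure the powers of $y_1,y_2$ generated by the partial summation are absorbed by $H(x_1+iy_1)H(x_2+iy_2)$ rather than appearing as an extra unfavorable factor. I expect to resolve it exactly as in \cite{Cellarosi-Marklof}: the height function $H$ satisfies $H(x+iy)\gg y^{1/2}$ on the relevant part of $\mathfrak H$ (and more generally $H$ dominates the contribution of the identity coset), so any polynomially-bounded-in-$y$ prefactor coming from the Schwartz-tail sum is harmless; the implied constant then depends only on $f$ (i.e.\ on $\kappa_{\eta_1,\eta_2}(F)$ and the choice of $\eta_1,\eta_2>1$), as claimed. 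The rest is a routine application of the triangle inequality and the rank-1 lemma, so I would not belabor it.
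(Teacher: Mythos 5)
Your route differs from the paper's. The paper proves the lemma in two short steps: it quotes the rank-2 cusp bound of Lemma \ref{lemma:growth-in-cusp-second-order} to get $|\Theta_F^{(2)}(g_1,g_2)|\ll y_1^{1/2}y_2^{1/2}$ on $\mathcal F_\Gamma\times\mathcal F_\Gamma$ (where $y_1,y_2\geq \tfrac12$), and then uses the $\Gamma\times\Gamma$-invariance of $|\Theta_F^{(2)}|$ from Proposition \ref{prop:invariance-under-gamma-gamma}, together with the automorphy of $H$ under $\operatorname{PSL}(2,\mathbb Z)$ and the fact that $H(x+iy)\geq y^{1/2}$ when $y\geq\tfrac12$, to transfer the bound to arbitrary $(g_1,g_2)$. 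You instead re-derive the cusp bound by slicing in $n_2$ and invoking the rank-1 height estimate; that is workable (and the inequality $\kappa_{\eta_1}(f_{g_2,n_2})\ll\kappa_{\eta_1,\eta_2}(F)\,(1+|(n_2-\xi_{2,2})y_2^{1/2}|)^{-\eta_2}$ is exactly the computation one would need), but it is a longer road to something Lemma \ref{lemma:growth-in-cusp-second-order} already hands you.

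The one assertion in your write-up that would actually fail is that ``any polynomially-bounded-in-$y$ prefactor coming from the Schwartz-tail sum is harmless.'' For arbitrary $g_2$ with $y_2\to0$ the prefactor $y_2^{1/4}\sum_{n_2}(1+|(n_2-\xi_{2,2})y_2^{1/2}|)^{-\eta_2}\asymp y_2^{-1/4}$ diverges (incidentally it is $y_2^{-1/4}$, not $y_2^{-3/4}$), while $H(x_2+iy_2)$ remains bounded as $y_2\to0$ for, say, badly approximable $x_2$; so a negative power of $y_2$ cannot be absorbed into $H(x_2+iy_2)$. Since the lemma is asserted for \emph{all} $(g_1,g_2)\in G\times G$, the fundamental-domain reduction cannot be an afterthought: you must first replace $(g_1,g_2)$ by $(\gamma_1g_1,\gamma_2g_2)\in\mathcal F_\Gamma\times\mathcal F_\Gamma$ using Proposition \ref{prop:invariance-under-gamma-gamma}, run the $n_2$-expansion there (where $y_2'\geq\tfrac12$ makes the sum $O(1)$ and $y_1'^{1/2}y_2'^{1/2}\leq H(x_1'+iy_1')H(x_2'+iy_2')$), and then use $H(x_j'+iy_j')=H(x_j+iy_j)$, which holds because $H$ is a sum over the full coset space $\Gamma_\infty\backslash\operatorname{PSL}(2,\mathbb Z)$. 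With that restructuring --- invariance first, estimate second --- your argument closes.
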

\begin{proof}
    Lemma \ref{lemma:growth-in-cusp-second-order} implies that if $y_1,y_2\geq \frac{1}{2}$ we have 
    \begin{equation*}
        |\Theta_F^{(2)}(g_1,g_2)|\ll y_1^{\frac{1}{2}} y_2^{\frac{1}{2}},
    \end{equation*}
    where the constant only depends on $f$. Hence this is true for all $(g_1,g_2)$ in the fundamental domain $\mathcal F_\Gamma \times \mathcal F_\Gamma$. For arbitrary $(g_1,g_2)\in G\times G$, from the defintion of fundamental domain there exist some $(\gamma_1,\gamma_2)\in \Gamma\times \Gamma$ so that $(g_1',g_2')=(\gamma_1,\gamma_2) (g_1,g_2)\in \mathcal F_\Gamma \times \mathcal F_\Gamma$. Therefore recalling the  invariance of $\Theta_F^{(2)}$ under $\Gamma\times \Gamma$  by Proposition \ref{prop:invariance-under-gamma-gamma}, we have 
    $$|\Theta_F^{(2)}(g_1,g_2)|=|\Theta_F^{(2)}(g_1',g_2')|\ll {y_1'}^{\frac{1}{2}} {y_2'}^{\frac{1}{2}}\leq H(x_1+iy_1)H(x_2+iy_2).$$
\end{proof}
\begin{proposition}\label{prop:bound-for-second-order-smooth}
    Let $\lambda$ be a Borel probability measure on $\mathbb R$ absolutely continuous with respect to Lebesgue measure. Let $F\in \mathcal S_{\eta_1,\eta_2}(\R^2)$ with $\eta_1,\eta_2>1.$ Then for any $(g_1,g_2)\in  G\times  G$ with $-x_1=x_2=x$ we have, uniformly in $y_1,y_2\leq 1$, that 
 \begin{equation*}
        \lambda\left(\left\{x\in \mathbb R: \left| \Theta_{F}^{(2)}(g_1,g_2)\right|>R\right\}\right)\ll \frac{1}{(1+R)^2}.
    \end{equation*}
\end{proposition}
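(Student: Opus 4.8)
The plan is to reduce the statement to the single‑variable tail bound for the height function $H$ already available from \cite{Cellarosi-Marklof}. First I would invoke Lemma~\ref{lemma:HH-bound-for-unshear}: since $F\in\mathcal S_{\eta_1,\eta_2}(\R^2)$ is regular, there is a constant $C_F$ depending only on $F$ with
\[
    \left|\Theta_F^{(2)}(g_1,g_2)\right|\le C_F\,H(x_1+iy_1)\,H(x_2+iy_2)
\]
for all $(g_1,g_2)\in G\times G$, the estimate being completely uniform in the angle and Heisenberg coordinates, which therefore play no role (and may even be allowed to depend on $x$). Specializing to $x_1=-x$, $x_2=x$ and using that $H$ is even in the real part (equivalently, that the push‑forward of $\lambda$ under $x\mapsto -x$ is again absolutely continuous), the event $\{|\Theta_F^{(2)}(g_1,g_2)|>R\}$ is contained, by the arithmetic–geometric mean inequality, in
\[
    \left\{x\in\R:\,H(x+iy_1)>\sqrt{R/C_F}\right\}\cup\left\{x\in\R:\,H(x+iy_2)>\sqrt{R/C_F}\right\},
\]
since a product exceeding $R/C_F$ forces one of the two factors to exceed its square root.

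Second, I would apply the uniform‑in‑$y$ tail bound for $H$: for every $0<y\le 1$ and every $t>0$,
\[
    \lambda\left(\{x\in\R:\,H(x+iy)>t\}\right)\ll\frac{1}{(1+t)^4},
\]
with implied constant independent of $y$. This is precisely the estimate underlying Proposition~3.17 in \cite{Cellarosi-Marklof} (their Lemmas~3.18–3.19); alternatively it is the $\varepsilon=0$ specialization of the computation in the proof of Lemma~\ref{lemma:integral-over-gamma-R4}, where for deterministic $y$ the sum over $c$ is finite — the condition $J_c\neq 0$ forces $c< R^{-2}y^{-1/2}$, so $\sum_c J_c\,\phi(c)\le 2R^{-2}\sqrt{y}\sum_{c< R^{-2}y^{-1/2}}1\ll R^{-4}$ — and the $\zeta$‑function pole never intervenes. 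Combining this with the inclusion above and a union bound gives
\[
    \lambda\left(\{x\in\R:\,|\Theta_F^{(2)}(g_1,g_2)|>R\}\right)\ll\frac{1}{\left(1+\sqrt{R/C_F}\right)^4}\ll\frac{1}{(1+R)^2},
\]
uniformly in $0<y_1,y_2\le 1$, where the last step uses $(1+u)^4\ge(1+u^2)^2$ with $u=\sqrt{R/C_F}$ after absorbing $C_F$ into the implied constant. The exponent is $2$ rather than $4$ precisely because the two heights are evaluated along the perfectly correlated arguments $-x+iy_1$ and $x+iy_2$, so only the weaker ``one factor is large'' estimate is available.

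The argument is essentially bookkeeping once Lemma~\ref{lemma:HH-bound-for-unshear} is in hand; the only points needing a little care are (i) ensuring the $H$‑tail bound is genuinely uniform over all $0<y_j\le1$ — handled by the $\varepsilon=0$ specialization, where finiteness of the $c$‑sum replaces the $\zeta$‑function regularization of Lemma~\ref{lemma:integral-over-gamma-R4} — and (ii) keeping track of the coupling $-x_1=x_2=x$ so that the union bound is applied to two honestly single‑variable events. The substantive input is the domination $|\Theta_F^{(2)}|\ll H\cdot H$, so I do not expect any real obstacle. This proposition then serves, alongside the analogous estimates for the sheared components, as a building block for the finite‑dimensional‑distribution analysis of Section~\ref{section:fdds}.
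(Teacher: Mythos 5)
Your overall architecture --- dominate $|\Theta^{(2)}_F|$ by a product of height functions, split the product event with a union bound, and quote the single-variable tail estimate from \cite{Cellarosi-Marklof} --- is exactly the paper's, but the tail bound you assert for $H$ is false, and with the correct one your chain of inequalities does not reach the stated exponent. The quantity that Lemma \ref{lemma:integral-over-gamma-R4} (and Lemma 3.19 of \cite{Cellarosi-Marklof}) bounds by $R^{-4}$ is $\int_0^1\sum_\gamma \mathbf{1}_{\{y_\gamma^{1/4}>R\}}\,\de x$, i.e.\ the tail at level $R$ of $\sup_\gamma y_\gamma^{1/4}\approx H^{1/2}$, \emph{not} of $H$. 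Translating, $\lambda(\{H(x+iy)>t\})\ll t^{-2}$, and this exponent is sharp: for fixed $t$ and small $y$ the set $\{H>t\}$ is essentially a union, over $c\lesssim t^{-1}y^{-1/2}$, of $\phi(c)$ intervals of length $\asymp \sqrt{y}/(tc)$, whose total measure is $\asymp t^{-2}$ uniformly as $y\to0$. (This is also why Proposition \ref{prop:3.17-simplified} achieves exponent $4$ in rank one: there $|\Theta_f|\ll H^{1/2}$, so the event $\{|\Theta_f|>R\}$ sits inside $\{H>R^2/C\}$.) Feeding the true $t^{-2}$ tail into your union bound at level $t=\sqrt{R/C_F}$ yields only $\ll(1+R)^{-1}$, which is not enough.

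The repair is to not give away a square root in the pointwise bound. Lemma \ref{lemma:growth-in-cusp-second-order} actually gives $|\Theta^{(2)}_F(g_1,g_2)|\ll_F y_1^{1/4}y_2^{1/4}$ for $y_1,y_2\ge\tfrac12$, so after translating to the fundamental domain via Proposition \ref{prop:invariance-under-gamma-gamma} one gets $|\Theta^{(2)}_F(g_1,g_2)|\ll H(x_1+iy_1)^{1/2}H(x_2+iy_2)^{1/2}$, which is strictly stronger than the $H\cdot H$ bound recorded in Lemma \ref{lemma:HH-bound-for-unshear} (that lemma is stated in a lossy form, and the paper's own proof of this proposition implicitly needs the sharper version as well). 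With the sharper bound the union bound produces the events $\{H(\mp x+iy_j)>R/C_F\}$ rather than $\{H(\mp x+iy_j)>\sqrt{R/C_F}\}$, and the genuine $t^{-2}$ tail applied at level $R/C_F$ gives $\ll(1+R)^{-2}$ for each, hence the claim. The remaining ingredients in your write-up --- uniformity in the angle and Heisenberg coordinates, the reduction to $\R/\Z$ using periodicity of $H$ and absolute continuity of $\lambda$, and the finiteness of the $c$-sum for deterministic $y$ --- are fine.
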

\begin{proof}
    Lemma \ref{lemma:HH-bound-for-unshear} implies that $\lambda\left(\left\{x\in \mathbb R: \left| \Theta_{F}^{(2)}(g_1,g_2)\right|>R\right\}\right)$ is bounded above by
    $$
    \lambda\left(\left\{x\in \mathbb R: H(-x+iy_1)H(x+iy_2)>\tfrac{R}{C}\right\}\right),$$ where $C$ is the constant implied by \eqref{eq:lemma:HH-bound-for-unshear}.
    A union bound then implies that 
   \begin{align*} \lambda\left(\left\{x\in \mathbb R: \left| \Theta_{F}^{(2)}(g_1,g_2)\right|>
   R
   \right\}\right)&\leq \lambda\left(\left\{x\in \mathbb R: H(-x+i y_1)>\sqrt{\tfrac{R}{C}}\right\}\right)\\
   &~+\lambda\left(\left\{x\in \mathbb R: H(x+i y_2)>\sqrt{\tfrac{R}{C}}\right\}\right).\end{align*}
   We now bound each of the previous terms using  the strategy of the proof of Proposition 3.20 in \cite{Cellarosi-Marklof}.
   \begin{align*}
       \int_{\R} \mathbf{1}_{\left\{H(-x+i y_1)<\sqrt{\frac{R}{C}}\right\}}\,\de\lambda&=\int_{\R/\Z}\mathbf{1}_{\left\{H(-x+i y_1)<\sqrt{\frac{R}{C}}\right\}}\de \lambda_{\Z}\\
       &\ll \int_{\R/\Z}\mathbf{1}_{\left\{H(-x+i y_1)<\sqrt{\frac{R}{C}}\right\}}\de x,
   \end{align*}
   where $\lambda_{\Z}$ is the push forward of $\lambda$ to $\R/\Z$ via the quotient map.  Lemma 3.19 in \cite{Cellarosi-Marklof} implies that for $R\geq C$ we have
   \begin{align*}
       \int_{\R/\Z}\mathbf{1}_{\left\{H(-x+i y_1)<\sqrt{\frac{R}{C}}\right\}}\de x\leq 2 C^2 R^{-2},
   \end{align*} 
   uniformly in $y_1\leq 1$.
   The same bound holds for $\int_\R \mathbf{1}_{\left\{H(x+i y_2)<\sqrt{R/C}\right\}}\,\de\lambda$, uniformly in $y_2\leq1$, finally implying the desired bound $\ll (1+R)^{-2}$.
\end{proof}

\begin{proposition}\label{prop:bound-for-second-order-smooth-random-y}
    Let $\lambda$ be a Borel probability measure on $\mathbb R$ absolutely continuous with respect to Lebesgue measure. Let $f\in \mathcal S_{\eta_1,\eta_2}(\mathbb R^2)$ with $\eta_1,\eta_2>1.$ Let $(g_1,g_2)\in  G\times  G$ be so that $-x_1=x_2=x$, $\vecxi_1(x)$, $\vecxi_2(x)$ and $y_1(x)$ $y_2(x)$ are functions of $x$ so that $0<y_0\leq y_1(x),y_2(x)\leq 1.$ Let $\varepsilon>0$ be small. Then uniformly in $y_0$ we have
 \begin{equation*}
        \lambda\left(\left\{x\in \mathbb R: \left| \Theta_{f}^{(2)}(g_1,g_2)\right|>\frac{R}{(y_1(x)y_2(x))^
        \varepsilon}\right\}\right)\ll \frac{1}{(1+R)^2}.
    \end{equation*}
\end{proposition}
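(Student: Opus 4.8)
The plan is to mimic the proof of Proposition \ref{prop:bound-for-second-order-smooth}, replacing each use of the fixed-$y$ tail bound for the height function $H$ by its random-$y$ analogue. The starting point is Lemma \ref{lemma:HH-bound-for-unshear}, which gives the pointwise bound $|\Theta_F^{(2)}(g_1,g_2)|\ll H(-x+iy_1(x))\,H(x+iy_2(x))$ with implied constant $C$ depending only on $F$. Hence the set whose measure we wish to estimate is contained in
\begin{equation*}
    \left\{x\in\R:\: H(-x+iy_1(x))\,H(x+iy_2(x))>\tfrac{R}{C\,(y_1(x)y_2(x))^\varepsilon}\right\}.
\end{equation*}
Using that $(y_1(x)y_2(x))^\varepsilon\geq (y_1(x))^{2\varepsilon}$ and also $\geq (y_2(x))^{2\varepsilon}$ (both $y_i\leq1$), a union bound over the event that each factor exceeds the square root of the right-hand side gives
\begin{equation*}
    \lambda\left(\left\{x\in\R:\:\left|\Theta_F^{(2)}(g_1,g_2)\right|>\tfrac{R}{(y_1(x)y_2(x))^\varepsilon}\right\}\right)\leq \lambda\!\left(\left\{x:\:H(-x+iy_1(x))>\tfrac{\sqrt{R/C}}{(y_1(x))^{\varepsilon}}\right\}\right)+\lambda\!\left(\left\{x:\:H(x+iy_2(x))>\tfrac{\sqrt{R/C}}{(y_2(x))^{\varepsilon}}\right\}\right).
\end{equation*}

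Next I would estimate each of the two terms separately. Since $\lambda$ is absolutely continuous with respect to Lebesgue measure and $H(z+1)=H(z)$, we may pass to $\R/\Z$ and bound $\lambda$ by a constant times Lebesgue measure, exactly as in the proof of Proposition \ref{prop:bound-for-second-order-smooth}. It then remains to bound $\int_{\R/\Z}\mathbf 1_{\{H(x+iy(x))>\sqrt{R/C}/(y(x))^\varepsilon\}}\,dx$. Writing out the definition \eqref{def-height-function-H} of $H$ and using $y_\gamma^{1/2}\geq\tfrac12$ on the support of each summand, one has $H(x+iy(x))>\sqrt{R/C}/(y(x))^\varepsilon$ only if some $\gamma\in\Gamma_\infty\backslash\mathrm{PSL}(2,\Z)$ satisfies $y_\gamma^{1/2}>c\sqrt{R}/(y(x))^\varepsilon$ for an appropriate constant $c$, i.e. $y_\gamma^{1/4}>c'R^{1/4}/(y(x))^{\varepsilon/2}$. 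This is precisely (up to renaming $R^{1/4}\mapsto \tilde R$ and $\varepsilon/2\mapsto\tilde\varepsilon$) the quantity $r_\gamma^{\tilde\varepsilon}(y)$ controlled by Lemma \ref{lemma:integral-over-gamma-R4}, whose conclusion gives
\begin{equation*}
    \int_0^1\sum_{\gamma\in\Gamma_\infty\backslash\hat\Gamma} r_\gamma^{\tilde\varepsilon}(y)\,dx\ll_{\tilde\varepsilon} \tilde R^{-4},
\end{equation*}
uniformly in $y_0$. Translating back, each of the two terms above is $\ll_\varepsilon (\sqrt R)^{-4}=R^{-2}$ for $R$ large, and for bounded $R$ the bound $\ll(1+R)^{-2}$ is trivial since probabilities are at most $1$. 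Adding the two contributions yields the claimed estimate. (This is in fact the content of Lemma \ref{lemma:tail-bound-smooth-f-random-y} applied in each variable, so one could alternatively cite that lemma twice after the union bound, with $R$ replaced by $\sqrt R$ and the exponent halved.)

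The main obstacle — really the only nonroutine point — is making sure the $\varepsilon$-discount is distributed correctly between the two factors and that the exponents still land in the convergent range for the Dirichlet series $\sum_c\phi(c)c^{-s}$, i.e. that $\tfrac{2}{1-4\tilde\varepsilon}>2$; choosing $\tilde\varepsilon=\varepsilon/2$ and assuming $\varepsilon$ small (say $\varepsilon<\tfrac14$, which also forces $\tilde\varepsilon<\tfrac18<\tfrac14$) guarantees this, and the implied constant blows up as $\varepsilon\to0$ through the pole of $\zeta$ at $1$ — which is exactly why the discount is needed and why the implied constant is allowed to depend on $\varepsilon$. Everything else (passing to $\R/\Z$, the absolute-continuity comparison, the union bound, the trivial bound for small $R$) is identical to arguments already carried out in the proofs of Propositions \ref{prop:bound-for-second-order-smooth} and \ref{proposition:tail-with-random-y} and Lemmas \ref{lemma:integral-over-gamma-R4} and \ref{lemma:tail-bound-smooth-f-random-y}, so I would state them briefly and refer back.
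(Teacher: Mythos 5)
Your proof is correct and follows the paper's own argument exactly: the pointwise bound of Lemma \ref{lemma:HH-bound-for-unshear}, a union bound splitting the event $(y_1^\varepsilon(x) H(-x+iy_1(x)))\,(y_2^\varepsilon(x) H(x+iy_2(x)))>R/C$ into the two events where one factor exceeds $\sqrt{R/C}$, and the random-$y$ height-function estimate of Lemma \ref{lemma:integral-over-gamma-R4} (via the argument of Lemma \ref{lemma:tail-bound-smooth-f-random-y}) applied to each factor. One small correction: the auxiliary inequality $(y_1y_2)^\varepsilon\geq y_1^{2\varepsilon}$ you invoke is false in general (it amounts to $y_2\geq y_1$), but it is also unnecessary, since your displayed union bound follows directly from the elementary fact that a product exceeding $R/C$ forces at least one factor to exceed $\sqrt{R/C}$.
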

\begin{proof}
    Lemma \ref{lemma:HH-bound-for-unshear} implies that 
    \begin{align*}
    \lambda&\left(\left\{x\in \mathbb R: \left| \Theta_{f}^{(2)}(g_1,g_2)\right|>\frac{R}{(y_1(x)y_2(x))^
        \varepsilon}\right\}\right)\\&\leq \lambda\left(\left\{x\in \mathbb R: H(-x+iy_1(x))H(x+iy_2(x))>\frac{R}{C(y_1^\varepsilon(x)y_2^\varepsilon(x)}\right\}\right), \end{align*}
    where $C$ is the constant implied in \eqref{eq:lemma:HH-bound-for-unshear}.
    A union bound implies that 
   \begin{align*} \lambda\left(\left\{x\in \mathbb R: \left| \Theta_{f}^{(2)}(g_1,g_2)\right|>\frac{R}{C(y_1(x)y_2(x))^\varepsilon}\right\}\right)
   &\leq \lambda\left(\left\{x\in \mathbb R: (y_1(x))^\varepsilon\, H(-x+i y_1(x))>\sqrt{\tfrac{R}{C}}\right\}\right)\\
   &~+\lambda\left(\left\{x\in \mathbb R: (y_2(x))^\varepsilon\, H(x+i y_1(x))>\sqrt{\tfrac{R}{C}}\right\}\right).
   \end{align*}
   Each of the two terms is estimated using the same argument used in the proof of Lemma \ref{lemma:tail-bound-smooth-f-random-y} (which uses Lemma \ref{lemma:integral-over-gamma-R4}), yielding the desired bound $\ll (1+R)^{-2}$.
\end{proof}
\begin{proposition}\label{prop:bound-unshear-building-blocks}
    Let $\lambda$ be a Borel probability measure on $\mathbb R$ absolutely continuous with respect to Lebesgue measure.  Then for any $(g_1,g_2)\in  G\times  G$ with $-x_1=x_2=x$ we have uniformly in $y_1,y_2\leq 1$ that 
    \begin{align*}
    \lambda\left(\left\{x\in \mathbb R: \left| \Theta_{\mathfrak{C}_{0,1}}^{(2)}(g_1,g_2)\right|>R\right\}\right)&\ll \frac{1}{(1+R)^2}\\
    \lambda\left(\left\{x\in \mathbb R: \left| \Theta_{\mathfrak{L}_h}^{(2)}(g_1,g_2)\right|>R\right\}\right)&\ll \frac{1}{(1+R)^2}\\
 \lambda\left(\left\{x\in \mathbb R: \left| \Theta_{\mathfrak L_v}^{(2)}(g_1,g_2)\right|>R\right\}\right)&\ll \frac{1}{(1+R)^2}
    \end{align*}
\end{proposition}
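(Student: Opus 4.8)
The plan is to use that each of $\mathfrak{C}_{0,1}$, $\mathfrak{L}_h$, $\mathfrak{L}_v$ is a finite sum of \emph{tensor products} $(w_1,w_2)\mapsto u(w_1)v(w_2)$ of one-dimensional functions, so that $\Theta^{(2)}_F$ splits as a finite sum of products $\Theta_u(g_1)\,\Theta_v(g_2)$ of rank-$1$ theta functions; one then controls each factor by Proposition \ref{prop:3.17-simplified} (and the ``slight variations thereof'' for affine images of $\chi_L,\chi_R$) and combines them by a union bound, much as in the proof of Proposition \ref{prop:bound-for-second-order-smooth}. Concretely, by \eqref{def:TCor} we have $\mathfrak{C}_{0,1}(w_1,w_2)=F_{\frac1{12},\frac16,\frac13}(w_1)\,F_{\frac1{12},\frac16,\frac13}(1-w_2)$, a single product; and by \eqref{def:TLine} the functions $\mathfrak{L}_h(w_1,w_2)=T_{\operatorname{Line}}(w_1,1-w_2)$ and $\mathfrak{L}_v(w_1,w_2)=T_{\operatorname{Line}}(w_2,w_1)$ are each a sum of three tensor products, one of whose factors is in each case an affine image of $\chi_L$ or $\chi_R$ or the indicator $\mathbf 1_{\{1/2\}}$ of a single point, while the other factor is an affine image of $\chi_L$ or $\chi_R$ (recall $\chi_L=F_{\frac16,\frac13,\frac23}$; all triples $(c_1,c_2,c_3)$ occurring satisfy $p=\tfrac{c_3-c_2}{c_2-c_1}=2$, so each $F_{c_1,c_2,c_3}$, and its reflections and translates, is such an affine image). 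Since $R^{(2)}(g_1,g_2)$ acts through $R(g_1)$ in the first variable and through $R(g_2)$ in the second, linearity of $F\mapsto\Theta^{(2)}_F$ gives $\Theta^{(2)}_F(g_1,g_2)=\sum_{k=1}^{K}\Theta_{u_k}(g_1)\,\Theta_{v_k}(g_2)$ with $K\le3$ and each $u_k,v_k$ one of the one-dimensional functions above.

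Next I would bound each rank-$1$ factor separately. Reflections $w\mapsto-w$, $w\mapsto c-w$ and translations $w\mapsto w-\tau$ are implemented by fixed elements of $G$ through $R$ (see \eqref{phi-transform_with_prefactor}, \eqref{representation-geodesic}, \eqref{representation-Heisenberg-2}), and dilations by a fixed scale likewise, so every non-Dirac $u_k,v_k$ reduces to $\chi_L$ or $\chi_R$, and Proposition \ref{prop:3.17-simplified}, invoked exactly as in the proof of Lemma \ref{lemma:lines-dont-matter} for suitably modified versions of $\chi_L,\chi_R$, gives
\[
\lambda\bigl(\{x\in\R:\ |\Theta_{u_k}(g_1)|>R\}\bigr)\ll\frac{1}{(1+R)^{4}}
\]
uniformly over $y_1\le1$ and over the Heisenberg coordinates, and similarly for $\Theta_{v_k}(g_2)$ with $y_2\le1$. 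For a Dirac factor, the computation in the proof of Lemma \ref{lemma:lines-dont-matter} gives $|\Theta_{\mathbf 1_{\{1/2\}}}(g)|\le y^{1/4}\le1$ for $y\le1$.

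Finally these estimates are assembled by a union bound. With $C$ denoting the implied constant above and using that the Dirac factors are $\le1$,
\[
\bigl\{\,|\Theta^{(2)}_F(g_1,g_2)|>R\,\bigr\}\ \subseteq\ \bigcup_{k=1}^{K}\Bigl(\bigl\{\,|\Theta_{u_k}(g_1)|>\sqrt{R/(KC)}\,\bigr\}\cup\bigl\{\,|\Theta_{v_k}(g_2)|>\sqrt{R/(KC)}\,\bigr\}\Bigr);
\]
since $-x_1=x_2=x$ is the only random parameter, subadditivity together with the tail bounds of the previous paragraph yields $\lambda$-measure $\ll K\,\bigl(1+\sqrt{R/(KC)}\bigr)^{-4}\ll(1+R)^{-2}$, uniformly in $y_1,y_2\le1$, which is the asserted bound. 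The one genuine point — and the reason the single-factor decay exponent $4$ gets halved to $2$, exactly as in Proposition \ref{prop:bound-for-second-order-smooth} — is that $g_1$ (with $x$-coordinate $-x$) and $g_2$ (with $x$-coordinate $+x$) are driven by the \emph{same} random variable $x$, so the two rank-$1$ theta sums are not independent and a union bound is forced; the remaining work is merely bookkeeping of affine changes of variable, each absorbed into the uniformity built into Proposition \ref{prop:3.17-simplified}.
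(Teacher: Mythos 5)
Your proof is correct and follows essentially the same route as the paper's: the same factorization of $\mathfrak{C}_{0,1}$, $\mathfrak{L}_h$, $\mathfrak{L}_v$ into (at most three) tensor products of affine images of $\chi_L$, $\chi_R$ and the Dirac factor, a union bound with thresholds of order $\sqrt{R}$, and Proposition \ref{prop:3.17-simplified} applied to the modified $\chi_L,\chi_R$ to get $(1+\sqrt{R})^{-4}\ll(1+R)^{-2}$ per term. Your direct bound $|\Theta_{\mathbf 1_{\{1/2\}}}(g)|\le y^{1/4}\le 1$ for the segment piece is exactly the computation underlying the paper's appeal to Lemma \ref{lemma:lines-dont-matter}, so the two arguments coincide in substance.
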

\begin{proof}
    Recalling \eqref{def:TCor}, since  $\mathfrak C_{0,1}(w_1,w_2)=F_1(w_1)F_2(w_2)$ with $F_1(w)=F_{\frac{1}{12},\frac{1}{6},\frac{1}{3}}(w_1)=\chi_L(2w_1)$ and $F_1(w_2)=F_{\frac{1}{12},\frac{1}{6},\frac{1}{3}}(1-w_2)=
    \chi_R(2w-1)$, we have that $\Theta_{\mathfrak C_{0,1}}^{(2)}(g_1,g_2)=\Theta_{F_1}(g_1)\Theta_{F_2}(g_2)$ 
    and using a union bound we have 
    \begin{align*}
        \lambda\left(\left\{x\in \mathbb R: \left| \Theta_{\mathfrak C_{0,1}}^{(2)}(g_1,g_2)\right|> R\right\}\right)&\leq \lambda\left(\left\{x\in \mathbb R: \left| \Theta_{F_1}(g_1)\right|>\sqrt R\right\}\right)\\
        &~+\lambda\left(\left\{x\in \mathbb R: \left| \Theta_{F_2}(g_2)\right|>\sqrt R\right\}\right).
    \end{align*}
    Proposition \ref{prop:3.17-simplified} (applied to  slightly modified $\chi_L$ and $\chi_R$) implies that each or the terms above is $\ll (1+\sqrt{R})^{-4}\ll (1+R)^{-2}$.
    
    Recalling \eqref{def:TLine} and Remark \ref{remark:where-line-segments-come-from}, we write $\mathfrak{L}_h(w_1,w_2)=
    F_1(w_1)F_2(w_2)+F_3(w_1)F_4(w_2)+F_5(w_1,w_2)$, where $F_1(w_1)=F_{\frac{1}{12},\frac{1}{6},\frac{1}{3}}(-\frac{1}{2}+w_1)=\chi_L(-1+2w_1)$, $F_2(w_2)=F_4(w_2)=F_{\frac{1}{24},\frac{1}{12},\frac{1}{6}}(1-w_2)
    =\chi_R(4w_2-3)$, $F_3(w_1)=F_{\frac{1}{12},\frac{1}{6},\frac{1}{3}}(\frac{1}{2}-w_1)
    =\chi_R(2w_1)$, and $F_5(w_1,w_2)=T_{\operatorname{Segm}}(w_1,1-w_2)$.
    Successive union bounds give
    \begin{align*}
        \lambda\left(\left\{x\in \mathbb R: \left| \Theta_{\mathfrak L_{h}}^{(2)}(g_1,g_2)\right|> R\right\}\right)\leq &\sum_{\ell\in\{1,3\}} \lambda\left(\left\{x\in \mathbb R: \left| \Theta_{F_\ell}(g_1)\right|>\sqrt{\tfrac{R}{3}}\right\}\right)\\
        &+\sum_{\ell\in\{2,4\}} \lambda\left(\left\{x\in \mathbb R: \left| \Theta_{F_\ell}(g_2)\right|>\sqrt{\tfrac{R}{3}}\right\}\right)\\
        &+ \lambda\left(\left\{x\in \mathbb R: \left| \Theta_{F_5}(g_1,g_2)\right|>\tfrac{R}{3}\right\}\right).
    \end{align*}
    Proposition \ref{prop:3.17-simplified} (again applied to slightly modified $\chi_L$ and $\chi_R$) yields the bound $\ll \left(1+\sqrt{R/2}\right)^{-4}\ll(1+R)^{-2}$ for each of the four sums above, while the fifth term is $\ll(1+R)^{-4}$ by the first part of Lemma \ref{lemma:lines-dont-matter}. Combining the five bounds we obtain the tail bound for $\Theta^{(2)}_{\mathfrak{L}_h}$.

    The tail bound for $\Theta^{(2)}_{\mathfrak{L}_v}$ since it is very similar to that of $\Theta^{(2)}_{\mathfrak{L}_h}$, using different $F_1,\ldots F_4$ (each handled by Proposition \ref{prop:3.17-simplified}) and $F_5$ (handled by the first part of Lemma \ref{lemma:lines-dont-matter}).
    \end{proof}
\begin{proposition}\label{prop:bound-unshear-building-blocks-random-y}
    Let $\lambda$ be a Borel probability measure on $\mathbb R$ absolutely continuous with respect to Lebesgue measure.  Then for any $g\in  G\times  G$ with $-x_1=x_2=x$, $\vecxi_1(x),$ $\vecxi_2(x)$ and $y_1(x),$ $y_2(x)$ with $0<y_0\leq y_1(x),y_2(x)\leq 1$, and $\varepsilon>0$ small, uniformly in $y_0$ that 
    \begin{align*}
    \lambda\left(\left\{x\in \mathbb R: \left| \Theta_{\mathfrak C_{0,1}}^{(2)}(g_1,g_2)\right|>\frac{R}{(y_1(x)y_2(x))^\varepsilon}\right\}\right)&\ll \frac{1}{(1+R)^2},\\
    \lambda\left(\left\{x\in \mathbb R: \left| \Theta_{\mathfrak L_h}^{(2)}(g_1,g_2)\right|>\frac{R}{(y_1(x)y_2(x))^\varepsilon}\right\}\right)&\ll \frac{1}{(1+R)^2},\\
 \lambda\left(\left\{x\in \mathbb R: \left| \Theta_{\mathfrak L_v}^{(2)}(g_1,g_2)\right|>\frac{R}{(y_1(x)y_2(x))^\varepsilon}\right\}\right)&\ll \frac{1}{(1+R)^2}.
    \end{align*}
\end{proposition}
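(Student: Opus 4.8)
The plan is to follow the proof of Proposition~\ref{prop:bound-unshear-building-blocks} essentially line by line, replacing the fixed-$y$ tail bound of Proposition~\ref{prop:3.17-simplified} with its random-$y$ strengthening, Proposition~\ref{proposition:tail-with-random-y}, and replacing the first part of Lemma~\ref{lemma:lines-dont-matter} with the random-$y$ estimate \eqref{eq:lines-dont-matter-random-y}. The decisive structural fact, already exploited in Proposition~\ref{prop:bound-unshear-building-blocks}, is that each of $\mathfrak{C}_{0,1}$, $\mathfrak{L}_h$, $\mathfrak{L}_v$ is --- up to functions supported on line segments --- a finite sum of products $F(w_1)G(w_2)$ with $F,G$ affine rescalings of $\chi_L$ or $\chi_R$, so that the corresponding $\Theta^{(2)}$ factors as $\Theta_F(g_1)\,\Theta_G(g_2)$, a product of rank-$1$ theta functions.

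First I would dispatch $\mathfrak{C}_{0,1}$. By \eqref{def:TCor}, $\mathfrak{C}_{0,1}(w_1,w_2)=F_1(w_1)F_2(w_2)$ with $F_1,F_2$ obtained from $\chi_L,\chi_R$ by affine rescaling, hence $\Theta^{(2)}_{\mathfrak{C}_{0,1}}(g_1,g_2)=\Theta_{F_1}(g_1)\Theta_{F_2}(g_2)$. Since $|ab|>R/(y_1(x)y_2(x))^\varepsilon$ forces $|a|>\sqrt{R}/(y_1(x))^\varepsilon$ or $|b|>\sqrt{R}/(y_2(x))^\varepsilon$, a union bound splits the set in question into two sets, and Proposition~\ref{proposition:tail-with-random-y} (applied to $F_1,F_2$; the fact that $g_1$ has $x$-coordinate $-x$ is harmless, since the law of $-x$ is still absolutely continuous) bounds each by $\ll(1+\sqrt{R})^{-4}\ll(1+R)^{-2}$, uniformly in $y_0$.

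Next I would treat $\mathfrak{L}_h$ and $\mathfrak{L}_v$ via the decomposition recalled in Remark~\ref{remark:where-line-segments-come-from}: $\mathfrak{L}_h(w_1,w_2)=F_1(w_1)F_2(w_2)+F_3(w_1)F_4(w_2)+F_5(w_1,w_2)$, where $F_1,\dots,F_4$ come from $\chi_L,\chi_R$ and $F_5=T_{\operatorname{Segm}}(\,\cdot\,,1-\,\cdot\,)$ is supported on a horizontal segment; the case of $\mathfrak{L}_v$ is the same after exchanging $w_1$ and $w_2$. Successive union bounds break the offending event into four pieces of the type $\{|\Theta_{F_\ell}(g_i)|>\sqrt{R/3}\,/(y_i(x))^\varepsilon\}$, each $\ll(1+R)^{-2}$ by Proposition~\ref{proposition:tail-with-random-y}, plus one piece $\{|\Theta^{(2)}_{F_5}(g_1,g_2)|>(R/3)/(y_1(x)y_2(x))^\varepsilon\}$, which is $\ll(1+R)^{-4}$ by \eqref{eq:lines-dont-matter-random-y}. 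Summing the five contributions yields the stated bounds.

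The argument is essentially bookkeeping; the one point that requires attention is distributing the discount factor correctly across the product, i.e. writing $1/(y_1(x)y_2(x))^\varepsilon=(y_1(x))^{-\varepsilon}(y_2(x))^{-\varepsilon}$ and taking the union bound at the thresholds $\sqrt{R}\,(y_i(x))^{-\varepsilon}$, so that each invocation of Proposition~\ref{proposition:tail-with-random-y} sees exactly one factor $(y(x))^{-\varepsilon}$, as that proposition demands. There is no genuine obstacle beyond what was already handled in Proposition~\ref{prop:bound-unshear-building-blocks}; all the real analytic work sits in Proposition~\ref{proposition:tail-with-random-y} and in \eqref{eq:lines-dont-matter-random-y}.
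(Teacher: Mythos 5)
Your proposal is correct and follows essentially the same route as the paper's proof: factor $\Theta^{(2)}_{\mathfrak{C}_{0,1}}$ as a product of rank-$1$ theta functions, split the discount as $(y_1y_2)^{-\varepsilon}=y_1^{-\varepsilon}y_2^{-\varepsilon}$ in a two-term union bound at thresholds $\sqrt{R}/(y_i(x))^{\varepsilon}$, and invoke Proposition~\ref{proposition:tail-with-random-y}; for $\mathfrak{L}_h,\mathfrak{L}_v$ use the five-term decomposition with the segment piece handled by \eqref{eq:lines-dont-matter-random-y}. The paper's proof is exactly this, stated more tersely.
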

\begin{proof} 
    We argue as in the proof of Proposition \ref{prop:bound-unshear-building-blocks}. Writing $\mathfrak C_{0,1}(w_1,w_2)=F_1(w_1)F_2(w_2)$, a union bound yields
    \begin{align*}
        \lambda\left(\left\{x\in \mathbb R: \left| \Theta_{\mathfrak{C}_{0,1}}^{(2)}(g_1,g_2)\right|> \frac{R}{(y_1(x)y_2(x))^\varepsilon}\right\}\right)
        &\leq \lambda\left(\left\{x\in \mathbb R: y_1^\varepsilon(x)\left| \Theta_{F_1}(g_1)\right|>\sqrt R\right\}\right)\\
        &~+\lambda\left(\left\{x\in \mathbb R: y_2^\varepsilon(x)\left| \Theta_{F_2}(g_2)\right|>\sqrt R\right\}\right).
    \end{align*}
    Applying Proposition \ref{proposition:tail-with-random-y} to slightly modified $\chi_L$ and $\chi_R$, each of the two terms above is bounded by $\ll_\varepsilon (1+\sqrt{R})^{-4}\ll_\varepsilon(1+R)^{-2}$.
    The argument for $\mathfrak L_h$ and $\mathfrak L_h$ is similar, but involves a union bound with five terms as in the proof of  Proposition \ref{prop:bound-unshear-building-blocks}.
\end{proof}

\subsection{The rank-2 theta function of Maklof-Welsh. 
}\label{subsection:theta-theta-tilde}
In order to handle the ``sheared" components such as $\mathfrak C_{1,1}$, it is helpful to map  $G\times G$ to a bigger group. In \cite{Marklof-Welsh-I,Marklof-Welsh-II} the authors consider the theta function $\Tilde{\Theta}_f^{(2)}:\mathbb H(\mathbb R^2)\rtimes \operatorname{Sp}(2,\R)\to \mathbb C$ (we use the tilde to distinguish it from our $\Theta^{(2)}$, although in their notation there is no tilde). Here $f\in \mathcal S(\mathbb R^2)$ is in the classical Schwarz space. We can map $G\times G$ to $\mathbb H(\R^2)\rtimes \operatorname{Sp}(2,\R)$ in a canonical way. In coordinates, using the notation from Section 4.1 of \cite{Marklof-Welsh-I}, we map $g_j=(x_j+iy_j,\phi_j;
    \sve{\xi_{j1}}{\xi_{j2}},\zeta_j)$ for $j=1,2$ to 
    \begin{align*}
    &X(g_1,g_2)=\begin{pmatrix}
        x_1&&0\\
        0&&x_2
    \end{pmatrix},\hspace{.5cm}
    &Y(g_1,g_2)=\begin{pmatrix}
        y_1&&0\\
        0&&y_2
    \end{pmatrix},\hspace{.5cm}
    &Q(g_1,g_2)=\begin{pmatrix}
        e^{i\phi_1}&&0\\
        0&&e^{i\phi_2}
    \end{pmatrix},\hspace{.5cm}\\
    &t(g_2,g_2)=-\zeta_1-\zeta_2,\hspace{.5cm}
    &\mathbf{x}(g_2,g_2)=(-\xi_{21},-\xi_{22}), \hspace{1.4cm}
    &\mathbf y(g_2,g_2)=(\xi_{11},\xi_{12}).
    \end{align*}
We emphasize that this is not quite an embedding due to the lack of injectivity coming from $t(g_1,g_2)$, and from the fact that $G$ is defined in terms of the universal cover of $\sltr$, while no cover of $\operatorname{Sp}(2,\R)$ is considered in \cite{Marklof-Welsh-I, Marklof-Welsh-II}.  Elements in $H(\R^2)\rtimes \operatorname{Sp}(2,\R)$ are of the form $(h,g)$, where $h=(\mathbf{x},\mathbf{y},t)\in\mathbb{H}(\R^2)$,  $\mathbf{x},\mathbf{y}$ are 2-dimensional row vectors, and $g$ is a $4\times4$ symplectic matrix. Since $\mathbf{x}, \mathbf{y},t$ all depend on $g_1,g_2\in G$, so does $h$.
Note that if $(M_1,M_2)\in \operatorname{SL}(2,\mathbb R)\times \operatorname{SL}(2,\mathbb R)$ with 
\begin{align*}
    M_1=\begin{pmatrix}
        a_1&&b_1\\
        c_1&&d_1
    \end{pmatrix},\hspace{1cm}
    M_2=\begin{pmatrix}
        a_2&&b_2\\
        c_2&&d_2
    \end{pmatrix},
\end{align*}
and we set
\begin{align*}
    A&=\begin{pmatrix}
        a_1&&0\\
        0&&a_2
    \end{pmatrix},  
    \hspace{.5cm}
    B=\begin{pmatrix}
        b_1&&0\\
        0&&b_2
    \end{pmatrix},
    \hspace{.5cm}
    C=\begin{pmatrix}
        c_1&&0\\
        0&&c_2
    \end{pmatrix}, 
    \hspace{.5cm}
    D=\begin{pmatrix}
        d_1&&0\\
        0&&d_2
    \end{pmatrix},
\end{align*}
then the matrix 
\begin{align*}
    g=\begin{pmatrix}
        A&&B\\
        C&&D
    \end{pmatrix}\in\operatorname{Sp}(2,\mathbb R),
\end{align*} 
is block diagonal. For these choices of $X,Y,Q,y,\mathbf x, \mathbf y$ with $\phi_1=\phi_2=0$ and for smooth $F$ we have that
\begin{equation*}
    \Theta_F^{(2)}(g_1,g_2)=\Tilde{\Theta}_F^{(2)}(h(g_1,g_2),g(g_1,g_2)).
\end{equation*}
Indeed, this follows from equation (4.3) in \cite{Marklof-Welsh-I} and from Proposition 2.1 in \cite{Marklof-Welsh-I} with the given $Q$. Additionally, by Theorem 4.1 in \cite{Marklof-Welsh-I} there is a lattice $\tilde \Gamma \subset \mathbb H(\mathbb R^2)\rtimes \operatorname{Sp}(2,\mathbb R)$ so that for all $\tilde \gamma\in \tilde \Gamma$ and we have that 
\begin{equation*}
    |\Tilde{\Theta}_F^{(2)}(\tilde \gamma(h,g))|= |\Tilde{\Theta}_F^{(2)}(h,g)|,
\end{equation*}
making the function $|\Tilde{\Theta}_{F}^{(2)}|:\tilde \Gamma \setminus(\mathbb H(\R^2)\rtimes \operatorname{Sp}(2,\R))\to \R_{\geq 0}$ well defined\footnote{However, $\Tilde{\Theta}_{F}^{(2)}$ is not well defined from the quotient  $\tilde \Gamma \setminus(\mathbb H(\R^2)\rtimes \operatorname{Sp}(2,\R))$ to $\C$ as there is a nontrivial automorphy factor of modulus $1$  in Theorem 4.1 in \cite{Marklof-Welsh-I}}. In addition, we note that for the above choices of $X,Y,Q,t,\mathbf x, \mathbf y$ with any $\phi_1,\phi_2\in \mathbb R$ and for smooth $F$ the functions $|\Theta_{F}^{(2)}|:(\Gamma \times \Gamma)\setminus G\times G\to \mathbb R_{\geq 0}$ and $|\Tilde{\Theta}_{F}^{(2)}|:\tilde \Gamma \setminus(\mathbb H(\mathbb R^2)\rtimes \operatorname{Sp}(2, \mathbb R))\to \mathbb R_{\geq 0}$ are equal. It is also sometimes convenient to write $Y=UV~{}^tU$ for diagonal $$V=\begin{pmatrix}
    v_1&&0\\
    0&&v_2
\end{pmatrix},$$
where $v_1\geq v_2>0$ if $Y$ is nondiagonal. There are two slightly different choices of fundamental domain for these higher rank theta functions - one given in \cite{Marklof-Welsh-I} and one given in \cite{Marklof-Welsh-II}. We will denote by $\mathcal D$ the fundamental domain given on page 7 of \cite{Marklof-Welsh-II}. Thankfully, the fundamental domain $\mathcal F_\Gamma \times \mathcal F_\Gamma$ maps to $\mathcal D.$
\begin{lemma}\label{lemma:embedding-of-fundamental-domains}
    Let $(g_1,g_2)\in \mathcal F_\Gamma \times \mathcal F_\Gamma$ be so that $\zeta_1=-\zeta_2$ and so that $y_1\geq y_2$. Then under the above mappings, $(h(g_1,g_2),g(g_1,g_2))\in \mathcal D.$
\end{lemma}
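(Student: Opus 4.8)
The plan is to unwind the explicit description of the fundamental domain $\mathcal D$ given on page 7 of \cite{Marklof-Welsh-II} and to verify, coordinate by coordinate, that the image point $(h(g_1,g_2),g(g_1,g_2))$ obeys each of its defining conditions. Recall that $\mathcal D$ is cut out by three packages of inequalities: (i) a Siegel-type reduction of the period matrix $Z=X+iY$ (Minkowski reduction of $Y$, reduction of the entries of $X$ modulo the symmetric integer matrices, and a height/cusp condition), (ii) a condition on the angular part $Q$, and (iii) a reduction of the Heisenberg coordinates $(\mathbf x,\mathbf y,t)$ fibered over (i)--(ii). Under the mapping described before the lemma, $Z=\operatorname{diag}(x_1+iy_1,\,x_2+iy_2)$ and $Q=\operatorname{diag}(e^{i\phi_1},e^{i\phi_2})$ are block-diagonal, and $t=-\zeta_1-\zeta_2$. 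The hypothesis $\zeta_1=-\zeta_2$ is used precisely here: it forces $t=0$, which lies in the $t$-window of $\mathcal D$ automatically.

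First I would treat the symplectic part. Since $x_j+iy_j\in\mathcal F_{\sltz}$ we have $-\tfrac12\le x_j<\tfrac12$, so $X=\operatorname{diag}(x_1,x_2)$ has diagonal entries in the half-open interval used for $\mathcal D$ and off-diagonal entry $0$; and $|x_j+iy_j|\ge1$. Writing $Y=UV\,{}^tU$ as in the discussion preceding the lemma, a diagonal $Y$ forces $U=I$ and $V=Y$, so the normalization $v_1\ge v_2>0$ used for $\mathcal D$ is exactly the requirement $y_1\ge y_2>0$, i.e. our second hypothesis; this is what makes $Y$ reduced. For the height/cusp condition one must check $|\det(CZ+D)|\ge1$ for the relevant $\binom{C}{D}$ coming from the lattice $\tilde\Gamma$; since $Z$ is block-diagonal, $\det(CZ+D)$ factors on the block-diagonal part of $\tilde\Gamma$ into $(c_1z_1+d_1)(c_2z_2+d_2)$, and $|c_jz_j+d_j|\ge1$ is the defining inequality of $\mathcal F_{\sltz}$ for each $z_j$; the remaining, genuinely two-dimensional elements of $\operatorname{Sp}(2,\Z)$ (the coordinate swap and its combinations with the block-diagonal reductions) are neutralized by the ordering $y_1\ge y_2$, as a diagonal $Z$ with both entries $\sltz$-reduced and $\Im z_1\ge\Im z_2$ is reduced in the block-diagonal locus. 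Finally the angular coordinates $\phi_j\in[-\tfrac\pi2,\tfrac\pi2)$ fall into the corresponding window of $\mathcal D$ --- this is precisely why the normalization $\phi\in[-\tfrac\pi2,\tfrac\pi2)$ was adopted in Section~\ref{section:Gamma-invariance} rather than $[0,\pi)$.

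Next I would treat the Heisenberg part. Here $\mathbf x=(-\xi_{2,1},-\xi_{2,2})$ and $\mathbf y=(\xi_{1,1},\xi_{1,2})$ with all $\xi_{j,i}\in[-\tfrac12,\tfrac12)$, so each component lies in $[-\tfrac12,\tfrac12]$, and $t=0$. Since $Y$ is diagonal, the stabilizer in $\operatorname{Sp}(2,\Z)$ relevant to the Heisenberg fiber acts on $(\mathbf x,\mathbf y)$ only through $\Z^2$-translations and a few sign changes, all of which preserve the box $[-\tfrac12,\tfrac12)^2\times[-\tfrac12,\tfrac12)^2$; hence no further reduction is needed, up to the usual null-set ambiguity at the half-open boundary, which can be absorbed by a harmless integer translation. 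Combining this with the symplectic part places $(h(g_1,g_2),g(g_1,g_2))$ in $\mathcal D$.

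The hard part will be the height/cusp condition in the Siegel direction: one must pin down which elements of the specific lattice $\tilde\Gamma\subset\mathbb H(\R^2)\rtimes\operatorname{Sp}(2,\R)$ of \cite{Marklof-Welsh-I,Marklof-Welsh-II} can possibly decrease the height of a block-diagonal $Z$, and confirm that $z_1,z_2\in\mathcal F_{\sltz}$ together with $y_1\ge y_2$ exhausts them. I expect this to follow from the explicit generators of $\tilde\Gamma$ together with the product structure of $\det(CZ+D)$ on the block-diagonal locus, but it is the one place where the precise definitions of $\mathcal D$ and $\tilde\Gamma$ must be invoked in detail rather than quoted.
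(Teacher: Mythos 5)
Your overall shape is right --- the easy coordinates ($t=0$ from $\zeta_1=-\zeta_2$, the Heisenberg box, the identification $v_1=y_1\geq Y_1=y_2$) are handled exactly as in the paper --- but the step you yourself flag as ``the hard part'' is where the proposal has a genuine gap, and the route you sketch for closing it would not work as stated. First, the reduction condition defining $\mathcal D$ on page 7 of \cite{Marklof-Welsh-II} is not the Siegel condition $|\det(CZ+D)|\geq 1$; it is the maximality of the single coordinate $v_1$ over the orbit, i.e.\ $v_1(\gamma g)\leq v_1(g)$ for all $\gamma\in\operatorname{Sp}(2,\Z)$. Second, and more seriously, this inequality must be verified for \emph{every} $\gamma$, not only for block-diagonal elements and the coordinate swap: those generate only $(\sltz\times\sltz)\rtimes(\Z/2)$, a proper subgroup of $\operatorname{Sp}(2,\Z)$, and membership in a fundamental domain cannot be checked on a generating set, since ``$v_1(\gamma g)\leq v_1(g)$'' is not stable under composition. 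Your statement that the remaining elements are ``neutralized by the ordering'' is exactly the claim that requires proof.

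The paper closes this gap with a single explicit computation rather than a case split. By equation (2.10) of \cite{Marklof-Welsh-II}, $v_1(\gamma g)^{-1}=\mathbf c\, Y\,{}^t\mathbf c+(\mathbf c X+\mathbf d)\,Y^{-1}\,{}^t(\mathbf c X+\mathbf d)$ depends only on the first rows $\mathbf c=(c_1,c_2)$, $\mathbf d=(d_1,d_2)$ of $C,D$; since $X,Y$ are diagonal this splits coordinatewise,
\[
v_1(\gamma g)^{-1}=\frac{c_1^2y_1^2+(c_1x_1+d_1)^2}{y_1}+\frac{c_2^2y_2^2+(c_2x_2+d_2)^2}{y_2}=\frac{1}{(y_1)_{\gamma_1}}+\frac{1}{(y_2)_{\gamma_2}},
\]
for suitable $\gamma_1,\gamma_2\in\sltz$, and this holds for \emph{arbitrary} $\gamma$, mixed rows included. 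Each summand is then controlled by the reducedness of $z_1$ (resp.\ $z_2$) in $\mathcal F_{\sltz}$, together with $y_1\geq y_2$ (the latter is what saves the degenerate case $(c_1,d_1)=(0,0)$, where one only gets $v_1(\gamma g)^{-1}\geq 1/y_2\geq 1/y_1$). To complete your argument you should replace the generator/case analysis by this row-wise splitting.
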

\begin{proof}
    If $(g_1,g_2)\in \mathcal F_\Gamma \times \mathcal F_\gamma$ then $x_1,x_2,\xi_{1,1},\xi_{1,2},\xi_{2,1},\xi_{2,2}\in [-\frac{1}{2},\frac{1}{2}]$ and $t=0$. Therefore $h(g_1,g_2)\in \Tilde{\mathcal D}$ (as defined in equation (3.4) in \cite{Marklof-Welsh-II}). We also need to check that $g(g_1,g_2)$ is of the form given in equation (3.6) in \cite{Marklof-Welsh-II}. We can just let $\mathbf r=0$, $X_1=x_2$, $\mathbf s=0,$ $t_1=x_1$, $v_1=y_1,Y_1=y_2$. Finally, we need to check that if $\gamma\in \operatorname{Sp}(2,\mathbb Z)$ then $v_1(\gamma g)\leq v_1(g)$. To this end, using equation (2.10) (also see equation (3.8)) in \cite{Marklof-Welsh-II} we see that
    $$v_1(\gamma g)^{-1}=\mathbf c Y ~{}^t\mathbf c+(\mathbf c X+\mathbf d) Y^{-1} ~{}^t(\mathbf c X+\mathbf d),$$
    where $\mathbf c=(c_1,c_2), \mathbf d=(d_1,d_2)$ are the first rows of $C,D$ in $\gamma =\begin{pmatrix}
        A&&B\\
        C&&D
    \end{pmatrix}.$
    Therefore
    \begin{align*}
        v_1(\gamma g)^{-1}&=c_1^2y_1+c_2^2y_2+\frac{(c_1x_1+d_1)^2}{y_1}+\frac{(c_2x_2+d_2)^2}{y_2}\\
        &=\frac{c_1^2y_1^2+(c_1x_1+d_1)^2}{y_1}+\frac{c_2^2y_2^2+(c_2x_2+d_2)^2}{y_2}\\
        &=\frac{1}{(y_1)_{\gamma_1}}+\frac{1}{(y_2)_{\gamma_2}},
    \end{align*}
    where $\gamma_1,\gamma_2\in \operatorname{SL}(2,\mathbb Z)$. As $(g_1,g_2)\in \mathcal F_\Gamma \times \mathcal F_\Gamma$ we have that $(y_1)_{\gamma_1}\leq y_1$ and hence
    \begin{align*}
        v_1(\gamma g)^{-1}&\geq \frac{1}{y_1}=v_1(g)^{-1}.
    \end{align*}
\end{proof}

\subsection{Estimates for sheared components}\label{subsection:estimates-for-sheared-blocks}
In this section, we give tail estimates for the ``sheared" components in \eqref{eq:decomposition-of-triangle}, namely $\mathfrak{C}_{0,0}$, $\mathfrak{C}_{1,1}$, and $\mathfrak{L}_d$, analogously to the estimates we provided in Section  \ref{subsection-estimates-for-nonsheared} for the non-sheared components. By Proposition 2.1 in \cite{Marklof-Welsh-I}, for smooth $F$ and for $\tau,\eta>0$ we have that
\begin{align*}
    |\Theta_{(w_1,w_2)\mapsto F(\tau w_1,\eta(w_2-w_1))}^{(2)}(g_1,g_2)|&=\tau\eta|\Tilde \Theta_{F}^{(2)}\left(\left(h(g_1,g_2),g(g_1,g_2)\right),(\operatorname{id},P)\right)|,
\end{align*}
where  
$$P=\begin{pmatrix}
    \Omega&&0\\
    0&&{}^t\Omega^{-1}
\end{pmatrix}$$
and
$$\Omega=\begin{pmatrix}
        \tau&&-\eta\\
        0&&\eta
    \end{pmatrix}.$$
We will use the following lemma to bound the ``sheared" components. Recall the height function in \eqref{def-height-function-H}.
\begin{lemma}\label{lemma:H-bound-for-sheared}
    Let $F\in \mathcal S(\mathbb R^2)$ and let 
    \begin{align*}
    X=\begin{pmatrix}
        x_1&&0\\
        0&&x,
    \end{pmatrix}, \hspace{.5cm} Y=\begin{pmatrix}
        y_1^{\frac{1}{2}}&&0\\
        0&&y_2^{\frac{1}{2}},
    \end{pmatrix}
    \end{align*}
    and $k(Q)=I$. Additionally let $$\Omega=\begin{pmatrix}
        \tau&&-\eta\\
        0&&\eta
    \end{pmatrix},$$
    with $\tau,  \eta> 0$. Define the matrix 
    $$P=\begin{pmatrix}
        \Omega&&0\\
        0&&{}^t\Omega^{-1}
    \end{pmatrix}$$
    and consider $(h_0,P)\in \mathbb H(\mathbb R^2)\rtimes \operatorname{Sp}(2,\mathbb R).$ Then, uniform in $\tau$ and $\eta$, we have the estimate 
    \begin{align}\label{Bound-tildeTheta^2-in-terms-of-HH}
        \left|\Tilde{\Theta}_F^{(2)}(( h,g)(h_0,P))\right|\ll_F H\!\left(x_1+i\tau^2 y_1\right)\,H\!\left(x_2+i\eta^2 y_2\right).
    \end{align} 
\end{lemma}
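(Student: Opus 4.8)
The plan is to transfer the estimate to the larger group $\mathbb H(\R^2)\rtimes\operatorname{Sp}(2,\R)$ and to exploit its reduction theory. Recall from Section~\ref{subsection:theta-theta-tilde} that $|\tilde\Theta_F^{(2)}|$ is left $\tilde\Gamma$-invariant, hence descends to $\tilde\Gamma\backslash(\mathbb H(\R^2)\rtimes\operatorname{Sp}(2,\R))$, and that $\mathcal F_\Gamma\times\mathcal F_\Gamma$ is carried into the fundamental domain $\mathcal D$. On $\mathcal D$ the series defining $\tilde\Theta_F^{(2)}$ is $O_F(1)$ (a Siegel-set estimate, the genuinely rank-$2$ analogue of Lemma~\ref{lemma:growth-in-cusp-second-order}; see the reduction theory of \cite{Marklof-Welsh-I,Marklof-Welsh-II}), so $|\tilde\Theta_F^{(2)}|\ll_F(v_1v_2)^{1/4}$ there, where $v_1\ge v_2>0$ are the Iwasawa height coordinates. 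Since $v_1,v_2$ are left $\operatorname{Sp}(2,\Z)$-invariant, the bound propagates to the whole group:
\[
\bigl|\tilde\Theta_F^{(2)}\bigl((h,g)(h_0,P)\bigr)\bigr|\ll_F \bigl(v_1(g_\ast)\,v_2(g_\ast)\bigr)^{1/4},
\]
uniformly in $h_0,\tau,\eta$, where $g_\ast$ is the $\operatorname{Sp}(2,\R)$-component of $(h,g)(h_0,P)$ (the Heisenberg coordinates do not enter $H$). This uniformity is exactly what is lost if one instead applies Lemma~\ref{lemma:HH-bound-for-unshear} to the rescaled function $(w_1,w_2)\mapsto F(\tau w_1,\eta(w_2-w_1))$, whose regularity norm is not uniform in $\tau,\eta$.

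Next I would compute $g_\ast$ explicitly. Since $k(Q)=I$, the element $g(g_1,g_2)$ is block diagonal with upper-triangular $\operatorname{SL}(2,\R)$-blocks $M_1,M_2$ satisfying $M_j\cdot i=x_j+iy_j$, and $g_\ast=g(g_1,g_2)P$. As the lower-left block of $g(g_1,g_2)$ vanishes, a direct matrix multiplication shows that the Siegel point of $g_\ast$ is
\[
Z_\ast=\begin{pmatrix}x_1&0\\0&x_2\end{pmatrix}+i\,Y\,(\Omega\,{}^t\Omega)\,Y,\qquad Y=\begin{pmatrix}y_1^{1/2}&0\\0&y_2^{1/2}\end{pmatrix},
\]
whence $\det\Im Z_\ast=(\det Y)^2\det(\Omega\,{}^t\Omega)=\tau^2\eta^2 y_1y_2$, the diagonal entries of $\Im Z_\ast$ are $(\tau^2+\eta^2)y_1$ and $\eta^2 y_2$, and for $m=(m_1,m_2)\in\Z^2$ one has the identity
\[
m\,(\Im Z_\ast)\,{}^t m=\tau^2 y_1 m_1^2+\eta^2\bigl(\sqrt{y_1}\,m_1-\sqrt{y_2}\,m_2\bigr)^2,
\]
which exhibits the way the shear $\Omega$ couples the two blocks.

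The remaining, and main, step is to bound $v_1(g_\ast)v_2(g_\ast)$ by $H(x_1+i\tau^2 y_1)^2\,H(x_2+i\eta^2 y_2)^2$; then $(v_1(g_\ast)v_2(g_\ast))^{1/4}\ll H(x_1+i\tau^2 y_1)\,H(x_2+i\eta^2 y_2)$ (the loss being harmless, since $H$ is bounded below by a positive constant), which is \eqref{Bound-tildeTheta^2-in-terms-of-HH}. For this I would use the expression for $v_1(\gamma g_\ast)^{-1}$ in terms of the bottom row $(\mathbf c,\mathbf d)$ of $\gamma\in\operatorname{Sp}(2,\Z)$ recalled in the proof of Lemma~\ref{lemma:embedding-of-fundamental-domains}, now with $X_\ast=\Re Z_\ast$ and $Y_\ast=\Im Z_\ast$. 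A careful analysis of that expression --- using the block structure displayed by the identity above and absorbing the single off-diagonal entry $-\eta^2\sqrt{y_1y_2}$ of $Y_\ast$ into the diagonal terms via the arithmetic--geometric mean inequality --- should show that $v_1(g_\ast)v_2(g_\ast)$ is dominated by the product of the two $\operatorname{SL}(2,\Z)$-reduced imaginary parts of $x_1+i\tau^2 y_1$ and $x_2+i\eta^2 y_2$, and this product is $\ll H(x_1+i\tau^2 y_1)^2\,H(x_2+i\eta^2 y_2)^2$ directly from the definition \eqref{def-height-function-H} of $H$.

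The hard part is this last analysis: one must verify that no genuinely symplectic (non-block-diagonal) element of $\operatorname{Sp}(2,\Z)$ inflates the reduced Iwasawa heights of the sheared point $Z_\ast$ beyond a bounded multiple of the product of the two separate $\operatorname{SL}(2,\Z)$-reductions. This is where the upper-triangular shape of $\Omega$ is essential --- it forces $\Omega\,{}^t\Omega$ to have only one off-diagonal entry, dominated by a diagonal one --- and the required comparison is the analogue, for the present vertical shear, of the height computation in the proof of Lemma~\ref{lemma:embedding-of-fundamental-domains}.
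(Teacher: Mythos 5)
Your overall architecture matches the paper's: pass to $\Tilde{\Theta}_F^{(2)}$ on $\mathbb H(\R^2)\rtimes\operatorname{Sp}(2,\R)$, use $\tilde\Gamma$-invariance of the absolute value together with a Siegel-set bound $|\Tilde\Theta_F^{(2)}|\ll_F D(\cdot)^{1/4}$ (Corollary 4.1 of \cite{Marklof-Welsh-II}), and then compare the resulting height with $H(x_1+i\tau^2y_1)H(x_2+i\eta^2y_2)$. Your computation of the Siegel point $Z_\ast=X+iY(\Omega\,{}^t\Omega)Y$ and of the quadratic form $m(\Im Z_\ast){}^tm=\tau^2y_1m_1^2+\eta^2(\sqrt{y_1}m_1-\sqrt{y_2}m_2)^2$ is correct. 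But the proposal stops exactly at the step that constitutes the proof: you write that a ``careful analysis'' of $v_1(\gamma g_\ast)^{-1}$ over all $\gamma\in\operatorname{Sp}(2,\Z)$ ``should show'' that the symplectic reduction of the sheared point is dominated by the product of the two separate $\operatorname{SL}(2,\Z)$ reductions. That assertion is the entire content of the lemma, and as stated it is not obviously true by an AGM absorption of the off-diagonal entry of $Y_\ast$: the off-diagonal entry $-\eta^2\sqrt{y_1y_2}$ is comparable to the diagonal entry $\eta^2y_2$ only when $y_1\asymp y_2$, and in the regime $\eta\gg\tau$ the matrix $\Omega\,{}^t\Omega$ is far from diagonal, so a genuinely symplectic $\gamma$ could a priori interact with the shear. (Also, a small imprecision: $v_1,v_2$ are not left $\operatorname{Sp}(2,\Z)$-invariant; the correct statement is that $D$ is defined as a supremum over the orbit.)

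The paper circumvents this difficulty rather than confronting it. It factors $\Omega=\operatorname{diag}(\tau,\eta)\cdot\bigl(\begin{smallmatrix}1&-\eta/\tau\\0&1\end{smallmatrix}\bigr)$, so that $gP=g_{\tau,\eta}P'$ with $g_{\tau,\eta}$ block diagonal carrying the heights $\tau^2y_1,\eta^2y_2$, and $P'$ a unipotent shear whose single entry $-\eta/\tau$ is bounded when $\tau\geq\eta$; the case $\eta>\tau$ is handled by a symmetric factorization (equivalently, by swapping the roles of the two coordinates, fixed by a WLOG at the outset). Lemma 4.4 of \cite{Marklof-Welsh-II} then gives $D((h',g_{\tau,\eta})(\operatorname{id},P'))\ll D(h',g_{\tau,\eta})$ uniformly, because $P'$ ranges over a fixed compact set, and the block-diagonal point reduces coordinatewise via Lemma \ref{lemma:embedding-of-fundamental-domains}, yielding $(\tau^2y_1)_{\gamma_1}^{1/4}(\eta^2y_2)_{\gamma_2}^{1/4}\leq H(x_1+i\tau^2y_1)H(x_2+i\eta^2y_2)$. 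If you want to complete your route, you would in effect have to reprove this quasi-invariance of $D$ under bounded right translations by hand for the specific point $Z_\ast$; the factorization plus the cited lemma is the clean way to do it, and without it your proof has a genuine gap at its main step.
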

\begin{proof}
Without loss of generality, we may assume that $\sup_{\gamma \in \operatorname{SL}(2,\mathbb Z)}(\tau y_1)_\gamma \geq \sup_{\gamma \in \operatorname{SL}(2,\mathbb Z)}(\eta y_1)_\gamma.$ Otherwise we just consider reverse the embedding $(h(g_2,g_1),g(g_2,g_1))$ and  $$\tilde \Omega=\begin{pmatrix}
    \eta &&0\\
    -\eta&&\tau
\end{pmatrix}.$$
With this assumption in hand we may write $$\Omega=\begin{pmatrix}
    \tau&&0\\
    0&&\eta
\end{pmatrix}\begin{pmatrix}
    1&&-\frac{\eta}{\tau}\\
    0&&1
\end{pmatrix}.$$
Given our choice of $X,Y,k(Q)$ we have that
\begin{align*}
    gP&=\begin{pmatrix}
        I&&X\\
        0&&I
    \end{pmatrix}
    \begin{pmatrix}
        Y^{\frac{1}{2}}&&0\\
        0&&{}^tY^{-\frac{1}{2}}
    \end{pmatrix}
    \begin{pmatrix}
        \begin{pmatrix}
            \tau&&0\\
            0&&\eta
        \end{pmatrix}&&0\\
        0&&\begin{pmatrix}
            \frac{1}{\tau}&&0\\
            0&&\frac{1}{\eta}
        \end{pmatrix}
    \end{pmatrix} 
       \begin{pmatrix}
        \begin{pmatrix}
            1&&-\frac{\eta}{\tau}\\
            0&&1
        \end{pmatrix}&&0\\
        0&&\begin{pmatrix}
            1&&0\\
            \frac{\eta}{\tau}&&1
        \end{pmatrix}
    \end{pmatrix}\\
    &=\begin{pmatrix}
        I&&X\\
        0&&I
    \end{pmatrix}
    \begin{pmatrix}
        Y_{\tau,\eta}^{\frac{1}{2}}&&0\\
        0&&{}^tY_{\tau,\eta}^{-\frac{1}{2}}
    \end{pmatrix}
       \begin{pmatrix}
        \begin{pmatrix}
            1&&-\frac{\eta}{\tau}\\
            0&&1
        \end{pmatrix}&&0\\
        0&&\begin{pmatrix}
            1&&0\\
            \frac{\eta}{\tau}&&1
        \end{pmatrix}
    \end{pmatrix}=g_{\tau, \eta} P',
\end{align*}
where $$Y^{\frac{1}{2}}_{\tau,\eta}:=\begin{pmatrix}
    (\tau^2 y_1)^{\frac{1}{2}}&&0\\
    0&&(\eta^2 y_2)^{\frac{1}{2}}
\end{pmatrix},\hspace{1cm}g_{\tau,\eta}:=\begin{pmatrix}
        I&&X\\
        0&&I
    \end{pmatrix}
    \begin{pmatrix}
        Y_{\tau,\eta}^{\frac{1}{2}}&&0\\
        0&&{}^tY_{\tau,\eta}^{-\frac{1}{2}}
    \end{pmatrix},$$
and
$$P':=\begin{pmatrix}
        \begin{pmatrix}
            1&&-\frac{\eta}{\tau}\\
            0&&1
        \end{pmatrix}&&0\\
        0&&\begin{pmatrix}
            1&&0\\
            \frac{\eta}{\tau}&&1
        \end{pmatrix}
    \end{pmatrix}.$$
Then recalling the product law in the semidirect product  (equation (2.4) in \cite{Marklof-Welsh-II}), we have 
\begin{align*}
     \left|\Tilde{\Theta}_F^{(2)}(( h,g)(h_0,P))\right|&=\left|\Tilde{\Theta}_F^{(2)}( h',g_{\tau,\eta}P')\right|\\
     &=\left|\Tilde{\Theta}_F^{(2)}(( h',g_{\tau,\eta})(\operatorname{id},P'))\right|
\end{align*}
where $h'=hh_0^{g^{-1}}.$
Now recall the height function $D$ defined in equation (4.3) in \cite{Marklof-Welsh-II}, as well as Corollary 4.1 in \cite{Marklof-Welsh-II}. We have
\begin{align*}
     \left|\Tilde{\Theta}_f^{(2)}( h,g)(h_0,P)\right|\ll_f \left(D\!\left(\tilde \Gamma(( h',g_{\tau,\eta})(\operatorname{id},P')\right)\right)^{\frac{1}{4}}.
\end{align*}

 If $\tau\geq \eta$ then the matrix
\begin{equation*}
    \begin{pmatrix}
        1&&-\frac{\eta}{\tau}\\
        0&&1
    \end{pmatrix}
\end{equation*}
is within $1$ of the identity matrix uniformly over $\tau,\eta$. Hence, Lemma 4.4 in \cite{Marklof-Welsh-II} implies that
$$D\!\left(\tilde \Gamma(( h',g_{\tau,\eta})(\operatorname{id},P')\right)\ll D\!\left(\tilde \Gamma( h',g_{\tau,\eta})\right).$$
If $\eta>\tau$ then the matrix 
$$K:=\begin{pmatrix}
    1&&0\\
    0&&\frac{\tau}{\eta}
\end{pmatrix}$$
is within $1$ of the identity matrix, uniformly over $\tau,\eta$. Therefore Lemma 4.4 in \cite{Marklof-Welsh-II} implies that
\begin{align*}
   D\!\left(\tilde \Gamma((h',g_{\tau,\eta})(\operatorname{id},P'))\right)&\ll D\!\left(\tilde \Gamma \left((h',g_{\tau,\eta})(\operatorname{id},P')\left(\operatorname{id},\begin{pmatrix}
    K&&0\\
    0&&{}^tK^{-1}
\end{pmatrix}\right)\right)\right)\\
&=D\!\left(\tilde \Gamma \left((h',g_{\tau,\eta})\left(\operatorname{id},\begin{pmatrix}
    \hat K&&0\\
    0&&{}^t\hat K^{-1}
\end{pmatrix}\right)\right)\right),
\end{align*}
where 
$$\hat K=\begin{pmatrix}
    1&&-\frac{\tau}{\eta}\\
    0&&\frac{\tau}{\eta}
\end{pmatrix}$$
is within $1$ of the identity matrix uniformly in $\tau,\eta$. Therefore
$$D\!\left(\tilde \Gamma (h',g_{\tau,\eta})(\operatorname{id},P')\right)\ll D\!\left(\tilde \Gamma (h',g_{\tau,\eta})\right).$$
Putting together the cases when $\tau<\eta$ and $\tau\geq \eta$ gives the uniform estimate 
$$  \left|\Tilde{\Theta}_f^{(2)}(( h,g)(h_0,P))\right|\ll_f \left(D\!\left(\tilde \Gamma (h',g_{\tau,\eta})\right)\right)^{\frac{1}{4}}.$$
Note that $g_{\tau,\eta}$ is block diagonal. Let $\gamma_1,\gamma_2\in \operatorname{SL}(2,\mathbb Z)$ be so that $\gamma_1$ takes $x_1+i\tau^2 y_1$ to the fundamental domain in $\operatorname{SL}(2,\mathbb R)$ and so that $\gamma_2$ takes $x_2+i\eta^2 y_1$ to the fundamental domain in $\operatorname{SL}(2,\mathbb R)$. By Lemma \ref{lemma:embedding-of-fundamental-domains} and the assumption that $\sup_{\gamma \in \operatorname{SL}(2,\mathbb Z)}(\tau y_1)_\gamma \geq \sup_{\gamma \in \operatorname{SL}(2,\mathbb Z)}(\eta y_1)_\gamma,$ the resulting $(\gamma_1,\gamma_2) g_{\tau,\eta}$ is in the fundamental domain of $\operatorname{Sp}(2,\mathbb R).$ Therefore
\begin{align*}
    \left(D\!\left(\tilde \Gamma (h',g_{\tau,\eta})\right)\right)^{\frac{1}{4}}&\ll(\tau^2 y_1)_{\gamma_1}^{\frac{1}{4}}(\eta^2 y_2)_{\gamma_2}^{\frac{1}{4}}\leq H\!\left(x_1+i\tau^2 y_1\right)\,H\!\left(x_2+i\eta^2 y_2\right).
\end{align*}
\end{proof}

\begin{proposition}\label{prop:smooth-f-sheared-lambda-bound}
         Let $\lambda$ be a Borel probability measure on $\mathbb R$ which is absolutely continuous with respect to Lebesgue measure. Let $F\in \mathcal S(\mathbb R^2)$ and let $(h,g)$, $(h_0,P)$ be as in Lemma \ref{lemma:H-bound-for-sheared} with $-x_1=x_2=x$. Let $R>0$. Then  uniformly over all variables and $0<\tau^2 y_{1},\eta^2 y_{2}\leq 1$ we have
    \begin{equation*}
        \lambda\left(\left\{x\in \mathbb R: \left|\Tilde \Theta_{F}^{(2)}(( h,g)(h_0,P))\right|>R\right\}\right)\ll \frac{1}{(1+R)^2}.
    \end{equation*}
\end{proposition}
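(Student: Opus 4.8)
The plan is to deduce the claim from the level-set estimates for the classical height function $H$, closely following the structure of the proof of Proposition~\ref{prop:bound-for-second-order-smooth}, with the pointwise input now coming from Lemma~\ref{lemma:H-bound-for-sheared} rather than Lemma~\ref{lemma:HH-bound-for-unshear}. First I would apply Lemma~\ref{lemma:H-bound-for-sheared}: since $(h,g)$ and $(h_0,P)$ are of the form required there and $-x_1=x_2=x$, it yields
\begin{equation*}
    \left|\Tilde{\Theta}_F^{(2)}\!\left(( h,g)(h_0,P)\right)\right|\ll_F H\!\left(-x+i\tau^2 y_1\right)\,H\!\left(x+i\eta^2 y_2\right),
\end{equation*}
with an implied constant $C$ depending only on $F$ and, crucially, independent of $\tau$, $\eta$, and all the remaining parameters.

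Next I would run a union bound in the usual way: the set $\{x\in\R:|\Tilde{\Theta}_F^{(2)}((h,g)(h_0,P))|>R\}$ is contained in $\{x:H(-x+i\tau^2 y_1)>\sqrt{R/C}\}\cup\{x:H(x+i\eta^2 y_2)>\sqrt{R/C}\}$. Here the hypothesis $0<\tau^2 y_1,\eta^2 y_2\leq1$ is precisely what makes the argument go through, since it places the two imaginary parts in $(0,1]$, the regime in which the horocyclic bound for $H$ (Lemma~3.19 in \cite{Cellarosi-Marklof}) holds uniformly. Using the periodicity $H(z+1)=H(z)$ to pass to $\R/\Z$ and the absolute continuity of $\lambda$ to replace $\lambda_\Z$ by Lebesgue measure, one bounds the $\lambda$-measure of each of the two sets by $\ll\int_{\R/\Z}\mathbf 1_{\{H(\pm x+iy)>\sqrt{R/C}\}}\,\de x\ll(1+R)^{-2}$ for $R\geq C$, uniformly in the imaginary part $\leq1$; for $R<C$ the bound $\ll(1+R)^{-2}$ is automatic since probabilities are at most $1$. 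Summing the two contributions gives the stated estimate.

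I do not expect a genuine obstacle in this argument: all the analytic difficulty ---passing from $G\times G$ to $\mathbb{H}(\R^2)\rtimes\operatorname{Sp}(2,\R)$, commuting the shear $P'$ past $g_{\tau,\eta}$, and comparing fundamental domains via Lemma~\ref{lemma:embedding-of-fundamental-domains}--- has already been carried out in the proof of Lemma~\ref{lemma:H-bound-for-sheared}. The only mild care needed is (i) to check that the parameters $X,Y,k(Q)$ prescribed in Lemma~\ref{lemma:H-bound-for-sheared} are indeed the images of $(g_1,g_2)$ (with $\phi_1=\phi_2=0$) under the maps of Section~\ref{subsection:theta-theta-tilde}, so that the lemma applies verbatim to $(h,g)(h_0,P)$, and (ii) to note that $\sqrt{R/C}\geq1$ once $R\geq C$, which is what licenses the application of Lemma~3.19 in \cite{Cellarosi-Marklof}.
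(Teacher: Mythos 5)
Your proposal is correct and follows essentially the same route as the paper's proof: apply Lemma \ref{lemma:H-bound-for-sheared} to dominate $|\Tilde{\Theta}_F^{(2)}|$ by a product of height functions, take a union bound into two level sets of $H$, and invoke Lemma 3.19 together with the argument of Proposition 3.20 in \cite{Cellarosi-Marklof} (periodicity of $H$, absolute continuity of $\lambda$) to get the $(1+R)^{-2}$ tail. The only difference is that you spell out the reduction to $\R/\Z$ and the trivial case $R<C$ explicitly, which the paper leaves to the citation.
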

\begin{proof}
   Lemma \ref{lemma:H-bound-for-sheared} implies that that  
   \begin{align*}\lambda&\left(\left\{x\in \mathbb R: \left|\Tilde \Theta_{F}^{(2)}(( h,g)(h_0,P))\right|>R\right\}\right)\leq \lambda\left(\left\{x\in \mathbb R: H(x+i\tau^2 y_1)H(-x+i\eta^2 y_2)>\tfrac{R}{C}\right\}\right),\end{align*}
   where $C$ is the constant implied in \eqref{Bound-tildeTheta^2-in-terms-of-HH}A union bound implies that 
   \begin{align*} \lambda\bigg(\bigg\{x\in \mathbb R: \left|\Tilde \Theta_{F}^{(2)}(( h,g)(h_0,P))\right|>\tfrac{R}{C}\bigg\}\bigg)&\leq \lambda\left(\left\{x\in \mathbb R: H\!\left(-x+i\tau^2 y_1\right)>\sqrt{\tfrac{R}{C}}\right\}\right)\\
   &~+\lambda\left(\left\{x\in \mathbb R: H(x+i\eta^2 y_1)>\sqrt{\tfrac{R}{C}}\right\}\right).\end{align*}
   Lemma 3.19 in \cite{Cellarosi-Marklof}, along with the proof of Proposition 3.20 in \cite{Cellarosi-Marklof}, conclude.
\end{proof}
\begin{proposition}\label{prop:smooth-f-sheared-lambda-bound-random-y}
     Let $\lambda$ be a Borel probability measure on $\mathbb R$ which is absolutely continuous with respect to Lebesgue measure. Let $F\in \mathcal S(\mathbb R^2)$ and let $(h,g)$, $(h_0,P)$ be as in Lemma \ref{lemma:H-bound-for-sheared} with $-x_1=x_2=x$. Let $R>0$ and let $y_1(x),y_2(x),\boldsymbol \xi(x),$ be so that $0<y_0\leq \tau^2 y_{1}(x),\eta^2 y_{2}(x)\leq 1$. Let $\varepsilon>0$ be small. Then uniformly over all variables and over $y_0$ we have
    \begin{equation*}
        \lambda\left(\left\{x\in \mathbb R: \left|\Tilde \Theta_{F}^{(2)}(( h,g)(h_0,P))\right|>\frac{R}{(\tau^2 y_1(x)\eta^2 y_2(x))^{\varepsilon}}\right\}\right)\ll \frac{1}{(1+R)^2}.
    \end{equation*}
\end{proposition}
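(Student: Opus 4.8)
The plan is to follow the proof of Proposition~\ref{prop:smooth-f-sheared-lambda-bound} almost verbatim, feeding in the random-$y$ cusp estimate that underlies Lemma~\ref{lemma:tail-bound-smooth-f-random-y} and Proposition~\ref{prop:bound-for-second-order-smooth-random-y} in place of the fixed-$y$ one. First I would invoke Lemma~\ref{lemma:H-bound-for-sheared}: with $-x_1=x_2=x$ it produces a constant $C=C_F$ such that
\begin{equation*}
\left|\Tilde{\Theta}_F^{(2)}((h,g)(h_0,P))\right|\le C\,H\!\left(-x+i\tau^2 y_1(x)\right)H\!\left(x+i\eta^2 y_2(x)\right)
\end{equation*}
for every $x$ (the bound being independent of the Heisenberg coordinates $\boldsymbol{\xi}(x)$ and of $\phi_1,\phi_2$). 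Hence the set whose $\lambda$-measure is to be bounded is contained in
\begin{equation*}
\left\{x\in\R:\ (\tau^2 y_1(x))^\varepsilon H\!\left(-x+i\tau^2 y_1(x)\right)\cdot(\eta^2 y_2(x))^\varepsilon H\!\left(x+i\eta^2 y_2(x)\right)>\tfrac{R}{C}\right\}.
\end{equation*}

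Since a product of two nonnegative numbers exceeds $R/C$ only if one of the factors exceeds $\sqrt{R/C}$, a union bound reduces the claim to proving
\begin{equation*}
\lambda\!\left(\left\{x\in\R:\ (\tau^2 y_1(x))^\varepsilon H\!\left(-x+i\tau^2 y_1(x)\right)>\sqrt{\tfrac{R}{C}}\right\}\right)\ll_\varepsilon\frac{1}{(1+R)^{2}}
\end{equation*}
together with the analogous estimate in which $\tau^2 y_1(x)$ is replaced by $\eta^2 y_2(x)$ and the sign of $x$ is flipped.

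Each of these two tail bounds is then handled exactly as in the proof of Lemma~\ref{lemma:tail-bound-smooth-f-random-y}. Writing $\tilde y(x):=\tau^2 y_1(x)$ (respectively $\eta^2 y_2(x)$), the standing hypothesis $0<y_0\le\tau^2 y_1(x),\,\eta^2 y_2(x)\le 1$ guarantees $0<y_0\le\tilde y(x)\le 1$, so $\tilde y(x)$ plays precisely the role of the random height $y(x)$ there. Using that $H$ is $\Z$-periodic, that $\lambda$ has a density comparable to Lebesgue measure, and Lemma~\ref{lemma:integral-over-gamma-R4} --- whose constant $2\zeta(\tfrac{2}{1-4\varepsilon}-1)/\zeta(\tfrac{2}{1-4\varepsilon})$ is independent of $y_0$ --- one obtains, for $R$ larger than a constant depending only on $F$ and $\varepsilon$, the bound $\ll_\varepsilon\left(1+\sqrt{R/C}\right)^{-4}\ll_\varepsilon(1+R)^{-2}$ for each term; for $R$ below that constant the estimate $(1+R)^{-2}$ is trivial since a $\lambda$-measure is at most $1$. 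Combining the two terms via the union bound completes the proof.

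I do not expect a genuine obstacle here: the substantive work has already been carried out in Lemma~\ref{lemma:H-bound-for-sheared}, which absorbs the shear into the \emph{effective} cusp heights $\tau^2 y_1$ and $\eta^2 y_2$ through the Marklof--Welsh embedding and Lemma~\ref{lemma:embedding-of-fundamental-domains}. The only point deserving attention is checking that these effective heights stay inside $[y_0,1]$ --- which is exactly what is assumed --- so that the random-$y$ tail estimate of Lemma~\ref{lemma:tail-bound-smooth-f-random-y} applies unchanged, with an implied constant uniform in $y_0$.
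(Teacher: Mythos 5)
Your proposal is correct and follows essentially the same route as the paper: bound $|\Tilde\Theta_F^{(2)}|$ by $H(-x+i\tau^2y_1(x))\,H(x+i\eta^2y_2(x))$ via Lemma \ref{lemma:H-bound-for-sheared}, split with a union bound into two single-$H$ tails with random heights $\tau^2y_1(x),\eta^2y_2(x)\in[y_0,1]$, and finish each with Lemma \ref{lemma:integral-over-gamma-R4} as in the proof of Lemma \ref{lemma:tail-bound-smooth-f-random-y}. No gaps.
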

\begin{proof}
   Lemma \ref{lemma:H-bound-for-sheared} implies that that  
   \begin{align*} &\lambda\left(\left\{x\in \mathbb R: \left|\Tilde \Theta_{F}^{(2)}(( h,g)(h_0,P))\right|>\frac{R}{(\tau ^2 y_1(x)\eta^2 y_2(x))^{\varepsilon}}\right\}\right)\\
   &\leq \lambda\left(\left\{x\in \mathbb R: H(-x+i\tau^2 y_1(x))H(x+i\eta^2 y_2(x))>\frac{R}{C}\right\}\right),\end{align*}
   where $C$ is the constant implied by \eqref{Bound-tildeTheta^2-in-terms-of-HH}.
   A union bound implies that 
   \begin{align*} 
   &\lambda\left(\left\{x\in \mathbb R: \left|\Tilde \Theta_{F}^{(2)}(( h,g)(h_0,P))\right|>\frac{R}{(C(\tau^2 y_1(x)\eta^2 y_2(x))^{\varepsilon}})\right\}\right)\\
   &\leq\: \lambda\left(\left\{x\in \mathbb R: (\tau^2 y_1(x))^\varepsilon H(-x+i\tau^2 y_1(x))>\sqrt{\tfrac{R}{C}}\right\}\right)+\lambda\left(\left\{x\in \mathbb R: (\eta^2 y_2(x))^\varepsilon H(x+i\eta^2 y_2(x))>\sqrt{\tfrac{R}{C}}\right\}\right).\end{align*}
    Therefore Lemma \ref{lemma:integral-over-gamma-R4} and the proof of Lemma \ref{lemma:tail-bound-smooth-f-random-y} conclude.
\end{proof}
\begin{proposition}\label{prop:bound-shear-building-blocks}
    Let $\lambda$ be a Borel probability measure on $\mathbb R$ absolutely continuous with respect to Lebesgue measure.  Then for any $(g_1,g_2)\in  G\times  G$ with $-x_1=x_2=x$, we have uniformly in $y_1,y_2\leq 1$ that 
    \begin{align*}
    \lambda\left(\left\{x\in \mathbb R: \left| \Theta_{\mathfrak{C}_{0,0}}^{(2)}(g_1,g_2)\right|>R\right\}\right)&\ll \frac{1}{(1+R)^2}\\
    \lambda\left(\left\{x\in \mathbb R: \left| \Theta_{\mathfrak C_{1,1}}^{(2)}(g_1,g_2)\right|>R\right\}\right)&\ll \frac{1}{(1+R)^2}\\
 \lambda\left(\left\{x\in \mathbb R: \left| \Theta_{\mathfrak L_d}^{(2)}(g_1,g_2)\right|>R\right\}\right)&\ll \frac{1}{(1+R)^2}
    \end{align*}
\end{proposition}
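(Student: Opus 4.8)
The plan is to follow the scheme of the proof of Proposition \ref{prop:bound-unshear-building-blocks}, the only genuinely new feature being that $\Theta^{(2)}_{\mathfrak{C}_{0,0}}$, $\Theta^{(2)}_{\mathfrak{C}_{1,1}}$ and $\Theta^{(2)}_{\mathfrak{L}_d}$ no longer factor as products of rank-one theta functions, so in place of Proposition \ref{prop:3.17-simplified} the inputs will be the embedding $G\times G\hookrightarrow\mathbb{H}(\mathbb{R}^2)\rtimes\operatorname{Sp}(2,\mathbb{R})$ of Section \ref{subsection:theta-theta-tilde}, Lemma \ref{lemma:H-bound-for-sheared}, Lemma \ref{lemma:lines-dont-matter}, and Lemma 3.19 of \cite{Cellarosi-Marklof}. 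First I would use linearity of $F\mapsto\Theta^{(2)}_F$ together with the decompositions of Section \ref{subsection:Triangle} and Remark \ref{remark:where-line-segments-come-from}: $\mathfrak{C}_{0,0}$ and $\mathfrak{C}_{1,1}$ are affine shears of the template corner $T_{\operatorname{Cor}}=F_{\frac{1}{12},\frac{1}{6},\frac{1}{3}}\otimes F_{\frac{1}{12},\frac{1}{6},\frac{1}{3}}$, while $\mathfrak{L}_d$ is an affine shear of $T_{\operatorname{Line}}$, which by Remark \ref{remark:where-line-segments-come-from} is the sum of two rectangular corner functions of the form $F_{c_1,c_2,c_3}\otimes F_{c_1',c_2',c_3'}$ and the function $(w_1,w_2)\mapsto T_{\operatorname{Segm}}(w_1,w_2-w_1)$, which is supported on a line segment. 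The segment term is disposed of at once by the first part of Lemma \ref{lemma:lines-dont-matter} (together with Remark \ref{remark:line-segments-for-other-triangles}), contributing $\ll(1+R)^{-4}\ll(1+R)^{-2}$ to the tail. It therefore remains to estimate, for each of the (finitely many) sheared corner functions, the corresponding $\Theta^{(2)}$.

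For such a sheared corner function I would expand $F_{c_1,c_2,c_3}(w)=\sum_{j\ge0}f_{c_1,c_2,c_3}(p^jw)$ with $p>1$ and $f_{c_1,c_2,c_3}\in C_c^\infty(\mathbb{R})\subset\mathcal{S}(\mathbb{R})$, so that the associated $\Theta^{(2)}$ becomes a doubly-indexed series over $(j_1,j_2)$ whose terms are $\Theta^{(2)}$ of functions obtained from the Schwartz function $f_{c_1,c_2,c_3}\otimes f_{c_1,c_2,c_3}\in\mathcal{S}(\mathbb{R}^2)$ by a dilation in each coordinate followed by the relevant shear. By Proposition 2.1 of \cite{Marklof-Welsh-I} each term equals, up to an explicit scalar, a value $\Tilde\Theta^{(2)}_{f\otimes f}\big((h(g_1,g_2),g(g_1,g_2)),(\operatorname{id},P_{j_1,j_2})\big)$, which Lemma \ref{lemma:H-bound-for-sheared} bounds by a constant times $H(x_1+i\,\ast_{j_1})H(x_2+i\,\ast_{j_2})$ for the appropriately dilated heights $\ast_j$. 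As in the proof of Proposition 3.17 of \cite{Cellarosi-Marklof}, the series is effectively finite: after reducing $\boldsymbol\xi$ into $[-\tfrac12,\tfrac12)$ using the $\Gamma\times\Gamma$-invariance (Proposition \ref{prop:invariance-under-gamma-gamma}), for $j$ larger than about $\tfrac12\log(\text{height})^{-1}$ the rescaled bump $f_{c_1,c_2,c_3}(p^{j}\cdot)$ is too concentrated relative to the lattice in \eqref{def-big-Theta} to contribute, and the geometric weights coming from the scalar factors allow the truncated sum to be controlled in terms of $H(-x+iy_1)$ and $H(x+iy_2)$. From there the argument is identical to Propositions \ref{prop:bound-for-second-order-smooth} and \ref{prop:smooth-f-sheared-lambda-bound}: a union bound reduces $\lambda\{|\Theta^{(2)}_F(g_1,g_2)|>R\}$ to two one-dimensional tails $\lambda\{H(\mp x+iy_i)>\sqrt{R/C}\}$, each of which is $\ll R^{-2}$ uniformly in $y_i\le1$ by Lemma 3.19 of \cite{Cellarosi-Marklof} using the absolute continuity of $\lambda$ and the $\mathbb{Z}$-periodicity of $H$. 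Summing the two or three contributions occurring in each of $\mathfrak{C}_{0,0}$, $\mathfrak{C}_{1,1}$, $\mathfrak{L}_d$ and taking a final union bound yields the stated bound $\ll(1+R)^{-2}$, uniformly in $y_1,y_2\le1$.

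The main obstacle is the sheared corner estimate itself. Unlike the non-sheared case, one cannot simply union-bound over the dyadic scales $(j_1,j_2)$: a naive weighted union bound produces a spurious $\log y_1^{-1}\log y_2^{-1}$ factor, so the dyadic sum must be estimated as a whole, exactly the way \cite{Cellarosi-Marklof} treat $\Theta_{\chi_L}$ via their height function. Making the per-scale estimate of Lemma \ref{lemma:H-bound-for-sheared} deliver a bound that is uniform in $(j_1,j_2)$ requires tracking how the dilation cutoff interacts with the shear matrix $\Omega$, which is precisely where Lemma \ref{lemma:embedding-of-fundamental-domains} and the fundamental-domain analysis of \cite{Marklof-Welsh-II} enter; everything else is a lengthy but routine repetition of the non-sheared arguments.
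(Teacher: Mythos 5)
Your overall architecture is the paper's: decompose into sheared corner pieces plus a segment (the segment killed by Lemma \ref{lemma:lines-dont-matter} and Remark \ref{remark:line-segments-for-other-triangles}), expand each corner dyadically into smooth bumps, pass through the map of Section \ref{subsection:theta-theta-tilde} and Proposition 2.1 of \cite{Marklof-Welsh-I} to rewrite each term as $2^{-j/2}2^{-k/2}\tilde\Theta^{(2)}_{f\otimes f}\bigl((h,g)(\operatorname{id},P_{j,k})\bigr)$, truncate at $J=\lceil\log_2 y_1^{-1/2}\rceil$, $K=\lceil\log_2 y_2^{-1/2}\rceil$, and control each term via Lemma \ref{lemma:H-bound-for-sheared} and Lemma 3.19 of \cite{Cellarosi-Marklof}. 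Up to that point you match the paper's proof.

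The problem is your final paragraph, where you identify "the main obstacle" as the claim that a weighted union bound over the dyadic scales $(j,k)$ necessarily loses a factor $\log y_1^{-1}\log y_2^{-1}$, and you therefore propose instead to bound the whole dyadic sum at once by $H(-x+iy_1)H(x+iy_2)$. Both halves of this are wrong or unjustified. First, the weighted union bound is exactly what the paper does, and it works: choosing weights $\delta_j\propto j^{-2}$, $\delta_k'\propto k^{-2}$ normalized so that $\sum_{j\le J}\delta_j=\sum_{k\le K}\delta_k'=1$ (the normalizing constants are bounded uniformly in $J,K$ because $\sum_j j^{-2}$ converges), each event $\{|\tilde\Theta^{(2)}_{j,k}|>2^{j/2}2^{k/2}\delta_j\delta_k'R\}$ has measure $\ll(1+2^{j/2}2^{k/2}\delta_j\delta_k'R)^{-2}$ by Proposition \ref{prop:smooth-f-sheared-lambda-bound}, and the resulting sum is $\ll R^{-2}\sum_{j,k}2^{-j}2^{-k}j^4k^4\ll(1+R)^{-2}$: the geometric prefactors $2^{-j/2}2^{-k/2}$ absorb the polynomial loss from the weights, and no logarithm in $y_1,y_2$ appears. (A \emph{uniform} weight $\delta_j=1/(J+1)$ would indeed lose powers of $J\sim\log y_1^{-1}$, but nobody is forced to use uniform weights.) Second, the substitute you propose — an estimate of the form $\sum_{j\le J}2^{-j/2}H(x_1+i\,2^{2j}y_1)\ll H(x_1+iy_1)$ so that Lemma 3.19 can be applied once — is not supplied by Lemma \ref{lemma:H-bound-for-sheared} (whose heights sit at the dilated points $x_\ell+i\tau^2y_\ell$, not at $x_\ell+iy_\ell$) and is not obviously true, since the invariant height at $x+i2^{2j}y$ need not be comparable to the one at $x+iy$. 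So as written your proof has a gap precisely at the quantitative step: you dismiss the argument that actually closes it and replace it with an unproven inequality. If you restore the $j^{-2}$-weighted union bound, the rest of your outline goes through.
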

\begin{proof}
    Using \eqref{def:TCor}, we write
    \begin{equation}\label{eq:F-decompose-as-sum-of-fg}
    \mathfrak{C}_{0,0}(w_1,w_2)=\sum_{j,k=0}^\infty f(2^j w_1)f(2^k(w_2-w_1)).
    \end{equation}
    where $f=f_{\frac{1}{12},\frac{1}{6},\frac{1}{3}}$ is smooth. 
    By linearity we have that 
    \begin{equation*}
    \Theta_{\mathfrak{C}_{0,0}}^{(2)}(g_1,g_2)=\sum_{j,k=0}^\infty \Theta_{(w_1,w_2)\mapsto f(2^jw_1)f(2^k (w_2-w_1))}^{(2)}(g_1,g_2).\end{equation*}
    Using Proposition 2.1 in \cite{Marklof-Welsh-I} and the mapping in Subsection \ref{subsection:theta-theta-tilde} we have that 
    \begin{equation*}
    \Theta_{\mathfrak{C}_{0,0}}^{(2)}(g_1,g_2)=\sum_{j,k=0}^\infty 2^{-\frac{j}{2}}2^{-\frac{k}{2}}\tilde \Theta_{(w_1,w_2)\mapsto f(w_1)f(w_2)}^{(2)}\left(\left(h(g_1,g_2),g(g_1,g_2)\right)(\operatorname{id},P_{j,k})\right),\end{equation*}
    where $P_{j,k}$ is as in Lemma \ref{lemma:H-bound-for-sheared} with $\tau=2^j$ and $\eta=2^k$. Let $J=\lceil \log_2(y_1^{-1/2})\rceil,$ and $K=\lceil \log_2(y_2^{-1/2})\rceil$. In this case, we have that 
     \begin{equation}\label{eq:decomposition-for-theta-2-F}
     \Theta_{\mathfrak{C}_{0,0}}^{(2)}(g_1,g_2)=\sum_{j=0}^J\sum_{k=0}^K 2^{-\frac{j}{2}}2^{-\frac{k}{2}}\tilde \Theta_{(w_1,w_2)\mapsto f(w_1)f(w_2)}^{(2)}\left(\left(h(g_1,g_2),g(g_1,g_2)\right)(\operatorname{id},P_{j,k})\right).\end{equation}
     Let $C_J,C_K>0$ be such that $\delta_j=\frac{C_J}{j^2}$ and $\delta'_k=\frac{C_K}{k^2}$ satisfy $\sum_{j=0}^J \delta_j=\sum_{k=0}^K \delta'_k=1$. In this case, we may use a union bound and Proposition \ref{prop:smooth-f-sheared-lambda-bound} to get that
     \begin{align}\label{eq:argument-in-sheared-tail-bound}
         \lambda&\left(\left\{x\in \mathbb R:\:\left|\Theta_{\mathfrak{C}_{0,0}}^{(2)}(g_1,g_2)\right|>R\right\}\right)\\
         &\leq \sum_{j=0}^{J}\sum_{k=1}^{K}\lambda\bigg(\bigg\{x:\in \mathbb R: |\tilde \Theta_{(w_1,w_2)\mapsto f(w_1)f(w_2)}^{(2)}(h(g_1,g_2),g(g_1,g_2)P_{j,k})|>\frac{R 2^{\frac{j}{2}} 2^{\frac{k}{2}}}{C_JC_K}\bigg\}\bigg)\nonumber\\
         &\ll  \sum_{j=0}^{J}\sum_{k=0}^{K} \frac{1}{(1+2^{\frac{j}{2}}2^{\frac{k}{2}}\delta_j\delta_k' R)^2}\nonumber \\
        &\ll \frac{1}{(1+R)^2} \sum_{j=0}^{J}\sum_{k=1}^{K} 2^{-j}2^{-k}\delta_j^2 \delta_k'^2\nonumber\\
        &\ll \frac{1}{(1+R)^2}.\label{eq:argument-in-sheared-tail-bound-end}
     \end{align}
    From \eqref{def:TLine} and Remark \ref{remark:where-line-segments-come-from} we write $\mathfrak{L}_d(w_1,w_2)$ as a sum of the following three functions: 
    \begin{align}
    \mathfrak{L}_{d,1}(w_1,w_2):=&F_{\frac{1}{12},\frac{1}{6},\frac{1}{3}}(\tfrac{1}{2}-w_1)F_{\frac{1}{24},\frac{1}{12},\frac{1}{6}}(w_2-w_1)=\sum_{j,k=0}^{\infty}f_{1}(2^j(\tfrac{1}{2}-w_1))f_{2}(2^k(w_2-w_1)),\label{L_d-piece1}\\
\mathfrak{L}_{d,2}(w_1,w_2):=&T_{\operatorname{Segm}}(w_1,w_2-w_1)\label{L_d-piece2}\\
    \mathfrak{L}_{d,3}(w_1,w_2):=&F_{\frac{1}{12},\frac{1}{6},\frac{1}{3}}(-\tfrac{1}{2}+w_1)F_{\frac{1}{24},\frac{1}{12},\frac{1}{6}}(w_2-w_1)=\sum_{j,k=0}^{\infty}f_{1}(2^j(-\tfrac{1}{2}+w_1))f_{2}(2^k(w_2-w_1)),\label{L_d-piece3}
    \end{align}
    where $f_1=f_{\frac{1}{12},\frac{1}{6},\frac{1}{3}}$ and $f_2=f_{\frac{1}{24},\frac{1}{12},\frac{1}{6}}$ are smooth. Let us first focus on \eqref{L_d-piece3}.
\begin{align}
    \Theta^{(2)}_{\mathfrak{L}_{d,3}}(g_1,g_2)&=\sum_{j,k=0}^{\infty} \Theta^{(2)}_{(w_1,w_2)\mapsto f_1(2^j(-\frac{1}{2}+w_1))f_2(2^k(w_2-w_1))}(g_1,g_2)\nonumber\\
    &=\sum_{j,k=0}^{\infty}2^{-\frac{j}{2}}2^{-\frac{k}{2}}\tilde\Theta^{(2)}_{(w_1,w_2)\mapsto f_1(w_1)f_2(w_2)}\left((h(g_1,g_2),g(g_1,g_2)),(u,P_{j,k})\right),\label{eq:decomposition-for-theta-L_d_3}
\end{align}
where $u=((-\tfrac{1}{2},-\tfrac{1}{2}), (0,0),0)\in\mathbb{H}(\R^2)$ and $P_{j,k}$ is as above. 
Letting $J$, $K$, $(\delta_j)_{1\leq j\leq J}$, $(\delta_k)_{1\leq k\leq K}$, $C_J$, and $C_K$  as before, a union bound and Proposition  \ref{prop:smooth-f-sheared-lambda-bound} gives

    \begin{align*}
         \lambda&\left(\left\{x\in \mathbb R:\:\left|\Theta_{\mathfrak{L}_{d,3}}^{(2)}(g_1,g_2)\right|>\tfrac{R}{3}\right\}\right)\\
         &\leq \sum_{j=0}^{J}\sum_{k=1}^{K}\lambda\bigg(\bigg\{x:\in \mathbb R: |\tilde \Theta_{(w_1,w_2)\mapsto f_1(w_1)f_2(w_2)}^{(2)}\left(\left(h(g_1,g_2),g(g_1,g_2)\right)(u,P_{j,k})\right)|>\frac{R 2^{\frac{j}{2}} 2^{\frac{k}{2}}}{3C_JC_K}\bigg\}\bigg)\nonumber\\
         &\ll  \sum_{j=0}^{J}\sum_{k=0}^{K} \frac{1}{(1+2^{\frac{j}{2}}2^{\frac{k}{2}}\delta_j\delta_k' R/3)^2}\nonumber \\
        &\ll \frac{1}{(1+R)^2}.
     \end{align*}

The theta function corresponding to \eqref{L_d-piece1} is handled after by observing that $f_1(2^j(\frac{1}{2}-w_1))=f_1^-(2^j(-\frac{1}{2}+w_1))$, where $f_1^-(w)=f_1(-w)$, giving the tail bound $\lambda\left(\left\{x\in \mathbb R:\:\left|\Theta_{\mathfrak{L}_{d,1}}^{(2)}(g_1,g_2)\right|>\tfrac{R}{3}\right\}\right)\ll(1+R)^{-2}$. Finally, the first part of Lemma \ref{lemma:lines-dont-matter} provides the 
the tail bound $\lambda\left(\left\{x\in \mathbb R:\:\left|\Theta_{\mathfrak{L}_{d,2}}^{(2)}(g_1,g_2)\right|>\tfrac{R}{3}\right\}\right)\ll(1+R)^{-4}$. A union bound gives the desired tail bound for $\Theta_{\mathfrak{L}_d}^{(2)}$. The tail bound for $\Theta_{\mathfrak{C}_{1,1}}^{(2)}$ is similar to that for $\Theta_{\mathfrak{L}_{d,1}}^{(2)}$, with slight changes to the group elements $(u,P_{j,k})\in\mathbb{H}(\R^2)\rtimes \operatorname{Sp}(2,\R)$

    %
\end{proof}

\begin{proposition}\label{prop:bound-shear-building-blocks-random-y}
 Let $\lambda$ be a Borel probability measure on $\mathbb R$ which is absolutely continuous with respect to Lebesgue measure. Let $f\in \mathcal S(\mathbb R^2)$ and let $(h,g)$, $P$ be as in Lemma \ref{lemma:H-bound-for-sheared} with $-x_1=x_2=x$. Let $R>0$ and let $y_1(x),y_2(x), \vecxi_1(x), \vecxi_2(x)$ be so that $0<y_0\leq \tau^2 y_{1}(x),\eta^2 y_{2}(x)\leq 1$. Let $\varepsilon>0$ be small. Then uniformly over all variables and over $y_0$ we have
     \begin{align*}
    \lambda\left(\left\{x\in \mathbb R: \left| \Theta_{\mathfrak{C}_{0,0}}^{(2)}(g_1,g_2)\right|>
\frac{R}{(\tau^2 y_1(x)\eta^2 y_2(x))^{\varepsilon}}\right\}\right)&\ll \frac{1}{(1+R)^2}\\
    \lambda\left(\left\{x\in \mathbb R: \left| \Theta_{\mathfrak C_{1,1}}^{(2)}(g_1,g_2)\right|>\frac{R}{(\tau^2 y_1(x)\eta^2 y_2(x))^{\varepsilon}}\right\}\right)&\ll \frac{1}{(1+R)^2}\\
 \lambda\left(\left\{x\in \mathbb R: \left| \Theta_{\mathfrak L_d}^{(2)}(g_1,g_2)\right|>\frac{R}{(\tau^2 y_1(x))\eta^2 y_2(x))^{\varepsilon}}\right\}\right)&\ll \frac{1}{(1+R)^2}
    \end{align*}
\end{proposition}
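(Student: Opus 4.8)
The plan is to reprise, \emph{mutatis mutandis}, the proof of Proposition~\ref{prop:bound-shear-building-blocks}, feeding in the random-$y$ inputs (Proposition~\ref{prop:smooth-f-sheared-lambda-bound-random-y} and equation~\eqref{eq:lines-dont-matter-random-y} of Lemma~\ref{lemma:lines-dont-matter}) in place of their fixed-$y$ analogues. First I would decompose the three sheared templates exactly as in Section~\ref{subsection:estimates-for-sheared-blocks}: using \eqref{def:TCor}, \eqref{def:TLine} and Remark~\ref{remark:where-line-segments-come-from}, each of $\mathfrak{C}_{0,0}$, $\mathfrak{C}_{1,1}$, $\mathfrak{L}_d$ is written as a finite sum of (i) doubly dyadic sheared products of smooth bump functions (namely $f_{\frac{1}{12},\frac{1}{6},\frac{1}{3}}$, and $f_{\frac{1}{12},\frac{1}{6},\frac{1}{3}}$ together with $f_{\frac{1}{24},\frac{1}{12},\frac{1}{6}}$ for $\mathfrak{L}_d$), as in \eqref{eq:F-decompose-as-sum-of-fg}, \eqref{eq:decomposition-for-theta-2-F}, \eqref{L_d-piece1}--\eqref{L_d-piece3}, \eqref{eq:decomposition-for-theta-L_d_3}, plus (ii) at most one $T_{\operatorname{Segm}}$-type function supported on a line segment. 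By linearity of $F\mapsto\Theta_F^{(2)}$, Proposition~2.1 in \cite{Marklof-Welsh-I}, and the map of Subsection~\ref{subsection:theta-theta-tilde}, the type-(i) contribution is a double series $\sum_{j,k}2^{-j/2}2^{-k/2}\,\Tilde{\Theta}^{(2)}_{(w_1,w_2)\mapsto f(w_1)f(w_2)}\big((h(g_1,g_2),g(g_1,g_2))(u,P_{j,k})\big)$, where $u\in\mathbb{H}(\R^2)$ is a fixed translation (encoding the shifts $w_1\mapsto\pm\tfrac12+w_1$ and $w_2\mapsto w_2-w_1$, different for each of $\mathfrak{C}_{0,0},\mathfrak{C}_{1,1},\mathfrak{L}_d$) and $P_{j,k}$ is the symplectic element of Lemma~\ref{lemma:H-bound-for-sheared} with $\tau=2^j$, $\eta=2^k$; the type-(ii) contribution is dispatched directly by \eqref{eq:lines-dont-matter-random-y}, giving a tail $\ll(1+R)^{-4}$, stronger than needed.

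The one genuinely new point, compared with Proposition~\ref{prop:bound-shear-building-blocks}, is that the double series must be truncated at a range independent of $x$. As in the proof of Proposition~\ref{proposition:tail-with-random-y}, the shrinking supports of $f(2^j\,\cdot)$ force the $(j,k)$-term to vanish identically once $2^j$ (resp.\ $2^k$) exceeds a fixed multiple of $y_1(x)^{-1/2}$ (resp.\ $y_2(x)^{-1/2}$); since $y_1(x),y_2(x)\geq y_0$, every nonzero term has $0\leq j\leq J_0$, $0\leq k\leq K_0$ with $J_0=K_0=\lceil\log_2 y_0^{-1/2}\rceil$, and (after trimming the $O(1)$ boundary indices, or absorbing a bounded constant) satisfies $0<y_0\leq 2^{2j}y_1(x),\,2^{2k}y_2(x)\ll 1$, so Proposition~\ref{prop:smooth-f-sheared-lambda-bound-random-y} applies to each with $\tau=2^j$, $\eta=2^k$ (any overall scaling $\tau,\eta$ appearing in the statement is carried through verbatim, the factor $(\tau^2\eta^2)^\varepsilon$ matching the stated discount and the finer dyadic discount $(2^{2j}2^{2k})^\varepsilon$ absorbed as below). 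Then I would run the union bound as in \eqref{eq:argument-in-sheared-tail-bound}--\eqref{eq:argument-in-sheared-tail-bound-end}: split the threshold $R/(\tau^2 y_1(x)\eta^2 y_2(x))^\varepsilon$ as $\sum_{j=0}^{J_0}\sum_{k=0}^{K_0}\delta_j\delta_k'\,R/(\tau^2y_1(x)\eta^2y_2(x))^\varepsilon$ with $\delta_j=C_{J_0}/(j+1)^2$, $\delta_k'=C_{K_0}/(k+1)^2$ normalised to sum to $1$ (so $C_{J_0},C_{K_0}\in[6/\pi^2,1]$, hence bounded uniformly in $y_0$), apply Proposition~\ref{prop:smooth-f-sheared-lambda-bound-random-y} to each term, and rewrite its threshold $2^{j/2}2^{k/2}\delta_j\delta_k'\,R/(\cdots)^\varepsilon$ in the form $R'_{jk}/(2^{2j}\tau^2y_1(x)\,2^{2k}\eta^2y_2(x))^\varepsilon$ with $R'_{jk}=R\,2^{j(1/2+2\varepsilon)}2^{k(1/2+2\varepsilon)}\delta_j\delta_k'$. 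This yields $\lambda(\cdots)\ll\sum_{j,k}\big(1+R\,2^{j(1/2+2\varepsilon)}2^{k(1/2+2\varepsilon)}\delta_j\delta_k'\big)^{-2}$; since $\varepsilon<\tfrac14$, the quantity $2^{j(1/2+2\varepsilon)}/(j+1)^2$ is bounded below and its reciprocal is square-summable with an $\varepsilon$-dependent, $y_0$-independent constant, so for $R$ above a fixed threshold the double sum is $\ll_\varepsilon R^{-2}$, while for bounded $R$ the bound $\ll(1+R)^{-2}$ is immediate from $\lambda(\cdot)\leq1$. A final union bound over the (at most three) type-(i) pieces and the single type-(ii) piece of each of $\mathfrak{C}_{0,0},\mathfrak{C}_{1,1},\mathfrak{L}_d$ — $\mathfrak{C}_{1,1}$ being handled identically to the acute-corner piece of $\mathfrak{L}_d$ with a different fixed $u$ — gives the three stated estimates.

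The main obstacle is the bookkeeping in the second step: one must verify that the geometric gain $2^{-j/2}$ supplied by the dyadic partition of unity strictly dominates the geometric loss $2^{2j\varepsilon}$ introduced by the $y$-discount — which is precisely why the hypothesis $\varepsilon<\tfrac14$ is imposed — and that the normalising constants $C_{J_0},C_{K_0}$ (equivalently, the implied constants) do not degenerate as $y_0\downarrow0$, so that the resulting bound is uniform over $y_0$ as claimed.
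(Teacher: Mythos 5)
Your argument matches the paper's proof: the same dyadic decomposition of each sheared template into products of smooth bumps plus (for $\mathfrak{L}_d$) a segment piece, a weighted union bound over $(j,k)$ with Proposition \ref{prop:smooth-f-sheared-lambda-bound-random-y} (i.e.\ Lemma \ref{lemma:H-bound-for-sheared} plus the random-$y$ height-function tails) applied to each term, truncation at a $y_0$-dependent but $x$-independent range, and equation \eqref{eq:lines-dont-matter-random-y} for the segment. You actually supply more bookkeeping than the paper, which only declares the argument a ``slightly more elaborate version'' of the fixed-$y$ case; the one small misplacement is attributing the hypothesis $\varepsilon<\tfrac14$ to the $(j,k)$-summation, where the extra factor $2^{2j\varepsilon}2^{2k\varepsilon}$ in fact raises the threshold $R'_{jk}$ and only helps, whereas the hypothesis is genuinely needed inside Lemma \ref{lemma:integral-over-gamma-R4} (convergence of $\sum_c\phi(c)c^{-2/(1-4\varepsilon)}$).
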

\begin{proof}
For each of the two corner functions, the proof is a slightly more elaborate version of the proof of Proposition  \ref{proposition:tail-with-random-y}, involving double sums instead of single sums and $\tilde \Theta^{(2)}$ instead of $\Theta$.
    We start as in the proof of Proposition \ref{prop:bound-shear-building-blocks}. Each theta function is written as a sum of theta functions of the form $\tilde \Theta^{(2)}_{F}$, where $F(w_1,w_2)=f_1(w_1)f_2(w_2)$, as in \eqref{eq:decomposition-for-theta-2-F}.
    We use a union bound over $j,k$ and for each term Lemma \ref{lemma:H-bound-for-sheared} relates tail bounds for such theta functions to tail bounds for product of height functions. A further union bound concludes. For the line function, we first split it a sum of three functions as in \eqref{L_d-piece1}--\eqref{L_d-piece3}. The argument outlined above applies for $\Theta^{(2)}_{\mathfrak{L}_{d,\ell}}$ for $\ell=1,3$, while for $\Theta_{\mathfrak{L}_{d,2}}$ we use equation \eqref{eq:lines-dont-matter-random-y} in Lemma \ref{lemma:lines-dont-matter}. Finally, a union bound concludes.
\end{proof}
\subsection{\texorpdfstring{Tail estimates for $\Theta_{T_{(0,1]}}^{(2)}$}{}}
We are finally able to prove tail bounds for $\Theta_{T_{(0,1]}}^{(2)}.$
\begin{proposition}\label{prop:final-tightness-for-T_(0,1]}
       Let $\lambda$ be a Borel probability measure on $\mathbb R$ absolutely continuous with respect to Lebesgue measure.  Then for any $(g_1,g_2)\in  G\times  G$ with $-x_1=x_2=x$, we have uniformly in $y_1,y_2\leq 1$ that 
    \begin{equation}\label{eq:tail-bound-T}
        \lambda\left(\left\{x\in \mathbb R: \left|\Theta_{T_{(0,1]}}^{(2)}(g_1,g_2)\right|>R\right\}\right)\ll \frac{1}{(1+R)^2}.
    \end{equation}
\end{proposition}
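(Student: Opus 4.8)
The plan is to reduce \eqref{eq:tail-bound-T} to the seven-piece decomposition \eqref{eq:decomposition-of-triangle} of the indicator of the triangle and then bound each piece using the tail estimates already established in Sections \ref{subsection-estimates-for-nonsheared} and \ref{subsection:estimates-for-sheared-blocks}. First, by the first part of Lemma \ref{lemma:lines-dont-matter}, applied to $F=\mathbf{1}_{(0,1)\times\{1\}}=T_{(0,1]}-T_{(0,1)}$, the contribution of $\Theta^{(2)}_{T_{(0,1]}-T_{(0,1)}}$ to the tail is $\ll(1+R)^{-4}\ll(1+R)^{-2}$, uniformly in $0<y_1,y_2\le 1$ and in the Heisenberg coordinates. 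Hence, by linearity of $F\mapsto\Theta^{(2)}_F$ and a union bound, it suffices to prove \eqref{eq:tail-bound-T} with $T_{(0,1)}$ in place of $T_{(0,1]}$.

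Next I invoke \eqref{eq:decomposition-of-triangle}, namely $T_{(0,1)}=\mathfrak{C}_{0,0}+\mathfrak{C}_{0,1}+\mathfrak{C}_{1,1}+\mathfrak{L}_h+\mathfrak{L}_v+\mathfrak{L}_d+\mathfrak{F}_{\text{smooth}}$, and use linearity again to write $\Theta^{(2)}_{T_{(0,1)}}$ as the sum of the seven corresponding theta functions. A union bound then gives
\[
\lambda\!\left(\left\{x\in\mathbb R:\:\left|\Theta^{(2)}_{T_{(0,1)}}(g_1,g_2)\right|>R\right\}\right)\le\sum_{i=1}^{7}\lambda\!\left(\left\{x\in\mathbb R:\:\left|\Theta^{(2)}_{F_i}(g_1,g_2)\right|>\tfrac{R}{7}\right\}\right),
\]
where $F_1,\dots,F_7$ denote the seven summands above. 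It then remains to bound each of these seven terms by $\ll(1+R)^{-2}$, uniformly in $0<y_1,y_2\le1$: the smooth piece $\mathfrak{F}_{\text{smooth}}$, being compactly supported and smooth, lies in $\mathcal S(\mathbb R^2)\subset\mathcal S_{\eta_1,\eta_2}(\mathbb R^2)$ and is handled by Proposition \ref{prop:bound-for-second-order-smooth}; the three non-sheared pieces $\mathfrak{C}_{0,1}$, $\mathfrak{L}_h$, $\mathfrak{L}_v$ are handled by Proposition \ref{prop:bound-unshear-building-blocks}; and the three sheared pieces $\mathfrak{C}_{0,0}$, $\mathfrak{C}_{1,1}$, $\mathfrak{L}_d$ by Proposition \ref{prop:bound-shear-building-blocks}. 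Summing the finitely many bounds yields \eqref{eq:tail-bound-T}.

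Concerning the main obstacle: at this stage there is essentially none, since all of the analytic work has been front-loaded — the sheared estimates rest on the (non-injective) map of $G\times G$ into $\mathbb H(\mathbb R^2)\rtimes\operatorname{Sp}(2,\mathbb R)$ and on Lemma \ref{lemma:H-bound-for-sheared}, while the remaining propositions reduce everything to tail bounds for a single height function (via the factorization of $\Theta^{(2)}$ into a product of rank-one theta sums) or for a product of two height functions. The only points requiring care are: (a) verifying that the hypothesis $-x_1=x_2=x$ is preserved through each reduction, which it is, since the affine and shear operations defining the seven template functions act on the spatial variables $(w_1,w_2)$ and not on the group parameter $x$; and (b) noting that the exponent $2$ in \eqref{eq:tail-bound-T} — rather than the $4$ available for the rank-one theta sum of Proposition \ref{prop:3.17-simplified} — is precisely what the sheared components force, because there a product of two height functions, together with the union bound used to split it, costs a square root.
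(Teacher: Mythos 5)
Your proof is correct and follows essentially the same route as the paper: the paper performs a single eight-way union bound over $\mathbf{1}_{(0,1)\times\{1\}}$ and the seven pieces of \eqref{eq:decomposition-of-triangle}, invoking exactly the same four ingredients (the first part of Lemma \ref{lemma:lines-dont-matter}, Propositions \ref{prop:bound-shear-building-blocks}, \ref{prop:bound-unshear-building-blocks}, and \ref{prop:bound-for-second-order-smooth}). Your two-stage union bound and your closing observations on uniformity and on the exponent $2$ are accurate but do not change the argument.
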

\begin{proof}
    By linearity, we use \eqref{eq:decomposition-of-triangle} to decompose $\Theta^{(2)}_{T_{(0,1]}}=\sum_{\ell=1}^8\Theta^{(2)}_{F_\ell}$, 
    where $$(F_1,\ldots, F_8)=(\mathbf{1}_{(0,1)\times \{1\}},\mathfrak{C}_{0,0},\mathfrak{C}_{0,1},\mathfrak{C}_{1,1}, \mathfrak{L}_h,\mathfrak{L}_v,\mathfrak{L}_d,\mathfrak{F}_{\text{smooth}}).$$
    We use the union bound
    \begin{align*}
          \lambda\left(\left\{x\in \mathbb R: \left|\Theta_{T_{(0,1]}}^{(2)}(g_1,g_2)\right|>R\right\}\right)\leq\sum_{\ell=1}^8   \lambda\left(\left\{x\in \mathbb R: \left|\Theta_{F_\ell}^{(2)}(g_1,g_2)\right|>\tfrac{R}{8}\right\}\right)
    \end{align*}
    For $\ell=1$ we apply the first part of Lemma \ref{lemma:lines-dont-matter}; for $\ell=2,4,7$ we apply Proposition \ref{prop:bound-shear-building-blocks}; for $\ell=3,5,6$ we apply  Proposition \ref{prop:bound-unshear-building-blocks}; for $\ell=8$ we apply Proposition \eqref{prop:bound-for-second-order-smooth}.
We obtain the bound $\ll (1+R)^{-4}+7(1+R)^{-2}\ll(1+R)^{-2}$, as claimed.
\end{proof}

The following proposition is the key technical ingredient needed in the proof of Theorem  \ref{theorem:tightness-second-order}.
\begin{proposition}\label{prop:tail-random-y-second-order}
    Let $\lambda$ be a Borel probability measure on $\mathbb R$ which is absolutely continuous with respect to Lebesgue measure. Let $R>0$ and let $\varepsilon>0$ be small. Then for all $y_1(x),y_2(x)$ with $0<y_0\leq y_1(x),y_2(x)\leq 1$ and for all $-x_1=x_2=x$,  $ \vecxi_1(x), \zeta_1(x),\vecxi_2(x), \zeta_2(x)$ we have the bound
    \begin{equation}\label{eq:tail-bound-T-random-y}
        \lambda\left(\left\{x\in \mathbb R: \left|\Theta_{ T_{(0,1)}}^2(g_1,g_2)\right|>\frac{R}{(y_1(x)y_2(x))^\varepsilon}\right\}\right)\ll \frac{1}{(1+R)^2}
    \end{equation}
    independent of $y_0$.
\end{proposition}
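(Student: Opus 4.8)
The plan is to follow the proof of Proposition~\ref{prop:final-tightness-for-T_(0,1]} verbatim in structure, replacing each deterministic-$y$ tail bound used there by its random-$y$ counterpart established in Sections~\ref{subsection-estimates-for-nonsheared} and~\ref{subsection:estimates-for-sheared-blocks}. First I would invoke the linearity of $F\mapsto\Theta^{(2)}_F$ together with the decomposition~\eqref{eq:decomposition-of-triangle} to write
\[
\Theta^{(2)}_{T_{(0,1)}}(g_1,g_2)=\sum_{\ell=1}^{7}\Theta^{(2)}_{F_\ell}(g_1,g_2),\qquad (F_1,\dots,F_7)=(\mathfrak{C}_{0,0},\mathfrak{C}_{0,1},\mathfrak{C}_{1,1},\mathfrak{L}_h,\mathfrak{L}_v,\mathfrak{L}_d,\mathfrak{F}_{\text{smooth}}),
\]
and then apply the union bound
\[
\lambda\!\left(\left\{x:\,|\Theta^{(2)}_{T_{(0,1)}}(g_1,g_2)|>\tfrac{R}{(y_1(x)y_2(x))^\varepsilon}\right\}\right)\le\sum_{\ell=1}^{7}\lambda\!\left(\left\{x:\,|\Theta^{(2)}_{F_\ell}(g_1,g_2)|>\tfrac{R}{7(y_1(x)y_2(x))^\varepsilon}\right\}\right).
\]

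It then remains to bound each of the seven summands by $\ll(1+R)^{-2}$. For $\ell=2,4,5$, i.e.\ the non-sheared corner $\mathfrak{C}_{0,1}$ and the non-sheared edges $\mathfrak{L}_h,\mathfrak{L}_v$, Proposition~\ref{prop:bound-unshear-building-blocks-random-y} applies directly, the line-segment pieces buried inside the edge functions (cf.\ Remark~\ref{remark:where-line-segments-come-from}) being absorbed via equation~\eqref{eq:lines-dont-matter-random-y} of Lemma~\ref{lemma:lines-dont-matter}. For $\ell=1,3,6$, i.e.\ the sheared corners $\mathfrak{C}_{0,0},\mathfrak{C}_{1,1}$ and the sheared edge $\mathfrak{L}_d$, Proposition~\ref{prop:bound-shear-building-blocks-random-y} applies (this is where Lemma~\ref{lemma:H-bound-for-sheared} and the fundamental-domain comparison of Lemma~\ref{lemma:embedding-of-fundamental-domains} do the work, and again the line-segment contributions are handled by~\eqref{eq:lines-dont-matter-random-y}). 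For $\ell=7$, since $\mathfrak{F}_{\text{smooth}}\in\mathcal S(\mathbb R^2)$ is globally smooth and compactly supported, Proposition~\ref{prop:bound-for-second-order-smooth-random-y} applies. Summing the seven bounds yields~\eqref{eq:tail-bound-T-random-y}; and since each of the invoked estimates is uniform in the lower height bound $y_0$ and in the coordinates $x,\vecxi_i(x),\zeta_i(x)$, so is the resulting bound.

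I do not expect a genuinely new obstacle at this last step: the real difficulty has already been discharged in the building-block estimates, in particular in the control of the sheared theta functions via the Marklof--Welsh height function $D$ carried out in Lemma~\ref{lemma:H-bound-for-sheared}. The only points needing a little care are bookkeeping. One must check that in the dyadic decompositions of $\mathfrak{C}_{0,0},\mathfrak{C}_{1,1},\mathfrak{L}_d$ the shear parameters $\tau=2^j,\eta=2^k$ never push the relevant heights $\tau^2y_1(x),\eta^2y_2(x)$ out of $(y_0,1]$ once the sums are truncated at $J=\lceil\log_2 y_1(x)^{-1/2}\rceil$ and $K=\lceil\log_2 y_2(x)^{-1/2}\rceil$ --- exactly as in the proof of Proposition~\ref{prop:bound-shear-building-blocks} --- and one must use the same fixed $\varepsilon>0$ for all seven pieces, which is legitimate since each building-block proposition permits an arbitrarily small $\varepsilon$.
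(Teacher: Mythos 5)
Your proposal is correct and follows essentially the same route as the paper: decompose $\Theta^{(2)}_{T}$ via \eqref{eq:decomposition-of-triangle}, apply a union bound, and invoke Propositions \ref{prop:bound-unshear-building-blocks-random-y}, \ref{prop:bound-shear-building-blocks-random-y}, and \ref{prop:bound-for-second-order-smooth-random-y} together with \eqref{eq:lines-dont-matter-random-y} for each piece. The only cosmetic difference is that the paper works with $T_{(0,1]}$ and hence carries the extra line-segment term $\mathbf{1}_{(0,1)\times\{1\}}$ as an eighth summand, while you work directly with the open triangle and seven summands; both are equivalent in view of Lemma \ref{lemma:lines-dont-matter}.
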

\begin{proof}
    We decompose $\Theta^2_{T_{(0,1]}}$ and apply a union  as in the proof of Proposition \ref{prop:final-tightness-for-T_(0,1]}. We get
    \begin{align*}
          \lambda\left(\left\{x\in \mathbb R: \left|\Theta_{T_{(0,1]}}^{(2)}(g_1,g_2)\right|>\frac{R}{(y_1(x)y_2(x))^\varepsilon}\right\}\right)&\leq  \sum_{\ell=1}^8 \lambda\left(\left\{x\in \mathbb R: \left|\Theta_{F_\ell}^{(2)}(g_1,g_2)\right|>\frac{R}{8(y_1(x)y_2(x))^\varepsilon}\right\}\right).
    \end{align*}
   We then conclude by combining the bounds obtained from \eqref{eq:lines-dont-matter-random-y} and Propositions \ref{prop:bound-shear-building-blocks-random-y}, \ref{prop:bound-unshear-building-blocks-random-y}, and \ref{prop:bound-for-second-order-smooth-random-y}.
\end{proof}

\begin{remark}
    For the indicator $T_{(s,t]}$ of a general triangle, we may write 
\begin{align}\label{Theta-2-general-triangle}
    &\Theta^{(2)}_{T_{(s,t]}}\left(\left(x_1+iy_1;0,\begin{pmatrix}
        \xi_{1,1}\\
        \xi_{1,2}
\end{pmatrix},\zeta_1\right),\left(x_2+iy_2;0,\begin{pmatrix}
        \xi_{2,1}\\
        \xi_{2,2}
    \end{pmatrix},\zeta_2\right)\right)\nonumber\\
    &= (t-s)\Theta^{(2)}_{T_{(0,1]}}\bigg(\bigg(x_1+i\frac{y_1}{(t-s)^2};0,\begin{pmatrix}
        \xi_{1,1}\\
        \xi_{1,2}+s
\end{pmatrix},\zeta_1\bigg),\left(x_2+i\frac{y_2}{(t-s)^2};0,\begin{pmatrix}
        \xi_{2,1}\\
        \xi_{2,2}+s
    \end{pmatrix},\zeta_2\right)\bigg).
\end{align}
\end{remark}

\begin{proposition}\label{prop:tail-bound-second-order-Holder}
 Let $R>0$, let $0<\gamma<\frac{1}{2}$, and let $\lambda$ be a probability measure on $\mathbb R$ absolutely continuous with respect to Lebesgue measure. Let $(g_1,g_2)\in G\times G$ be with $-x_1=x_2=x$. Then uniformly over $y_1,y_2\leq $ we have the tail bound
    \begin{equation*}
        \lambda\left(\left\{x\in \mathbb R:\sup_{s\neq t\in [0,1]}\frac{| \Theta_{T(s,t)}^{(2)}(g_1,g_2)|}{|t-s|^{2\gamma}}>R\right\}\right)\ll \frac{1}{(1+R)^2},
    \end{equation*}
    uniformly over $N$, but where the implied constant is dependent on $\gamma$.    
\end{proposition}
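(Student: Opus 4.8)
The plan is to transcribe the proof of Proposition \ref{prop:tail-bound-Holder}, with the rank-$2$ object $\Theta^{(2)}_{T_{(s,t)}}$ and Proposition \ref{prop:tail-random-y-second-order} playing the roles of $\Theta_\chi$ and Proposition \ref{proposition:tail-with-random-y}. The case that is actually used is $y_1=y_2=N^{-2}$ (the general case $0<y_1,y_2\leq1$ reduces to it by discretizing at scale $N:=\lceil\max(y_1^{-1/2},y_2^{-1/2})\rceil$), so one may think of $g_1=(-x+iN^{-2},0;\vecxi_1,\zeta_1)$ and $g_2=(x+iN^{-2},0;\vecxi_2,\zeta_2)$. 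The first step is to reduce the continuous supremum to a maximum over grid points: by \eqref{def-big-Theta}, with $y_1=y_2=N^{-2}$ and $\phi_1=\phi_2=0$, the quantity $\Theta^{(2)}_{T_{(s,t)}}(g_1,g_2)$ depends on $(s,t)$ only through $\lfloor sN\rfloor$ and $\lfloor tN\rfloor$: on the open cell $\{m/N<s<(m{+}1)/N,\ n/N<t<(n{+}1)/N\}$ it equals $\Theta^{(2)}_{T_{(m/N,n/N]}}(g_1,g_2)$, and it vanishes unless $n\geq m+2$. Since on that cell $|t-s|^{2\gamma}\geq|(n{-}m{-}1)/N|^{2\gamma}\gg_\gamma|(n{-}m)/N|^{2\gamma}$ and the boundary values are subsumed in adjacent cells, one obtains, exactly as in \eqref{comparison-Holder-norm_v_grid-points},
\[
  \sup_{s\neq t\in[0,1]}\frac{|\Theta^{(2)}_{T_{(s,t)}}(g_1,g_2)|}{|t-s|^{2\gamma}}\ \ll_\gamma\ \max_{0\leq m,\ m+2\leq n\leq N}\frac{|\Theta^{(2)}_{T_{(m/N,n/N]}}(g_1,g_2)|}{|(n-m)/N|^{2\gamma}}.
\]

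Next I would pass to a random $y$. For each $x$ let $m_*(x)<n_*(x)$ realize this maximum (least indices on ties, so they are measurable in $x$) and set $y(x):=(n_*(x)-m_*(x))^{-2}\in[N^{-2},1]$. The triangle scaling relation \eqref{Theta-2-general-triangle}, applied with $s=m_*(x)/N$ and $t=n_*(x)/N$, gives
\[
  \frac{|\Theta^{(2)}_{T_{(m_*/N,n_*/N]}}(g_1,g_2)|}{|(n_*-m_*)/N|^{2\gamma}}=N^{-(1-2\gamma)}\,y(x)^{-\frac{1-2\gamma}{2}}\,\bigl|\Theta^{(2)}_{T_{(0,1]}}(\tilde g_1(x),\tilde g_2(x))\bigr|,
\]
where $\tilde g_j(x)$ is $g_j$ with $y_j$ replaced by $y(x)$ and the coordinate $\xi_{j,2}$ shifted by $m_*(x)/N$; in particular $\tilde g_1(x),\tilde g_2(x)$ still have opposite real parts $\mp x$. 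As $N\geq1$ and $1-2\gamma>0$, the harmless factor $N^{-(1-2\gamma)}\leq1$ is dropped, and writing $T_{(0,1]}=T_{(0,1)}+\mathbf{1}_{(0,1)\times\{1\}}$ splits the right-hand side into a main term involving $\Theta^{(2)}_{T_{(0,1)}}$ and a line-segment term controlled by \eqref{eq:lines-dont-matter-random-y}.

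Finally I would invoke Proposition \ref{prop:tail-random-y-second-order} with $y_1(x)=y_2(x)=y(x)$ and $\varepsilon:=\tfrac{1-2\gamma}{4}$, which lies in $(0,\tfrac14)$ precisely because $0<\gamma<\tfrac12$: since $y(x)^{-\frac{1-2\gamma}{2}}=(y(x)\,y(x))^{-\varepsilon}$, that proposition bounds the $\lambda$-measure of $\{x:\ y(x)^{-\frac{1-2\gamma}{2}}|\Theta^{(2)}_{T_{(0,1)}}(\tilde g_1(x),\tilde g_2(x))|>R/C\}$ by $\ll(1+R)^{-2}$, with constant independent of the lower bound $y_0=N^{-2}$ --- which is exactly the asserted uniformity in $N$. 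Combined via a union bound with the line-segment contribution (itself $\ll(1+R)^{-4}$ by \eqref{eq:lines-dont-matter-random-y}) and with the constant from the grid reduction, this completes the argument, the implied constant depending on $\gamma$ only through the choice of $\varepsilon$.

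The step I expect to be the main obstacle is the grid reduction: tracking the floor-function boundary cases, checking that the cells adjacent to the diagonal of $\Delta_2^{(0,T)}$ carry no mass, organizing the passage between $T_{(s,t]}$ and $T_{(s,t)}$ (via Lemma \ref{lemma:lines-dont-matter} and Remark \ref{remark:line-segments-for-other-triangles}), and --- in the general case --- choosing the discretization scale $N$ from $y_1,y_2$. This is where essentially all the bookkeeping lives; it parallels closely both the argument behind \eqref{comparison-Holder-norm_v_grid-points} and that of Lemma \ref{lemma:grid-approximation-with-continuous-time}, so once it is carried out the remainder is a direct transcription of the first-order proof.
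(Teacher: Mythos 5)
Your proposal follows exactly the route the paper intends: its own proof of this proposition simply defers to the first-order H\"older tail bound, with \eqref{Theta-2-general-triangle} and Proposition \ref{prop:tail-random-y-second-order} substituted in. Your grid reduction, the argmax/random-$y$ device, the rescaling identity, and the treatment of the line segment $T_{(s,t]}-T_{(s,t)}$ are all set up correctly; in particular the identity
\[
\frac{\bigl|\Theta^{(2)}_{T_{(m_*/N,n_*/N]}}(g_1,g_2)\bigr|}{|(n_*-m_*)/N|^{2\gamma}}=N^{-(1-2\gamma)}\,y(x)^{-\frac{1-2\gamma}{2}}\,\bigl|\Theta^{(2)}_{T_{(0,1]}}(\tilde g_1(x),\tilde g_2(x))\bigr|
\]
is the correct one.

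However, the final step does not follow from Proposition \ref{prop:tail-random-y-second-order}, because the exponent of $y$ points the wrong way. After dropping $N^{-(1-2\gamma)}\le 1$ you must bound the measure of $\{x:\ y(x)^{-\frac{1-2\gamma}{2}}|\Theta^{(2)}_{T_{(0,1)}}(\tilde g_1,\tilde g_2)|>R/C\}$, which is the set $\{|\Theta^{(2)}|>\tfrac{R}{C}(y_1y_2)^{+\varepsilon}\}$ with $\varepsilon=\tfrac{1-2\gamma}{4}$ and $y_1=y_2=y(x)$; its threshold is \emph{at most} $R/C$. Proposition \ref{prop:tail-random-y-second-order} only controls $\{|\Theta^{(2)}|>R(y_1y_2)^{-\varepsilon}\}$, whose threshold is \emph{at least} $R$; that is a strictly smaller set, so the cited bound does not transfer. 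Concretely, the containment you need amounts to $\tfrac{R}{C}N^{1-2\gamma}y^{\frac{1-2\gamma}{2}}\ge R'\,y^{-2\varepsilon}$, which reduces to $(Ny)^{1-2\gamma}\gtrsim 1$, i.e.\ $n_*-m_*\lesssim\sqrt{N}$; it fails whenever the argmax is attained at a pair with $n_*-m_*$ comparable to $N$. (You have in effect reproduced the sign slip in the displayed threshold $\frac{RN^{1/2-\gamma}}{Cy^{1/4-\gamma/2}}$ in the paper's proof of Proposition \ref{prop:tail-bound-Holder}; the correct threshold there is $\frac{R}{C}N^{1/2-\gamma}y^{1/4-\gamma/2}$, and the same issue then propagates here.) Closing the gap requires either a random-$y$ tail bound without the $(y_1y_2)^{\varepsilon}$ discount, or a separate treatment (for example a dyadic decomposition in $n_*-m_*$ with the deterministic-$y$ bound of Proposition \ref{prop:final-tightness-for-T_(0,1]} at the top scale) of the regime where $n_*-m_*$ is large; neither is supplied by the results you invoke.
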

\begin{proof}
The proof is the same as the proof of Proposition \ref{prop:1st-order-tightness}, using equation \eqref{Theta-2-general-triangle} and Proposition \ref{prop:tail-random-y-second-order} instead of Proposition \ref{proposition:tail-with-random-y}.
\end{proof}

\begin{proof}[Proof of Theorem \ref{theorem:tightness-second-order}]
Let $\gamma\in(0,\frac{1}{2})$ and pick $\gamma'\in (\gamma, \frac{1}{2})$. Recall from \cite{Friz-Hairer-Book}, Exercise 2.12, that the embedding $\mathscr C_g^{\gamma'}\hookrightarrow \mathscr C_g^{\gamma}$ is compact. Propositions \ref{prop:tail-bound-Holder} and \ref{prop:tail-bound-second-order-Holder} conclude (see also Proposition \ref{prop:1st-order-tightness}).
\end{proof}

\section{\texorpdfstring{Limit theorems and Convergence of Finite Dimensional Distributions of $\mathbb X_N$}{}}\label{section:fdds}
Let  $\hat\mu_{\GamG\times\GamG}$ be the probability measure on $\GamG\times\GamG$ given by
    \begin{align}
        d\hat\mu(\Gamma g_1,\Gamma g_2)=&
        \frac{\de x_1\de y_1 \de\phi_1\de\xi_{11}\de \xi_{12}\de\zeta_1\de x_2\de y_2\de \phi_2\de\xi_{21}\de \xi_{22}\de\zeta_2}{y_1^2y_2^2}\nonumber
        \\&\delta(x_1+x_2)\delta(y_1-y_2)\delta(\phi_1+\phi_2)\delta(\xi_{11}+\xi_{21})\delta(\xi_{12}-\xi_{22})\delta(\zeta_1+\zeta_2)\label{def-hat-mu-2}
    \end{align}
    where $g_j=(x_j+iy_j,\phi_j;
    \sve{\xi_{j1}}{\xi_{j2}},\zeta_j)$ for $j=1,2$. Note that $\GamG\times\GamG$ is 12-dimensional and $\hat\mu$ is supported on the 6-dimensional submanifold of $\GamG$ given by the equations $x_1=-x_2$, $y_1=y_2$, $\phi_1=-\phi_2$, $\xi_{11}=-\xi_{21}$, $\xi_{12}=\xi_{22}$, $\zeta_1=-\zeta_2$. 
\begin{theorem}[Joint equidistribution theorem]\label{thm-joint-equidistribution}
    Let $\vecxi\notin\Q^2$, and let $\lambda$ be a Borel probability measure on $\R$ which is absolutely continuous with respect to the Lebesgue measure. For every bounded continuous function $F:\GamG\times\GamG\to\R$ we have 
    \begin{align*}
    &\lim_{\tau\ti}\int_{\R}F\!\left(\Gamma \left(I;\sve{\xi_1}{\xi_2},0\right)\Psi^{x}\Phi^{\tau},\Gamma \left(I;\sve{-\xi_1}{\xi_2},0\right)\Psi^{-x}\Phi^{\tau}\right)\de\lambda(x)=\\
    &=\int_{\GamG\times\GamG}F(g_1,g_2)\,\de\hat\mu_{\GamG\times\GamG}(g_1,g_2).
    \end{align*}
\end{theorem}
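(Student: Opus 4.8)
The plan is to reduce Theorem~\ref{thm-joint-equidistribution} to the (single) horocycle-lift equidistribution underlying Theorem~\ref{thm-existence-of-theta-process}, by exhibiting an automorphism of $G$ whose graph contains the entire pair of orbits in the statement. Let $w=\sma{-1}{0}{0}{1}$: conjugation by $w$ normalizes $\sltr$ (acting on $\h$ by $z\mapsto-\bar z$), lifts to $\tsltr$, and acts on $\R^2$ by $\vecxi\mapsto w\vecxi$. Since $\det w=-1$, one checks from the group law \eqref{group-law-on-G} (the $\tfrac12\omega$-term changes sign, which is compensated by $\zeta\mapsto-\zeta$) that the map
\[
\theta:\ \left(x+iy,\phi;\sve{\xi_1}{\xi_2},\zeta\right)\ \longmapsto\ \left(-x+iy,\,-\phi;\,\sve{-\xi_1}{\xi_2},\,-\zeta\right)
\]
is a continuous automorphism of $G$. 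First I would verify $\theta(\Gamma)=\Gamma$ by checking it on the generators \eqref{generators-1}--\eqref{generators-5} (each generator is sent to itself or to its inverse), so that $\theta$ descends to a homeomorphism of $\GamG$ which, because the coordinate Jacobian is $\pm1$ in each variable and $y$ is unchanged, preserves $\mu_{\GamG}$. Here the choice of the symmetric interval $[-\tfrac\pi2,\tfrac\pi2)$ in the $\phi$-coordinate is convenient precisely because it is $\theta$-stable. One then records the intertwining relations $\theta(\Psi^x)=\Psi^{-x}$, $\theta(\Phi^\tau)=\Phi^\tau$, and $\theta\big((I;\sve{\xi_1}{\xi_2},0)\big)=(I;\sve{-\xi_1}{\xi_2},0)$, all immediate from the coordinate formula for $\theta$ and the definitions of $\Psi^x,\Phi^\tau$.

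With $g(x):=(I;\sve{-\xi_1}{\xi_2},0)\Psi^{-x}\Phi^{\tau}$, the intertwining relations give
\[
\Big(\Gamma (I;\sve{\xi_1}{\xi_2},0)\Psi^{x}\Phi^{\tau},\ \Gamma (I;\sve{-\xi_1}{\xi_2},0)\Psi^{-x}\Phi^{\tau}\Big)=\big(\Gamma\theta(g(x)),\,\Gamma g(x)\big),
\]
so the whole one-parameter family lies on the ``twisted diagonal'' $\{(\Gamma\theta(g),\Gamma g):\Gamma g\in\GamG\}$. Hence, writing $\tilde F(\Gamma g):=F(\Gamma\theta(g),\Gamma g)$ --- a bounded continuous function on $\GamG$ --- the left-hand side of the theorem equals $\int_{\R}\tilde F(\Gamma g(x))\,\de\lambda(x)$. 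After the substitution $x\mapsto -x$ (which replaces $\lambda$ by its pushforward under $x\mapsto-x$, still absolutely continuous), this is exactly the integral in the horocycle-lift equidistribution theorem for the \emph{single} lift $u\mapsto\Gamma(I;\sve{-\xi_1}{\xi_2},0)\Psi^{u}\Phi^{\tau}$, with frequency vector $(-\xi_1,\xi_2)\notin\Q^2$. That theorem --- the theta-sum equidistribution underlying Theorem~\ref{thm-existence-of-theta-process} (see \cite{Cellarosi-Marklof} and the references therein) --- says these pushforwards converge weakly to $\mu_{\GamG}$ as $\tau\ti$; its proof combines Ratner's classification of unipotent-invariant measures on $\GamG$, a linearization argument (this is where $\vecxi\notin\Q^2$ is used, to exclude trapping in an intermediate subvariety), and non-divergence estimates in the cusp through the height function $H$ (this is where absolute continuity of $\lambda$ enters, exactly as in Section~\ref{section:tightness}). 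We conclude $\int_{\R}\tilde F(\Gamma g(x))\,\de\lambda(x)\to\int_{\GamG}\tilde F\,\de\mu_{\GamG}=\int_{\GamG}F(\Gamma\theta(g),\Gamma g)\,\de\mu_{\GamG}(g)$.

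It then remains to identify $\int_{\GamG}F(\Gamma\theta(g),\Gamma g)\,\de\mu_{\GamG}(g)$ with $\int_{\GamG\times\GamG}F\,\de\hat\mu_{\GamG\times\GamG}$. Since $\theta(x+iy,\phi;\sve{\xi_1}{\xi_2},\zeta)=(-x+iy,-\phi;\sve{-\xi_1}{\xi_2},-\zeta)$, the six constraints $x_1=-x_2$, $y_1=y_2$, $\phi_1=-\phi_2$, $\xi_{11}=-\xi_{21}$, $\xi_{12}=\xi_{22}$, $\zeta_1=-\zeta_2$ cut $\GamG\times\GamG$ down to precisely the twisted diagonal $\{(\Gamma\theta(g),\Gamma g)\}$, and the product of delta functions in \eqref{def-hat-mu-2} is, by construction, the image of $\mu_{\GamG}$ under $\Gamma g\mapsto(\Gamma\theta(g),\Gamma g)$. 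Unwinding the delta functions (and fixing the normalizing constant so that $\hat\mu_{\GamG\times\GamG}$ is a probability measure) gives the claimed identity, completing the proof.

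I expect the main obstacle to be the single-lift equidistribution in the precise form required: \cite{Cellarosi-Marklof} establishes it for the $x$-dependent frequency $\vecxi(x)=(\alpha+\beta x,0)$, whereas here $\vecxi=(\xi_1,\xi_2)$ is constant. The curve $u\mapsto\Gamma(I;\sve{-\xi_1}{\xi_2},0)\Psi^{u}\Phi^{\tau}$ is still a non-degenerate curve in the expanding horospherical direction, folded by the geodesic, so the Ratner--linearization machinery and the $H$-based non-divergence estimates apply essentially verbatim; nevertheless this step must either be quoted from the literature in sufficient generality or re-run. An alternative that dovetails with Section~\ref{subsection:theta-theta-tilde} is to transport the pair of lifts into the Marklof--Welsh group $\mathbb{H}(\R^2)\rtimes\operatorname{Sp}(2,\R)$ and invoke equidistribution of horocycle lifts there, observing that the ``anti-diagonal'' horocycle $x\mapsto(\Psi^{x},\Psi^{-x})$ explores only the block-diagonal subgroup corresponding to $\theta$.
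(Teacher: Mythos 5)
Your proposal is correct and follows essentially the same route as the paper: the paper's proof also reduces to single-lift equidistribution by observing that the second lift is the image of the first under the coordinate involution $(x,\phi,\xi_1,\zeta;y,\xi_2)\mapsto(-x,-\phi,-\xi_1,-\zeta;y,\xi_2)$, and its generator-by-generator fundamental-domain computation (with $\gamma_1,\gamma_2,\gamma_3,\gamma_5$ sent to their inverses and $\gamma_4$ fixed) is exactly your verification that $\theta$ is a $\Gamma$-preserving automorphism of $G$. Your packaging via the automorphism $\theta$ and the twisted diagonal is a cleaner formulation of the same argument, and your identification of $\hat\mu_{\GamG\times\GamG}$ as the pushforward of $\mu_{\GamG}$ under $\Gamma g\mapsto(\Gamma\theta(g),\Gamma g)$ matches the paper's conclusion.
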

\begin{proof}
    Since $\vecxi\notin\Q^2$, we know that, separately, each of the horocycle lifts $\Gamma \left(I;\sve{\pm\xi_1}{\xi_2},0\right)\Psi^{\pm x}\Phi^{\tau}$
    becomes equidistributed with respect to the Haar measure $\mu_{\GamG}$ as $t\ti$. We claim that the joint limiting distribution is given by the probability measure $\hat\mu$ defined in \eqref{def-hat-mu-2}, whose marginals are both equal to $\mu_{\GamG}$. 
    
    Let $x,\tau$ be given, and suppose $\gamma_{+}=\gamma_{+}(x,\tau)\in\Gamma$ is the unique lattice element such that 
    \begin{align}
        \gamma_{+}\left(I;\sve{\xi_1}{\xi_2},0\right)\Psi^{x}\Phi^{\tau}\in \mathscr{F}_\Gamma.
    \end{align}
    We need to find $\gamma_{-}=\gamma_{-}(x,\tau)$ such that 
    \begin{align}
        \gamma_{-}\left(I;\sve{-\xi_1}{\xi_2},0\right)\Psi^{-x}\Phi^{\tau}\in \mathscr{F}_\Gamma.
    \end{align}
    If we write $\gamma_{+}$ as a word in the generators, i.e. $\gamma_{+}=\gamma_+^{(1)}\gamma_+^{(2)}\cdots\gamma_+^{(L)}$ with $\gamma_{+}^{(i)}\in\{\gamma_1^{\pm1},\ldots,\gamma_5^{\pm1}\}$, then we claim that $\gamma_{-}=\gamma_{-}^{(1)}\gamma_{-}^{(2)}\cdots\gamma_{-}^{(L)}$ where  $\gamma_{-}^{(i)}=(\gamma_{+}^{(i)})^{-1}$ if $\gamma_{+}^{(i)}\in\{\gamma_1^{\pm1},\gamma_2^{\pm1},\gamma_3^{\pm1},\gamma_5^{\pm1}\}$ and $\gamma_{-}^{(i)}=\gamma_{+}^{(i)}$ if $\gamma_{+}^{(i)}=\gamma_4^{\pm1}$.  Note that in coordinates \eqref{coordinates-on-G} we have $\left(I;\sve{\pm\xi_1}{\xi_2},0\right)\Psi^{\pm x}\Phi^{\tau}=\left(\pm x+ie^{-\tau},0;\sve{\pm \xi_1}{\xi_2},0\right)$. 
    To show our claim, it is enough to check each generator of $\Gamma$. Recall \eqref{generators-1}-\eqref{generators-5} and  compute
    \begin{align*}
            &\gamma_1\left(x+iy,\phi;\sve{\xi_1}{\xi_2},\zeta\right)=\left(\frac{-x}{x^2+y^2}+i\frac{y}{x^2+y^2}
        ,\phi+\arg(x+iy);\sve{-\xi_2}{\xi_1},\zeta+\tfrac{1}{8}\right)\\
        &\gamma_1^{-1}\left(-x+iy,-\phi;\sve{-\xi_1}{\xi_2},-\zeta\right)\\
        &~=\left(\frac{x}{x^2+y^2}+i\frac{y}{x^2+y^2}
        ,-\phi-\arg\!\left(\frac{1}{-(-x+iy)}\right);\sve{\xi_2}{\xi_1},-\zeta-\tfrac{1}{8}\right),\nonumber\\
        &\gamma_2\left(x+iy,\phi;\sve{\xi_1}{\xi_2},\zeta\right)=\left(x+1+iy
        ,\phi;\sve{1/2+\xi_1+\xi_2}{\xi_2},\zeta+\tha\xi_2\right)\\ 
        &\gamma_2^{-1}\left(-x+iy,-\phi;\sve{-\xi_1}{\xi_2},\zeta\right)=\left(-x-1+iy,-\phi;\sve{-1/2-\xi_1-\xi_2}{\xi_2},-\zeta-\tha\xi_2\right). \\
        &\gamma_3\left(x+iy,\phi;\sve{\xi_1}{\xi_2},\zeta\right)=\left(x+iy,\phi;\sve{\xi_1+1}{\xi_2},\zeta+\tha\xi_2\right)\\
        &\gamma_{3}^{-1}\left(-x+iy,-\phi;\sve{-\xi_1}{\xi_2},\zeta\right)=\left(-x+iy,-\phi;\sve{-\xi_1-1}{\xi_2},-\zeta-\tha\xi_2\right),\\
        &\gamma_4\left(x+iy,\phi;\sve{\xi_1}{\xi_2},\zeta\right)=\left(x+iy,\phi;\sve{\xi_1}{\xi_2+1},\zeta-\tha\xi_1\right)\\
        &\gamma_{4}\left(-x+iy,-\phi;\sve{-\xi_1}{\xi_2},-\zeta\right)=\left(-x+iy,-\phi;\sve{-\xi_1}{\xi_2+1},-\zeta+\tha\xi_1\right),\\
        &\gamma_5\left(x+iy,\phi;\sve{\xi_1}{\xi_2},\zeta\right)=\left(x+iy,\phi;\sve{\xi_1}{\xi_2},\zeta+1\right)\\
        &\gamma_{5}^{-1}\left(-x+iy,-\phi;\sve{-\xi_1}{\xi_2},-\zeta\right)=\left(-x+iy,-\phi;\sve{-\xi_2}{\xi_1},-\zeta-1\right),
    \end{align*}
    and similarly for the action of $\gamma_1^{-1},\ldots,\gamma_5^{-1}$ on $\left(x+iy,\phi;\sve{\xi_1}{\xi_2},\zeta\right)$.
    We see that when we apply each of the generators $\gamma\in\{\gamma_1^{\pm1},\gamma_2^{\pm1},\gamma_3^{\pm1},\gamma_5^{\pm1}\}$ to $\left(x+iy,\phi;\sve{\xi_1}{\xi_2},\zeta\right)$ (in order to translate it to the fundamental domain $\mathscr{F}_\Gamma$) and the corresponding generator $\gamma^{-1}$ to $\left(-x+iy,-\phi;\sve{-\xi_1}{\xi_2},-\zeta\right)$ we obtain two points\footnote{We need to omit sets of $\lambda$-measure zero corresponding to the boundary of the fundamental domain $\mathscr{F}_{\Gamma}$. For instance, under the action of $\gamma_1$ we could get $\frac{-x}{x^2+y^2}=\pm\ha$, or $\phi+\arg(x+iy)=\pm\tfrac{\pi}{2}$.} in the fundamental domain whose $(x,\phi,\xi_1,\zeta)$-coordinates are opposite and whose $(y,\xi_2)$-coordinates are the same.  The same happens when we apply $\gamma_4$ to both points $\left(\pm x+iy,\pm\phi;\sve{\pm\xi_1}{\xi_2},\pm\zeta\right)$. Our claim is therefore proven.

    For every $\tau$  the group elements $\gamma_{\pm}(x,\tau)$ needed to translate $(\pm x+ie^{-\tau},0;\sve{\pm \alpha}{-\beta},0)$  into the fundamental domain $\mathscr{F}_\Gamma$ will be related as discussed above. Therefore the point $\gamma_{-}\left(I;\sve{-\xi_1}{\xi_2},0\right)\Psi^{-x}\Phi^{\tau}$ is determined by the point $\gamma_{+}\left(I;\sve{\xi_1}{\xi_2},0\right)\Psi^{x}\Phi^{\tau}$ via the transformation $\mathscr{F}_\Gamma\to\mathscr{F}_\Gamma$, 
    \begin{align*}
    \left(x+iy,\phi;\sve{\xi}{\xi_2},\zeta\right)\mapsto\left(-x+iy,-\phi;\sve{-\xi_1}{\xi_2},-\zeta\right).
\end{align*}
Since equidistribution of the each horocycle lift as $\tau\ti$ occurs with respect to the measure $\mu_{\GamG}$, we obtain that the joint equidistribution holds with respect to the measure $\hat\mu_{\GamG\times\GamG}$.
f
\end{proof}
\begin{theorem}\label{thm-joint-equidistribution-family}
    Let $\lambda$ be a Borel probability  measure on $\R$ which is absolutely continuous with respect to the Lebesgue measure. Let $F:\R\times\GamG\times\GamG\to\R$ be a bounded continuous function, end let  $F_\tau:\R\times\GamG\times\GamG\to\R$ be a family of uniformly bounded, continuous functions so that $B_\tau\to B$ uniformly on compacta as $\tau\ti$. Let $\vecxi\notin\Q^2$. Then we have 
    \begin{align*}
        &\lim_{\tau\ti}\int_{\R}F_\tau\!\left(x, \left([I,0];\sve{\xi_1}{\xi_2},0\right)\Psi^{x}\Phi^\tau,\left([I,0];\sve{-\xi_1}{\xi_2},0\right)\Psi^{-x}\Phi^\tau\right)\de\lambda(x)=\\
        &=\int_{\R\times\GamG\times\GamG} F(x,\Gamma g_1,\Gamma g_2)\,\de\lambda(x)\,\de\hat\mu_{\GamG\times\GamG}(g_1,g_2)
    \end{align*}
\end{theorem}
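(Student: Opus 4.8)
The plan is to reformulate the assertion as the weak convergence, as $\tau\ti$, of the push-forward probability measures
\[
\tilde\nu_\tau:=\bigl(x\longmapsto(x,u_+(x,\tau),u_-(x,\tau))\bigr)_*\lambda\qquad\text{on }\ \R\times\GamG\times\GamG,
\]
where $u_\pm(x,\tau):=\left([I,0];\sve{\pm\xi_1}{\xi_2},0\right)\Psi^{\pm x}\Phi^{\tau}$, towards $\lambda\otimes\hat\mu_{\GamG\times\GamG}$, and to deduce this from Theorem \ref{thm-joint-equidistribution}. I would proceed in three steps: (i) remove the $\tau$-dependence of the test functions $F_\tau$ by a uniform tightness argument, reducing to a fixed bounded continuous $F$; (ii) prove $\tilde\nu_\tau\Rightarrow\lambda\otimes\hat\mu_{\GamG\times\GamG}$ first for product test functions $(x,\Gamma g_1,\Gamma g_2)\mapsto\psi(x)\,G(\Gamma g_1,\Gamma g_2)$, which is exactly where Theorem \ref{thm-joint-equidistribution} enters; (iii) pass from such products to arbitrary $h\in C_0(\R\times\GamG\times\GamG)$ by Stone--Weierstrass, and then to $h\in C_b$ by tightness.

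For step (i): the $\GamG\times\GamG$-marginal of $\tilde\nu_\tau$ is the measure $\bigl(u_+(\cdot,\tau),u_-(\cdot,\tau)\bigr)_*\lambda$, which by Theorem \ref{thm-joint-equidistribution} converges weakly to the \emph{probability} measure $\hat\mu_{\GamG\times\GamG}$; hence this family is uniformly tight by Prokhorov's theorem, and since the $\R$-marginals are all equal to $\lambda$, the family $\{\tilde\nu_\tau\}$ is uniformly tight on $\R\times\GamG\times\GamG$. Put $M:=\sup_\tau\|F_\tau\|_\infty<\infty$. Given $\varepsilon>0$, choose a compact $\mathcal{K}\subset\R\times\GamG\times\GamG$ with $\tilde\nu_\tau(\mathcal{K}^{\mathrm c})<\varepsilon$ for all $\tau$. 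On the compact set $\mathcal{K}$ the convergence $F_\tau\to F$ is uniform, so $\sup_{\mathcal K}|F_\tau-F|<\varepsilon$ for $\tau$ large, while off $\mathcal{K}$ we use $|F_\tau-F|\le 2M$. Therefore $\bigl|\int F_\tau\,\de\tilde\nu_\tau-\int F\,\de\tilde\nu_\tau\bigr|\le\varepsilon+2M\varepsilon$ for $\tau$ large, and as $\varepsilon$ is arbitrary it suffices to show $\int F\,\de\tilde\nu_\tau\to\int F\,\de(\lambda\otimes\hat\mu_{\GamG\times\GamG})$ for a fixed $F\in C_b$.

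For step (ii): let $\psi\in C_0(\R)$ and $G\in C_0(\GamG\times\GamG)$, and split $\psi=\psi^+-\psi^-$ with $\psi^\pm:=\max(\pm\psi,0)\ge0$ continuous. When $c^\pm:=\int_\R\psi^\pm\,\de\lambda>0$, the normalized measure $(c^\pm)^{-1}\psi^\pm\,\de\lambda$ is a Borel probability measure absolutely continuous with respect to Lebesgue measure, so Theorem \ref{thm-joint-equidistribution} applied to $G$ with that measure gives $\int_\R\psi^\pm(x)\,G(u_+(x,\tau),u_-(x,\tau))\,\de\lambda(x)\to c^\pm\int_{\GamG\times\GamG}G\,\de\hat\mu_{\GamG\times\GamG}$ as $\tau\ti$ (the case $c^\pm=0$ being trivial). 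Subtracting the two limits yields $\int\psi(x)\,G\,\de\tilde\nu_\tau\to\bigl(\int_\R\psi\,\de\lambda\bigr)\bigl(\int_{\GamG\times\GamG}G\,\de\hat\mu_{\GamG\times\GamG}\bigr)=\int\psi(x)\,G(\Gamma g_1,\Gamma g_2)\,\de(\lambda\otimes\hat\mu_{\GamG\times\GamG})$. For step (iii): the linear span of the products $(x,\Gamma g_1,\Gamma g_2)\mapsto\psi(x)\,G(\Gamma g_1,\Gamma g_2)$, with $\psi\in C_0(\R)$ and $G\in C_0(\GamG\times\GamG)$, is a subalgebra of $C_0(\R\times\GamG\times\GamG)$ that separates points and vanishes nowhere, hence is dense by the Stone--Weierstrass theorem; so $\int h\,\de\tilde\nu_\tau\to\int h\,\de(\lambda\otimes\hat\mu_{\GamG\times\GamG})$ for every $h\in C_0$. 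A routine cutoff using the uniform tightness from step (i) upgrades this to every $h\in C_b$, giving the theorem.

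The one genuinely delicate point is the non-compactness of $\GamG\times\GamG$: both the upgrade from $C_0$ to $C_b$ in step (iii) and the uniform-in-$\tau$ handling of the family $F_\tau$ in step (i) rely on the absence of escape of mass to the cusps. This comes for free here, since the limiting object $\hat\mu_{\GamG\times\GamG}$ of Theorem \ref{thm-joint-equidistribution} is a probability measure, so Prokhorov's theorem delivers the required uniform tightness; all the other steps are soft.
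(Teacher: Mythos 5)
Your proposal is correct and is essentially the paper's approach: the paper's proof consists of invoking the "standard argument" of Theorem 5.3 in Marklof--Str\"ombergsson to deduce the result from Theorem \ref{thm-joint-equidistribution}, and your three steps (uniform tightness to replace $F_\tau$ by $F$, the normalized-density trick $(c^\pm)^{-1}\psi^\pm\,\de\lambda$ to handle product test functions, and Stone--Weierstrass plus a cutoff to pass from products to general $C_b$ functions) are precisely that standard argument written out in full.
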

\begin{proof}
    Using a standard argument (see e.g. Theorem 5.3 in \cite{Marklof-Strombergsson-Annals-2010}), the result follows directly from  Theorem \ref{thm-joint-equidistribution}.
\end{proof}

If $\underline{f}=(f_1,\ldots,f_k)\in(\LtR)^k$ and $\underline{T}={T_1,\ldots,T_k}\in(\mathrm{L}^2(\R^2))^k$, we use the notations 
\begin{align*}
    \Theta_{\underline{f}}(g)&=\left(\Theta_{f_1}(g),\ldots,\Theta_{f_k}(g)\right),\\
    \Theta^{(2)}_{\underline{T}}(g_1,g_2)&=\left(\Theta^{(2)}_{T_1}(g_1,g_2),\ldots,\Theta^{(2)}_{T_k}(g_1,g_2)\right).
\end{align*}
For fixed $\alpha,\beta$ we will also write the elements of $G$
\begin{align*}
    g^\tau_{\alpha,\beta}(x)&=\left(I;\sve{\alpha+\beta x}{0},0\right)\Psi^x\Phi^\tau=\left(x+i e^{-\tau},0;\sve{\alpha+\beta x}{0},0\right)\\
    \overline{g}^\tau_{\alpha,\beta}(x)&=\left(I;\sve{-\alpha-\beta x}{0},0\right)\Psi^{-x}\Phi^\tau=\left(-x+i e^{-\tau},0;\sve{-\alpha-\beta x}{0},0\right),
    \end{align*} where $x,\tau\in\R$.

\begin{theorem}\label{thm-limiting-distribution-of-finitely-regular-thetas}
    Fix $m\geq1$ and fix $2m$ regular functions $f_1,\ldots, f_m\in\SEtaR$ with $\eta>1$ and $T_1,\ldots,T_m\in\mathcal{S}_{\eta_1,\eta_2}(\R^2)$ with $\eta_1,\eta_2>1$.
    Let $\lambda$ be a Borel probability  measure on $\R$ which is absolutely continuous with respect to the Lebesgue measure. Let $x$ be randomly distributed according to $\lambda$. Let $(\alpha,\beta)\notin\Q^2$. Then the $\C^{2m}$-valued random variables
    \begin{align*}
\left(\Theta_{\underline{f}}\left(\Gamma g^\tau_{\alpha,\beta}(x)\right),\Theta_{\underline{T}}^{(2)}\left(\Gamma \overline{g}^\tau_{\alpha,\beta}(x),\Gamma g^\tau_{\alpha,\beta}(x)\right)\right)_{\ell=1}^m
    \end{align*}
    have a limit in distribution as $\tau\ti$, where  $\underline f=(f_1,\ldots,f_m)$ and  $\underline{T}=(T_1,\ldots,T_m)$. More precisely,  there exists a probability measure $\mathbb{P}^{\mathrm{reg}}_{\underline{f},\underline{T}}$
        on $\C^{2m}$ such that for every bounded continuous function $B:\C^{2m}\to\R$, we have
    \begin{align}
\lim_{\tau\ti}\int_{\R}B\Big{(}\Theta_{\underline{f}}\left(\Gamma g^\tau_{\alpha,\beta}(x)\right),\Theta_{\underline{T}}^{(2)}\left(\Gamma\overline g^\tau_{\alpha,\beta}(x),\Gamma g^\tau_{\alpha,\beta}(x)\right)
\Big{)}\de\lambda(x)=
\int_{\C^{2m}}B\,\de\mathbb{P}^{\mathrm{reg}}_{\underline{f},\underline{T}}.\label{statement-joint-equidistribution-2m-regular-functions-1}
    \end{align}
    The limiting law $\mathbb{P}^{\mathrm{reg}}_{\underline{f},\underline{T}}$ does not depend on $\alpha,\beta, \lambda$. 
\end{theorem}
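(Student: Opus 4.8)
The plan is to reduce the statement to the joint equidistribution theorem (Theorem~\ref{thm-joint-equidistribution}, or rather its family version Theorem~\ref{thm-joint-equidistribution-family}) by exhibiting the $\C^{2m}$-valued random variable as a continuous functional of the pair of horocycle lifts $\bigl(\Gamma\overline g^\tau_{\alpha,\beta}(x),\Gamma g^\tau_{\alpha,\beta}(x)\bigr)$ in $\GamG\times\GamG$. First I would recall that since $f_1,\dots,f_m$ are regular (in $\SEtaR$, $\eta>1$) and $T_1,\dots,T_m$ are regular (in $\mathcal S_{\eta_1,\eta_2}(\R^2)$, $\eta_1,\eta_2>1$), the functions $\Gamma g\mapsto \Theta_{f_\ell}(\Gamma g)$ and $(\Gamma g_1,\Gamma g_2)\mapsto \Theta^{(2)}_{T_\ell}(\Gamma g_1,\Gamma g_2)$ are well defined (by the $\Gamma$-invariance in Section~\ref{section:Gamma-invariance} and Proposition~\ref{prop:invariance-under-gamma-gamma}) and \emph{continuous} on $\GamG$ resp.\ $\GamG\times\GamG$; this continuity is exactly what the absolute convergence guaranteed by regularity buys, together with the local uniformity of the $\phi$-transforms. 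Hence the map
\[
 \Psi_{\underline f,\underline T}:(\Gamma g_1,\Gamma g_2)\longmapsto\bigl(\Theta_{\underline f}(\Gamma g_2),\Theta^{(2)}_{\underline T}(\Gamma g_1,\Gamma g_2)\bigr)\in\C^{2m}
\]
is continuous, and for any bounded continuous $B:\C^{2m}\to\R$ the composition $B\circ\Psi_{\underline f,\underline T}$ is a bounded continuous function on $\GamG\times\GamG$.

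Next I would apply Theorem~\ref{thm-joint-equidistribution} to the function $F=B\circ\Psi_{\underline f,\underline T}$ with the specific horocycle lifts appearing in the statement: setting $\tau=2\log N$ is irrelevant here since we take $\tau\ti$ directly, and with $\vecxi=(\alpha,\beta)\notin\Q^2$ the theorem gives
\[
 \lim_{\tau\ti}\int_\R B\bigl(\Theta_{\underline f}(\Gamma g^\tau_{\alpha,\beta}(x)),\Theta^{(2)}_{\underline T}(\Gamma\overline g^\tau_{\alpha,\beta}(x),\Gamma g^\tau_{\alpha,\beta}(x))\bigr)\,\de\lambda(x)
 =\int_{\GamG\times\GamG} B\circ\Psi_{\underline f,\underline T}\,\de\hat\mu_{\GamG\times\GamG}.
\]
The right-hand side does not depend on $\alpha,\beta$ or $\lambda$ because $\hat\mu_{\GamG\times\GamG}$ does not; this is exactly the content of the limiting law $\mathbb P^{\mathrm{reg}}_{\underline f,\underline T}$ being the pushforward $(\Psi_{\underline f,\underline T})_*\hat\mu_{\GamG\times\GamG}$, which is a probability measure on $\C^{2m}$ since $\Psi_{\underline f,\underline T}$ is Borel measurable. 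This establishes \eqref{statement-joint-equidistribution-2m-regular-functions-1} with $\mathbb P^{\mathrm{reg}}_{\underline f,\underline T}=(\Psi_{\underline f,\underline T})_*\hat\mu_{\GamG\times\GamG}$ and the stated independence.

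The one point requiring care — and I expect it to be the main (though modest) obstacle — is verifying the \emph{continuity} of $\Gamma g\mapsto\Theta_{f}(\Gamma g)$ and $(\Gamma g_1,\Gamma g_2)\mapsto\Theta^{(2)}_T(\Gamma g_1,\Gamma g_2)$ on the respective homogeneous spaces for regular $f,T$, rather than merely pointwise well-definedness. For $\Theta_f$ this is implicit in \cite{Cellarosi-Marklof} (the series \eqref{def-Theta_f(g)} converges absolutely and locally uniformly, because $\kappa_\eta(f)<\infty$ controls all $f_\phi$ uniformly and the geodesic/Heisenberg coordinates enter continuously), and one should note that $g^\tau_{\alpha,\beta}(x)$ and $\overline g^\tau_{\alpha,\beta}(x)$ lie in a fixed compact range of $y=e^{-\tau}$ once $\tau$ is bounded below, so no cusp issues arise along the approximating sequence. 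For $\Theta^{(2)}_T$ the same argument applies using Lemma~\ref{lemma:growth-in-cusp-second-order} and Definition~\ref{def:Theta^(2)}; alternatively one invokes the relation $\Theta^{(2)}_F(g_1,g_2)=y_1^{1/4}\e{\zeta_2-\xi_{2,1}\xi_{2,2}}\sum_{n_2}\e{\tfrac12(n_2-\xi_{2,2})^2x_2+n_2\xi_{2,1}}\Theta_{f_{g_2,n_2}}(g_1)$ from the proof of Proposition~\ref{prop:invariance-under-gamma-gamma} and dominated convergence. With continuity in hand, everything else is a direct application of the portmanteau/pushforward formalism combined with Theorem~\ref{thm-joint-equidistribution}. (If one wants $F_\tau$-type flexibility — e.g.\ to absorb lower-order terms later — one uses Theorem~\ref{thm-joint-equidistribution-family} in place of Theorem~\ref{thm-joint-equidistribution}; for the present statement the plain version suffices.)
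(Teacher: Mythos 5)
There is a genuine gap. Your plan applies Theorem \ref{thm-joint-equidistribution} \emph{directly} to the lifts in the statement, but those lifts are $g^\tau_{\alpha,\beta}(x)=\big(I;\sve{\alpha+\beta x}{0},0\big)\Psi^x\Phi^\tau$: their Heisenberg component $\sve{\alpha+\beta x}{0}$ depends on the integration variable $x$ whenever $\beta\neq0$ (which is unavoidable under the hypothesis $(\alpha,\beta)\notin\Q^2$, e.g.\ $\alpha\in\Q$, $\beta\notin\Q$). Theorem \ref{thm-joint-equidistribution} only covers lifts $\Gamma\big(I;\sve{\xi_1}{\xi_2},0\big)\Psi^{\pm x}\Phi^\tau$ with a \emph{fixed} $\vecxi\notin\Q^2$, so it does not apply to the objects you feed it. The paper's proof spends most of its effort precisely on this point: using the group law one writes $\big(I;\sve{\pm\alpha\pm\beta x}{0},0\big)\Psi^{\pm x}\Phi^{\tau}=\big(I;\sve{\pm\alpha}{-\beta},0\big)\Psi^{\pm x}\Phi^{\tau}\big(I;\sve{0}{e^{-\tau/2}\beta},\mp\tha\beta(\alpha+x\beta)\big)$, and the right-hand factor is absorbed into the test functions via the Schr\"odinger--Weil representation, producing modified functions $\tilde f_{\ell,x;\tau}$, $\tilde T_{\ell;\tau}$ that depend on $x$ and $\tau$. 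This is exactly why the family version, Theorem \ref{thm-joint-equidistribution-family} (with test functions $F_\tau(x,\cdot,\cdot)$ converging on compacta and carrying explicit $x$-dependence), is \emph{required}, not optional as your parenthetical remark suggests; note also that the correct choice after this reduction is $\vecxi=(\alpha,-\beta)$, not $(\alpha,\beta)$.

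A second, smaller omission: even after passing to the limit, the limiting test function retains an $x$-dependent phase $\e{\tha\beta(\alpha+x\beta)}$ in front of each $\Theta_{f_\ell}$, so the limit is a priori an integral over $\R\times\GamG\times\GamG$ rather than a pushforward of $\hat\mu_{\GamG\times\GamG}$ alone. The paper removes this dependence by exploiting the invariance of $\hat\mu_{\GamG\times\GamG}$ under translation in the $\zeta$-coordinate, using that on the support of $\hat\mu$ the $\zeta$-components of $g_1$ and $g_2$ are opposite (so the shift cancels in $\Theta^{(2)}_{T_\ell}(\Gamma g_2,\Gamma g_1)$ while normalizing $\Theta_{\tilde f_{\ell,x}}$). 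Without this step you do not obtain a limiting law on $\C^{2m}$ independent of $\alpha,\beta,\lambda$. Your observations about continuity of $\Theta_f$ and $\Theta^{(2)}_T$ for regular $f,T$ and about the pushforward formalism are correct and do appear (implicitly) in the paper, but they are not where the difficulty lies.
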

\begin{proof}
Using the group law on $G$, we can write 
\begin{align}\label{transforming-the-horocycle-lifts}
    &\left(I;\sve{\pm\alpha\pm\beta x}{0},0\right)\Psi^{\pm x}\Phi^{\tau}=\left(I;\sve{\pm\alpha}{-\beta},0\right)\Psi^{\pm x}\Phi^{\tau}\left(I;\sve{0}{e^{-\tau/2}\beta},\mp \tha\beta(\alpha+x\beta)\right).
\end{align}
Now we can use the representation-theoretical definitions of $\Theta_{f_\ell}$ and $\Theta^{(2)}_{T_\ell}$ in terms of  $R:G\to\mathcal{U}(\LtR)$ and of $R^{(2)}:G\times G\to \mathcal{U}(\mathrm{L}^2(\R^2))$ respectively. For each $1\leq\ell\leq m$, the identity \eqref{transforming-the-horocycle-lifts} implies
\begin{align}\label{Theta_f_with_transformed_lift-1}
\Theta_{f_\ell}\!\left(\Gamma g^\tau_{\alpha,\beta}(x)\right)=\Theta_{\tilde{f}_{\ell,\tau,x}}\!\left((I;\sve{\alpha}{-\beta},0)\Psi^{x}\Phi^{\tau}\right)
\end{align}
where $\tilde{f}_{\ell,x,\tau}=R\!\left(\left(I;\sve{0}{e^{-\tau/2}\beta},\tha\beta(\alpha+x\beta)\right)\right)f_\ell\in\SEtaR$ and similarly
\begin{align}
    &\Theta^{(2)}_{T_\ell}\!\left(\Gamma\overline g^\tau_{\alpha,\beta}, \Gamma g^\tau_{\alpha,\beta}\right)=
\Theta^{(2)}_{\tilde{T}_{\ell;\tau}}\!\left(\Gamma (I;\sve{-\alpha}{-\beta},0)\Psi^{-x}\Phi^{2\log N},\Gamma (I;\sve{\alpha}{-\beta},0)\Psi^{x}\Phi^{2\log N} \right),\label{Theta_f_with_transformed_lift-2}
\end{align}
where $\tilde T_{\ell;\tau}=R^{(2)}\!\left(\left(I;\sve{0}{e^{-\tau/2}\beta},-\tha\beta(\alpha+x\beta)\right),\left(I;\sve{0}{e^{-\tau/2}\beta}, \tha\beta(\alpha+x\beta)\right)\right){T}_\ell\in\mathcal{S}_{\eta_1,\eta_2}(\R^2)$. 
Using \eqref{representation-Heisenberg-2} and \eqref{representation-Heisenberg-3} we can write
\begin{align}
\tilde{f}_{\ell,x;\tau}(w)&=\e{\tha\beta(
\alpha+x\beta)}f_\ell(w-e^{-\tau/2}\beta),\label{function-tilde_f_tau_x}\\
    \tilde{T}_{\ell;\tau}(w_1,w_2)&=T\!\left(w_1-e^{-\tau/2}\beta,w_2-e^{-\tau/2}\beta\right).\label{function-tilde_T_tau}
\end{align} 
Let us set 
\begin{align}F_\tau(x,\Gamma g_1,\Gamma g_2)&=B\!\left(\Theta_{(\tilde{f}_{1,x;\tau},\ldots,\tilde{f}_{m,x;\tau})}(\Gamma g_1),\Theta^{(2)}_{(\tilde{T}_{1;\tau},\ldots, \tilde{T}_{m;\tau})}(\Gamma g_2,\Gamma g_1)\right)\label{def-F_tau_x-in-limit-theorem}\\
    F(x,\Gamma g_1,\Gamma g_2)&=B\!\left(\Theta_{(\tilde{f}_{1,x},\ldots,\tilde{f}_{m,x})}(\Gamma g_1),\Theta^{(2)}_{(T_{1},\ldots,T_{m})}(\Gamma g_2,\Gamma g_1)\right)\label{def-F_x-in-limit-theorem}
\end{align}
where $\tilde{f}_{\ell,x}(w)=\e{\tha\beta(\alpha+x\beta)}f_\ell(w)$. Since \eqref{function-tilde_f_tau_x}-\eqref{function-tilde_T_tau} imply that $\tilde{f}_{\ell,x;\tau}\to\tilde{f}_{\ell,x}$ and $\tilde{T}_{\ell;\tau}\to T_\ell$ as $\tau\ti$ for each $1\leq \ell\leq m$, we obtain that $F_\tau\to F$ on compacta as $\tau\ti$.
We now use \eqref{Theta_f_with_transformed_lift-1}-\eqref{Theta_f_with_transformed_lift-2} and \eqref{def-F_tau_x-in-limit-theorem} to  rewrite the limit in \eqref{statement-joint-equidistribution-2m-regular-functions-1} as
\begin{align}
    \lim_{\tau\to\infty}\int_{\R}F_{\tau}\!\left(x,\Gamma(I;\sve{\alpha}{-\beta},0)\Psi^x\Phi^\tau,\Gamma(I;\sve{-\alpha}{-\beta},0)\Psi^{-x}\Phi^{\tau}\right)\de\lambda(x).
\end{align}
We apply Theorem \ref{thm-joint-equidistribution-family} with $\vecxi=(\alpha,-\beta)\notin\Q^2$ and obtain that this limit equals
\begin{align}
    \int_{\R\times\GamG\times\GamG}F(x,\Gamma g_1,\Gamma g_2)\,\de\lambda(x)\,\de\hat\mu_{\GamG\times\GamG}(g_1,g_2).\label{integral-after-applying-limit-theorem-before-removing-x-dependence}
\end{align}
Note that the pre-factor $\e{\ha\beta(\alpha+x\beta)}$ in the definition of $\tilde{f}_{\ell,x}$ is the same for each $1\leq\ell\leq m$.
Recall that the measure $\hat\mu_{\GamG\times\GamG}$ is defined in \eqref{def-hat-mu-2} and projects onto the Haar measure $\mu_{\GamG}$ via $(\Gamma g_1,\Gamma g_2)\mapsto\Gamma g_1$. The invariance of $\mu_{\GamG}$ under multiplication by $(I;\sve{0}{0},-\ha\beta(\alpha+x\beta))$ allows us to remove the $x$-dependence from each $\Theta_{\tilde{f}_{\ell,x}}$ without introducing any further $x$-dependence any of the $\Theta_{\tilde{T}_\ell}$ (due to the fact that the component of $g_1$ in the $\zeta$-direction is opposite to that of $g_2$). Therefore \eqref{integral-after-applying-limit-theorem-before-removing-x-dependence} equals
\begin{align}
    \int_{\GamG\times\GamG}B\!\left(\Theta_{\underline{f}}(\Gamma g_1),\Theta^{(2)}_{\underline{T}}(\Gamma g_2,\Gamma g_1) \right)\de\hat\mu_{\GamG\times\GamG}\label{integral-after-applying-limit-theorem-after-removing-x-dependence}
\end{align}
To obtain \eqref{statement-joint-equidistribution-2m-regular-functions-1}, we simply push forward the measure $\hat\mu_{\GamG\times\GamG}$  via the function $H_{\underline{f},\underline{T}}:\GamG\times\GamG\to\C^{2m}$, 
\begin{align*}
    H_{\underline{f},\underline{T}}(\Gamma g_1,\Gamma g_2)=\left(\Theta_{\underline{f}}(\Gamma g_1),\Theta^{(2)}_{\underline{T}}(\Gamma g_2,\Gamma g_1) \right).
\end{align*}
That is, \eqref{integral-after-applying-limit-theorem-after-removing-x-dependence} equals the right-hand-side of \eqref{statement-joint-equidistribution-2m-regular-functions-1} with $\mathbb{P}^{\mathrm{reg}}_{\underline{f},\underline{T}}=(H_{\underline{f},\underline{T}})_{*}\hat\mu_{\GamG\times\GamG}$.
\end{proof}
We are now able to prove the second ingredient necessary for proving Theorem \ref{theorem:main}.
\begin{theorem}[Existence of finite dimensional limiting distributions of $\mathbf{X}_N$]\label{thm-existence-of-fdd-for-mathbfX}
Fix $k\geq1$ and fix $\underline{s}=(s_1,\ldots, s_k)$, $\underline{t}=(t_1, \ldots, t_k)\in[0,T]^k$ with $s_\ell<t_\ell$ for each $1\leq \ell\leq k$. Let $(\alpha,\beta)\notin\Q^2$. Let $\lambda$ be a Borel probability  measure on $\R$ which is absolutely continuous with respect to the Lebesgue measure. Let $x$ be randomly distributed according to $\lambda$. Then the $\R^2\times\R^{2\times2}$-valued random variables
\begin{align}
    \mathbf{X}_N(x;\alpha,\beta;s_1,t_1), \ldots, \mathbf{X}_N(x;\alpha,\beta;s_k,t_k)
\end{align}
have a joint limiting distribution as $N\ti$. More precisely, there exists a probability measure $\mathbb{P}_{s_1,t_1;\ldots;s_k,t_k}$ on $(\R^2\times\R^{2\times2})^k$ such that for every bounded continuous function $B:(\R^2\times\R^{2\times2})^k\to\R$, we have 
\begin{align}
    \lim_{N\to\infty}\int_{\R} B&\!\left(\mathbf{X}_N(x;\alpha,\beta; s_1,t_1),
    \ldots, \mathbf{X}_N(x;\alpha,\beta;s_k,t_k)\right)\de\lambda(x)\\
    &=\int_{(\R^2\times\R^{2\times 2})^k}B\,\de\mathbb{P}_{s_1,t_1;\ldots;s_k,t_k}.
\end{align}
The limiting law $\mathbb{P}_{s_1,t_1;\ldots;s_k,t_k}$ does not depend on $\alpha,\beta$ and on $\lambda$.
\end{theorem}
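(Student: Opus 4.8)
The plan is to reduce Theorem~\ref{thm-existence-of-fdd-for-mathbfX} to the joint limiting distribution of the pairs $\big(\mathcal{L}_N(x;\alpha,\beta;s_\ell,t_\ell),\mathcal{J}_N(x;\alpha,\beta;s_\ell,t_\ell)\big)$, $1\le\ell\le k$, and then to obtain that from Theorem~\ref{thm-limiting-distribution-of-finitely-regular-thetas} by a regularization argument. First I would invoke Lemma~\ref{lemma:grid-approximation-with-continuous-time} to replace $s_\ell,t_\ell$ by $m_\ell/N,n_\ell/N$ (with $m_\ell=\lfloor s_\ell N\rfloor$, $n_\ell=\lfloor t_\ell N\rfloor$) at the cost of an error $O(\log N/\sqrt N)$; then, by \eqref{rewriting_X_N^11}--\eqref{rewriting_X_N^22} together with Lemma~\ref{lem-B_N^ij_and_M_N-tend-to-0-in-probability}, each $\mathbb{X}_N^{i,j}(m_\ell/N,n_\ell/N)$ equals the real or imaginary part of a fixed quadratic polynomial in $\big(\mathcal{L}_N(s_\ell,t_\ell),\mathcal{J}_N(s_\ell,t_\ell)\big)$ up to a term tending to $0$ in probability, while $X_N(t_\ell)-X_N(s_\ell)$ agrees with $\mathcal{L}_N(s_\ell,t_\ell)$ up to interpolation terms that also vanish in probability. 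Thus $\big(\mathbf{X}_N(x;\alpha,\beta;s_\ell,t_\ell)\big)_{\ell=1}^k=\Psi\big((\mathcal{L}_N(s_\ell,t_\ell),\mathcal{J}_N(s_\ell,t_\ell))_{\ell=1}^k\big)+o_{\mathbb{P}}(1)$ for a fixed continuous map $\Psi\colon(\C\times\C)^k\to(\R^2\times\R^{2\times2})^k$, so by Slutsky's theorem and the continuous mapping theorem it is enough to prove the joint convergence in distribution of $\big((\mathcal{L}_N(s_\ell,t_\ell),\mathcal{J}_N(s_\ell,t_\ell))\big)_{\ell=1}^k$ as $N\ti$.

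Next I would express these quantities along horocycle lifts: by \eqref{L_N(t)-as-Theta_1_(s,t]}, \eqref{writing-J_N-using-horocycles} and Lemma~\ref{lemma:lines-dont-matter} (which replaces $T_{(s_\ell,t_\ell]}$ by $T_{(s_\ell,t_\ell)}$), up to $o_{\mathbb{P}}(1)$ one has
\[
\mathcal{L}_N(s_\ell,t_\ell)=\Theta_{\mathbf{1}_{(s_\ell,t_\ell]}}\!\big(\Gamma g^{2\log N}_{\alpha,\beta}(x)\big),\qquad
\mathcal{J}_N(s_\ell,t_\ell)=\Theta^{(2)}_{T_{(s_\ell,t_\ell)}}\!\big(\Gamma \overline{g}^{2\log N}_{\alpha,\beta}(x),\Gamma g^{2\log N}_{\alpha,\beta}(x)\big).
\]
Since $\mathbf{1}_{(s_\ell,t_\ell]}$ and $T_{(s_\ell,t_\ell)}$ are not regular, I would decompose them as in Sections~\ref{section-Theta_chi} and~\ref{section:defining-theta-T} (using the scaling relation \eqref{Theta-2-general-triangle} to reduce an arbitrary triangle to $\mathfrak{T}_{(0,1)}$): up to functions supported on line segments, each is a sum of a smooth regular piece and finitely many ``corner''/``edge'' functions, each of the latter being an infinite dyadic series of smooth compactly supported — hence Schwartz, hence regular, possibly after a shear — functions. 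Truncating every dyadic series at a common level $J_0$, and absorbing the geodesic translations $\Phi^{-j\log4}$ and the remaining $x$-independent group elements into the functions via $\Theta_f(\Gamma g h)=\Theta_{R(h)f}(\Gamma g)$, the truncated parts of all the $\mathcal{L}_N(s_\ell,t_\ell)$ and $\mathcal{J}_N(s_\ell,t_\ell)$ become finite linear combinations of $\Theta$'s and $\Theta^{(2)}$'s of finitely many regular functions evaluated at the single pair of horocycle lifts $\big(\Gamma \overline{g}^{2\log N}_{\alpha,\beta}(x),\Gamma g^{2\log N}_{\alpha,\beta}(x)\big)$; Theorem~\ref{thm-limiting-distribution-of-finitely-regular-thetas} with $\tau=2\log N$ then produces, for each fixed $J_0$, a joint limiting law for this truncated vector.

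It then remains to pass to the limit $J_0\to\infty$. The line-segment contributions vanish in probability by Lemma~\ref{lemma:lines-dont-matter}, and the dyadic tails beyond level $J_0$ vanish in probability \emph{uniformly in $N$} as $J_0\to\infty$: this follows by summing the uniform tail bounds of Proposition~\ref{prop:3.17-simplified} (for the interval pieces) and of Propositions~\ref{prop:bound-unshear-building-blocks}, \ref{prop:bound-shear-building-blocks}, \ref{prop:bound-for-second-order-smooth} and~\ref{prop:final-tightness-for-T_(0,1]} (for the triangle pieces) against the geometric weights $2^{-j/2}$, respectively $2^{-j/2}2^{-k/2}$. Combining this uniform approximation with the tightness of $\mathbf{X}_N$ on $\mathscr C_g^\gamma$ from Theorem~\ref{theorem:tightness-second-order} (which yields tightness of the finite-dimensional distributions), a standard three-$\varepsilon$ argument over bounded Lipschitz test functions shows that all subsequential weak limits of the laws of $\big(\mathbf{X}_N(x;\alpha,\beta;s_\ell,t_\ell)\big)_{\ell=1}^k$ coincide with $\lim_{J_0\to\infty}$ of the push-forwards under $\Psi$ of the truncated limiting laws; hence the full limit $\mathbb{P}_{s_1,t_1;\ldots;s_k,t_k}$ exists. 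Its independence of $\alpha,\beta$ and $\lambda$ is inherited from the corresponding independence of $\mathbb{P}^{\mathrm{reg}}_{\underline{f},\underline{T}}$ in Theorem~\ref{thm-limiting-distribution-of-finitely-regular-thetas}, since the decompositions, truncations and error estimates used above do not involve those parameters.

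The main obstacle I anticipate is the uniform-in-$N$ control of the regularization error for the \emph{triangle} indicators $T_{(s_\ell,t_\ell)}$: one must carefully organize the seven-piece corner/edge/smooth decomposition — in particular the sheared corners $\mathfrak{C}_{0,0},\mathfrak{C}_{1,1}$ and the sheared edge $\mathfrak{L}_d$, which are analyzed through the Marklof--Welsh group of Section~\ref{subsection:theta-theta-tilde} — check that the finitely many truncated dyadic blocks really are regular functions on $\R^2$ covered by Theorem~\ref{thm-limiting-distribution-of-finitely-regular-thetas}, and verify that the tails together with the line-segment pieces contribute an error that is small in probability uniformly in $N$. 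All the substantial analytic inputs for this step, namely the joint equidistribution theorem and the uniform tail bounds for rank-$2$ theta sums with non-smooth, non-rectangular cutoffs, are already established, so what is left is largely bookkeeping.
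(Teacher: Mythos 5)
Your proposal is correct and follows essentially the same route as the paper: reduce $\mathbf{X}_N$ to $(\mathcal{L}_N,\mathcal{J}_N)$ via Slutsky and continuous mapping, write these as $\Theta_{\chi_{(s,t]}}$ and $\Theta^{(2)}_{T_{(s,t]}}$ along the pair of horocycle lifts, approximate the sharp cutoffs by regular functions through the truncated dyadic corner/edge decomposition with error small in probability uniformly in $N$ (this is precisely Lemmas \ref{lemma:make-sharp-functions-smooth-epsilon} and \ref{lemma:approximation-of-triangle-smooth-fdds}), and conclude from Theorem \ref{thm-limiting-distribution-of-finitely-regular-thetas} by the standard approximation argument. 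The only cosmetic difference is that you invoke tightness of $\mathbf{X}_N$ in the final three-$\varepsilon$ step, which is not needed since the uniform-in-$N$ approximation in probability already suffices.
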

As we will see, Theorem \ref{thm-existence-of-fdd-for-mathbfX} follows from Theorem \ref{thm-limiting-distribution-of-finitely-regular-thetas} by means of an approximation argument which uses several lemmata. The strategy is the same as for the proof of Theorem 4.4 in \cite{Cellarosi-Marklof}.

\begin{lemma}\label{lemma:make-sharp-functions-smooth-epsilon}
Let $\lambda$ be a Borel probability measure on $\mathbb R$ absolutely continuous with respect to Lebesgue measure. Let $\varepsilon>0$. Then there exists $F_1,...,F_6\in \mathcal S(\mathbb R^2)$ so that for any $(g_1,g_2)\in  G\times  G$ with $-x_1=x_2=x$, we have 
 \begin{align*}
    \lambda\left(\left\{x\in \mathbb R: \left| \Theta_{\mathfrak C_{0,0}-F_1}^{(2)}(g_1,g_2)\right|>R\right\}\right)&< \frac{\varepsilon}{(1+R)^2}\\
    \lambda\left(\left\{x\in \mathbb R: \left| \Theta_{\mathfrak C_{1,1}-F_2}^{(2)}(g_1,g_2)\right|>R\right\}\right)&< \frac{\varepsilon}{(1+R)^2}\\
 \lambda\left(\left\{x\in \mathbb R: \left| \Theta_{\mathfrak C_{0,1}-F_3}^{(2)}(g_1,g_2)\right|>R\right\}\right)&<\frac{\varepsilon}{(1+R)^2}\\
  \lambda\left(\left\{x\in \mathbb R: \left| \Theta_{\mathfrak{L}_{h}-F_4}^{(2)}(g_1,g_2)\right|>R\right\}\right)&<
  \frac{\varepsilon}{(1+R)^2}\\
    \lambda\left(\left\{x\in \mathbb R: \left| \Theta_{\mathfrak L_v-F_5}^{(2)}(g_1,g_2)\right|>R\right\}\right)&<
    \frac{\varepsilon}{(1+R)^2}\\
 \lambda\left(\left\{x\in \mathbb R: \left| \Theta_{\mathfrak L_d-F_6}^{(2)}(g_1,g_2)\right|>R\right\}\right)&< \frac{\varepsilon}{(1+R)^2}
  \end{align*}
uniformly in $y_1,y_2\leq 1$. 
\end{lemma}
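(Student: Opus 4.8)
The plan is to exploit the explicit dyadic/geometric decompositions developed in Sections~\ref{subsection-dyadic-decompositions}--\ref{subsection:estimates-for-sheared-blocks}, together with the tail bounds for the building blocks, and to control the ``tail'' of each geometric series by a suitable smooth truncation. The key point is that each of the six functions $\mathfrak{C}_{0,0}$, $\mathfrak{C}_{1,1}$, $\mathfrak{C}_{0,1}$, $\mathfrak{L}_h$, $\mathfrak{L}_v$, $\mathfrak{L}_d$ is, by construction (see \eqref{def:TCor}, \eqref{def:TLine}, and the maps to the corners/edges), an affine or sheared image of a finite sum of products $F_{c_1,c_2,c_3}(\cdot)F_{c_1',c_2',c_3'}(\cdot)$ --- hence of a finite sum of double geometric series $\sum_{j,k\geq 0} f(2^j\cdot)\,f(2^k\cdot)$ with $f$ smooth and compactly supported --- plus possibly a function supported on a line segment (of the type handled by Lemma~\ref{lemma:lines-dont-matter}).

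\medskip
\noindent\textbf{Step 1 (Choice of $F_\ell$).} For each of the six functions, say $\mathfrak{C}_{0,0}(w_1,w_2)=\sum_{j,k=0}^\infty f(2^j w_1)f(2^k(w_2-w_1))$ as in \eqref{eq:F-decompose-as-sum-of-fg}, fix a large integer $M=M(\varepsilon)$ and define $F_1$ to be the partial sum $\sum_{j,k=0}^{M} f(2^j w_1)f(2^k(w_2-w_1))$, which is smooth (it is a finite sum of smooth compactly supported functions) and lies in $\mathcal S(\mathbb R^2)$. Then $\mathfrak{C}_{0,0}-F_1=\sum_{(j,k):\,j>M\text{ or }k>M} f(2^j w_1)f(2^k(w_2-w_1))$, the tail of the double geometric series. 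For the edge functions $\mathfrak L_h,\mathfrak L_v,\mathfrak L_d$ one does the same after first splitting off the line-segment piece $T_{\operatorname{Segm}}$ (as in \eqref{L_d-piece1}--\eqref{L_d-piece3} and Remark~\ref{remark:where-line-segments-come-from}); note that $T_{\operatorname{Segm}}$ is \emph{not} absorbed into $F_\ell$ but is instead handled directly by Lemma~\ref{lemma:lines-dont-matter}, which gives a bound $\ll (1+R)^{-4}$ that is more than enough and can be made $<\varepsilon(1+R)^{-2}$ for all $R$ trivially after scaling $R$, or more carefully by noting the extra decay. (Actually the cleanest route: absorb nothing on the line segment; instead observe the line-segment contribution is $\ll(1+R)^{-4}$ uniformly, so it contributes at most $\varepsilon'(1+R)^{-2}$ for $R$ bounded and is negligible for $R$ large; choose $M$ to handle the remaining geometric tail.)

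\medskip
\noindent\textbf{Step 2 (Tail estimate).} By linearity of $F\mapsto\Theta^{(2)}_F$, write $\Theta^{(2)}_{\mathfrak{C}_{0,0}-F_1}$ as the sum over $(j,k)$ with $j>M$ or $k>M$ of $\Theta^{(2)}_{(w_1,w_2)\mapsto f(2^jw_1)f(2^k(w_2-w_1))}$, and apply Proposition~2.1 in \cite{Marklof-Welsh-I} together with the embedding of Subsection~\ref{subsection:theta-theta-tilde} exactly as in the proof of Proposition~\ref{prop:bound-shear-building-blocks}, to obtain the prefactor $2^{-j/2}2^{-k/2}$ and reduce to $\widetilde\Theta^{(2)}_{(w_1,w_2)\mapsto f(w_1)f(w_2)}\bigl((h,g)(\operatorname{id},P_{j,k})\bigr)$. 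One truncates the $j,k$ ranges at $J=\lceil\log_2 y_1^{-1/2}\rceil$, $K=\lceil\log_2 y_2^{-1/2}\rceil$ using Lemma~\ref{lemma:H-bound-for-sheared} (the tail of the series beyond these indices vanishes since $f$ is compactly supported and $\tau^2 y_1,\eta^2 y_2$ leave the support). Then, choosing the slowly-summable weights $\delta_j=C_J/j^2$, $\delta_k'=C_K/k^2$ as in the proof of Proposition~\ref{prop:bound-shear-building-blocks}, a union bound and Proposition~\ref{prop:smooth-f-sheared-lambda-bound} give
\begin{align*}
\lambda\!\left(\left\{x:\left|\Theta^{(2)}_{\mathfrak{C}_{0,0}-F_1}(g_1,g_2)\right|>R\right\}\right)
\ll \sum_{\substack{0\leq j\leq J,\,0\leq k\leq K\\ j>M\text{ or }k>M}}\frac{1}{\bigl(1+2^{j/2}2^{k/2}\delta_j\delta_k' R\bigr)^2}
\ll \frac{1}{(1+R)^2}\sum_{\substack{j>M\text{ or }k>M}}2^{-j}2^{-k}\delta_j^2(\delta_k')^2.
\end{align*}
The remaining double sum over the complementary index set is the tail of a convergent double series, hence can be made smaller than any prescribed $\varepsilon$ by taking $M=M(\varepsilon)$ large; this is uniform in $y_1,y_2\leq 1$ since $J,K$ only enter through the convergent full sum. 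For the non-sheared functions $\mathfrak{C}_{0,1},\mathfrak L_h,\mathfrak L_v$ the same argument applies with the simpler product structure $\Theta^{(2)}_F=\Theta_{F^{(1)}}\Theta_{F^{(2)}}$ and Proposition~\ref{prop:3.17-simplified} (applied to slightly modified $\chi_L,\chi_R$) in place of Proposition~\ref{prop:smooth-f-sheared-lambda-bound}, now truncating single geometric series $\sum_j f(2^j\cdot)=\chi_L$ after $M$ terms.

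\medskip
\noindent\textbf{Main obstacle.} The genuinely delicate point is the uniformity in $y_1,y_2$: the number of effectively contributing indices $(j,k)$ grows like $\log(1/y_1)\log(1/y_2)$, so one must be careful that the $\varepsilon$ one gains from the geometric tail is not eaten by this growth. The resolution is that the weights $2^{-j/2}2^{-k/2}$ make the full double series over \emph{all} $(j,k)\geq 0$ converge, so the tail beyond $\max(j,k)>M$ is bounded by a quantity depending only on $M$ (not on $J,K$, i.e.\ not on $y_1,y_2$); and likewise the squared-weight sum $\sum_{j>M\text{ or }k>M}2^{-j-k}\delta_j^2(\delta_k')^2\to 0$ as $M\to\infty$ independently of the truncation. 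A secondary technical care is needed for the edge functions, where the overlap of supports (the ``bumps'' in Figures~\ref{fig:fit-at-the-corners-1}--\ref{fig:fit-at-the-corners-2}) means $\mathfrak L_\bullet-F_\ell$ is a sum of several geometric tails plus a line-segment remainder; one simply takes $M$ large enough to handle each geometric tail and invokes Lemma~\ref{lemma:lines-dont-matter} for the line-segment piece, whose $(1+R)^{-4}$ decay is already $\leq\varepsilon(1+R)^{-2}$ for all $R$ after shrinking $\varepsilon$ appropriately (or noting the line-segment piece can in fact be absorbed into none of the $F_\ell$ since it is genuinely singular, but contributes with a strictly better power of $(1+R)^{-1}$, hence is harmless). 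Assembling the six estimates via the choices $M=M(\varepsilon/6)$ and relabelling $F_1,\dots,F_6$ completes the proof.
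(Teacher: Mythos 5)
Your proposal is correct and follows essentially the same route as the paper's proof: the paper likewise takes $F_\ell$ to be the partial sum of the dyadic double series up to a finite level $N$, splits $\Theta^{(2)}_{\mathfrak{C}_{0,0}-F_N}$ into the tail sums over $\{j\geq N\}\cup\{k\geq N\}$, and bounds each via the union-bound argument of Proposition \ref{prop:bound-shear-building-blocks} (the inequalities \eqref{eq:argument-in-sheared-tail-bound}--\eqref{eq:argument-in-sheared-tail-bound-end}), obtaining a factor $\ll 2^{-N}$ in front of $(1+R)^{-2}$ that is uniform in $y_1,y_2$ for exactly the reason you identify, namely that the weighted full series converges independently of the truncation indices $J,K$.

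One caveat on the point you yourself flag: your treatment of the $T_{\operatorname{Segm}}$ pieces inside $\mathfrak{L}_h,\mathfrak{L}_v,\mathfrak{L}_d$ does not close as written. The bound from Lemma \ref{lemma:lines-dont-matter} is $\ll(1+R)^{-4}$ with a fixed implied constant, so it is not $<\varepsilon(1+R)^{-2}$ for small $R$ no matter how $\varepsilon$ is ``shrunk,'' and a Schwartz $F_\ell$ cannot cancel a contribution supported on a line segment. The paper's own proof is silent here (it treats only $\mathfrak{C}_{0,0}$ in detail and asserts the remaining cases ``follow similarly''), and in the downstream use (Lemma \ref{lemma:approximation-of-triangle-smooth-fdds}) the line-segment contributions are in fact disposed of separately via the second part of Lemma \ref{lemma:lines-dont-matter}, not via the $\varepsilon$-approximation; so the clean way to state and prove the present lemma is with the $T_{\operatorname{Segm}}$ pieces removed from the edge functions, after which your argument (and the paper's) goes through verbatim.
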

\begin{proof}
    For $F = \mathfrak{C}_{0,0}$ recall from equation \eqref{eq:F-decompose-as-sum-of-fg} that we can write
    $$ F(w_1,w_2)=\sum_{j,k=0}^\infty f(2^j w_1)f(2^k(w_2-w_1))$$
    for some smooth $f$. Also recall for this $F$ from equation \eqref{eq:decomposition-for-theta-2-F} that we can write
    $$ \Theta_{F}^{(2)}(g_1,g_2)=\sum_{j=0}^J\sum_{k=0}^K 2^{-j/2}2^{-k/2}\tilde \Theta_{(w_1,w_2)\mapsto f(w_1)f(w_2)}^{(2)}\left(\left(h(g_1,g_2),g(g_1,g_2)\right) (\operatorname{id},P_{j,k})\right),$$
    where $J=\lceil \log_2(y_1^{-1/2})\rceil,$ and $K=\lceil \log_2(y_2^{-1/2})\rceil$. Define $$F_N(w_1,w_2):=\sum_{j,k=0}^{N-1} f(2^j w_1)f(2^k(w_2-w_1)).$$
    In this case we have that
    \begin{align*}
        \Theta_{F-F_N}^{(2)}(g_1,g_2)&=\sum_{j=N}^J\sum_{k=N}^K 2^{-\frac{j}{2}}2^{-\frac{k}{2}}\tilde \Theta_{(w_1,w_2)\mapsto f(w_1)f(w_2)}^{(2)}\left(\left(h(g_1,g_2),g(g_1,g_2)\right)(\operatorname{id},P_{j,k})\right)\\
        &~+\sum_{j=0}^J\sum_{k=N}^K 2^{-\frac{j}{2}}2^{-\frac{k}{2}}\tilde \Theta_{(w_1,w_2)\mapsto f(w_1)f(w_2)}^{(2)}\left(\left(h(g_1,g_2),g(g_1,g_2)\right)(\operatorname{id},P_{j,k})\right)\\
        &~+\sum_{j=N}^J\sum_{k=0}^K 2^{-\frac{j}{2}}2^{-\frac{k}{2}}\tilde \Theta_{(w_1,w_2)\mapsto f(w_1)f(w_2)}^{(2)}\left(\left(h(g_1,g_2),g(g_1,g_2)\right)(\operatorname{id},P_{j,k})\right).
    \end{align*}
    Without loss of generality, we may assume that $J,K>N$. Indeed, if $N<K$ or $N<J$ then one or more of these sums will be $0$. We can apply a union bound to get that 
    \begin{align*}
        \lambda&\left(\left\{x\in \mathbb R:|        \Theta_{F-F_N}^{(2)}(g_1,g_2)|>R \right\}\right)\\
        &~~\leq \lambda\left(\left\{x\in \mathbb R:\left|\sum_{j=N}^J\sum_{k=N}^K 2^{-\frac{j}{2}}2^{-\frac{k}{2}}\tilde \Theta_{(w_1,w_2)\mapsto f(w_1)f(w_2)}^{(2)}\left(\left(h(g_1,g_2),g(g_1,g_2)\right)(\operatorname{id},P_{j,k})\right)\right|>\frac{R}{3}  \right\}\right)\\
        &~~+ \lambda\left(\left\{x\in \mathbb R:\left|\sum_{j=0}^J\sum_{k=N}^K 2^{-\frac{j}{2}}2^{-\frac{k}{2}}\tilde \Theta_{(w_1,w_2)\mapsto f(w_1)f(w_2)}^{(2)}\left(\left(h(g_1,g_2),g(g_1,g_2)\right)(\operatorname{id},P_{j,k})\right)\right|>\frac{R}{3} \right\}\right)\\
        &~~+ \lambda\left(\left\{x\in \mathbb R:\left|\sum_{j=N}^J\sum_{k=0}^K 2^{-\frac{j}{2}}2^{-\frac{k}{2}}\tilde \Theta_{(w_1,w_2)\mapsto f(w_1)f(w_2)}^{(2)}\left(\left(h(g_1,g_2),g(g_1,g_2)\right)(\operatorname{id},P_{j,k})\right)\right|>\frac{R}{3} \right\}\right).
    \end{align*}
    In this case, the argument used in inequalities \eqref{eq:argument-in-sheared-tail-bound}-\eqref{eq:argument-in-sheared-tail-bound-end} and the standard tail bounds for a geometric series gives that
    \begin{equation}
        \lambda\left(\left\{x\in \mathbb R:|        \Theta_{F-F_N}^{(2)}(g_1,g_2)|>R \right\}\right)\ll \frac{2^{-N}2^{-N}}{(1+R)^2}+\frac{2^{-N}}{(1+R)^2}+\frac{2^{-N}}{(1+R)^2}.
    \end{equation}
    Choosing $N$ large enough proves the claim for $F= \mathfrak{C}_{0,0}$. The proof for the rest of the building blocks follows similarly. 
\end{proof}

\begin{lemma}\label{lemma:approximation-of-triangle-smooth-fdds}
Let $\lambda$ be a Borel probability measure on $\mathbb R$ absolutely continuous with respect to Lebesgue measure.  Let for any $(g_1,g_2)\in  G\times  G$ with $-x_1=x_2=x$. For all $\varepsilon>0$, $\delta>0$ there exists $F\in \mathcal S(\mathbb R^2)$ so that 
    \begin{equation}\label{statement-Lemma-replace-T_(0,1]-with-smooth-F-for-small-price}
        \lambda\left(\left\{x\in \mathbb R: |\Theta_{T_{(0,1]}}^{(2)}(g_1,g_2)-\Theta_{F}^{(2)}(g_1,g_2)|>\delta\right\}\right)< \varepsilon,
    \end{equation}
    uniformly in $y_1,y_2\leq 1.$
\end{lemma}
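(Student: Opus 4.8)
The plan is to assemble $F$ from the geometric decomposition of the triangle carried out in Section~\ref{subsection:Triangle} together with the smoothing already supplied by Lemma~\ref{lemma:make-sharp-functions-smooth-epsilon}, which contains all the hard estimates. Recall \eqref{eq:decomposition-of-triangle}, namely $T_{(0,1)}=\mathfrak{C}_{0,0}+\mathfrak{C}_{0,1}+\mathfrak{C}_{1,1}+\mathfrak{L}_h+\mathfrak{L}_v+\mathfrak{L}_d+\mathfrak{F}_{\text{smooth}}$, where $\mathfrak{F}_{\text{smooth}}\in\mathcal{S}(\R^2)$ since it is smooth and compactly supported, and recall that $T_{(0,1]}-T_{(0,1)}=\mathbf{1}_{(0,1)\times\{1\}}$ is supported on a single horizontal line segment.

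First I would invoke Lemma~\ref{lemma:make-sharp-functions-smooth-epsilon} with $\varepsilon$ replaced by $\varepsilon/12$, producing Schwartz functions $F_1,\ldots,F_6$ so that, writing $(\mathfrak{B}_1,\ldots,\mathfrak{B}_6):=(\mathfrak{C}_{0,0},\mathfrak{C}_{1,1},\mathfrak{C}_{0,1},\mathfrak{L}_h,\mathfrak{L}_v,\mathfrak{L}_d)$, we have $\lambda(\{x\in\R:|\Theta_{\mathfrak{B}_i-F_i}^{(2)}(g_1,g_2)|>R\})<\tfrac{\varepsilon}{12}(1+R)^{-2}$ uniformly in $y_1,y_2\le1$ for each $i$. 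Set $F:=\mathfrak{F}_{\text{smooth}}+\sum_{i=1}^6 F_i\in\mathcal{S}(\R^2)$. By linearity of $F\mapsto\Theta_F^{(2)}$ and \eqref{eq:decomposition-of-triangle},
\[
\Theta_{T_{(0,1]}}^{(2)}(g_1,g_2)-\Theta_F^{(2)}(g_1,g_2)=\Theta_{\mathbf{1}_{(0,1)\times\{1\}}}^{(2)}(g_1,g_2)+\sum_{i=1}^6\Theta_{\mathfrak{B}_i-F_i}^{(2)}(g_1,g_2),
\]
so a union bound gives $\lambda(\{x:|\Theta_{T_{(0,1]}}^{(2)}-\Theta_F^{(2)}|>\delta\})\le\lambda(\{x:|\Theta_{\mathbf{1}_{(0,1)\times\{1\}}}^{(2)}|>\delta/7\})+\sum_{i=1}^6\lambda(\{x:|\Theta_{\mathfrak{B}_i-F_i}^{(2)}|>\delta/7\})$. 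Each of the six sums is $<\tfrac{\varepsilon}{12}(1+\delta/7)^{-2}<\varepsilon/12$, contributing at most $\varepsilon/2$.

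It then remains to handle the line-segment term $\Theta_{\mathbf{1}_{(0,1)\times\{1\}}}^{(2)}$, which is the delicate step: the first part of Lemma~\ref{lemma:lines-dont-matter} only gives the tail bound $\ll(1+R)^{-4}$ uniformly in $0<y_1,y_2\le1$, which is not yet small for a fixed threshold. Instead I would use the product structure of $\Theta^{(2)}$ for functions of the form $\chi(w_1)\delta_1(w_2)$ established in the proof of Lemma~\ref{lemma:lines-dont-matter}, together with the bound $|\Theta_{\delta_1}(x_2+iy_2;\,\cdot\,)|\le y_2^{1/4}$ and Proposition~\ref{prop:3.17-simplified}, to get the sharper $\lambda(\{x:|\Theta_{\mathbf{1}_{(0,1)\times\{1\}}}^{(2)}|>\delta/7\})\ll(1+\delta y_2^{-1/4}/7)^{-4}\ll\delta^{-4}y_2$; in particular, for the horocycle heights $y_1=y_2=N^{-2}$ that occur in Theorem~\ref{thm-existence-of-fdd-for-mathbfX} this is $\ll\delta^{-4}N^{-2}$, so it is $<\varepsilon/2$ once $N$ is large, and alternatively one invokes \eqref{lemma:lines-dont-matter-statement-fdd} directly. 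Combining the two estimates yields $\lambda(\{x:|\Theta_{T_{(0,1]}}^{(2)}-\Theta_F^{(2)}|>\delta\})<\varepsilon$.

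The main obstacle has, in effect, already been dealt with inside Lemma~\ref{lemma:make-sharp-functions-smooth-epsilon}: truncating the dyadic series defining the corner and edge functions and controlling the tails via the Marklof--Welsh height estimates of Section~\ref{subsection:theta-theta-tilde}. What is new in the present statement is only the bookkeeping of the seven pieces of the triangle and the remark that the one-dimensional line-segment discrepancy between $T_{(0,1]}$ and $T_{(0,1)}$ is negligible at the small heights relevant to the application; the one point requiring care is to keep the constants $\varepsilon/12$ and the thresholds $\delta/7$ consistent with the uniformity in $y_1,y_2$.
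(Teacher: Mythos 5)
Your proof is correct and follows essentially the same route as the paper's: decompose via \eqref{eq:decomposition-of-triangle}, take $F=\mathfrak{F}_{\text{smooth}}+F_1+\dots+F_6$ with the $F_i$ from Lemma \ref{lemma:make-sharp-functions-smooth-epsilon}, apply a union bound, and dispose of the line-segment discrepancy $T_{(0,1]}-T_{(0,1)}$ via Lemma \ref{lemma:lines-dont-matter}. Your explicit treatment of that last term (noting that the uniform $(1+R)^{-4}$ tail bound alone does not make the measure small at a fixed threshold $\delta$, and that one needs either the $y_2$-dependent gain or the second part of Lemma \ref{lemma:lines-dont-matter} at the heights $y_1=y_2=N^{-2}$ relevant to the application) is in fact more careful than the paper's one-line reduction.
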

\begin{proof}
Thanks to the second part of Lemma \ref{lemma:lines-dont-matter}, it is enough to show \eqref{statement-Lemma-replace-T_(0,1]-with-smooth-F-for-small-price} with $T_{(0,1]}$ replaced by $T_{(0,1)}$. Let $\varepsilon'<\frac{\varepsilon}{6(1+R)^2}$. Use Lemma \ref{lemma:make-sharp-functions-smooth-epsilon} with $\varepsilon'$ to get functions $F_1,...,F_6$. Define $F=F_1+...+F_6+\mathfrak{F}_{\text{smooth}}$ where $\mathfrak{F}_{\text{smooth}}$ is coming from  \eqref{eq:decomposition-of-triangle}. A union bound and Lemma \ref{lemma:make-sharp-functions-smooth-epsilon} concludes.
\end{proof}

\begin{proof}[Proof of Theorem \ref{thm-existence-of-fdd-for-mathbfX}]
Recall from equations \eqref{def-L_N} and \eqref{rewriting_X_N^11}-\eqref{rewriting_X_N^22} that we can write $\mathbf X_N(s,t):=(X(t)-X(s),\mathbb X_N(s,t))$ as a continuous function of $\Theta_{\chi_{(s,t]}}$ and of $\Theta^{(2)}_{T_{(s,t]}}$ plus some terms that go to $0$ in probability. By Slutsky's theorem and the continuous mapping theorem, convergence of the joint finite dimensional distributions of $\Theta_{\chi_{(s,t]}}$ and of $\Theta^{(2)}_{T_{(s,t]}}$ imply the convergence of the finite dimensional distributions of $\mathbf X_N$. 
Using the argument in the proof of Theorem 4.4 in \cite{Cellarosi-Marklof} with Theorem \ref{thm-limiting-distribution-of-finitely-regular-thetas} taking the role of Lemma 4.5 in \cite{Cellarosi-Marklof} and with the Lemma \ref{lemma:approximation-of-triangle-smooth-fdds} taking the role of Lemma 4.9 in \cite{Cellarosi-Marklof} concludes.
\end{proof}

\begin{proof}[Proof of the Main Theorem \ref{theorem:main}]
Theorem \ref{theorem:tightness-second-order} and Theorem \ref{thm-existence-of-fdd-for-mathbfX} prove the main Theorem \ref{theorem:main}, as outlined in Section \ref{section:strategy}.
\end{proof}

\section{L\'evy Area and Closing Remarks}
The spiral-like structures (``curlicues") in the sequence of partial sums of theta sums have been investigated extensively, see, e.g. \cite{Berry-Goldberg, Cellarosi-Curlicue, Cellarosi-Marklof, Coutsias-Kazarinoff1987, Moore-vanderPoorten, Sinai-curlicues}. Presence of curlicues at various scales can be understood dynamically via renormalization techniques that rely on approximate functional equations for theta sums, see \cite{Coutsias-Kazarinoff1998, Fedotov-Kopp2012, HL-1914, Mordell, Wilton}.
The well defined (in light of Theorem \ref{theorem:main}) L\'evy area $$L(t):=\int_0^t X^1(r)dX^2(r)-\int_0^t X^2(r) dX^1(r)$$ can be seen (in light of Green-Stokes theorem) as twice the signed area swept out by $(X^1,X^2)$ with respect to the line connecting $X(0)$ and $X(t)$. Curlicues happen when $L$ is highly oscillatory but does not grow much in magnitude. See Figure \ref{fig:levyarea}.

\begin{figure}[h!]
    \centering
    \includegraphics[width=14cm]{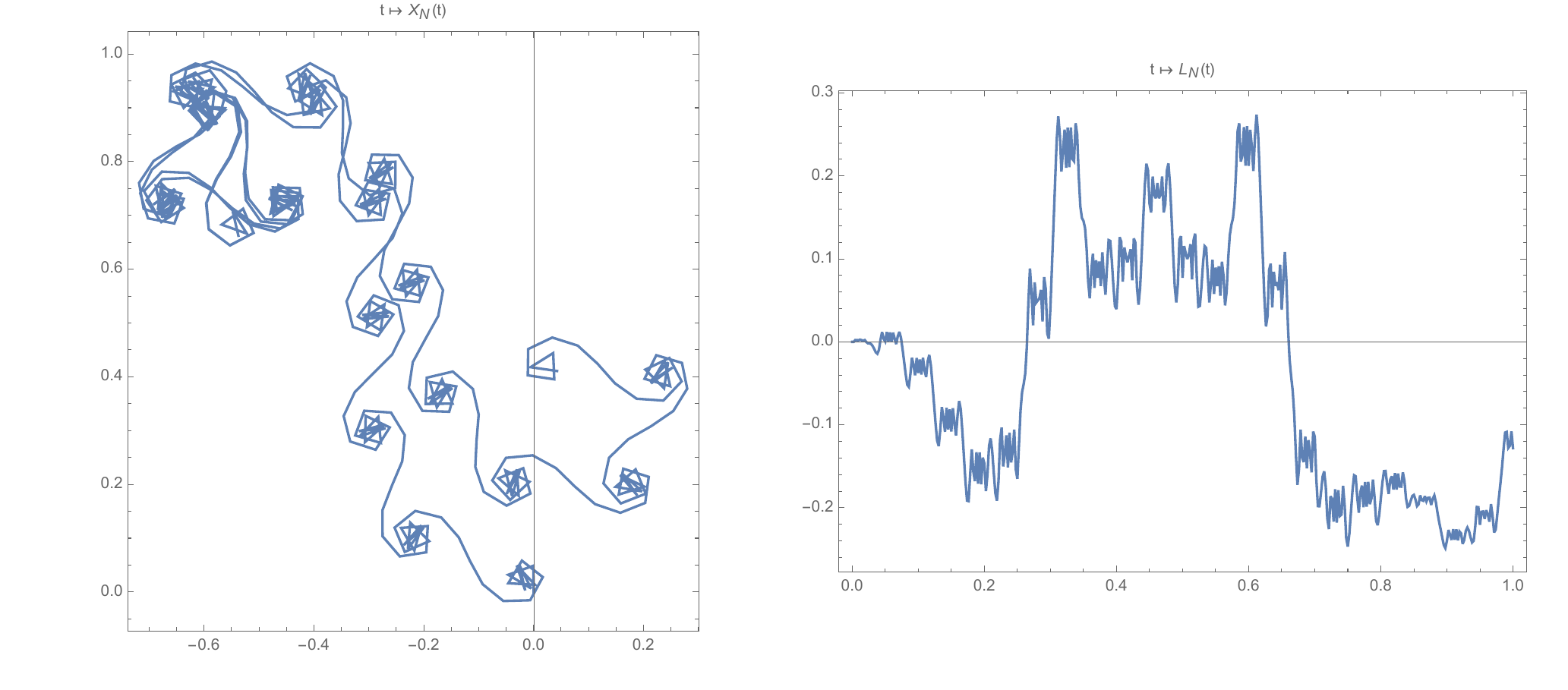}
    \includegraphics[width=14cm]{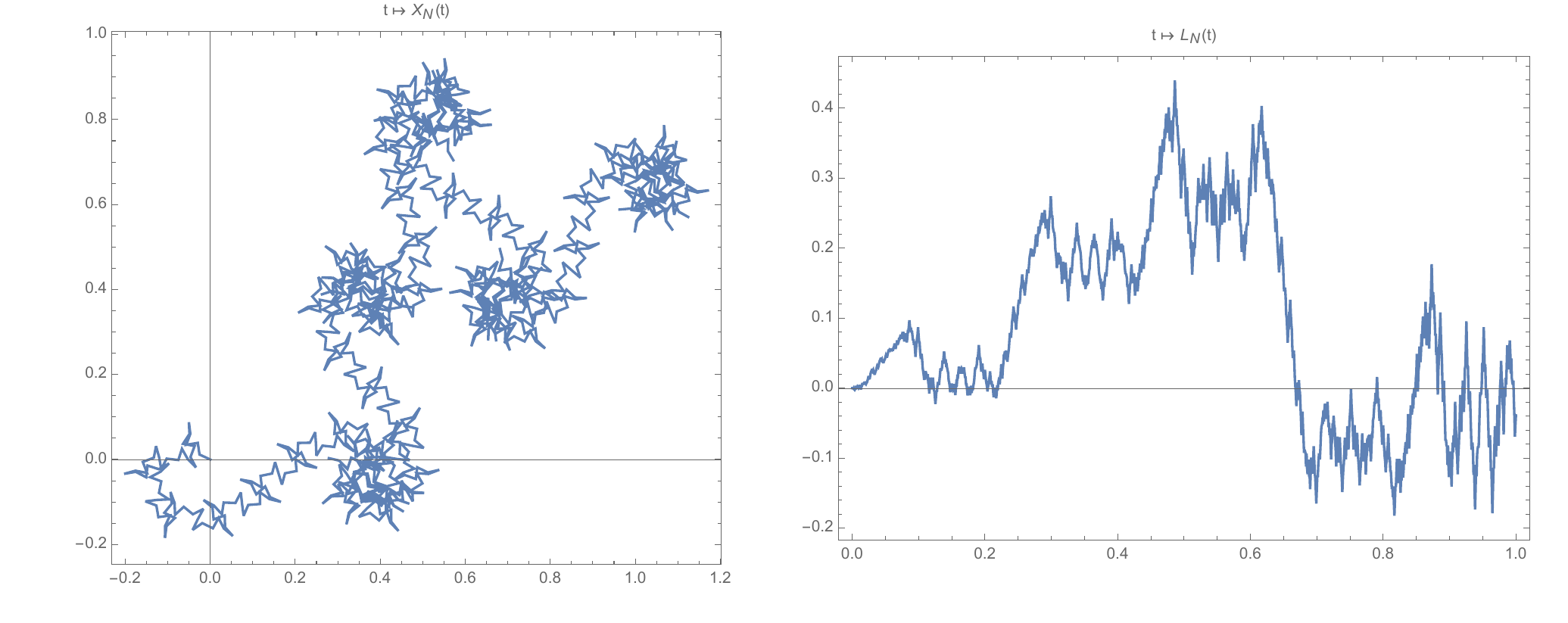}
    \caption{Left panels: two realizations of the random curve $[0,1]\ni t\mapsto X_{N}(t)$, where $\alpha=\sqrt{2}$, $\beta=0$. Right panels: the corresponding L\'evy area $[0,1]\ni t\mapsto L_N(t)$.
    In the top panels we have $x\approx 0.0528282$ and $N=420$. In the bottom panels we have $x\approx0.411534$ and $N=1,\!300$. }
    \label{fig:levyarea}
\end{figure}

The L\'evy area of $X_N$ at time $1$ by a standard trigonometric identity is 
\begin{align*}
    L_N(1)&:=\frac{1}{N}\sum_{1\leq n_1<n_2\leq N} \cos\left(2\pi\left(\frac{1}{2}n_1^2+\beta n_1\right)x+2\pi n_1\alpha\right)\sin\left(2\pi\left(\frac{1}{2}n_2^2+\beta n_2\right)x+2\pi n_2\alpha\right)\\
    &~- \frac{1}{N}\sum_{1\leq n_1<n_2\leq N}\sin\left(2\pi\left(\frac{1}{2}n_1^2+\beta n_1\right)x+2\pi n_1\alpha\right)\cos\left(2\pi\left(\frac{1}{2}n_2^2+\beta n_2\right)x+ 2\pi n_2\alpha\right)\\
    &=-\frac{1}{2N} \sum_{1\leq n_1<n_2\leq N}\sin\left(2\pi \left(\frac{1}{2}(n_1^2-n_2^2)+\beta (n_1-n_2)\right)x+2\pi(n_1-n_2)\alpha\right)\\
    &=\frac{1}{2}\Im\left(\Theta^{(2)}_{T_{(0,1]}}\left(\left(-x+\tfrac{i}{N^2},0;\sve{-\alpha-\beta x}{0},0\right),\left(x+\tfrac{i}{N^2},0;\sve{\alpha+\beta x}{0},0\right)\right)\right).
\end{align*}
Using the evenness of cosine and applying another trigonometric identity to the real part of $\Theta^{(2)}_{T_{(0,1]}}$ shows that
\begin{align*}
    \Re&\bigg(\Theta^{(2)}_{T_{(0,1]}}\left(\left(-x+\tfrac{i}{N^2},0;\sve{-\alpha-\beta x}{0},0\right),\left(x+\tfrac{i}{N^2},0;\sve{\alpha+\beta x}{0},0\right)\right)\bigg)\\
    &=\frac{1}{N} \sum_{1\leq n_1<n_2\leq N}\cos\left(2\pi \left(\frac{1}{2}(n_1^2-n_2^2)+\beta (n_1-n_2)\right)x+2\pi (n_1-n_2)\alpha\right)\\
    &=\left(\frac{1}{2N}\sum_{1\leq n_1,n_2\leq N}\cos\left(2\pi\left(\frac{1}{2}(n_1^2-n_2^2)+\beta (n_1-n_2)\right)x+2\pi(n_1-n_2)\alpha\right)\right)-\frac{1}{2}\\
    &=\left(\frac{1}{\sqrt N} \sum_{1\leq n\leq N}\cos\left(2\pi\left(\frac{1}{2}n^2+\beta n\right)x+2\pi n\alpha\right)\right)^2\\
    &\:\:\:\:\:\:+\left(\frac{1}{\sqrt N} \sum_{1\leq n\leq N}\sin\left(2\pi\left(\frac{1}{2}n^2+\beta n\right)x+2\pi n\alpha\right)\right)^2-\frac{1}{2}\\
    &=\|X_N(1)\|^2-\frac{1}{2}.
\end{align*}
Therefore, it is only the imaginary part of $\Theta^{(2)}_{T_{(0,1]}}$ that gives any extra information, as the real part can be written in terms of the lower ordered process. Figure \ref{fig:two-histograms} compares the empirical distributions of the imaginary and the real parts of $\Theta^{(2)}_{T_{(0,1]}}\left(\left(-x+\tfrac{i}{N^2},0;\sve{-\alpha-\beta x}{0},0\right),\left(x+\tfrac{i}{N^2},0;\sve{\alpha+\beta x}{0},0\right)\right)$.
\begin{figure}[b]
    \centering
    \includegraphics[width=16cm]{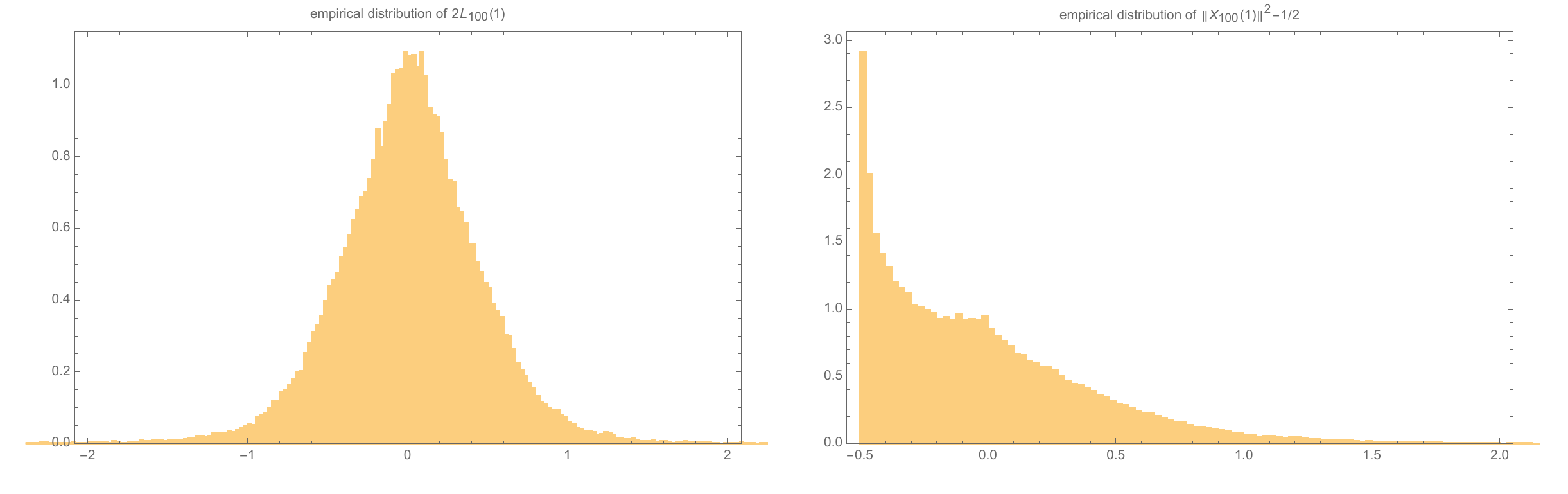}
    \caption{The empirical distribution of the imaginary part (left panel) and the real part (right panel) of $\Theta^{(2)}_{T_{(0,1]}}\left(\left(-x+\tfrac{i}{N^2},0;\sve{-\alpha-\beta x}{0},0\right),\left(x+\tfrac{i}{N^2},0;\sve{\alpha+\beta x}{0},0\right)\right)$, where $N=100$, $\alpha=\sqrt{2}$, $\beta=0$, and $x$ randomly uniformly distributed on $[0,1]$ (sample size is 300,000). The imaginary part equals twice the L\'{e}vy area $2L_N(1)$, while the real part equals $\|X_N\|^2-\frac{1}{2}$.}
    \label{fig:two-histograms}
\end{figure}

\begin{remark}
    The primary technical barrier in the construction of the rough path is the dyadic construction of $T_{(0,1)}$ given in Section \ref{section:defining-theta-T} and in the bounds for the ``sheared" building blocks in Subsections \ref{subsection:theta-theta-tilde} and \ref{subsection:estimates-for-sheared-blocks}. In \cite{Marklof-Welsh-I,Marklof-Welsh-II} the authors consider higher ranked theta sums over boxes. However the relationship between these two sums is unclear. We must control sums of the form
    $$\sum_{1\leq n_1<n_2\leq N} \sin(a_{n_1})\cos(a_{n_2})$$
    for a sequence of real numbers $a_n$. If we let $a_n=0$ for $n<N$ then
    $$\sum_{1\leq n_1<n_2\leq N} \sin(a_{n_1})\cos(a_{n_2})=0,$$
    but 
    $$\sum_{n_1,n_2=1}^N \sin(a_{n_1})\cos(a_{n_2})=\sin(a_N)(N-1+\cos(a_N)),$$
    which can take values between $-(N-1)$ and $N-1$. Therefore, the sum over a square in general tells us very little about the sum over a triangle.
\end{remark}

\begin{remark}
    The authors in \cite{Cellarosi-Marklof} show in Lemma 4.6 for compactly supported Riemann integrable $f$ that $$\lim_{y\to 0^+} E_\lambda [|\Theta_f(g)|^2]\leq 2 \|f\|_{L^2}.$$
    To do this, the authors had to count solutions to the diophantine equation 
    $$n_1^2-n_2^2+c(n_1-n_2)=0.$$
    When trying to do the same argument for $\Theta^{(2)}_f$ one arrives at the diophantine equation
    $$n_1^2-n_2^2+n_3^2-n_4^2+c(n_1-n_2+n_3-n_4)=0,$$
    which is significantly harder to count solutions to. In the case $c=0$ for example, to get the kind of decay necessary for the proof of Lemma 4.6 in \cite{Cellarosi-Marklof} one has to count solutions to 
    $$n_1^2-n_2^2+n_3^2-n_4^2=k\in \mathbb Z.$$
    Counting solutions of this form in a way that would let the argument of Lemma 4.6 in \cite{Cellarosi-Marklof} work seems quite intractable. The lack of a nice abstract approximation result is a one reason why constructing the triangle $T_{(0,1)}$ by hand dyadically was necessary. 
\end{remark}
\begin{remark}
    The dyadic building blocks $T_{\operatorname{Cor}}$ and $T_{\operatorname{Line}}$ given in Section \ref{section:defining-theta-T} can be combined in more general ways to construct other polygons. In addition, due to the generality of the parameters $c_1,c_2,c_3$ one can choose the support of corner and line building blocks to construct smaller polygons or polygons with smaller angles. \cite{Marklof-Welsh-I,Marklof-Welsh-II} considers boxes of any dimension however to the authors' best knowledge, no one has studied theta sums over sharp indicators of general polygons. 
\end{remark}

\bibliographystyle{plain}
\bibliography{bibliography}

\appendix 
\section{Semimartingale with Scaling and Stationarity implies Local Martingale or Bounded Variation}
We show the following lemma which might be of independent interest. 
\begin{lemma}\label{lemma:semimartingale-is-BM}
    Let $X(t)$ be a real valued continuous $L^4$ semimartingale so that $X(t+h)-X(t)=X(h)$ in distribution and so that $t\mapsto X(at)$ and $t\mapsto a^\gamma X(t)$ are equal in distribution for all $a>0$ and for some $\gamma>0$. Then either $X(t)$ is a scaled Brownian motion or it has no martingale part and is of bounded variation.
\end{lemma}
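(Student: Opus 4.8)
The plan is to decompose $X$ into its martingale and bounded-variation parts, $X = M + V$ with $M$ a continuous local $L^2$-martingale and $V$ continuous adapted of locally finite variation, and then show that the self-similarity and stationarity of the increments force $V$ and $M$ to be of a very rigid form. First I would use the stationarity of increments together with the $L^4$ assumption to observe that $E(X(t)^2) = c t$ for some constant $c \geq 0$ (the map $t \mapsto E(X(t)^2)$ is additive by the uncorrelatedness of increments coming from stationarity, hence linear). The scaling relation $X(at) \sim a^\gamma X(t)$ applied to second moments gives $a^{2\gamma} E(X(t)^2) = E(X(at)^2) = a\, c\, t$, so either $c = 0$ (in which case $X \equiv 0$ almost surely and the statement is trivial) or $2\gamma = 1$, i.e. $\gamma = \tfrac12$. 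So from now on $\gamma = \tfrac12$.

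Next I would analyze the quadratic variation. By the scaling relation, $[X,X](at) = [X(a\cdot),X(a\cdot)](t) = a^{2\gamma}[X,X](t) = a[X,X](t)$ in distribution (quadratic variation is a pathwise/in-probability limit and hence commutes with the scaling in distribution), and by stationarity of increments $[X,X]$ has stationary increments. Combined with $\gamma = \tfrac12$ this forces $[X,X](t) = \sigma^2 t$ almost surely for a deterministic constant $\sigma^2 \geq 0$: indeed $[X,X]$ is a continuous increasing additive-in-distribution process that scales linearly, and a continuous increasing process with stationary increments whose law at time $t$ scales like $t$ must be deterministic linear (the key point is that $E([X,X](1))$ is finite by the $L^2$ bound on the martingale part, and $\mathrm{Var}([X,X](t)) = t\,\mathrm{Var}([X,X](1))$ by additivity while scaling forces $\mathrm{Var}([X,X](t)) = t^2 \mathrm{Var}([X,X](1))$, so the variance vanishes). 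Since $[X,X] = [M,M]$, we get $[M,M](t) = \sigma^2 t$. If $\sigma^2 > 0$, then by Lévy's characterization $M/\sigma$ is a standard Brownian motion, so $M$ is a scaled Brownian motion; if $\sigma^2 = 0$ then $M \equiv 0$ and $X = V$ has bounded variation.

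It remains to rule out, in the case $\sigma^2 > 0$, the possibility that $V$ is a nontrivial bounded-variation part — that is, to show $X$ is \emph{exactly} a scaled Brownian motion rather than Brownian motion plus a drift. Here I would use the scaling and stationarity once more: writing $X = \sigma B + V$ with $B$ a Brownian motion, the process $V = X - \sigma B$ inherits continuity and stationary increments, and one can extract $V$ from $X$ in a scaling-covariant way (e.g. $V$ is the bounded-variation component in the Doob–Meyer/semimartingale decomposition, which is canonical). Scaling then gives $V(at) \sim a^{1/2} V(t)$; but a continuous bounded-variation process with stationary increments scaling with exponent $\tfrac12$ must be identically zero, since a nonzero bounded-variation process cannot have $\tfrac12$-self-similar fluctuations — more concretely, $\frac1n \sum_{k} |V(\tfrac{k+1}{n}) - V(\tfrac kn)| \to \mathrm{Var}(V)|_{[0,1]} < \infty$ by bounded variation, while the scaling and stationarity force each increment to be of order $n^{-1/2}$ in distribution, giving a sum of order $n^{1/2} \to \infty$ unless the increments are a.s.\ zero. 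This contradiction shows $V \equiv 0$, so $X = \sigma B$ is a scaled Brownian motion, completing the proof.

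The main obstacle I anticipate is making the self-similarity-plus-stationarity rigidity arguments fully rigorous at the level of \emph{processes} rather than one-dimensional marginals: the scaling hypothesis is only assumed as equality in distribution of the processes $t \mapsto X(at)$ and $t \mapsto a^\gamma X(t)$, so I must be careful that quantities like $[X,X]$, the martingale part $M$, and the bounded-variation part $V$ are all measurable functionals of the path that transform correctly under this scaling. This is true because the semimartingale decomposition and quadratic variation are defined by limits in probability that are invariant under the relevant time-change, but the bookkeeping (especially ensuring $V$ is canonically determined, using continuity to avoid the fixed-jump ambiguity in the Doob–Meyer decomposition) needs to be done with some care. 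Everything else — the moment computations, Lévy's characterization, and the bounded-variation-versus-$\tfrac12$-scaling contradiction — is routine.
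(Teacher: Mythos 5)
Your overall architecture matches the paper's: decompose $X=M+V$, pin down $\gamma=\tfrac12$, show $[X,X](t)=\sigma^2 t$, apply L\'evy's characterization, and kill $V$. But two of your steps have genuine gaps. First, your derivation of $\gamma=\tfrac12$ rests on the claim that $E(X(t)^2)$ is additive ``by the uncorrelatedness of increments coming from stationarity.'' Stationary increments do not imply uncorrelated increments: the process $X(t)=tZ$ with $Z\in L^4$ is a continuous bounded-variation semimartingale satisfying $X(t+h)-X(t)=X(h)$ and self-similar with $\gamma=1$, yet $E(X(t)^2)=t^2E(Z^2)$ is not linear. So $\gamma=\tfrac12$ is false in general --- as it must be, since the lemma's second alternative has to accommodate exactly such examples. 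The correct order of logic (the paper's) is to assume first that the martingale part is nontrivial (the other branch of the dichotomy being immediate), so that the quadratic variation on $[0,t]$ is positive and finite; then $\sum_k E[(X(t_{k+1})-X(t_k))^2]=\sum_k(t_{k+1}-t_k)^{2\gamma}$ diverges for $\gamma<\tfrac12$ and vanishes for $\gamma>\tfrac12$, forcing $\gamma=\tfrac12$. Uncorrelated increments are then a consequence (via polarization), not an input. Your identification of the degenerate branch as ``$c=0$, hence $X\equiv 0$'' is likewise off; the degenerate branch is $M\equiv 0$ with $V$ an arbitrary bounded-variation process.

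Second, your argument that $V\equiv 0$ via blow-up of the first variation is not conclusive as stated: the sums $\sum_k|V(\tfrac{k+1}{n})-V(\tfrac kn)|$ have expectation of order $n^{1/2}E|V(1)|$, but a sequence of nonnegative random variables can converge almost surely to a finite limit while its expectations diverge (Fatou goes the wrong way), so divergence of the means does not contradict almost-sure finiteness of the total variation without a concentration estimate. The paper closes this by working with the \emph{quadratic} variation of $V$ instead: it shows $E(V(t)X(s))$ is independent of $t\geq s$ (tower property against $B$), deduces $E((V(t)-V(s))^2)=C'(t-s)$ together with uncorrelated increments and nonpositively correlated squared increments, proves the quadratic variation sums of $V$ converge in $L^2$ to $C't$, and then uses that the pathwise quadratic variation of a continuous bounded-variation process vanishes to force $C'=0$, hence $V\equiv 0$. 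You would need either this route or a genuine second-moment/uniform-integrability upgrade of your first-variation heuristic. (Your observation that $V$ is a scaling-covariant functional of the path, via uniqueness of the continuous semimartingale decomposition, is a legitimate alternative to the paper's direct covariance computation, but it does not by itself repair the blow-up step.)
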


Let $X$ be as in Lemma \ref{lemma:semimartingale-is-BM} Without loss of generality, we assume $X(0)=0$ and $E[X^2(1)]=1$. We can scale and shift otherwise. From now on, we assume that $X$ has a nontrivial martingale part.

\begin{lemma}
   We have that $\gamma=1/2$.
\end{lemma}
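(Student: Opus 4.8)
The plan is to extract $\gamma$ from the combination of the scaling property and the $L^2$ normalization, using that $X$ has a nontrivial martingale part. First I would observe that the scaling hypothesis $X(at)\sim a^\gamma X(t)$ together with $E[X^2(1)]=1$ forces $E[X^2(t)]=t^{2\gamma}$ for all $t>0$; indeed, taking $a=t$ gives $E[X^2(t)]=E[(t^\gamma X(1))^2]=t^{2\gamma}$. Next I would use stationarity of increments: for $s<t$, write $X(t)-X(s)\sim X(t-s)$, so $E[(X(t)-X(s))^2]=(t-s)^{2\gamma}$. The key is then to combine these two facts. Since $X=M+V$ with $M$ a continuous local martingale (nontrivial by assumption) and $V$ continuous of bounded variation, and since $X\in L^4$, one can arrange (after localization, using continuity) that the decomposition behaves well enough to compute second moments of increments.

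The heart of the argument is a variance/orthogonality computation over a partition. Fix $t>0$ and the uniform partition $t_k=kt/n$, $k=0,\dots,n$. On one hand, $\sum_{k=1}^n E[(X(t_k)-X(t_{k-1}))^2]=n\cdot(t/n)^{2\gamma}=t^{2\gamma}n^{1-2\gamma}$ by the increment-stationarity and scaling identities just derived. On the other hand, $\sum_{k=1}^n E[(X(t_k)-X(t_{k-1}))^2]$ must also be controlled by the quadratic variation: since $X$ is a semimartingale with martingale part $M$, $\sum_k (X(t_k)-X(t_{k-1}))^2 \to [X,X](t)=[M,M](t)$ in probability as $n\to\infty$ (the bounded-variation part contributes nothing to the quadratic variation, and cross terms vanish), and with the $L^4$ bound this convergence can be upgraded to $L^1$ so that $\sum_k E[(X(t_k)-X(t_{k-1}))^2]\to E[[M,M](t)]$, a finite positive quantity since $M$ is nontrivial (positivity requires a short argument: if $E[[M,M](t)]=0$ for some, hence all, $t>0$ by stationarity, then $M\equiv 0$, contradicting nontriviality). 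Comparing, $t^{2\gamma}n^{1-2\gamma}$ must converge to a finite \emph{nonzero} limit as $n\to\infty$, which forces the exponent $1-2\gamma=0$, i.e. $\gamma=1/2$.

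\textbf{Main obstacle.} The delicate point is justifying the interchange of limit and expectation in $\sum_k E[(X(t_k)-X(t_{k-1}))^2]\to E[[M,M](t)]$ and, relatedly, showing the limit is strictly positive. For the convergence one needs uniform integrability of the variational sums $V_n:=\sum_k (X(t_k)-X(t_{k-1}))^2$; this is where the $L^4$ hypothesis enters, via a Burkholder--Davis--Gundy or direct $L^2(\Omega)$ estimate on $V_n$ (bounding $E[V_n^2]$ uniformly in $n$ using $E[X(t)^4]=t^{4\gamma}E[X(1)^4]<\infty$ and orthogonality of martingale increments, with the bounded-variation part handled by its a.s. finite total variation and continuity). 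For strict positivity, one invokes that a continuous local martingale with $[M,M]\equiv 0$ is constant, hence $M\equiv 0$ given $M(0)=0$; since we assumed the martingale part is nontrivial, $E[[M,M](t)]>0$. Everything else is routine bookkeeping with the scaling and stationarity identities.
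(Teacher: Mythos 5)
Your proposal is correct and follows essentially the same route as the paper: both compute $E[(X(t)-X(s))^2]=(t-s)^{2\gamma}$ from stationarity and scaling, evaluate the expected variational sum over a shrinking partition as $t^{2\gamma}n^{1-2\gamma}$, and conclude $\gamma=\tfrac12$ because the quadratic variation of a semimartingale with nontrivial martingale part must be positive and finite. You are in fact more careful than the paper at the one delicate point (upgrading convergence of the variational sums to convergence of their expectations via uniform integrability from the $L^4$ hypothesis), which the paper passes over implicitly.
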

\begin{proof}
Using stationarity of increments and self similarity gives
    \begin{align*}
        E[(X(t)-X(s))^2]&=E[X^2(t-s)]\\
        &=(t-s)^{2\gamma}E[X^2(1)]\\
        &=(t-s)^{2\gamma}.
    \end{align*}
    Take a sequence of partitions $\mathcal P_n$ of $[0,t]$ with $|\mathcal P_n|\to 0$. Then
    \begin{align*}
        E\left(\sum_{k=0}^n (X(t_{k+1})-X(t_k))^2\right)&=\sum_{k=0}^n (t_{k+1}-t_k)^{2\gamma},
    \end{align*}
    which blows up if $\gamma<1/2$ or is $0$ if $\gamma>1/2$. The quadratic variation of a semimartingale with nontrivial martingale part is positive and finite. Therefore $\gamma=1/2$.
\end{proof}
\begin{lemma}
   We have that $E(X(t))=0$.
\end{lemma}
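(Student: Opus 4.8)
The plan is to use the scaling and stationarity properties to force the mean to be linear in $t$, and then to combine this with the semimartingale structure to conclude it must vanish. First I would observe that by stationarity of increments, for $s<t$ we have $E(X(t)-X(s))=E(X(t-s))$, so the function $m(t):=E(X(t))$ satisfies the Cauchy functional equation $m(t+h)=m(t)+m(h)$. Since $X$ is continuous and in $L^4$, $m$ is continuous (indeed, $E|X(t)-X(s)|\le E|X(t-s)|\le (E X^2(t-s))^{1/2}=(t-s)^{\gamma}\to 0$ by the previous lemmas, using $\gamma=1/2$), hence $m(t)=ct$ for some constant $c\in\R$.

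Next I would bring in the self-similarity: $t\mapsto X(at)$ and $t\mapsto a^{1/2}X(t)$ are equal in distribution (using $\gamma=1/2$ from the preceding lemma), so $m(at)=a^{1/2}m(t)$, i.e. $c\,(at)=a^{1/2}c\,t$ for all $a>0$ and all $t$. Taking any $t>0$ and $a\ne 1$ forces $c=0$, which gives $m(t)=E(X(t))=0$ for all $t$.

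Alternatively, and perhaps more in the spirit of the ambient argument, one can skip the Cauchy-equation step: from self-similarity alone, $X(at)\sim a^{1/2}X(t)$ for all $a$, so $E(X(t))=E(X(a\cdot (t/a)))=a^{1/2}E(X(t/a))$; writing $m(t)=E(X(t))$ and setting $u=t/a$ this says $m(au)=a^{1/2}m(u)$. Fixing $u=1$ gives $m(a)=a^{1/2}m(1)$, and then stationarity of increments applied as $m(2)=2m(1)$ gives $2^{1/2}m(1)=2m(1)$, whence $m(1)=0$ and therefore $m(t)\equiv 0$.

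I do not expect a serious obstacle here: the only mild care needed is justifying continuity/measurability of $m$ so that the functional equation has only linear solutions, which is immediate from the $L^2$-bound on increments established in the previous lemmas. The content of the lemma is really just bookkeeping that combines the two distributional symmetries; the substance of the appendix lies in the later steps, where the vanishing mean together with the semimartingale decomposition and the quadratic-variation computation will be used to identify the martingale part as Brownian motion.
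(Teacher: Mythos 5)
Your proposal is correct, and your second ("alternatively") route is exactly the paper's argument: stationarity gives $E(X(2))=2E(X(1))$ while self-similarity with $\gamma=\tfrac{1}{2}$ gives $E(X(2))=2^{1/2}E(X(1))$, forcing $E(X(1))=0$ and hence $E(X(t))=t^{1/2}E(X(1))=0$. The first route via the Cauchy functional equation is also fine but unnecessary here.
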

\begin{proof}
    By stationarity, we have that $X(2)-X(1)=X(1)$ in distribution. Therefore $E(X(2))-E(X(1))=E(X(1))$, so by self similarity $2^{1/2}E(X(1))=2 E(X(1))$, therefore $E(X(1))=0$. Self similarity gives that $E(X(t))=0$ for all t. 
\end{proof}
\begin{lemma}
The covariance of $X$ is $E(X(t)X(s))=\min(s,t)$. 
\end{lemma}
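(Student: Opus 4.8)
The plan is to compute $E(X(t)X(s))$ for $0\le s\le t$ by the standard polarization trick together with the already-established facts that $X$ is centered, has stationary increments, and is self-similar with exponent $\gamma=\tfrac12$. First I would write, for $s\le t$,
\begin{align*}
    X(t)X(s)=\tfrac12\!\left(X(s)^2+X(t)^2-(X(t)-X(s))^2\right),
\end{align*}
and take expectations. Using $E(X(u))=0$ from the previous lemma, together with stationarity of increments (so that $X(t)-X(s)\sim X(t-s)$) and self-similarity ($X(u)\sim u^{1/2}X(1)$, hence $E(X(u)^2)=u\,E(X(1)^2)=u$ by the normalization $E(X(1)^2)=1$), each of the three terms is identified: $E(X(s)^2)=s$, $E(X(t)^2)=t$, and $E((X(t)-X(s))^2)=E(X(t-s)^2)=t-s$. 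Substituting gives $E(X(t)X(s))=\tfrac12(s+t-(t-s))=s=\min(s,t)$.

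Then I would note the computation is symmetric in $s,t$ up to relabeling, so in general $E(X(t)X(s))=\min(s,t)$, completing the proof. The only bookkeeping point worth stating explicitly is that the normalization $E(X(1)^2)=1$ was imposed without loss of generality at the start of the appendix (one may rescale $X$), and that all the expectations involved are finite because $X$ is assumed to be an $L^4$ (in particular $L^2$) process, so every term above is well-defined.

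I do not anticipate a genuine obstacle here: this lemma is a routine consequence of the three structural properties (centering, stationary increments, $\tfrac12$-self-similarity) already proved in the preceding two lemmas, and the argument is the same polarization identity used to compute the covariance of Brownian motion. The only thing to be careful about is to invoke stationarity of increments in the correct form — namely that the \emph{increment} $X(t)-X(s)$ has the same law as $X(t-s)$, which is exactly the hypothesis $X(t+h)-X(t)\stackrel{d}{=}X(h)$ of Lemma \ref{lemma:semimartingale-is-BM} applied with $h=t-s$ — rather than something stronger about the joint law, which is not needed.
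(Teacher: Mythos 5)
Your proposal is correct and is essentially identical to the paper's proof: both use the polarization identity $E(X(t)X(s))=\tfrac12 E\bigl(X^2(t)+X^2(s)-(X(t)-X(s))^2\bigr)$ together with $E(X^2(u))=u$, which follows from stationarity of increments, $\tfrac12$-self-similarity, and the normalization $E(X^2(1))=1$. No gaps.
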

\begin{proof}
   The polarization identity yields
   \begin{align*}
      E(X(t)X(s))&=\frac{1}{2}E\left(X^2(t)+X^2(s)-(X(t)-X(s))^2\right) \\
      &=\frac{1}{2}(t+s-|t-s|)\\
      &=\min(s,t).
   \end{align*}
\end{proof}

\begin{lemma}\label{lemma:uncorrelated}
    $X$ has uncorrelated increments.
\end{lemma}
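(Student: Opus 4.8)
The plan is to show that the increments of $X$ over disjoint intervals are uncorrelated, i.e. for $s_1\le t_1\le s_2\le t_2$ we have $E[(X(t_1)-X(s_1))(X(t_2)-X(s_2))]=0$. First I would expand this expectation using bilinearity into the four terms $E[X(t_i)X(s_j)]$, and then substitute the covariance formula $E[X(t)X(s)]=\min(s,t)$ established in the previous lemma. Since $s_1\le t_1\le s_2\le t_2$, each of the four minima is determined: $E[X(t_1)X(t_2)]=t_1$, $E[X(t_1)X(s_2)]=t_1$, $E[X(s_1)X(t_2)]=s_1$, and $E[X(s_1)X(s_2)]=s_1$.

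Carrying out the arithmetic,
\begin{align*}
E[(X(t_1)-X(s_1))(X(t_2)-X(s_2))]&=E[X(t_1)X(t_2)]-E[X(t_1)X(s_2)]-E[X(s_1)X(t_2)]+E[X(s_1)X(s_2)]\\
&=t_1-t_1-s_1+s_1=0,
\end{align*}
which is exactly the claim. So the proof is a one-line computation once the covariance lemma is in hand; there is essentially no obstacle, since the ordering of the times pins down every $\min$ that appears.

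One minor point I would be careful about is that the statement of Lemma \ref{lemma:uncorrelated} as displayed does not specify the ordering hypothesis on the four times, so in the write-up I would either state it for disjoint (ordered) intervals $[s_1,t_1]$ and $[s_2,t_2]$, or note that the general case of two intervals with at most one common endpoint reduces to this by relabeling. Beyond that, the argument is self-contained given the earlier lemmas, and requires only the linearity of expectation and the explicit covariance $\min(s,t)$.
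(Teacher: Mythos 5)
Your proof is correct and is essentially identical to the paper's: both expand the product of increments by bilinearity and substitute the covariance $E[X(t)X(s)]=\min(s,t)$ from the preceding lemma, with the ordering $s_1\le t_1\le s_2\le t_2$ pinning down each minimum so the four terms cancel. Nothing further is needed.
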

\begin{proof}
    Let $0\leq s_1\leq t_1\leq s_2\leq t_2$. Then
    \begin{align*}
        E((X(t_2)-X(s_2))(X(t_1)-X(s_1)))
        &=E(X(t_2)X(t_1)-E(X(t_2)X(s_1))\\
        &-E(X(s_2)X(t_1))+E(X(s_2)X(s_1))\\
        &=t_1-s_1-t_1-s_1\\
        &=0.
    \end{align*}
\end{proof}
\begin{lemma}\label{lemma:nonpositively-correlated}
The squares of increments are nonpositively correlated.
\end{lemma}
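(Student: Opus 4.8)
The plan is to fix $0\le s_1\le t_1\le s_2\le t_2$, put $Y_i:=X(t_i)-X(s_i)$, and recall from the covariance lemma that $E(Y_i^2)=t_i-s_i$, so that the assertion is exactly $E(Y_1^2Y_2^2)\le (t_1-s_1)(t_2-s_2)$. Since $t_1\le s_2$, the variable $Y_1^2$ is $\mathcal F_{s_2}$-measurable, hence $E(Y_1^2Y_2^2)=E\big(Y_1^2\,E(Y_2^2\mid\mathcal F_{s_2})\big)$; because $Y_1^2\ge0$ and $E\big(E(Y_2^2\mid\mathcal F_{s_2})\big)=t_2-s_2$, the inequality will follow (with equality, in fact) as soon as one shows that $E(Y_2^2\mid\mathcal F_{s_2})$ is almost surely non-random. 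In other words, the whole lemma reduces to the statement that the conditional quadratic fluctuation $E\big((X(t)-X(s))^2\mid\mathcal F_s\big)$ is deterministic, necessarily equal to $t-s$.

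To prove this I would use the semimartingale decomposition $X=M+A$ with $M$ a continuous local martingale and $A$ a continuous adapted process of finite variation, both vanishing at $0$. It\^o's formula applied to $r\mapsto (X(r)-X(s))^2$ on $[s,t]$ gives
\[
(X(t)-X(s))^2=2\int_s^t (X(r)-X(s))\,dM(r)+2\int_s^t (X(r)-X(s))\,dA(r)+\big([M]_t-[M]_s\big),
\]
and under the standing $L^4$ assumption the integral against $M$ is a genuine martingale increment, so conditioning on $\mathcal F_s$ removes it and leaves $E\big((X(t)-X(s))^2\mid\mathcal F_s\big)=E\big([M]_t-[M]_s\mid\mathcal F_s\big)+2\,E\big(\int_s^t (X(r)-X(s))\,dA(r)\mid\mathcal F_s\big)$. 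Now I would bring in the structural hypotheses: self-similarity with exponent $1/2$ forces the quadratic variation $[M]=[X]$ to be self-similar with exponent $1$, stationarity of the increments makes it an increasing process with stationary increments, and $A$ inherits the corresponding mean-zero scaling; together with a zero--one law for the germ $\sigma$-field $\mathcal F_{0+}$ (which holds for the theta process, by mixing) these constraints pin the right-hand side down to the deterministic value $t-s$. Hence $E(Y_2^2\mid\mathcal F_{s_2})=t_2-s_2$ almost surely and $\operatorname{Cov}(Y_1^2,Y_2^2)=0\le 0$.

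The main obstacle is the drift term $\int_s^t (X(r)-X(s))\,dA(r)$. When $A\equiv0$ (the genuine martingale case) the argument above is already conclusive once $[M]_t$ is shown to be non-random — and that in turn, via L\'evy's characterisation, identifies $M$ with a Brownian motion, which is where the first alternative of Lemma \ref{lemma:semimartingale-is-BM} comes from; the real work is to exclude a cancellation between a fluctuating quadratic-variation increment and the drift integral, which is precisely where self-similarity together with stationarity of increments must be used in an essential way, and which ultimately forces the drift to vanish. In the complementary case $M\equiv0$ the process $X=A$ has finite variation and there is nothing to prove, this being the second alternative of Lemma \ref{lemma:semimartingale-is-BM}. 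It is worth noting that for Brownian motion one has $\operatorname{Cov}(Y_1^2,Y_2^2)=0$ exactly, so the inequality is sharp and consistent with the dichotomy under construction.
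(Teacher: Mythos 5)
Your reduction via the tower property is fine as far as it goes, but the statement it reduces to --- that $E\bigl((X(t)-X(s))^2\mid\mathcal F_s\bigr)$ equals the constant $t-s$ almost surely --- is never actually proved, and it is substantially harder than the lemma itself. After It\^o's formula you are left with $E\bigl([M]_t-[M]_s\mid\mathcal F_s\bigr)$ plus a drift integral, and you assert that self-similarity, stationarity of increments, and a germ-field zero--one law ``pin down'' this sum to $t-s$. None of these hypotheses does that: stationarity here is only equality of one-dimensional marginal laws, and self-similarity is a distributional scaling of the whole path; neither constrains conditional moments given the past, and both are compatible a priori with $[M]_t$ being genuinely random. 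Worse, showing that the quadratic variation is the deterministic function $t$ is exactly what Proposition \ref{prop:covariation} establishes, and that proposition \emph{uses} the present lemma, so your route is circular. The appeal to mixing of the theta process is also out of scope: Lemma \ref{lemma:semimartingale-is-BM} concerns an abstract $L^4$ semimartingale with the stated scaling and stationarity properties (it is applied in Theorem \ref{theorem:not-semimartingale} to derive a contradiction), so no property of the theta process beyond those hypotheses may be imported.

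For comparison, the paper's proof is a one-line Cauchy--Schwarz bound, $E(Y_1^2Y_2^2)\le\sqrt{E(Y_1^4)}\,\sqrt{E(Y_2^4)}$, combined with the fourth-moment scaling. (Note that with $E(Y^4)=C(t-s)^2$ and $C>1$ this only gives $E(Y_1^2Y_2^2)\le C\,E(Y_1^2)E(Y_2^2)$; the displayed equality in the paper implicitly takes $C=1$, so even the direct argument deserves more care.) In any case, your proposal leaves its central step unproved and as structured cannot be completed without the conclusion it is meant to support.
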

\begin{proof}
Let $0\leq s_1\leq t_1\leq s_2\leq t_2$. Then by Cauchy-Schwarz we have that 
    \begin{align*}
        E((X(t_2)-X(s_2))^2&(X(t_1)-X(s_1))^2)\\
        &\leq \sqrt{E((X(t_2)-X(s_2))^4)} \sqrt{E((X(t_1)-X(s_1))^4}\\
        &=(t_2-s_2)(t_1-s_1)\\
        & =E((X(t_2)-X(s_2))^2)E((X(t_1)-X(s_1))^2).
    \end{align*}
\end{proof}
\begin{lemma}\label{lemma:4th-moment}
There exists some $C\geq 1$ so that
    \begin{equation}
        E((X(t)-X(s))^4)= C (t-s)^2.
    \end{equation}
\end{lemma}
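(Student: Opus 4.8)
The plan is to use the self-similarity of $X$ to reduce the claim to the case $s=0$, $t=1$, and then to extract a fourth-moment identity from the additivity of the quadratic variation combined with the Burkholder–Davis–Gundy-type estimates implied by the $L^4$ semimartingale hypothesis. First I would observe that by stationarity of increments, $X(t)-X(s)$ has the same law as $X(t-s)$, and by the scaling property $X(t-s)$ has the same law as $(t-s)^{1/2}X(1)$ (using $\gamma = 1/2$ from the earlier lemma). Hence $E((X(t)-X(s))^4) = (t-s)^2\, E(X(1)^4)$, so the entire content of the lemma is that the constant $C := E(X(1)^4)$ is finite and at least $1$. Finiteness is immediate from the $L^4$ assumption on $X$. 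The lower bound $C \geq 1$ follows from Jensen's (or Cauchy–Schwarz) inequality: $E(X(1)^4) \geq (E(X(1)^2))^2 = 1$ by the normalization $E(X(1)^2)=1$ established above.

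So in fact the statement as phrased is nearly immediate once the scaling exponent is pinned to $1/2$ and the normalization is in place; the only subtlety is making sure that $C$ is a genuine constant independent of the pair $(s,t)$, which is exactly what self-similarity plus stationarity deliver. I would write this out as: for any $s<t$,
\begin{align*}
E((X(t)-X(s))^4) &= E(X(t-s)^4) = E\big(((t-s)^{1/2}X(1))^4\big) = (t-s)^2 E(X(1)^4),
\end{align*}
and then set $C = E(X(1)^4)$, noting $C < \infty$ since $X \in L^4$ and $C \geq (E(X(1)^2))^2 = 1$ by Cauchy–Schwarz.

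The one place where I would be a little careful — and what I expect to be the main (minor) obstacle — is the legitimacy of passing from equality in distribution to equality of fourth moments: this is fine because $X \in L^4$ guarantees the fourth moments are all finite, so equal laws force equal fourth moments. I would also double-check that the scaling property as stated ($t \mapsto X(at)$ equal in law to $t \mapsto a^\gamma X(t)$ as processes, with $\gamma = 1/2$) is being applied at a single time, which it is; no process-level statement is needed here. There is no real analytic difficulty; the lemma is essentially bookkeeping that isolates the constant $C = E(X(1)^4)$ for use in the subsequent argument (presumably to compare it against the value $3$ forced by Gaussianity, thereby distinguishing the Brownian case from the bounded-variation case).
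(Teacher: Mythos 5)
Your argument is correct and is essentially the paper's own proof: stationarity of increments reduces to $X(t-s)$, self-similarity with $\gamma=\tfrac12$ gives the factor $(t-s)^2 E[X^4(1)]$, and $C=E[X^4(1)]\geq (E[X^2(1)])^2=1$ by the monotonicity of $L^p$ norms (the paper writes this as $\sqrt[4]{C}\geq\sqrt{E[X^2(1)]}=1$). The opening mention of BDG-type estimates is unnecessary, but the computation you actually carry out matches the paper's.
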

\begin{proof}
    By stationarity and self similarity, we have that
    \begin{align*}
        E((X(t)-X(s))^4)&=E(X^4(t-s))\\
        &=(t-s)^2 E[X^4(1)]\\
        &=C(t-s)^2,
    \end{align*}
    where $\sqrt[4]{C}=\sqrt[4]{E[X^4(1)]}\geq \sqrt{E[X^2(1)]}=1$.
\end{proof}
\begin{proposition}\label{prop:covariation}
    The quadratic variation of $X$ on $[0,t]$ is 
    \begin{equation}
    [X](t)=t.
    \end{equation}
\end{proposition}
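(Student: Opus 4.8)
The plan is to show $[X](t)=t$ by exploiting the stationarity, self-similarity, and the moment bounds already established, together with the fact that $X$ has a nontrivial martingale part so its quadratic variation is a well-defined, finite, nondecreasing, adapted process. First I would record that for a deterministic sequence of partitions $\mathcal P_n$ of $[0,t]$ with $|\mathcal P_n|\to0$ the sums $V_n:=\sum_{k}(X(t_{k+1})-X(t_k))^2$ converge in probability to $[X](t)$; the point is to identify the limit. Taking expectations and using Lemma stating $E((X(t)-X(s))^2)=t-s$ (from the covariance computation), we get $E(V_n)=\sum_k(t_{k+1}-t_k)=t$ for every $n$. So the candidate limit has the right mean.

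Next I would control the variance of $V_n$ to upgrade $L^1$-boundedness to $L^2$-convergence to the constant $t$. Write $V_n-t=\sum_k\big((X(t_{k+1})-X(t_k))^2-(t_{k+1}-t_k)\big)$ and expand $\operatorname{var}(V_n)=\sum_{k,\ell}\operatorname{Cov}\big((\Delta_kX)^2,(\Delta_\ell X)^2\big)$, where $\Delta_kX=X(t_{k+1})-X(t_k)$. The diagonal terms are $\operatorname{var}((\Delta_kX)^2)=E((\Delta_kX)^4)-(E(\Delta_kX)^2)^2=(C-1)(t_{k+1}-t_k)^2$ by Lemma \ref{lemma:4th-moment} and the second-moment formula; summing these gives $\le (C-1)|\mathcal P_n|\,t\to0$. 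The off-diagonal terms are $\operatorname{Cov}((\Delta_kX)^2,(\Delta_\ell X)^2)=E((\Delta_kX)^2(\Delta_\ell X)^2)-(t_{k+1}-t_k)(t_{\ell+1}-t_\ell)\le0$ by Lemma \ref{lemma:nonpositively-correlated}. Hence $\operatorname{var}(V_n)\le (C-1)|\mathcal P_n|\,t\to 0$, so $V_n\to t$ in $L^2$ and therefore in probability. Since the limit in probability of the quadratic variation sums is $[X](t)$ and is independent of the chosen partitions, we conclude $[X](t)=t$.

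The main obstacle — really the only subtle point — is justifying that the limit in probability of $V_n$ along a fixed partition sequence genuinely equals $[X](t)$ in the semimartingale sense; this is standard (the quadratic variation of a continuous semimartingale is the $ucp$ limit of these sums, and convergence in probability at the fixed time $t$ suffices to identify it), and can be cited from \cite{Protter-Stochastic-Calculus} or \cite{Schilling-book}. One should also note that the non-positive correlation bound from Lemma \ref{lemma:nonpositively-correlated} as stated covers ordered disjoint increments, which is exactly the configuration arising between two distinct subintervals of a partition, so no extra work is needed there. Everything else is the routine expansion above, and the constant $C\ge1$ from Lemma \ref{lemma:4th-moment} enters only through the harmless factor $(C-1)$.
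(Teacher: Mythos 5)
Your proof is correct and follows essentially the same route as the paper's: compute $E(V_n)=t$ from the second-moment formula, then kill $\operatorname{var}(V_n)$ by combining the fourth-moment identity of Lemma \ref{lemma:4th-moment} (diagonal terms $(C-1)(t_{k+1}-t_k)^2$) with the non-positive correlation of squared increments from Lemma \ref{lemma:nonpositively-correlated} (off-diagonal terms). Your added remark justifying that the $L^2$-limit of the partition sums identifies the semimartingale quadratic variation $[X](t)$ is a point the paper leaves implicit, and is a welcome clarification.
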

\begin{proof}
     Take a sequence of partitions $\mathcal P_n$ of $[0,t]$ with $|\mathcal P_n|\to 0$. Then
    \begin{align*}
        E\left(\sum_{k=0}^n E[(X(t_{k+1})-X(t_k))^2]\right)&=\sum_{k=0}^n (t_{k+1}-t_k)=t.
    \end{align*}
    Then we need to show that 
    \begin{equation*}
         E\left(\left(\sum_{k=0}^n E[(X(t_{k+1})-X(t_k))^2]-t\right)^2\right)\to 0.
    \end{equation*}
    We can recognize this as the variance of the variational sum. Therefore, 
    \begin{align*}
        E\left(\left(\sum_{k=0}^n E[(X(t_{k+1})-X(t_k))^2]-t\right)^2\right)&=\operatorname{var}\left(\sum_{k=0}^n E[(X(t_{k+1})-X(t_k))^2]\right)\\
        &\leq \sum_{k=0}^n \operatorname{var} \left(E[(X(t_{k+1})-X(t_k))^2]\right),
    \end{align*}
    where we used Lemma \ref{lemma:nonpositively-correlated}. We now just need to study $\operatorname{var} \left(E[(X(t_{k+1})-X(t_k))^2]\right)$. From Lemma \ref{lemma:4th-moment}, we have that 
    \begin{equation*}
        \operatorname{var} \left(E[(X(t_{k+1})-X(t_k))^2]\right)=C(t_{k+1}-t_k)^2-(t_{k+1}-t_k)^2=(C-1)(t_{k+1}-t_k)^2.
    \end{equation*}
    Therefore
    \begin{align*}
        E\left(\left(\sum_{k=0}^n E[(X(t_{k+1})-X(t_k))^2]-t\right)^2\right)&\leq (C-1)\sum_{k=0}^n (t_{k+1}-t_k)^2\\
        &\leq (C-1) |\mathcal P_n| t\\
        &\to 0.
    \end{align*}
\end{proof}
\begin{corollary}
    $X(t)=B(t)+V(t)$ for a standard Brownian motion $B$ and adapted bounded variation process $V$.
\end{corollary}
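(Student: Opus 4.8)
The statement to prove is the final Corollary: that $X(t)=B(t)+V(t)$ where $B$ is a standard Brownian motion and $V$ is an adapted continuous bounded variation process. The plan is to invoke the canonical semimartingale decomposition together with the quadratic variation computation already in hand. First I would recall that, by hypothesis, $X$ is a continuous $L^4$ semimartingale, so it admits the unique Doob--Meyer type decomposition $X(t)=M(t)+V(t)$ into a continuous local martingale $M$ with $M(0)=0$ and a continuous adapted process $V$ of bounded variation with $V(0)=0$ (we absorb $X(0)=0$ and the normalization $E[X^2(1)]=1$ as in the running assumptions; these cost nothing by shifting and scaling). Since the bounded variation part contributes nothing to the quadratic variation, we have $[X](t)=[M](t)$.

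Next I would apply Proposition~\ref{prop:covariation}, which gives $[X](t)=t$ for all $t$. Hence $[M](t)=t$ deterministically. By L\'evy's characterization of Brownian motion, a continuous local martingale $M$ with $M(0)=0$ and $[M](t)=t$ is a standard Brownian motion $B$. Therefore $X(t)=B(t)+V(t)$ with $B$ a standard Brownian motion and $V$ adapted, continuous, of bounded variation, as claimed. This also retroactively justifies the phrasing of Lemma~\ref{lemma:semimartingale-is-BM}: if in addition $V\equiv 0$ we get a scaled (here, standard) Brownian motion, while the alternative in that lemma --- no martingale part, i.e.\ $B\equiv 0$ and $X=V$ of bounded variation --- is exactly the degenerate case $[M]\equiv 0$, which under our standing assumption $E[X^2(1)]=1$ and Proposition~\ref{prop:covariation} is excluded, so that under the nontriviality assumption $X$ is forced to be a standard Brownian motion.

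I do not expect a genuine obstacle here: the only subtlety is making sure the quadratic variation in Definition~\ref{def:quadratic-variation} (limit in probability of the variational sums, partition-independent) coincides with the bracket process $[M]$ from stochastic calculus, which is standard for continuous semimartingales --- see, e.g., \cite{Protter-Stochastic-Calculus} --- and that L\'evy's characterization applies at the level of local martingales, which it does. The main conceptual point, already carried out in Proposition~\ref{prop:covariation} via Lemma~\ref{lemma:nonpositively-correlated} and Lemma~\ref{lemma:4th-moment}, is that the $L^2$-convergence of the variational sum to the deterministic value $t$ requires the squared increments to be nonpositively correlated together with the fourth-moment bound $E[(X(t)-X(s))^4]=C(t-s)^2$; both follow from stationarity of increments and the scaling relation with $\gamma=1/2$. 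With that in place the corollary is immediate.
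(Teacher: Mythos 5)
Your proof is correct and follows essentially the same route as the paper: decompose the continuous semimartingale as $X=M+V$, observe that $[X]=[M]=t$ by Proposition~\ref{prop:covariation}, and apply L\'evy's characterization to conclude $M=B$. The additional remarks about how this slots into Lemma~\ref{lemma:semimartingale-is-BM} are consistent with the paper's logic, though not needed for the corollary itself.
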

\begin{proof}
Because $X$ is a semimartingale that is continuous, it is a continuous semimartingale. That is, $X=M+V$ for continuous local martingale $M$ and continuous bounded variation function $V$. See \cite{Rogers-Williams}, page 358. Therefore we know that the quadratic variation of $X$ is the quadratic variation of $M$. 
By L\`evy characterization theorem we have that $M=B$.
\end{proof}
\begin{proposition}
There exists some constant $C'$ so that for all $t\geq s$ we have that
\begin{equation}
    E(V(t)V(s))=C'\min(s,t).
\end{equation}
In particular, we have that
    $$E((V(t)-V(s))^2)=C'(t-s).$$
\end{proposition}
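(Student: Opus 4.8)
The plan is to exploit the decomposition $X = B + V$ already established together with the three structural facts proved for $X$: stationarity of increments, the self-similarity $X(at)\sim a^{1/2}X(t)$, and the covariance formula $E(X(t)X(s))=\min(s,t)$. Since $B$ is a standard Brownian motion with $E(B(t)B(s))=\min(s,t)$, and $X=B+V$, the idea is simply to expand $E(X(t)X(s))$ in terms of $B$ and $V$ and isolate the $V$ covariance. First I would record that $V$ inherits stationarity and self-similarity from $X$ and $B$: both $X$ and $B$ are self-similar with exponent $1/2$ and have stationary increments, so $V(t)=X(t)-B(t)$ does too (one has to be slightly careful here that the decomposition $X=B+V$ is canonical, hence commutes with the time-scaling and time-shift operations — this follows from uniqueness of the semimartingale decomposition). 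In particular $E(V(t)^2)=t\cdot E(V(1)^2)=:C't$ and $E((V(t)-V(s))^2)=(t-s)C'$ by stationarity, and by polarization $E(V(t)V(s))=C'\min(s,t)$.

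The key steps, in order, would be: (1) note $V$ has stationary increments and is self-similar with exponent $1/2$, so that $v_2:=E(V(1)^2)<\infty$ (finite because $V\in L^2$, being a difference of $L^2$ processes — $X\in L^4\subset L^2$ and $B\in L^2$) and $E(V(t)^2)=v_2 t$; (2) show $E(V(t)-V(s))^2 = v_2(t-s)$ by stationarity of increments; (3) apply the polarization identity exactly as in the earlier covariance lemmas:
\begin{align*}
E(V(t)V(s)) &= \tfrac{1}{2}\left(E(V(t)^2)+E(V(s)^2)-E((V(t)-V(s))^2)\right)\\
&= \tfrac{1}{2}\left(v_2 t + v_2 s - v_2|t-s|\right) = v_2\min(s,t),
\end{align*}
and set $C'=v_2$. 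That gives both displayed formulas at once, with $E((V(t)-V(s))^2) = E(V(t)^2) - 2E(V(t)V(s)) + E(V(s)^2) = C't - 2C's + C's = C'(t-s)$ for $t\ge s$, consistent with step (2).

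The main obstacle — really the only subtle point — is justifying that $V$ itself has stationary increments and is exactly self-similar with exponent $1/2$. The cleanest route is uniqueness of the Doob–Meyer/semimartingale decomposition: for any $a>0$ the process $t\mapsto a^{-1/2}X(at)$ has the same law as $X$, and under the time change $t\mapsto a^{-1/2}(\cdot)(at)$ the continuous local martingale part $B$ maps to another continuous local martingale ($a^{-1/2}B(at)$, which is again a standard Brownian motion) and the bounded-variation part $V$ maps to a bounded-variation adapted process; by uniqueness the martingale part of $a^{-1/2}X(at)$ must be $a^{-1/2}B(at)$ and its bounded-variation part $a^{-1/2}V(at)$, and these inherit the distributional identities from $X$. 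A parallel argument with the shift $t\mapsto X(t_0+t)-X(t_0)$ gives stationarity of the increments of $V$. One should also remark that $V\in L^2$, which is immediate since $X\in L^4$ and $B\in L^2$ so $V = X - B \in L^2$, guaranteeing $C'=v_2<\infty$. Everything else is the same polarization computation used repeatedly above for $X$.
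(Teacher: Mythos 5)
Your conclusion is right and your polarization computation at the end is fine, but your route differs from the paper's in a way that matters. The paper never claims that $V$ has stationary increments. Instead it works directly from the covariance $E(X(t)X(s))=\min(s,t)$: expanding $X=B+V$ and using the tower property $E(B(t)V(s))=E\bigl(V(s)E(B(t)\mid\mathcal F_s)\bigr)=E(V(s)B(s))$, it deduces that $E(V(t)X(s))$ does not depend on $t\ge s$, hence that the increments of $V$ are \emph{uncorrelated}; combined with the one\-/point scaling $E(V^2(s))=sE(V^2(1))$ this gives $E(V(t)V(s))=E(V^2(s))=C's$ without polarization. The only distributional transfer the paper needs is the self\-/similarity one, which is comparatively safe: the natural filtration of $t\mapsto a^{-1/2}X(at)$ is exactly the time\-/changed natural filtration of $X$, so uniqueness of the canonical decomposition really does identify $a^{-1/2}V(a\cdot)$ as the finite\-/variation part.

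Your argument, by contrast, leans on the claim that $V$ inherits \emph{stationarity of increments} from $X$ via uniqueness of the decomposition, and this step is more delicate than you acknowledge. For $Y(t)=X(t_0+t)-X(t_0)$ the decomposition $Y=(B(t_0+\cdot)-B(t_0))+(V(t_0+\cdot)-V(t_0))$ lives in the shifted filtration $\mathcal G_t=\mathcal F_{t_0+t}$, whereas the law\-/determinacy of the canonical decomposition applies to the \emph{natural} filtration of $Y$, which is in general strictly smaller than $\mathcal G_t$ (knowing $X(t_0+s)-X(t_0)$ for $s\le t$ does not recover $\mathcal F_{t_0+t}$). Since $V(t_0+\cdot)-V(t_0)$ need not be adapted to the natural filtration of $Y$, uniqueness does not immediately identify it as the finite\-/variation part of $Y$, and so $V(t_0+t)-V(t_0)\sim V(t)$ does not follow as stated. (There is also the milder point that the lemma only assumes one\-/dimensional stationarity $X(t+h)-X(t)\sim X(h)$, while your transfer needs the process\-/level version.) The fix is exactly the paper's move: replace ``stationary increments of $V$'' by ``uncorrelated increments of $V$,'' which you can prove by the tower\-/property computation above, and then conclude via $E((V(t)-V(s))V(s))=0$ rather than via polarization. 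With that substitution your proof closes the gap and matches the paper's.
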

\begin{proof}
 From Lemma \ref{prop:covariation}  we know for $t\geq s$ that
\begin{align*}
    s&=E(X(t)X(s))\\
    &=E\left(B(t)B(s)\right)+E\left(B(t)V(s)\right)+E\left(B(s)V(t)\right)+E(V(t)V(s))\\
    &=s+E\left(B(t)V(s)\right)+E\left(B(s)V(t)\right)+E(V(t)V(s))\\
    &=s+E\left(B(t)V(s)\right)+E(V(t)X(s))\\
    &=s+E\left(E\left(B(t)V(s)|\mathcal F_s\right)\right)+E(V(t)X(s))\\
    &=s+E\left( V(s)E\left(B(t)|\mathcal F_s\right)\right)+E(V(t)X(s))\\
    &=s+E\left(V(s)B(s)\right)+E(V(t)X(s)),
\end{align*}
using the tower property of conditional expectation. This implies that 
$$E(V(t)X(s))$$
is independent of $t$. 
 Therefore
$$E(V(t)X(s))=E(V(s)X(s))=sE(V(1)X(1))=:Cs=C\min(s,t)$$
by self similarity. Investigating the correlation of increments gives that
\begin{align*}
    E((X(t_1)-X(s_1))(V(t_2)-V(s_2))
    &=C(\min(t_1,t_2)-\min(t_1,s_2)\\
    &~-\min(s_1,t_2)+\min(s_1,s_2))\\
    &=0
\end{align*}
if either $0\leq s_1\leq t_1\leq s_2\leq t_2$ or $0\leq s_2\leq t_2\leq s_1\leq t_1$. As we know that increments of $X$ are uncorrelated by Lemma \ref{lemma:uncorrelated}, we have that
\begin{align*}
    0&=E((X(t_1)-X(s_1))(X(t_2)-X(s_2))\\
    &=E\left(\left(B(t_1)-B(s_1)+V(t_1)-V(s_1)\right)(X(t_2)-X(s_2))\right)\\
    &=E\left(\left(B(t_1)-B(s_1)\right)(X(t_2)-X(s_2))\right)\\
    &=E\left(\left(B(t_1)-B(s_1)\right)\left(B(t_2)-B(s_2)+V(t_2)-V(s_2)\right)\right)\\
    &=E\left(\left(B(t_1)-B(s_1)\right)\left(V(t_2)-V(s_2)\right)\right).
\end{align*}
Therefore, $V$ also has uncorrelated increments. That is, for $0\leq s_1\leq t_1\leq s_2\leq t_2$ we have
\begin{equation}
    E((V(t_2)-V(s_2))(V(t_1)-V(s_1)))=0.
\end{equation}
We thus get for $t\geq s$ that 
$$E((V(t)-V(s))V(s))=0,$$
so $$E(V(t)V(s))=E(V^2(s))=sE(V^2(1))=:C'\min(s,t)$$
by self similarity. For $t\geq s$ we have
\begin{align*}
    E((V(t)-V(s))^2)&=E(V^2(t))-2E(V(t)V(s))+E(V^2(s))\\
    &=C(t-2s+s)\\
    &=C'(t-s). 
\end{align*}
\end{proof}
\begin{proposition}
    The quadratic variation of $V$ on $[0,t]$ is $[X](t)=C't$.
\end{proposition}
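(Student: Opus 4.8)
The plan is to mimic the proof of Proposition \ref{prop:covariation} almost verbatim, now applied to the bounded variation part $V$ in place of $X$. The key input we have already established is the identity $E((V(t)-V(s))^2)=C'(t-s)$ for $t\geq s$, together with the fact (from the previous proposition) that $V$ has uncorrelated increments. We will also need a fourth-moment control on the increments of $V$; this follows from Lemma \ref{lemma:4th-moment} applied to $X$ together with $V=X-B$ and the finiteness of all moments of Brownian increments. Indeed, by Minkowski's inequality, $\|V(t)-V(s)\|_{L^4}\leq \|X(t)-X(s)\|_{L^4}+\|B(t)-B(s)\|_{L^4}\ll (t-s)^{1/2}$, so $E((V(t)-V(s))^4)\ll (t-s)^2$.

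First I would fix a sequence of partitions $\mathcal P_n=\{0=t_0<\cdots<t_{n+1}=t\}$ of $[0,t]$ with $|\mathcal P_n|\to 0$ and compute the expected value of the variational sum:
\begin{align*}
E\left(\sum_{k=0}^n (V(t_{k+1})-V(t_k))^2\right)=\sum_{k=0}^n E((V(t_{k+1})-V(t_k))^2)=C'\sum_{k=0}^n(t_{k+1}-t_k)=C't.
\end{align*}
Then I would bound the variance of this sum. Because the increments of $V$ are uncorrelated, so are (after a Cauchy--Schwarz argument as in Lemma \ref{lemma:nonpositively-correlated}) the \emph{squares} of the increments of $V$: for $0\le s_1\le t_1\le s_2\le t_2$, Cauchy--Schwarz plus the fourth-moment bound gives $E((V(t_2)-V(s_2))^2(V(t_1)-V(s_1))^2)\le \sqrt{E((V(t_2)-V(s_2))^4)}\sqrt{E((V(t_1)-V(s_1))^4)}\ll (t_2-s_2)(t_1-s_1)$. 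Actually, to get the clean equality-type bound we want the analogue of Lemma \ref{lemma:4th-moment}: here we may simply use $E((V(t)-V(s))^4)\le \tilde C(t-s)^2$ for some constant $\tilde C$, which suffices. Hence
\begin{align*}
\operatorname{var}\left(\sum_{k=0}^n(V(t_{k+1})-V(t_k))^2\right)\le \sum_{k=0}^n E((V(t_{k+1})-V(t_k))^4)\ll \sum_{k=0}^n(t_{k+1}-t_k)^2\le |\mathcal P_n|\,t\to 0.
\end{align*}
Therefore the variational sums converge in $L^2$, hence in probability, to $C't$, and the limit is independent of the partition; this is exactly the assertion that $[V](t)=C't$.

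The only genuinely delicate point — and the main obstacle — is justifying the fourth-moment bound on the increments of $V$ and the resulting nonpositive (or at least $O((t_2-s_2)(t_1-s_1))$) correlation of the squared increments of $V$; everything else is a direct transcription of the argument for Proposition \ref{prop:covariation}. Once the fourth-moment estimate $E((V(t)-V(s))^4)\ll (t-s)^2$ is in hand (via $V=X-B$, Minkowski, Lemma \ref{lemma:4th-moment}, and Gaussian moments), the variance computation goes through verbatim and the proposition follows. I would also remark that this is consistent with the earlier statement that $[X](t)=t=[B](t)+[V](t)$ only if $C'=0$ in the case where $X$ has a nontrivial martingale part; but stated in this generality (for the process $V$ on its own), the constant $C'$ is whatever $E(V^2(1))$ happens to be, and the conclusion $[V](t)=C't$ stands. (In particular, combined with $[X]=[B]$ from Proposition \ref{prop:covariation}, one concludes $C'=0$, i.e. $V\equiv 0$ in the nontrivial-martingale-part case, completing the dichotomy of Lemma \ref{lemma:semimartingale-is-BM}.)
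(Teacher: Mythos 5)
Your proof is essentially identical to the paper's, whose entire argument is the remark that ``the argument is the same as Proposition \ref{prop:covariation}'' together with the observation that $V$ has four moments because $B$ has all moments and $X$ has four --- which is exactly the $V=X-B$ plus Minkowski step you make explicit, so you have correctly reconstructed the intended proof. The one caveat you flag yourself (Cauchy--Schwarz with the fourth-moment bound only yields $E\big((V(t_2)-V(s_2))^2(V(t_1)-V(s_1))^2\big)\le \tilde C\,(t_2-s_2)(t_1-s_1)$ with $\tilde C$ possibly exceeding $1$, so the cross-covariances of the squared increments are not literally nonpositive and the inequality $\operatorname{var}(\sum_k Y_k)\le\sum_k E(Y_k^2)$ is not fully justified) is present to exactly the same degree in the paper's own Lemma \ref{lemma:nonpositively-correlated} and Proposition \ref{prop:covariation}, so your argument is faithful to, and no weaker than, the original.
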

\begin{proof}
    The argument is the same as Proposition \ref{prop:covariation}. Note that as $B$ has infinitely many moments, $X$ has $4$ moments, $V$ also has $4$ moments. 
\end{proof}
\begin{proof}[Proof of Lemma \ref{lemma:semimartingale-is-BM}]
As $V$ is of bounded variation and continuous, its quadratic variation is $0$. Therefore we have that $C'=0$ and $V(t)$ is identically $0$. 
\end{proof}
\end{document}